\newcommand{\interior}{\operatorname{int}}
\newcommand{\diff}{\operatorname{Diff}}
\newcommand{\supp}{\operatorname{supp}}
\newcommand{\Fix}{\operatorname{Fix}}
\newcommand{\Per}{\operatorname{Per}}
\newcommand{\spann}{\operatorname{span}}
\newcommand{\dist}{\operatorname{dist}}
\newcommand{\EE}{{\mathcal E}}
\newcommand{\CC}{{\mathcal C}}
\newcommand{\FF}{{\mathcal F}}
\newcommand{\GG}{{\mathcal G}}
\newcommand{\KS}{{\mathcal{K}\mathcal{S}}}
\newcommand{\OO}{{\mathcal O}}
\newcommand{\RR}{{\mathcal R}}
\newcommand{\UU}{{\mathcal U}}
\newcommand{\VV}{{\mathcal V}}
\newcommand{\T}{{\mathbb T}}
\newcommand{\tM}{{\tilde{M}}}
\newcommand{\tFF}{{\tilde{\mathcal{F}}}}
\newcommand{\tC}{\tilde{\mathcal{C}}}
\newcommand{\tK}{\tilde{K}}
\newcommand{\tx}{{\tilde{x}}}
\newcommand{\ty}{{\tilde{y}}}
\newcommand{\tz}{{\tilde{z}}}
\newcommand{\tgamma}{{\tilde{\gamma}}}
\newcommand{\wtC}{{\widetilde{C}}}
\newcommand{\wtAC}{{\widetilde{AC}}}
\theoremstyle{plain}
\newtheorem{lemma}{Lemma}[section]
\newtheorem{corollary}[lemma]{Corollary}
\newtheorem{proposition}[lemma]{Proposition}
\newtheorem{theorem}[lemma]{Theorem}
\newtheorem{maintheorem}{Theorem}
\theoremstyle{definition}
\newtheorem{example}{Example}
\newtheorem{remark}[lemma]{Remark}
\newtheorem{definition}[lemma]{Definition}
\newtheorem{conjecture}{Conjecture}
\newcommand{\la}{\lambda}
\title{Stable Ergodicity and Accessibility for certain Partially
Hyperbolic Diffeomorphisms with Bidimensional Center Leaves}
\author{\sc Vanderlei Horita and Martin Sambarino
\thanks{Work partially supported by CAPES, FAPESP, PRONEX
and PROSUL, Brazil; Palis-Balzan project and CSIC, Dynamic Group 618,
Uruguay.}}
\date{}
\begin{document}

\maketitle \pagestyle{plain}

\begin{abstract}
We consider  classes of  partially hyperbolic diffeomorphism $f:M\to M$
with splitting $TM=E^s\oplus E^c\oplus E^u$ and $\dim E^c=2.$
These classes include for instance (perturbations of) the product of
Anosov and conservative surface diffeomorphisms, skew products of surface
diffeomorphisms over Anosov, partially hyperbolic symplectomorphisms
on manifolds of dimension four with bidimensional center foliation whose
center leaves are all compact.
We prove that accessibility holds in these classes for $C^1$ open and
$C^r$ dense subsets and moreover they are stably ergodic.
\end{abstract}

%\tableofcontents

\section{Introduction}
\label{s.int}

Ergodicity plays a fundamental role in Dynamics (and in Probability and
Physics) since L. Boltzmann stated the ``ergodic hypothesis" which says
(roughly speaking) that in an evolution law time average and space
average are equal.
More precisely, we say that a dynamical system $f:M\to M$ preserving a
finite measure $m$ is ergodic (with respect to $m$) if any invariant set
has zero measure or its complement has zero measure.

E. Hopf \cite{Hop39} proved the ergodicity of the geodesic flow on
surfaces of negative curvature.
This was extended by Anosov to the geodesic flow on compact manifolds with
negative curvature in a cornerstone paper in dynamics \cite{An67}.
He also proved that conservative (today called) Anosov $C^{1+\alpha}$
diffeomorphisms are ergodic.
And, since Anosov diffeomorphisms are open, the above implies that
conservative Anosov systems are stably ergodic.
We say that a $C^r$ diffeomorphism $f:M\to M$ preserving a measure $m$ is
$C^r$ \textit{stably ergodic} if any sufficiently small $C^r$ perturbation
of $f$ preserving $m$ is ergodic.

In a seminal work, Grayson, Pugh, and Shub \cite{GPS94} proved that the
time one map of the geodesic flow of a hyperbolic surface is $C^2$ stably
ergodic.
Afterwards, Ch. Pugh and M. Shub recovered (in some sense) Smale´s
program in the sixties about stability and genericity by restricting to
partially hyperbolic diffeomorphisms on manifolds preserving the Lebesgue
measure and replacing structural stability by stable ergodicity.
They conjectured that among $C^2$  partially hyperbolic diffeomorphism
preserving the Lebesgue measure $m$, stable ergodicity holds in an open and
dense set.
They  proved important results in this direction and they proposed a
program as well (see \cite{PS97m}, \cite{PS97}, and \cite{PS00}).
The main conjecture is:

\begin{conjecture}[\cite{PS00}]
On any compact manifold, ergodicity holds for an open and dense set of
$C^2$ volume preserving partially hyperbolic diffeomorphisms.
\end{conjecture}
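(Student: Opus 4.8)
The plan is to carry out the Pugh--Shub program, which splits the conjecture into two parts. \textbf{(I)} \emph{Essential accessibility, together with a center-bunching inequality comparing the action of $Df$ on $E^c$ with its contraction and expansion rates on $E^s$ and $E^u$, implies ergodicity.} \textbf{(II)} \emph{The accessibility property holds on a $C^2$ dense set of volume preserving partially hyperbolic diffeomorphisms.} Since accessibility is a $C^1$-open property, and center bunching is an open condition which is automatic when $\dim E^c=1$, combining (I) and (II) already settles the conjecture when $\dim E^c=1$; for higher-dimensional center one must in addition either remove the bunching hypothesis from (I) or show that bunched accessible diffeomorphisms remain $C^2$ dense.

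\textbf{Step (I): accessibility implies ergodicity.} First establish absolute continuity of the strong stable and strong unstable foliations $\WW^s,\WW^u$ through the standard holonomy estimates. Then run a Hopf-type argument: given an $f$-invariant set $A$ with $0<\Leb(A)<1$, almost every point is a Lebesgue density point of $A$ or of its complement, and absolute continuity transports the density property along $\WW^s$- and $\WW^u$-leaves modulo zero. The delicate step is to propagate density through $su$-paths. When $f$ is dynamically coherent this is done with the Pugh--Shub \emph{julienne} neighborhoods, whose shapes are kept under control along stable and unstable holonomies precisely by the center-bunching inequality; without dynamical coherence one replaces the center and center-(un)stable foliations by locally defined \emph{fake} invariant foliations, available under partial hyperbolicity alone with no integrability assumption, and argues with those. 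Essential accessibility then forces the set of density points of $A$ to be all of $M$ modulo zero, so $\Leb(A)\in\{0,1\}$.

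\textbf{Step (II): $C^2$ density of accessibility.} Given a non-accessible $f$, first construct a $C^1$-small perturbation that is accessible, and in fact has an open accessibility class: localize near a periodic point, and perturb $f$ inside a small chart so as to alter the stable and unstable holonomies between nearby center leaves, which for a generic perturbation makes the $su$-holonomy around a small loop nontrivial and destroys the joint integrability of $E^s\oplus E^u$. To upgrade this to the $C^r$ topology ($r\ge 2$) one must perform the holonomy-breaking perturbation with small $C^r$ norm: for $\dim E^c=1$ this is achieved by a one-parameter unfolding transverse to the center together with a careful bump-function estimate; for $\dim E^c\ge 2$ one needs a genuinely multidimensional non-joint-integrability criterion and a correspondingly more flexible family of perturbations.

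\textbf{Main obstacle.} The two hard points are (a) the $C^r$, $r\ge 2$, density of accessibility when $\dim E^c\ge 2$, where the one-dimensional unfolding has no direct substitute and one is forced to understand the joint strong foliation more globally; and (b) running Step (I) when no center-bunching inequality is available, which requires controlling the distortion of the center dynamics by other means. Both difficulties are absent when $\dim E^c=1$, which is why that case can be completed outright; overcoming them for higher-dimensional center is where the main work lies.
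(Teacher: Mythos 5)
What you have written is not a proof of the statement, and indeed it could not be: the statement is the Pugh--Shub conjecture itself, which the paper only quotes from \cite{PS00} as a conjecture and never claims to prove. Your two-step outline is a faithful summary of the Pugh--Shub program and of what is known: Step~(I) is a genuine theorem (in the form you sketch it is Burns--Wilkinson \cite{BW10}, using fake invariant foliations and julienne density points, but it does require their center-bunching hypothesis), and Step~(II) is known only in the $C^1$ topology \cite{DW03} or, in the $C^r$ topology with $r\ge 2$, when $\dim E^c=1$ \cite{RRU08}, \cite{BRRTU08}. The two items you flag under ``Main obstacle'' --- $C^r$-density of accessibility with higher-dimensional center, and ergodicity without bunching --- are precisely the parts that remain open, so your text reduces the conjecture to open problems rather than closing it. There is also a smaller slip in Step~(II): making the $su$-holonomy around one loop nontrivial destroys joint integrability of $E^s\oplus E^u$ and produces one nontrivial (or open) accessibility class, but that alone does not give accessibility, and keeping such a property open in the $C^1$ topology while the perturbation is only $C^r$-small is exactly where the real work sits.

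For comparison, the paper does not attempt the conjecture in full generality. It proves unconditional results only for restricted classes with $\dim E^c=2$ (Theorems~\ref{trivialaccesibility}, \ref{axa} and \ref{main}): for dynamically coherent systems with dense compact periodic center leaves, Global Product Structure, etc., it shows accessibility (hence, with \cite{BW10}, stable ergodicity) on a $C^1$-open and $C^r$-dense set. The mechanism is quite different from your holonomy-breaking sketch: accessibility classes inside a periodic compact center leaf are shown to be $C^1$-homogeneous, hence open, a point, or a $C^1$ curve; trivial classes are destroyed by the perturbation Lemma~\ref{l.primeiro}; and one-dimensional classes are ruled out generically using two-dimensional conservative surface dynamics (Kupka--Smale, Mather's theorem, Franks, the Koropecki--Xia theorem on aperiodic invariant continua, and the lamination structure of non-open classes), together with the semicontinuity arguments of Sections~\ref{s.pp}--\ref{s.main}. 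So the honest assessment is: your proposal correctly describes the known reduction of the conjecture but contains no new argument for the open steps, whereas the paper sidesteps the general conjecture and settles special cases by exploiting the two-dimensionality of the center leaves.
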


This conjecture splits into two conjectures where accessibility (see
Definition~\ref{defacc}) plays a key role:

\begin{conjecture}[\cite{PS00}]
Accessibility holds for an open and dense set of $C^2$ partially
hyperbolic diffeomorphism, volume preserving or not.
\end{conjecture}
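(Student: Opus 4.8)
The plan is to follow the strategy of Dolgopyat and Wilkinson, reducing accessibility to a statement about \emph{accessibility classes} and showing that an arbitrarily $C^1$-small, $C^r$ perturbation destroys every non-open class. Recall that for a partially hyperbolic $f$ the accessibility class $AC_f(x)$ of a point $x$ is the set of points joined to $x$ by a concatenation of finitely many $C^1$ arcs each tangent to $E^s$ or to $E^u$, and $f$ is accessible iff $AC_f(x)=M$ for one (hence every) $x$. By the laminability results of Didier and of Pugh--Shub--Wilkinson, each accessibility class is an injectively immersed $C^1$ submanifold and, for fixed $f$, either every class is open --- in which case, $M$ being connected, $f$ is accessible --- or the union $\Gamma(f)$ of the non-open classes is a nonempty compact $f$-invariant set. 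Since a $C^1$-small perturbation cannot manufacture non-open classes where there were none, the map $f\mapsto\Gamma(f)$ is upper semicontinuous, so $\{f:\Gamma(f)=\emptyset\}$ is $C^1$-open; intersecting it with the $C^r$ diffeomorphisms gives the candidate open set, and the whole problem reduces to proving \emph{density} of accessibility.

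The technical heart is a local perturbation step. Fix $f$ with $\Gamma(f)\neq\emptyset$, a point $p\in\Gamma(f)$, and set $L=AC_f(p)$, a proper immersed submanifold. One produces an $su$-cycle at $p$, i.e. an $su$-path from $p$ back to a nearby $q\in L$, along which the composed stable/unstable holonomies are what pin $L$ down near $p$. The \textbf{perturbation lemma} then states: for any $C^1$-neighbourhood $\UU$ of $f$, any $z$, and any short unstable arc $J^u\ni z$ chosen on this cycle away from $p$, there exists $g\in\UU$, of class $C^r$ if $f$ is, coinciding with $f$ outside a small ball, realizing a prescribed small nonzero change of the $E^s$-holonomy between two local unstable plaques along $J^u$ while preserving partial hyperbolicity. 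Applying this finitely often along the cycle moves $q$ transversally off $L$, i.e. strictly enlarges $AC_g(p)$ and lowers the codimension of the accessibility class of $p$.

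One then globalizes. Using compactness of $\Gamma(f)$ and a finite cover by such perturbation charts, the step above shows that ``the accessibility class of every point of a fixed countable dense subset of $M$ has codimension $\le k$'' is, for each $k\ge 1$, a $C^1$-open and $C^1$-dense condition; a Baire / Kupka--Smale argument then yields a $C^1$-residual --- hence $C^1$-dense --- set of accessible diffeomorphisms, and since all perturbations are taken $C^r$ this set meets the $C^r$ diffeomorphisms densely. Combined with the openness from the first paragraph, this gives the asserted open and dense set.

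\textbf{Main obstacle.} The crux is the perturbation lemma together with its globalization when $\dim E^c>1$. For one-dimensional center the $su$-holonomies and the induced holonomy cocycle over $\Gamma(f)$ are well behaved and the scheme above runs unconditionally (the Rodriguez~Hertz--Rodriguez~Hertz--Ures case); but for bidimensional (or higher) center the center-stable and center-unstable holonomies need not be differentiable, accessibility classes can be pathologically immersed, and a single compactly supported perturbation may fail to propagate coherently through a closed $su$-cycle, so that lowering the codimension of one class need not shrink $\Gamma$. This is exactly why the unconditional conjecture remains open, and why in the present paper one imposes extra structure on the center foliation --- products, skew products, all center leaves compact --- to make the relevant holonomy cocycle tractable and the globalization effective.
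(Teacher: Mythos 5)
There is a fundamental mismatch here: the statement you were asked about is not a theorem of the paper but one of the Pugh--Shub conjectures quoted from \cite{PS00}; the paper offers no proof of it, and it remains open. What the paper actually proves are restricted versions: accessibility (and stable ergodicity) on $C^1$ open and $C^r$ dense subsets of the special classes $\EE^r$, $\EE^r_m$, $\EE^r_\omega$, $\EE^r_{sp}$, $\EE^r_{sp,\omega}$, under strong structural hypotheses (dynamical coherence, plaque expansiveness, Global Product Structure, a dense set of compact periodic center leaves, center bunching, and $\dim E^c=2$ for the accessibility theorems). So any ``proof'' of the full conjecture would go far beyond the paper, and yours does not close that distance: your own final paragraph concedes that the perturbation lemma and its globalization fail to be justified when $\dim E^c>1$, which is exactly where the conjecture is open. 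A sketch whose technical heart is declared to be the unresolved obstacle is not a proof.

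Beyond that self-acknowledged gap, several intermediate claims are unjustified in the generality you use them. (i) That every accessibility class is an injectively immersed $C^1$ submanifold is not known for general partially hyperbolic diffeomorphisms; in the paper this kind of structure is obtained only inside two-dimensional center leaves, using $C^1$-homogeneity plus local compactness and the result of \cite{RSS96}, and local compactness of a class is not automatic. (ii) The dichotomy ``either every class is open or the union $\Gamma(f)$ of non-open classes is compact and the map $f\mapsto\Gamma(f)$ is upper semicontinuous, hence $\{\Gamma=\emptyset\}$ is $C^1$ open'' is not established anywhere at this level of generality; what is true (and what \cite{DW03} and the present paper use) is that accessibility must be produced in a \emph{stable} form, with openness built into the construction (compare the paper's Corollary~\ref{c.accopen}, which needs the open set $V$ of Theorem~\ref{t.peridic_point}, itself resting on periodic compact center leaves and generic properties of center periodic points), not deduced from a general semicontinuity of non-open classes. (iii) The codimension-lowering induction (``codimension $\le k$ is open and dense for each $k$'') is precisely the step that is not known to propagate through $su$-cycles when the center is higher dimensional, since the relevant holonomies are only continuous and a compactly supported perturbation need not reduce $\Gamma(f)$. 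In short: the approach is a reasonable heuristic modeled on \cite{DW03} and \cite{RRU08}, but it does not prove the statement, and the paper itself only proves weaker, conditional results by imposing exactly the extra structure (compact periodic center leaves, $\dim E^c=2$, conservative or symplectic center dynamics) that your sketch lacks.
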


\begin{conjecture}[\cite{PS00}]
A partially hyperbolic $C^2$ volume preserving diffeomorphism with the
essential accessibility property is ergodic.
\end{conjecture}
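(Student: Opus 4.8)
\medskip
\noindent\textbf{Proof proposal.}
The plan is to run the Hopf argument in the partially hyperbolic category. Ergodicity of $f$ is equivalent to the assertion that for every continuous $\varphi\colon M\to\real$ the forward Birkhoff average $\varphi^{+}(x)=\lim_{n\to\infty}\frac1n\sum_{k=0}^{n-1}\varphi(f^{k}x)$ is $m$-almost everywhere constant. This average exists $m$-a.e.\ and is $f$-invariant; the backward average $\varphi^{-}$ likewise exists $m$-a.e., and since $f$ preserves $m$ a standard consequence of Birkhoff's theorem gives $\varphi^{+}=\varphi^{-}$ $m$-a.e. Uniform continuity of $\varphi$ together with uniform contraction along strong stable leaves shows that $\varphi^{+}$ is \emph{genuinely} constant on each strong stable leaf on which it is defined; dually $\varphi^{-}$ is constant on strong unstable leaves. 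Hence $\varphi^{+}$ is essentially $W^{s}$-saturated and, being a.e.\ equal to $\varphi^{-}$, also essentially $W^{u}$-saturated. If this can be upgraded to \emph{essential $su$-saturation} --- that $\varphi^{+}$ agrees $m$-a.e.\ with a function constant on every accessibility class --- then each level set of $\varphi^{+}$ is, modulo a null set, a union of accessibility classes; essential accessibility forces it to have zero or full measure, so $\varphi^{+}$ is essentially constant, which is exactly what we want.

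\medskip
The core step is therefore the implication that a function which is essentially $W^{s}$-saturated and essentially $W^{u}$-saturated is essentially $su$-saturated. For this I would use that the strong stable and strong unstable foliations of a $C^{2}$ partially hyperbolic diffeomorphism are absolutely continuous, with measurable holonomy Jacobians. Fix a full-measure set $G$ of Lebesgue-regular points (density points of the set on which $\varphi^{+}=\varphi^{-}$ and both limits exist) and work inside a small chart around a point $p$. The aim is to show that $m$-a.e.\ pair of points of the chart is joined by an $su$-path of uniformly bounded length whose legs meet $G$ in full relative measure; pushing $\varphi^{+}$ along such a path --- constant along stable legs, and constant along unstable legs because there it equals $\varphi^{-}$ --- then forces $\varphi^{+}$ to be a.e.\ constant on the local accessibility set of $p$, and a covering and connectedness argument promotes this to essential $su$-saturation globally. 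The mechanism for producing the path is an iterated Fubini/density-point step: absolute continuity of $W^{s}$ makes the stable holonomy between nearby transversals send full-measure sets to full-measure sets, so it transports Lebesgue-regularity; the same holds for $W^{u}$; concatenating the four legs of the $su$-quadrilateral that locally generates the accessibility relation then moves regularity across the chart.

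\medskip
The step I expect to be the main obstacle is making this concatenation quantitative and uniform \emph{without} dynamical coherence and \emph{without} a center-bunching hypothesis. A single holonomy being absolutely continuous does not suffice: as one composes $su$-legs the path drifts in the center direction, the transversals involved are only the \emph{fake} locally invariant center-stable and center-unstable manifolds constructed in charts (in the spirit of Hirsch--Pugh--Shub and Burns--Wilkinson), and what is really needed is that the strong stable and strong unstable holonomies \emph{between} these fake center leaves distort Lebesgue measure by a bounded factor at all relevant scales. It is precisely a bunching condition that guarantees this, by making the strong holonomies Lipschitz --- indeed $C^{1}$ --- along fake center leaves; with only pointwise partial hyperbolicity these holonomies may be merely H\"older with an uncontrolled exponent, so the iterated density-point estimate can lose a definite proportion of measure at each of unboundedly many steps and the argument collapses. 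To get past this I would try to replace the crude ``full measure at each step'' bookkeeping by a scale-sensitive recursion that tracks how the density deficit of $G$ along a leg at scale $r$ is transformed by the next holonomy as a function of $r$ and of the pointwise contraction and expansion rates, and then to close that recursion using pointwise domination alone, perhaps after passing to an adapted Riemannian metric. Closing this recursion is, I believe, the genuine difficulty, and it is the point at which a supplementary hypothesis has so far been required.
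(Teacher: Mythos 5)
The statement you were asked about is not a theorem of this paper at all: it is the third Pugh--Shub conjecture, quoted verbatim from \cite{PS00}, and it is still open. What is actually known is the weaker theorem of Pugh--Shub (ergodicity under the additional hypotheses of dynamical coherence and center bunching) and its strengthening by Burns--Wilkinson \cite{BW10}, which removes dynamical coherence and relaxes, but does not remove, the center bunching condition. The present paper never proves the conjecture; it \emph{uses} the Burns--Wilkinson theorem as a black box, and this is precisely why center bunching is built into the definition of the classes $\EE^r$ on which all of its results are stated.

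Your proposal is the standard Hopf-argument outline, and you have correctly located where it breaks: the passage from ``$\varphi^{+}$ essentially $W^{s}$-saturated and essentially $W^{u}$-saturated'' to ``essentially $su$-saturated.'' Absolute continuity of each strong foliation separately does not give this; the accessibility relation is generated by unboundedly many concatenated holonomies between (fake) center-stable and center-unstable plaques, and to control the measure-theoretic distortion of these concatenations one needs quantitative regularity of the strong holonomies along center directions --- Lipschitz or H\"older with controlled exponent at the relevant (julienne) scales. That regularity is exactly what center bunching provides in \cite{PS00} and \cite{BW10}; with bare partial hyperbolicity the holonomies can be so badly H\"older that the density-point bookkeeping loses a definite fraction of measure at each step, and no one knows how to close the ``scale-sensitive recursion'' you sketch in the last paragraph. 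So the proposal is not a proof but a correct description of the known strategy together with an honest acknowledgement of the open gap; as such it cannot be compared with a proof in the paper, because the paper (correctly) records the statement as a conjecture rather than proving it. If you want a provable statement along these lines, you should add the center bunching hypothesis and cite \cite{BW10}, which is exactly what the authors do.
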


They also proved \cite{PS00} a result in the direction of the third
conjecture: \textit{A partially hyperbolic $C^2$ volume preserving
diffeomorphism, dynamically coherent, center bunched, and with the
essential accessibility property is ergodic}.
Since then, a lot of research on the field has been done.
See the surveys \cite{BPSW01}, \cite{RRU07}, \cite{Wi10}, and \cite{CrICM}
for an account on this progress during the last decades.

In \cite{BW10}, K. Burns and A. Wilkinson improved a lot Pugh-Shub result
in two directions: dynamically coherence is not needed and the center
bunching condition is much milder than originally stated.

The key fact thus to obtain ergodicity is \textit{accessibility}.
In \cite{DW03} it is proved that accessibility holds for a $C^1$ open and
dense subset of $C^r$ partially hyperbolic diffeomorphism, volume
preserving or not.
When the center bundle has dimension one, it is proved in \cite{RRU08}
that accessibility holds for a $C^1$ open and $C^r$ dense subset of $C^r$
partially hyperbolic volume preserving diffeomorphism (later extended to
the non-volume preserving case in \cite{BRRTU08}).
This in particular implies the main conjecture in its full generality when
the center dimension is one.

There has been in the last years a great advance to the main conjecture in
the $C^1$ topology.
In fact in \cite{RRTU07} it is proved that stably ergodicity is $C^1$
dense when the center dimension is two.
And recently, an outstanding  result has been obtained by A. Avila, S.
Crovisier, and A. Wilkinson \cite{ACW}: stable ergodicity is $C^1$ dense in
any case (without any assumption on the dimension of the center bundle).

These results depends heavily on perturbation techniques available in the
$C^1$ topology and not known on higher topologies.
The $C^r$ denseness of stable ergodicity, $r\ge 2,$ is a complete different
problem.
Little is known in this case when the center bundle has dimension greater
than one.
In \cite{BW99}, the authors prove $C^r$ density of stable ergodicity for
group extensions over Anosov diffeomorphisms.
A remarkable  result has been obtained by F. Rodriguez-Hertz \cite{RH05}
for certain automorphisms of the torus $\T^d$.
Also, in \cite{SW00a} are given two examples that can be $C^r$, $r\ge 2$,
approximated by stable ergodic ones.
And very recently Z. Zhang \cite{Zha15} obtained $C^r$ density of stable
ergodicity for volume preserving diffeomorphisms satisfying some pinching
condition and a certain type of dominated splitting on the center.
A. Avila and M. Viana have announced $C^1$ openness and $C^r$ density for
certain skew product of surfaces diffeomorphisms over Anosov and our work
might have some overlap with theirs although our methods are different.

Our aim in this paper is to contribute to the $C^r$ denseness of stable
ergodicity, in particular when the center dimension is two.
We prove that for large classes of $C^r$ partially hyperbolic volume
preserving diffeomorphisms with two dimensional center bundle, stable
ergodicity holds in $C^r$ dense subsets.
Precise statements are given in Section~\ref{ss.results}.
However, just to give a flavor of them let us state a particular case
(see Theorem~\ref{main}).

\begin{maintheorem}
\label{t.intro}
Ergodicity holds in $C^1$ open and $C^r$ dense subset in the following
settings:
\begin{itemize}
\item Skew products of conservative surfaces diffeomorphisms over
      conservative Anosov diffeomorphisms
\item Partially hyperbolic symplectomorphisms on $(M,\omega)$ where
      $dimM=4$ having a bidimensional center foliation whose leaves are
      all compact.
\end{itemize}
\end{maintheorem}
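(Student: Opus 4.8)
The plan is to reduce Theorem~\ref{t.intro} to a general accessibility-plus-ergodicity result (our Theorem~\ref{main}, to be stated in Section~\ref{ss.results}) for abstract classes of partially hyperbolic systems with two-dimensional center, and then verify that the two listed families fall into those classes. The backbone of the argument has two halves: first, that accessibility holds on a $C^1$-open and $C^r$-dense subset of each class; second, that accessibility (together with the structural features of these systems) implies ergodicity, so that on this subset ergodicity is in fact $C^1$-stable. For the second half I would invoke the Burns--Wilkinson criterion \cite{BW10}: a $C^2$ volume-preserving partially hyperbolic diffeomorphism that is center bunched and (essentially) accessible is ergodic. Thus the real work is the accessibility statement, and the center-bunching hypothesis, which is automatic here because the center is (after perturbation, or intrinsically in the skew-product and symplectic cases) close to an isometry on center leaves or has controlled center Jacobian.

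For accessibility I would follow the now-standard strategy initiated in \cite{DW03} and refined in the one-dimensional-center works \cite{RRU08,BRRTU08}: show that the accessibility class of a point is open (this is general, using the $su$-path engulfing argument and the fact that strong-stable and strong-unstable foliations are absolutely continuous with smoothly-varying leaves), hence the non-open obstruction is that an accessibility class could be a proper closed $f$-invariant ``lamination.'' One then perturbs to destroy such a lamination. The two-dimensional center is what makes this delicate: instead of the accessibility class accumulating on a curve or a point inside a one-dimensional center leaf, it can accumulate on a one-dimensional subset of a surface center leaf, and one must create, by a $C^r$-small perturbation supported in a small box, a new $su$-loop that moves transversally inside the center. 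Here I would use the product/skew/symplectic structure crucially: the center dynamics is (a perturbation of) a conservative surface diffeomorphism or is organized by a fibration with compact fibers, so one has enough room and enough control to engineer the perturbation while staying inside the class and preserving volume (respectively the symplectic form). Concretely, I would pick a periodic center leaf (or a dense center leaf), linearize the strong foliations in a small neighborhood, and add a conservative bump that tilts the return map so that the holonomy $su$-loop has non-trivial center component; iterating and using minimality of one of the strong foliations on the Anosov base propagates accessibility to an open dense set. Openness in $C^1$ and density in $C^r$ come out together because the perturbation is $C^r$-small but the resulting accessibility (via a finite $su$-loop with a transversality condition) is a $C^1$-open property.

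The main obstacle I anticipate is precisely this perturbation step in the presence of a bidimensional center: ensuring that a single $C^r$-small, volume- (or symplectic-) preserving perturbation simultaneously (i) keeps the diffeomorphism in the prescribed class, (ii) preserves or restores center bunching, and (iii) genuinely enlarges the accessibility class in \emph{both} center directions rather than just one. In the one-dimensional-center case \cite{RRU08} a single transversality suffices; here one needs a two-parameter family of $su$-loops, or equivalently a perturbation whose associated center holonomy has an invertible derivative, and controlling this while respecting the invariant geometric structure (the fibration with compact leaves, or the symplectic form) is the crux. A secondary technical point is handling the non-volume-preserving side of the first bullet at the level of \emph{accessibility} (which does not require invariance of volume, cf. \cite{DW03,BRRTU08}) and only then restricting to the conservative subset for ergodicity; and, in the symplectic case, verifying that the compact-center-leaf hypothesis forces dynamical coherence and the requisite bunching so that \cite{BW10} applies verbatim. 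Once accessibility is established, ergodicity and its $C^1$-stability are, as above, a direct citation.
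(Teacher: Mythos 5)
Your top-level reduction is the one the paper uses: Theorem~\ref{t.intro} is a corollary of a general Theorem~\ref{main} for the classes $\EE^r_\omega$ and $\EE^r_{sp,\omega}$, whose proof separates into an accessibility statement ($C^1$-open, $C^r$-dense) and a citation of Burns--Wilkinson \cite{BW10} for stable ergodicity. You also correctly identify the crux: in a two-dimensional center a single transversal $su$-loop does not propagate, and one must rule out accessibility classes that are one-dimensional curves. But the mechanism you propose for doing so --- perturb so the $su$-holonomy ``has an invertible derivative,'' a ``two-parameter family of $su$-loops'' --- would only control the class near one point and does not address the real obstruction. Your opening claim that ``the accessibility class of a point is open (this is general\ldots)'' is false: for $f\times\mathrm{id}$ every class is a single point, and after a small perturbation a class can perfectly well be a $C^1$ curve invariant under the center return map, which no single transversality condition forbids.

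The paper's actual argument is substantially more structural and this is where the missing content lies. Following \cite{RH05} and using $C^1$-homogeneity together with \cite{RSS96}, it establishes a trichotomy: a class in a 2D center leaf is a point, a $C^1$ curve, or open. Trivial (point) classes are destroyed generically (Theorem~\ref{trivialaccesibility}, Lemma~\ref{l.primeiro}). Curves are handled by passing to a periodic compact center leaf with a periodic point $p$; a genericity condition (Property $(L)$), invariance, and the non-existence of an invariant line at an elliptic point (or the uniqueness of locally invariant graphs at a saddle) force $C_0(p)$ to be open (Theorem~\ref{t.peridic_point}). The remaining obstruction is that $\partial C_0(p)\neq\emptyset$, and here the proof leans on conservative surface dynamics in an essential way: the non-open classes form a $C^1$ lamination (Proposition~\ref{p.cont-var}), Fokkink--Oversteegen \cite{FO96} and the Xia--Koropecki theorem \cite{Xia06a,Ko10} on aperiodic invariant continua, together with Kupka--Smale genericity and Mather's prime-end theorem \cite{Ma81}, show each boundary component is a finite union of essential invariant circles with irrational rotation; these are finally destroyed by Lemma~\ref{l.segundo}, whose engine is the elementary but nontrivial Proposition~\ref{p.bb} about non-decreasing interval maps. $C^1$-openness and $C^r$-density are then assembled via Baire category with semicontinuous set-valued maps ($\Gamma_0,\Gamma_1,\Gamma_2,\Gamma_3$), not by a pointwise transversality condition. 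None of this surface-dynamics machinery appears in your sketch, and without it the curve case is not ruled out.
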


The  main tool we use to prove the ergodicity is accessibility.
Thus, we have to prove that accessibility holds in a $C^1$ open and $C^r$
dense subset in the setting we are working with.
The main idea is to use results on conservative surface dynamics to show
that generically one gets accessibility.
Indeed, when the center dimension is two and we look to the accessibility
class inside a (periodic) compact center leaf we have three possibilities:
it has zero, one or two topological dimensions.
We prove that generically (see Theorem~\ref{trivialaccesibility}) zero
dimensional accessibility classes do not exist.
We will use to the full extent results on conservative surface dynamics to
prove that also generically one-dimensional accessibility classes do not
exist and therefore the accessibility classes are open on the
center leaf and so there is just one accessibility class.

\subsection{Setting}
\label{ss.setting}

Let $f:M\to M$ be a diffeomorphism where $M$ is a compact riemannian
manifold without boundary.
We say that $f$ is {\em partially hyperbolic} if the tangent bundle splits
into three subbundles $TM=E^s\oplus E^c\oplus E^u$ invariant under the
tangent map $Df$ and such that:
\begin{itemize}
\item There exists $0<\la<1$ such that
$$
\|Df_{/E^s}\|<\la \qquad \mbox{ and } \qquad \|Df^{-1}_{/E^u}\|<\la.
$$
\item For every $x\in M$ we have
$$
\frac{\|Df_{/E^s_x}\|}{m\{Df_{/E^c_x}\}}<1 \qquad \mbox{ and } \qquad
\frac{\|Df_{/E^c_x}\|}{m\{Df_{/E^u_x}\}}<1,
$$
where $m\{A\}$ is the co-norm of $A,$ i.e.,
$m\{A\}=\|A^{-1}\|^{-1}.$
\end{itemize}
By continuity of $Df$ and the compactness of $M$, there is a positive
constant $\eta <1$ such that the inequalities in the last item hold for
$\eta$ instead of $1$.
In other words, $E^s$ is uniformly contracted, $E^u$ is uniformly
expanded and the behaviour of $E^c$ is between both.

It is well known that the subbundles $E^s$ and $E^u$ uniquely
integrate to two foliations $\FF^s=\FF^s_f$ and $\FF^u=\FF^u_f$
called the stable and unstable foliation respectively.
We denote by $\FF^\sigma(x)\,(\sigma=s,u)$ the leaf of the foliation
through the point $x.$

On the other hand it is not always true that the center subbundle
$E^c$ is integrable. We say that the partially hyperbolic
diffeomorphism $f$ is \textit{dynamically coherent} if the bundles
$E^s\oplus E^c$ and $E^c\oplus E^u$ integrate to invariant
foliations $\FF^{cs}$ and $\FF^{cu}$ called the center stable and
center unstable foliations respectively. In particular $E^c$
integrates to a (normally hyperbolic) invariant foliation $\FF^c.$
Moreover, $\FF^c$ and $\FF^s$ subfoliates $\FF^{cs}$ and $\FF^c$ and
$\FF^u$ subfoliates $\FF^{cu}$, see \cite{BW08}.

We say the center foliation is $r$-\emph{normally hyperbolic} ($r\ge 1$)
if the following holds:
$$
\frac{\|Df_{/E^s_x}\|}{m\{Df_{/E^c_x}\}^r}<1 \qquad \mbox{ and } \qquad
\frac{\|Df_{/E^c_x}\|^r}{m\{Df_{/E^u_x}\}}<1.
$$
If $f$ is of class $C^r$ and the center foliation is $r$-normally
hyperbolic then the leaves of $\FF^c$ are of $C^r$ class (see \cite{HPS77}).

Partially hyperbolic diffeomorphisms are $C^1$ open.
In order to assure that dynamically coherence also holds for $C^1$ systems
nearby we have to require \textit{plaque expansiveness}.
This is technical and we will not define it here, we refer to \cite{HPS77}
(however, if $\FF^c$ is a $C^1$ foliation or all leaves of $\FF^c$ are
compact then the center foliation is plaque expansive).
The results on \cite{HPS77} (see Theorem 7.4) assure that a normally
hyperbolic and plaque expansive foliation $\FF^c_f$  of a diffeomorphism
$f$ is structurally stable, that is, there exist a neighborhood of
$\UU(f)$ and a homeomorphism $h:M\to M$ such that for $g\in\UU(f)$
there exists a (normally hyperbolic) foliation $\FF^c_g$ such that
$h(\FF^c_f(x))=\FF^c_g(h(x))$ and $h(\FF^c_f(f(x)))=\FF^c_g(g(h(x))).$
This result implies that \textit{partially hyperbolic diffeomorphisms,
dynamically coherent with center foliation plaque expansive are $C^1$
open}.

We also say that a partially hyperbolic diffeomorphism $f$ is
\textit{center bunched} if\;:
$$
\|Df_{/E^s_x}\|\frac{\|Df_{/E^c_x}\|}{m\{Df_{/E^c_x}\}}<1 \qquad
\mbox{ and } \qquad \frac{\|Df_{/E^c_x}\|}{m\{Df_{/E^c_x}\}}
\frac{1}{m\{Df_{/E^u_x}\}}<1.
$$

This bunching condition is as in \cite{BW10} where they improve
substantially the one stated by Pugh-Shub originally.
Notice that the bunching condition is also $C^1$ open.

We say that a partially hyperbolic diffeomorphism $f:M\to M$ dynamically coherent has
\textit{Global Product Structure} if there is a covering
$\pi:\tilde{M}\to M$ and a lift $\tilde{f}:\tilde{M}\to \tilde{M}$ of $f$
such that when we lift the invariant foliations (stable, unstable,
center-stable, and center-unstable) to $\tilde{M}$ we have for any
$\tilde{x}, \tilde{y}$ in $\tilde{M}$:
$$
\#\{\tilde{\FF}^{cs}(\tilde{x})\cap\tilde{\FF}^u(\tilde{y})\}=1\,\,\,
\qquad \mbox{ and }\,\,\, \qquad
\#\{\tilde{\FF}^{cu}(\tilde{x})\cap\tilde{\FF}^s(\tilde{y})\}=1.
$$

Under the conditions where the center foliation is structurally stable we
have that Global Product Structure is also $C^1$ open.
The above allow us to define a global projection (in $\tilde{M}$)
onto a given center stable manifold along the holonomy of the
unstable foliation.

\begin{definition}
Let $M$  be a compact riemannian manifold without boundary and let $r\ge1$.
We denote by $\EE^r=\EE^r(M)$ the set of $C^r$ diffeomorphisms $f:M\to M$
(with the $C^r$ topology) such that
\begin{itemize}
\item $f$ is partially hyperbolic;
\item $f$ is dynamically coherent;
\item the center foliation is $r$-normally hyperbolic and plaque
expansive;
\item $f$ is center bunched;
\item $f$ has Global Product Structure; and
\item the set of center leaves that are compact and $f$-periodic are dense
in $M$.
\end{itemize}
\end{definition}

We remark that $\EE^r$ is $C^1$ open (and hence $C^r$ open as well).
\vskip 15pt
\noindent\textbf{Examples.} Here we give some examples of diffeomorphism
in $\EE^r.$
We restrict ourselves where the center dimension is two.
\begin{enumerate}
\item \textit{Perturbation of product of diffeomorphisms:}
Let $g:S\to S$ be a $C^r$ diffeomorphism of a compact surface and let
$f:N\to N$ be a transitive Anosov diffeomorphism.
If the contraction and expansion of $f$ are strong enough we get that
$f\times g \in \EE^r(M)$ where $M=N\times S$.
Notice that the center foliation consists of compact manifolds
homeomorphic to $S.$
In case $g=id$ then automatically $f\times g\in\EE^r$ for any $r\ge 1.$
Notice that if we denote by $\tilde{N}$ the universal covering of $N$ then
setting $\tilde{M} = \tilde{N} \times S$ and lifting $f \times g$ to
$\tilde{M}$ we have Global Product Structure.

\item Let $f:M\to M$ be a volume preserving partially hyperbolic
diffeomorphism dynamically coherent whose center leaves are all compact.
Results of Gogolev \cite{Go11} and Carrasco \cite{CaP} state that the
center foliation is uniformly compact (that is, the leaves have finite
holonomy).
And also, Bohnet \cite{Boh13} proved that if the center foliation is
uniformly compact and $\dim E^u$ is one, then there is a lift that fibers
over an Anosov map on a torus.
In particular we have that there is a lift having Global Product
Structure.
Also, since $f$ is volume preserving, the periodic leaves are dense
(see also \cite{BB}).
Thus, if $f:M\to M$ is a volume preserving partially hyperbolic
diffeomorphism, dynamically coherent, $r$-normally hyperbolic, whose
center leaves are all compact, two dimensional, and $\dim M \le 5$
(or $\dim E^s=1$ or $\dim E^u=1)$ then $f\in\EE^r.$

\item \textit{Skew Products over Anosov:}
Let $f:N\to N$ be a $C^r$ (transitive) Anosov diffeomorphism and consider
$S$ a compact surface.
Let $\mathcal U\subset \diff^r(S)$ be the open set such that if
$h\in \mathcal U$ then $f\times h$ is partially hyperbolic with center
fiber $\{x\} \times S$, center bunched, and $r$-normally hyperbolic.
Let $g:N\to \mathcal U$ be a continuous map.
For $x\in N$ lets denote by $g_x$ the diffeomorphism $g(x):S\to S$.
For such a map $g$ consider the skew product $F=f\times_{sp}g:N\times S
\to N\times S$ by
$$
F(x,y)=(f(x), g_x(y)).
$$
We have that $F\in \EE^r(N\times S).$
We may consider thus perturbations of $F$ in $\EE^r$ and also
perturbations in the skew product setting.
For this, let $\GG=\{g:N\to \UU:\mbox{continuos}\}$, where
$g,\tilde{g}\in\GG$ are close if $g_x,\tilde{g_x}$ are $C^r$ close for all
$x\in N.$
We denote by $\EE^r_{sp}$ the set of skew products $f\times_{sp}g$ with
$g\in\GG$.

If $\omega$ is an area (symplectic) form on $S$ we denote by
$\EE^r_{sp,\omega}$ the set of skew products as above where $g_x$
preserves $\omega$ for all $x\in N.$

\item \textit{(Perturbation of) the product of the time t of an Anosov
suspension and a rotation:}
Consider $f:N\to N$ the time $t$ map of the suspension of a transitive
Anosov diffeomorphism and let $R:\mathbb{S}^1\to \mathbb{S}^1$ be a
rotation.
Let $f\times R:N\times \mathbb{S}^1\to N\times \mathbb{S}^1$.
It is not difficult to see that belongs to $\EE^r$ for any $r$ as long as
$f$ is $C^r.$

\item \textit{(Perturbation of) the product of time maps of Anosov
suspensions:}
Let $f, g$ be time maps of the suspensions of a transitive Anosov
diffeomorphisms.
Then $f\times g$ belongs to $\EE^r.$
\begin{remark}
We considered time maps of Anosov suspensions so that there is a lift with
Global Product Structure.
There are time-$1$ map of Anosov flows without Global Product Structure,
for instance, time-$1$ map of the geodesic flow in a surface of negative
curvature.
\end{remark}
\end{enumerate}

\subsection{Statements of Results}
\label{ss.results}

We denote by $\EE^r_m(M)$ the set of diffeomorphisms in $\EE^r$ preserving
a volume form $m$ on $M$, and by $\EE^r_\omega$ the ones in $\EE^r$
preserving a symplectic form $\omega$ on $M$.
And recall that $\EE^r_{sp}(M)$, $\EE^r_{sp,\omega}(M)$ are the skew
products over Anosov diffeomorphism on $M=N\times S$ where $S$ is a
compact surface and $\omega$ is an area form on $S.$

Our results mainly concerns accessibility, so let us introduce the
concept.

\begin{definition}
\label{defacc}
Let $f:M\to M$ be a partially hyperbolic diffeomorphism in $\EE^r$.
A {\em $su$-path} is a continuous curve $\alpha:[0,1]\to M$ such that
there exists a partition $0=t_0<t_1< \dots <t_n=1$ such that
$\alpha([t_i,t_{i+1}])$ is contained either in a leaf of $\FF^s$ or in a
leaf of $\FF^u$.
The relation $x\sim y$ if there exists a $su$-path from $x$ to $y$ is an
equivalence relation on $M.$

For a point $x\in M$ the accessibility class $AC(x)$ of $x$ is:
$$
AC(x)=\{y\in M \colon \text{there is a } su\mbox{-path from }x
\mbox{ to }y\}.
$$

We say that $f$ is {\em accessible} if $AC(x)=M$ for some $x$ (and hence
for all $x\in M)$.
On the other hand, we say the accessibility class $AC(x)$ is
\emph{trivial} if $AC(x)\cap \FF^c(x)$ is totally disconnected.
\end{definition}

Our first result concerns trivial accessibility classes (with no
restriction on the dimension of the center leaves):

\begin{maintheorem} \label{trivialaccesibility}
Let $r\ge 2$ and let $\EE$ denote $\EE^r, \EE^r_m, \EE^r_\omega,
\EE^r_{sp}$ or $\EE^r_{sp,\omega}$.
Then, the set $\RR_0$ of diffeomorphisms in $\EE$ having no trivial
accessibility classes is $C^1$ open and $C^r$ dense.
\end{maintheorem}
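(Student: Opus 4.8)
The plan is to proceed in two stages: first establish $C^1$ openness of $\RR_0$, then establish $C^r$ density. For the openness, suppose $f \in \RR_0$. Since the periodic compact center leaves are dense and $f \in \EE$, it suffices (by a compactness and continuity argument on the structurally stable center foliation) to show that having no trivial accessibility class is an open condition detected on periodic center leaves. The key point is that an accessibility class being trivial inside $\FF^c(x)$ is equivalent, via the Global Product Structure and the holonomy maps, to a certain return map (composition of $su$-holonomies from the center leaf to itself, the analog of the Pugh--Shub ``$su$-loops'') having totally disconnected image near $x$; for $f \in \RR_0$ there is some $su$-loop whose holonomy image contains a nondegenerate arc, and since $su$-holonomies vary continuously in the $C^1$ topology (the stable and unstable foliations and their holonomies depend continuously on $f$ in $C^1$, and the center foliation is structurally stable), this persists under $C^1$ perturbation. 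One must be a little careful that the perturbation $g$ lies in $\EE$ and that the corresponding center leaf remains periodic — but $\EE$ is $C^1$ open and periodic leaves persist because the center foliation is structurally stable, so the perturbed leaf through $h(x)$ is again compact and $g$-periodic.

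For the density, the strategy is to use the dynamics on the (two-dimensional, or in general compact) periodic center leaves. Fix $f \in \EE$ and a periodic compact center leaf $L = \FF^c(p)$ with period $k$, so $f^k(L) = L$ and $f^k|_L$ is a diffeomorphism of the compact manifold $L$. Consider the ``center return'' $su$-holonomies: starting from a point $x \in L$, flow along an unstable leaf, then a stable leaf, back to $L$ using the Global Product Structure — this produces local maps of $L$ into itself, and $AC(x)\cap L$ is the orbit of $x$ under the pseudogroup they generate. If $AC(x)\cap L$ is totally disconnected for every $x\in L$, then in particular every such holonomy map is, locally, a map with totally disconnected image, which forces these holonomies to be (locally) constant — equivalently, the stable and unstable foliations restricted to the center-stable and center-unstable leaves over $L$ have trivial holonomy onto $L$. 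The plan is to show that this ``rigid'' configuration can always be destroyed by an arbitrarily small $C^r$ perturbation supported near $L$: one perturbs $f$ in a small neighborhood of a point of $L$ (staying in $\EE$, and in the appropriate conservative or skew-product subclass) so as to tilt the unstable leaf direction relative to the center leaf, thereby making one of the $su$-return holonomies genuinely move points along a nondegenerate arc in $L$. Because $L$ has positive dimension, such a perturbation has room to act. Since the periodic leaves are dense, doing this near one leaf and invoking openness from the first part, we can approximate $f$ by diffeomorphisms in $\RR_0$.

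The main obstacle I expect is the density step, and specifically two technical points inside it. The first is performing the local perturbation while staying inside the prescribed class $\EE$ (or $\EE^r_m$, $\EE^r_\omega$, $\EE^r_{sp}$, $\EE^r_{sp,\omega}$): one needs perturbations that are volume-preserving, or symplectic, or of skew-product form, and that preserve partial hyperbolicity, dynamical coherence, $r$-normal hyperbolicity, center bunching, and Global Product Structure. The conservative/symplectic constraint is the delicate one — it requires a careful choice of the generating function or a Moser-type correction so that the tilt of $E^u$ (or $E^s$) is achieved without violating the measure or form, and in the skew-product case the perturbation must respect the bundle structure while still altering the fiberwise holonomy. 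The second point is verifying that a single nontrivial $su$-holonomy on one periodic leaf actually rules out \emph{all} trivial accessibility classes, not just the one at $L$; here one uses that the accessibility relation is global, that $su$-paths can be concatenated, and that a nontrivial arc in $AC(x)\cap\FF^c(x)$ at one point propagates — via the foliation holonomies and the density of periodic leaves — to give nontrivial center-arcs in every accessibility class. Once these are in place, combining the open set from Part~1 with the dense approximations from Part~2 yields that $\RR_0$ is $C^1$ open and $C^r$ dense, as claimed.
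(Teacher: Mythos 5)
Your openness argument is in the right spirit — the paper also proves openness by showing that nontriviality of accessibility classes persists under $C^1$ perturbation (via continuity of stable/unstable leaves on compact parts and the Global Product Structure) — but the paper organizes it through a map $\Gamma_0 \colon \EE \to \CC(M)$, $\Gamma_0(f) = \{x : AC(x) \text{ trivial}\}$, and shows this map is upper semicontinuous; openness of $\RR_0 = \Gamma_0^{-1}(\emptyset)$ then drops out by taking $K = M$. Your version hand-waves the uniformity-over-$M$ step that the semicontinuity/covering argument makes precise, but the core idea is the same.

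The density argument has a genuine gap. You assert that if every $AC(x)\cap L$ is totally disconnected, then ``every such holonomy map is, locally, a map with totally disconnected image, which forces these holonomies to be (locally) constant.'' This is false. The $su$-loop holonomies along the stable and unstable foliations are local $C^1$ diffeomorphisms of $L$ (they have open image, never totally disconnected image); triviality of $AC(x)$ only says that every $su$-loop based at $x$ has holonomy \emph{fixing} $x$, which is far weaker than the holonomy being locally constant. From this false premise your plan to ``tilt the unstable leaf direction'' to make a holonomy ``genuinely move points along an arc'' does not engage with the actual difficulty: after perturbation, each individual $su$-loop holonomy is still just a diffeomorphism that might happen to fix some points. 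The paper instead constructs, near a trivial class on a periodic leaf $\FF^c(x)$, \emph{two} different four-legged $su$-loops (through nearby periodic leaves $\FF^c(p_1)$, $\FF^c(p_2)$ and intermediate points $w_i$, $z_i$), and then — using the translation-gluing lemma (Lemma~\ref{l.pert_sympl}), available in the conservative, symplectic, and skew-product settings — perturbs $f$ in two small disjoint balls $U_1,U_2$ away from the orbit of $\FF^c(x)$ so that the two perturbed loop holonomies have \emph{no common fixed point} in a whole neighborhood $V_x$. It is the ``no common fixed point of two loops'' mechanism, not locally constant holonomy, that forces every point near $x$ to have a nontrivial class.

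You also miss the structure that converts ``destroy one trivial class by a local perturbation'' into genuine $C^r$ density. Since a perturbation could a priori create new trivial classes elsewhere, the paper works with the residual set $\GG_0$ of continuity points of the upper semicontinuous map $\Gamma_0$: if $f\in\GG_0$ had $\Gamma_0(f)\neq\emptyset$, then by continuity every nearby $g$ would have a trivial class near some fixed $x$ on a periodic leaf, but Lemma~\ref{l.primeiro} produces arbitrarily $C^r$-close $g$ (supported off the orbit of $\FF^c(x)$) with $C_0(y,g)\neq\{y\}$ for all $y$ in a fixed neighborhood $V_x$ — a contradiction. Your proposal of propagating a nontrivial center-arc through dense periodic leaves to all accessibility classes is not the argument used, and it is unclear it could be made to work: a nontrivial arc in $AC(x)\cap\FF^c(x)$ at one $x$ does not by itself prevent a different accessibility class elsewhere from being trivial.
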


The next result gives a condition to assure accessibility when the
center leaves have dimension two.
We say that $f\in\EE^r$ supports
a {\em Center Axiom-A} if there is a periodic compact center leave
$\FF^c_1$ such that $f^k_{/\FF^c_1}$ is an Axiom-A diffeomorphism
without having both periodic attractor and periodic repellers, where $k$
is the period of $\FF^c_1$.
We remark that the set $f\in\EE^r$ supporting a Center Axiom-A is open in
$\EE^r.$

\begin{maintheorem}\label{axa}
Let $r\ge 2$ and let $\EE_A$ (respectively $\EE_{A,m}$) be the set of
diffeomorphism in $\EE^r$ (respectively $\EE^r_m$)  supporting a Center
Axiom-A.
Assume that $\dim E^c=2$.
Then, the set of diffeomorphisms $\RR_1$ in $\EE_A$ (or $\EE_{A,m}$) that
are accessible are $C^1$ open and $C^r$ dense in $\EE_A$ (respectively
$\EE_{A,m}$).
In particular in $\EE_{A,m}$ they are stably ergodic.
\end{maintheorem}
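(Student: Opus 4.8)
The plan is to work on a periodic compact center leaf $\FF^c_1$ where $f^k$ restricted to it is Axiom-A with no simultaneous periodic attractors and repellers, and to upgrade this local structure into global accessibility. First I would pass to $g=f^k$ and the surface diffeomorphism $\varphi=g_{/\FF^c_1}$, recalling that $\varphi$ is Axiom-A on the surface $\FF^c_1$, so its nonwandering set is a finite union of basic sets $\Lambda_1,\dots,\Lambda_\ell$. The key observation is that the accessibility class $AC(x)\cap\FF^c_1$ is $g$-invariant (up to the period) and is a union of stable/unstable-manifold pieces inside the leaf; hence it contains, for each basic set $\Lambda_i$ that it meets, the whole of $\Lambda_i$ together with $W^s_{\varphi}(\Lambda_i)$ and $W^u_{\varphi}(\Lambda_i)$ \emph{as subsets of the center leaf}. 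Using the spectral decomposition and the hypothesis that there is no pair consisting of an attractor and a repeller, one shows that for a nontrivial (two-dimensional) accessibility class $AC(x)$ the intersection $AC(x)\cap\FF^c_1$ must be all of $\FF^c_1$: the filtration of basic sets, together with the absence of an attracting/repelling extreme pair, forces the stable and unstable manifolds inside the leaf to sweep out an open and closed subset of the connected surface $\FF^c_1$.

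The second step is to rule out triviality and one-dimensionality of the accessibility class on $\FF^c_1$ after a $C^r$ perturbation. By Theorem~\ref{trivialaccesibility} I may assume (after a $C^r$-small perturbation keeping $\FF^c_1$ periodic and compact and preserving the Center Axiom-A, which is an open condition) that no accessibility class is trivial, so $AC(x)\cap\FF^c_1$ has topological dimension one or two. If it has dimension one, then it is a $\varphi$-invariant closed subset of the surface that is a union of stable and unstable leaves, i.e.\ essentially a lamination; one then perturbs — using that the $su$-holonomies between center leaves vary and that the stable/unstable foliations of nearby center leaves are transverse — to create a new $su$-loop that escapes the lamination. Concretely I would use the mechanism of Theorem~\ref{trivialaccesibility} (engulfing via $su$-paths through nearby periodic center leaves, exploiting Global Product Structure and the density of compact periodic center leaves) to enlarge a one-dimensional class to a two-dimensional one; the point is that a one-dimensional class, being a proper invariant lamination on a surface, has empty interior, so a generic perturbation of the stable/unstable holonomy pushes some $su$-path off it. This is where the surface dynamics input is essential: the classification of invariant closed sets of surface diffeomorphisms (and the fact that a nowhere dense invariant continuum cannot persist under generic perturbation of the transverse holonomies) is what makes the one-dimensional case non-generic.

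Once $AC(x)\cap\FF^c_1=\FF^c_1$ for a dense (in the space of diffeomorphisms) and $C^1$-open set, global accessibility follows from the structure of $\EE^r$: since $\FF^c_1$ is a compact center leaf and the accessibility class is $su$-saturated, and since the union of $su$-images of $\FF^c_1$ already has nonempty interior in $M$ (the $su$-holonomy from $\FF^c_1$ to nearby leaves is onto by Global Product Structure), the accessibility class $AC(x)$ is open; being also closed (it is the closure of an open set, as $su$-classes in $\EE^r$ are either open or have empty interior by the standard Pugh--Shub argument) and $M$ connected, $AC(x)=M$, i.e.\ $f$ is accessible. Openness of the set $\RR_1$ in the $C^1$ topology follows because accessibility is $C^1$-open in $\EE^r$ (the accessibility class of a point varies lower-semicontinuously, and here it is the whole manifold), and $C^r$ density follows by combining the perturbations in the two steps above. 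Finally, for $\EE_{A,m}$, accessibility plus the center bunching and dynamical coherence already built into $\EE^r_m$ lets us invoke the Burns--Wilkinson criterion \cite{BW10}, so every diffeomorphism in the open set $\RR_1\cap\EE^r_m$ is ergodic; since that set is $C^1$-open, these diffeomorphisms are stably ergodic.

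\medskip
\textbf{Main obstacle.} The hard part will be the second step: eliminating one-dimensional accessibility classes on the center leaf. Unlike the trivial (zero-dimensional) case handled by Theorem~\ref{trivialaccesibility}, a one-dimensional class is a genuine invariant object of the surface dynamics, and ruling it out generically requires carefully exploiting conservative surface dynamics together with the way the $su$-holonomies between distinct (periodic, compact) center leaves interact — ensuring that the perturbation used to break the lamination does not destroy the Center Axiom-A, does not collapse the Global Product Structure, and genuinely produces an $su$-path leaving the one-dimensional set rather than merely deforming it.
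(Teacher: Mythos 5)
Your high-level plan (use the Axiom--A structure on the periodic compact center leaf $\FF^c_1$, show the whole leaf is one accessibility class, then pass to global accessibility via Global Product Structure, then get openness and density) matches the paper's. But the crucial middle step --- how one handles a possibly one-dimensional accessibility class on $\FF^c_1$ --- is resolved by a different mechanism in the paper, and your sketch of it is the one place where your argument is genuinely incomplete.

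You propose to deal with the one-dimensional case by treating $AC(x)\cap\FF^c_1$ as a ``proper invariant lamination on a surface'' and then perturbing the $su$-holonomies, as in Theorem~\ref{trivialaccesibility}, to push an $su$-loop off it; you correctly flag this as the main obstacle. The paper does not attempt a direct lamination-destruction argument in Theorem~\ref{axa}. Instead, in Section~\ref{s.pp} it first establishes a residual Property $(L)$ for center-hyperbolic periodic saddles $p$: the \emph{local} accessibility class $C_0(p,U,f)$ is not contained in $CW^s_{loc}(p)\cup CW^u_{loc}(p)$. Combined with nontriviality (Theorem~\ref{trivialaccesibility}), this forces $C_0(p)$ to be either open or a one-dimensional $C^1$ submanifold; but if it were one-dimensional, then $T_pC_0(p)$ would have to be an eigendirection $CE^s_p$ or $CE^u_p$ of $Df^{\tau(p)}_p$, hence $C_0(p)$ would be a locally invariant graph, hence (by uniqueness of such graphs) it would coincide with $CW^s_{loc}(p)$ or $CW^u_{loc}(p)$ --- contradicting Property $(L)$. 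So for saddles, $C_0(p)$ is \emph{open} (Theorem~\ref{t.peridic_point}), without any surface-lamination classification. The argument for Theorem~\ref{axa} is then purely soft: any $x$ in a basic set $\Lambda$ has $C_0(x)$ open or a $1$-manifold, either of which must meet the dense sets $CW^s(\OO(p))$ or $CW^u(\OO(p))$, so $C_0(x)=C_0(f^i(p))$; thus $\Lambda$ and $CW^{s,u}(\Lambda)$ lie in the open class of the saddle orbit; since there are no periodic center-attractors, every point of $\FF^c_1$ outside the finite set $F_0$ of repellers lies in a basic set's stable manifold, so $\FF^c_1\setminus F_0\subset C_0(p)$, and connectedness plus nontriviality of the repellers' classes give $\FF^c_1=C_0(p)$. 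In short, the ``hard part'' you identified is circumvented, not attacked head on: you never need to perturb away a one-dimensional class on $\FF^c_1$, because Property $(L)$ plus the invariant-graph rigidity at a saddle already prohibits it, and the rest is the Smale spectral decomposition.

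Two further inaccuracies to flag. First, the claim that $AC(x)\cap\FF^c_1$ ``is a union of stable/unstable-manifold pieces inside the leaf'' is not how the argument runs; the $su$-holonomies between center leaves are what generate the class, and the fact that $CW^s(p),CW^u(p)\subset C_0(p)$ is a consequence of openness plus invariance, not a starting point. Second, ``accessibility class is closed as the closure of an open set'' is not the mechanism the paper uses to globalize: once $C(x)=\FF^c(x)$ for one leaf, Lemma~\ref{l.4} (Global Product Structure) immediately gives $AC(x)=M$; and the $C^1$-openness of $\RR_1$ is obtained concretely from the fixed open set $V$ of Theorem~\ref{t.peridic_point} and the upper semicontinuity of $\Gamma_1$ in Proposition~\ref{p.gamma1upper} / Corollary~\ref{c.accopen}, not by a general semicontinuity principle. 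Your final appeal to \cite{BW10} for ergodicity in $\EE_{A,m}$ is correct.
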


Finally, our main result says that accessibility holds generically
when, for instance, a symplectic form is preserved (it implies
Theorem~\ref{t.intro}):

\begin{maintheorem}\label{main}
Let $r\ge 2$ and let $\EE$ denote $\EE^r_\omega$ or $\EE^r_{sp,\omega}$.
Then, there exists $\RR\subset\EE$ which is $C^1$ open and $C^r$ dense
such that all diffeomorphism in $\RR$ are accessible.
In particular, in the case $\EE=\EE^r_\omega$ they are stably ergodic with
respect to volume form induced by $\omega$ and in the skew product case
$\EE=\EE^r_{sp,\omega}$ over a conservative Anosov diffeomorphism, they
are stably ergodic with respect to the volume form induced by volume form
of the Anosov and $\omega.$
\end{maintheorem}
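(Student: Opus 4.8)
The plan is to deduce this from Theorem~\ref{axa} by showing that, in the symplectic (or area-preserving skew product) setting, the hypothesis of supporting a Center Axiom-A can be achieved by an arbitrarily small $C^r$ perturbation while staying inside $\EE$, and moreover can be achieved in an open set; once a Center Axiom-A holds, Theorem~\ref{axa} provides a $C^1$-open and $C^r$-dense subset of accessible diffeomorphisms. Thus the first step is to fix $f\in\EE$ and a periodic compact center leaf $\FF^c_1$ (which exists by density of periodic compact center leaves), of period $k$, and to consider the restriction $g:=f^k|_{\FF^c_1}$, which is a $C^r$ area-preserving diffeomorphism of a compact surface. The goal is to $C^r$-perturb $f$ inside $\EE$ so that this restriction becomes Axiom-A with no simultaneous periodic attractor and repeller; for an area-preserving surface diffeomorphism, the only Axiom-A pieces consistent with area preservation are hyperbolic saddle-type basic sets (no attractors or repellers at all), so the parenthetical condition is automatic, and what we really need is density of \emph{Axiom-A} area-preserving surface maps near $g$ — but this is false in general (KAM phenomena obstruct it). So instead the correct target is weaker: we only need \emph{some} periodic compact center leaf whose return map is Axiom-A, and we have the freedom to perturb $f$ so as to create, on \emph{one} chosen periodic leaf, a return map that is (for instance) a small perturbation of a linear Anosov map or an Axiom-A horseshoe-type map — which is an open condition.

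The second step is the realization of such a perturbation within the constrained classes. In the skew product case $\EE^r_{sp,\omega}$ this is transparent: writing $F=f\times_{sp}g$ with $g:N\to\UU$, a periodic leaf over a periodic point $p$ of the Anosov base of period $k$ has return map $g_{f^{k-1}(p)}\circ\cdots\circ g_p$, an area-preserving surface diffeomorphism, and by modifying $g$ in a small neighborhood of the orbit of $p$ we may replace this composition by any prescribed area-preserving diffeomorphism $C^r$-close to it; in particular we may arrange it to be Axiom-A (e.g.\ conjugate to a linear hyperbolic automorphism of $\T^2$ when $S=\T^2$, or to contain a horseshoe in the general surface case), which is $C^r$-open in the skew product parameter $\GG$ and hence gives an open set in $\EE^r_{sp,\omega}$. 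In the symplectic case $\EE^r_\omega$ one uses a local perturbation supported near a periodic compact center leaf: since all center leaves near $\FF^c_1$ are compact (we are in the setting where the center foliation is normally hyperbolic and plaque expansive with compact leaves, so structurally stable), one builds a symplectic perturbation, localized in a tubular neighborhood of the periodic orbit of leaves, whose effect on the return map of $\FF^c_1$ is a prescribed small Hamiltonian (hence area-preserving) perturbation; standard symplectic local perturbation techniques (a generating-function or flow-of-a-compactly-supported-Hamiltonian construction) produce this while keeping the diffeomorphism in $\EE^r_\omega$. Again the resulting Center Axiom-A condition is open.

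The third and final step is bookkeeping: combine the above to conclude that $\EE_{A,m}\cap\EE$ (that is, the diffeomorphisms in $\EE$ supporting a Center Axiom-A) is $C^r$ dense in $\EE$, and it is $C^1$ open by the remark preceding Theorem~\ref{axa}; then apply Theorem~\ref{axa} to each such diffeomorphism to extract, in a $C^1$ neighborhood, a $C^r$-dense-and-$C^1$-open set $\RR$ of accessible diffeomorphisms. Taking the union of these local sets over a $C^r$-dense family of Center-Axiom-A diffeomorphisms yields the global $C^1$-open, $C^r$-dense set $\RR\subset\EE$ of accessible diffeomorphisms. Finally, accessibility together with volume preservation and the center bunching built into $\EE$ gives ergodicity (Burns--Wilkinson \cite{BW10}), and since $\RR$ is $C^1$-open this is stable ergodicity; in the skew product case over a conservative Anosov base the relevant invariant volume is the product of the Anosov volume and the area form $\omega$ on the fiber. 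The main obstacle is the second step in the \emph{symplectic} case: arranging a symplectic, $C^r$-small, localized perturbation whose restriction to a prescribed periodic center leaf realizes a desired area-preserving surface map, while simultaneously preserving all the structural conditions defining $\EE^r_\omega$ (dynamical coherence, normal hyperbolicity, Global Product Structure, center bunching). This requires care because perturbing near one leaf could in principle disturb the center foliation; the point is that normal hyperbolicity with compact leaves makes $\FF^c$ structurally stable, so for small enough perturbations all these conditions persist, and one only needs the perturbation to be small enough that it stays in the $C^1$-open set $\EE^r_\omega$ — this is where the openness of $\EE^r_\omega$ is essential.
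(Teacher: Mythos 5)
Your plan is to reduce Theorem~\ref{main} to Theorem~\ref{axa} by perturbing so that the return map on some periodic compact center leaf becomes Axiom-A. This cannot work, and you almost identify the reason yourself but then try to brush it aside. The return map $f^k|_{\FF^c_1}$ is an \emph{area-preserving} diffeomorphism of a compact surface $S$. By Poincar\'e recurrence its nonwandering set is a closed subset of full Lebesgue measure, hence $\Omega(f^k|_{\FF^c_1})=S$. For such a map, Axiom-A forces the entire surface to be a hyperbolic set, i.e.\ the restriction must be Anosov, which is only possible when $S=\T^2$. So on any surface other than the torus, \emph{no} area-preserving surface diffeomorphism is Axiom-A, and your proposed ``horseshoe-type'' or locally-horseshoe alternative is not Axiom-A either (the horseshoe would be a proper subset of $\Omega$). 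Even on $\T^2$, Anosov is $C^0$-open but certainly not $C^r$-dense among area-preserving torus maps, and a $C^r$-small perturbation of $f$ changes the return map by a $C^r$-small amount; there is no way to turn a generic (e.g.\ nearly integrable, or Lewowicz-type with an elliptic fixed point) return map into an Anosov map by a small perturbation. Hence the set of $f\in\EE^r_\omega$ (or $\EE^r_{sp,\omega}$) supporting a Center Axiom-A is not dense, and the reduction to Theorem~\ref{axa} fails at step one.

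The paper's actual argument avoids Axiom-A entirely. It works with a generic $f$ having a compact periodic center leaf $\FF^c_1$ that contains a hyperbolic (hence saddle) periodic point $p$ of the conservative return map, for which $C_0(p)$ is already known to be open (Theorem~\ref{t.peridic_point}). The issue is to rule out that $C_0(p)\subsetneq\FF^c_1$, and this is done by showing that any connected component of $\partial C_0(p)$ is a $C^1$-laminated continuum without periodic points (via Mather's theorem on invariant subsets for area-preserving surface maps and Proposition~\ref{p.lamination}), which by the Xia--Koropecki theorem and a lamination result of Fokkink--Oversteegen (Proposition~\ref{p.plane}) must be an essential simple closed $C^1$-curve, and finally by applying the perturbation Lemma~\ref{l.segundo} together with the semicontinuity of $\Gamma_2,\Gamma_3$ to conclude that such curves cannot persist generically. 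None of this machinery --- the structure theory for aperiodic invariant continua, the lamination argument, the two-translation Proposition~\ref{p.bb}, the destruction of invariant essential closed curves --- appears in your proposal; you would need all of it (or a genuine substitute) to close the gap.
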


\textbf{Organization of the paper:} In Section~\ref{s.accessibility} we
give general facts concerning accessibility classes and some results
regarding its structure when the center has dimension two.
In Section~\ref{s.perturbation} we prove some perturbation results in
order  to obtain later some generic results on the accessibility classes.
Section~\ref{s.theorem-trivialaccesibility} is devoted to prove
Theorem~\ref{trivialaccesibility}.
The accessibility classes of periodic points are studied in
Section~\ref{s.pp}, where it is proved that generically, when the center
subbundle has dimension two, the accessibility classes of hyperbolic
periodic points or elliptic (when we restrict to an  invariant center
leaf) are open.
Theorem~\ref{axa} is proved in Section~\ref{s.axa}.
Finally, in Section~\ref{s.main} we prove Theorem~\ref{main}.

\smallskip

\noindent\textit{Acknowledgements:} We wish to thank C. Bonatti, P. Le
Calvez, A. Koropecki, E. Pujals, M. Viana for useful conversations and specially R. Potrie for
reading a draft version of the paper and giving some insightful comments.

\section{Basic facts on accessibility}
\label{s.accessibility}

In this section we establish some basic results on accessibility.
We assume that $f:M\to M$ belongs to $\EE^r, r\ge 2$, although some remarks
hold in general.

When $y\in\FF^s(x)$ denote by $\Pi^s_f(\FF^c(x),\FF^c(y))$ the {\em (local)
holonomy map} from a neighborhood of $x$ in $\FF^c(x)$ to a neighborhood
of $y$ in $\FF^c(y)$ along the stable leaves (inside $\FF^{cs}(x) =
\FF^{cs}(y))$.
This map is well defined since the leaves of $\FF^s$ are simple connected.
By \cite{PSW97} this holonomy map is of class $C^1,$ i.e., the holonomy
map inside center stable leaves along stable leaves is $C^1.$
The same holds for center unstable leaves and holonomy along unstable
leaves and so, for $\Pi^u_f(\FF^c(x),\FF^c(y))$.

Recall that  a diffeomorphism $f$ in $\EE^r$  has Global Product Structure, that is, there exists a
covering map $\pi:\tM\to M$ such that denoting by $\tFF^*, *=s,u,cs,cu,c$
the lift of the stable, unstable, center stable, center unstable, and
center leaves respectively, then for every $\tx, \ty \in \tM$ we have:
$$
\#\{\tilde{\FF}^{cs}(\tilde{x})\cap\tilde{\FF}^u(\tilde{y})\}=1\,\,\,
\mbox{ and }\,\,\, \#\{\tilde{\FF}^{cu}(\tilde{x})\cap
\tilde{\FF}^s(\tilde{y})\}=1.
$$
This allows to define a continuous map
\begin{equation}\label{eq.proyu}
\Pi^u_\tx:\tM\to\tFF^{cs}(\tx),
\end{equation}
defined by the holonomy map along unstable leaves:
$$
\Pi^u_\tx(\tz)=\tFF^u(\tz)\cap\tFF^{cs}(\tx).
$$

If we restrict the map $\Pi^u_\tx$ to $\tFF^{cu}(\tx),$ we have that
$\Pi^u_\tx(\tFF^{cu}(\tx))=\tFF^c(\tx).$
In an analogous way we define $\Pi^s_\tx$ the holonomy along stable leaves.
And also we define $\Pi^{su}_\tx:\tM\to\tFF^c(\tx)$ by
\begin{equation}
\label{eq.proysu}
\Pi^{su}_\tx=\Pi^s_\tx\circ\Pi^u_\tx.
\end{equation}
Note that, in general, $\Pi^{us}_\tx \neq \Pi^{su}_\tx$.

Recall that we have defined the accessibility class of $x\in M$ as
$$
AC (x) = \{y \in M \colon \text{there is a } su\mbox{-path from } x
\mbox{ to }y\}.
$$
We define the \emph{center accessibility class} of $x$ as
$C(x)=AC(x)\cap \FF^c(x)$.

The same definitions for $\tM:$ for $\tx\in \tM$ its \emph{accessibility
class} is $\widetilde{AC}(\tx)$ = $\{\ty \in \tilde{M} \colon$ there is a
$su$-path from $\tilde{x}$ to $\tilde{y}\}$ and $\widetilde{C}(\tx) =
\widetilde{AC}(\tx) \cap \widetilde{\mathcal{F}}^c(\tx)$.

Let us observe that if
\begin{equation}
\label{eq.proyacc}
\tx, \tz\in\tM \text{ with }\tz \in \widetilde{AC}(\tx)
\Longrightarrow \Pi^{su}_\tx(\tz) \in \widetilde{C}(\tx).
\end{equation}

\begin{lemma}
\label{proj}
Let $\tx\in\tM$ and set $x=\pi(\tx)$.
Then
\begin{itemize}
\item $\pi(\widetilde{AC}(\tilde{x})) = AC (x)$.
\item $\pi(\widetilde{C}(\tilde{x})) \subset C(x)$.
\end{itemize}
\end{lemma}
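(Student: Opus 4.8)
The plan is to prove the two items essentially by the same bookkeeping, tracking a $su$-path and its projection under $\pi$. For the first item, I would argue both inclusions. For $\pi(\widetilde{AC}(\tx)) \subset AC(x)$: given $\tz \in \widetilde{AC}(\tx)$, there is an $su$-path $\tilde\alpha$ in $\tM$ from $\tx$ to $\tz$, so each piece $\tilde\alpha([t_i,t_{i+1}])$ lies in a leaf of $\tFF^s$ or $\tFF^u$. Since $\pi$ is a covering map and the foliations $\FF^s,\FF^u$ on $M$ lift to $\tFF^s,\tFF^u$ on $\tM$, the map $\pi$ sends leaves of $\tFF^\sigma$ into leaves of $\FF^\sigma$ ($\sigma = s,u$). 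Hence $\alpha := \pi\circ\tilde\alpha$ is a continuous curve in $M$ whose pieces $\alpha([t_i,t_{i+1}])$ lie in leaves of $\FF^s$ or $\FF^u$, with the same partition; it runs from $x$ to $\pi(\tz)$. Therefore $\pi(\tz) \in AC(x)$.

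For the reverse inclusion $AC(x) \subset \pi(\widetilde{AC}(\tx))$: given $y \in AC(x)$, choose a $su$-path $\alpha:[0,1]\to M$ from $x$ to $y$ with partition $0=t_0 < \dots < t_n = 1$. I would lift $\alpha$ stepwise. Starting from $\tx$ (a fixed lift of $x = \alpha(0)$), on $[t_0,t_1]$ the path lies in a single leaf of $\FF^s$ or $\FF^u$; since leaves of $\FF^s$ and $\FF^u$ are simply connected (this is noted in the paper, and is what makes the holonomy maps well defined), the restriction of $\pi$ to the leaf of $\tFF^\sigma$ through $\tx$ is a homeomorphism onto the leaf of $\FF^\sigma$ through $x$, so $\alpha|_{[t_0,t_1]}$ lifts uniquely to a path in that $\tFF^\sigma$-leaf starting at $\tx$; call its endpoint $\tx_1$, a lift of $\alpha(t_1)$. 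Repeat on each $[t_i,t_{i+1}]$ starting from $\tx_i$. Concatenating gives a $su$-path $\tilde\alpha$ in $\tM$ from $\tx$ to some point $\tx_n$ with $\pi(\tx_n) = y$. Thus $y = \pi(\tx_n) \in \pi(\widetilde{AC}(\tx))$, proving the first item.

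For the second item, $\pi(\widetilde{C}(\tx)) \subset C(x)$: if $\tz \in \widetilde{C}(\tx) = \widetilde{AC}(\tx)\cap\tFF^c(\tx)$, then on one hand $\pi(\tz) \in AC(x)$ by the first item, and on the other hand $\pi(\tz) \in \pi(\tFF^c(\tx)) \subset \FF^c(x)$ since $\pi$ carries center leaves to center leaves. Hence $\pi(\tz) \in AC(x)\cap\FF^c(x) = C(x)$. The reason only an inclusion (not equality) holds here, which I would not need to prove but is worth noting for orientation, is that a $su$-path in $M$ from $x$ to a point $y$ on the same center leaf $\FF^c(x)$ may lift to a $su$-path in $\tM$ ending on a \emph{different} lift of $\FF^c(x)$, so the lifted path need not stay in, or return to, $\tFF^c(\tx)$.

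The only genuinely delicate point is the unique path-lifting in the first item: one must know that each individual piece $\alpha|_{[t_i,t_{i+1}]}$, lying in a leaf $L$ of $\FF^s$ or $\FF^u$, lifts to the \emph{prescribed} leaf of the lifted foliation through the already-chosen starting point $\tx_i$. This is exactly where simple connectedness of stable and unstable leaves enters: $\pi$ restricted to any leaf of $\tFF^\sigma$ is a covering of the corresponding leaf of $\FF^\sigma$, and since the latter is simply connected this covering is a homeomorphism, giving the unique lift. I expect this to be the main (and only) obstacle, and it is resolved by the simple-connectedness remark already recorded in the paper.
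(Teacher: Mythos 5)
Your proof is correct and follows essentially the same route as the paper's, which simply notes in one line that projections of $su$-paths in $\tM$ are $su$-paths in $M$ and, conversely, that $su$-paths in $M$ lift to $su$-paths in $\tM$, and then derives the second item by the same set-theoretic computation $\pi(\wtAC(\tx)\cap\tFF^c(\tx))\subset\pi(\wtAC(\tx))\cap\pi(\tFF^c(\tx))=AC(x)\cap\FF^c(x)$. You have merely expanded the path-lifting step into a piecewise argument; your appeal to simple-connectedness of the $\FF^s$ and $\FF^u$ leaves is a legitimate (and the paper's own preferred) way to justify that each leg of the lifted path stays in a single leaf of $\tFF^\sigma$, though one could also argue it locally via foliation charts and connectedness without invoking simple connectedness.
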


\begin{proof}
Note that the projection of a $su$-path in $\tilde{M}$ is a $su$-path in
$M$.
Then, $\pi(\widetilde{AC}(\tilde{x})) \subset AC(x)$.
Reciprocally, the lift of a $su$-path in $M$ is a $su$-path in
$\tilde{M}$.
The second part also follows easily:
\begin{align*}
\pi(\widetilde{C}(\tilde{x})) & = \pi(\widetilde{AC}(\tilde{x}) \cap
\tilde{\mathcal{F}}^c(\tilde{x})) \subset \pi(\widetilde{AC}(\tilde{x}))
\cap \pi(\tilde{\mathcal{F}}^c(\tilde{x})) \\
& = AC (x) \cap \mathcal{F}^c(x) = C(x).
\end{align*}
The proof is complete.
\end{proof}

\begin{lemma}
\label{equiv}
The following are equivalent:
\begin{itemize}
\item[a)] $AC (x)$ is an open subset.
\item[b)] $AC (x)$ has non-empty interior.
\item[c)] $C (x)$ is an open subset of $\mathcal{F}^c(x)$
\item[d)] $C (x)$ has non-empty interior (in $\mathcal{F}^c(x)$).
\end{itemize}
\end{lemma}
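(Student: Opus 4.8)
The plan is to prove the cycle (a)$\Rightarrow$(b)$\Rightarrow$(c)$\Rightarrow$(d)$\Rightarrow$(a). The implications (a)$\Rightarrow$(b) and (c)$\Rightarrow$(d) are immediate, since $AC(x)$ and $C(x)$ both contain $x$, so openness of a non-empty set forces non-empty interior. The content lies in (b)$\Rightarrow$(c) and (d)$\Rightarrow$(a), and both will be reduced to the following auxiliary fact: \emph{for every $p\in M$, if $C(p)$ has non-empty interior in $\FF^c(p)$ then $C(p)$ is open in $\FF^c(p)$; and in that case $C(q)$ is open in $\FF^c(q)$ for every $q\in AC(p)$.} Two elementary remarks will be used repeatedly: since $AC$ is an equivalence class, $q\in AC(p)$ implies $AC(q)=AC(p)$ and hence $C(q)=AC(p)\cap\FF^c(q)$; and a single stable or unstable arc is an $su$-path, so $a\in\FF^\sigma(b)$ ($\sigma=s,u$) implies $a\sim b$.

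To prove the auxiliary fact I would fix $z\in\interior C(p)$ (interior taken in $\FF^c(p)$) and an arbitrary $w\in AC(p)$. Since $z$ and $w$ lie in the same accessibility class there is an $su$-path joining them, with partition points $z=z_0,z_1,\dots,z_N=w$ where $z_{j+1}\in\FF^{\sigma_j}(z_j)$ for some $\sigma_j\in\{s,u\}$; all $z_j$ belong to $AC(p)$. For each $j$, the center-leaf holonomy $h_j=\Pi^{\sigma_j}_f(\FF^c(z_j),\FF^c(z_{j+1}))$ introduced at the beginning of the section is a homeomorphism from a neighbourhood of $z_j$ in $\FF^c(z_j)$ onto a neighbourhood of $z_{j+1}$ in $\FF^c(z_{j+1})$ with $h_j(z_j)=z_{j+1}$. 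Then one shows by induction on $j$ that $z_j\in\interior C(z_j)$ (interior in $\FF^c(z_j)$): for $j=0$ this is the hypothesis (note $C(z_0)=AC(p)\cap\FF^c(p)=C(p)$); for the inductive step, choose a small open $W_j\subset C(z_j)$ with $z_j\in W_j$ inside the domain of $h_j$, so that $h_j(W_j)$ is open and contains $z_{j+1}$, and $h_j(W_j)\subset C(z_{j+1})$ because every $q\in W_j\subset AC(p)$ satisfies $h_j(q)\in\FF^{\sigma_j}(q)$, hence $h_j(q)\sim q$, hence $h_j(q)\in AC(p)$, while also $h_j(q)\in\FF^c(z_{j+1})$ and $C(z_{j+1})=AC(p)\cap\FF^c(z_{j+1})$. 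Taking $j=N$ gives $w\in\interior C(w)$; if moreover $w\in C(p)$ then $C(w)=C(p)$, so $w\in\interior C(p)$, and letting $w$ range over $C(p)$ shows $C(p)$ is open. Once $C(p)$ is open, $p\in\interior C(p)$, so the same argument with $z=p$ and $w$ an arbitrary point of $AC(p)$ yields $w\in\interior C(w)$, whence $C(w)$ is open by the part just proved.

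Granting the auxiliary fact, (b)$\Rightarrow$(c) follows by choosing $p$ in the interior of $AC(x)$: the set $C(p)=AC(x)\cap\FF^c(p)$ contains the non-empty open trace of that interior on $\FF^c(p)$, so $C(p)$ is open, and hence so is $C(x)$ since $x\in AC(x)=AC(p)$. For (d)$\Rightarrow$(a): from (d), $C(x)$ has non-empty interior, so by the auxiliary fact $C(x)$ is open (which is (c)) and $C(y)$ is open in $\FF^c(y)$ for every $y\in AC(x)$; fixing such a $y$, I would use the local product structure around $y$ to build a continuous map $\rho_y$ from a neighbourhood $V$ of $y$ in $M$ onto a neighbourhood of $y$ in $\FF^c(y)$, by first projecting a point $q\in V$ along its unstable leaf to $\FF^{cs}_{\mathrm{loc}}(y)$ and then projecting the image along its stable leaf (inside $\FF^{cs}_{\mathrm{loc}}(y)$) to $\FF^c_{\mathrm{loc}}(y)$. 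By construction $\rho_y$ is the identity on $\FF^c_{\mathrm{loc}}(y)$ and $\rho_y(q)\sim q$ for every $q\in V$ (each of the two projections is a leg of an $su$-path), so $\rho_y^{-1}\!\big(C(y)\cap\FF^c_{\mathrm{loc}}(y)\big)$ is an open neighbourhood of $y$ contained in $AC(y)=AC(x)$. Since $y\in AC(x)$ was arbitrary, $AC(x)$ is open, i.e.\ (a) holds, which closes the cycle.

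I expect the main obstacle to be careful bookkeeping rather than any real difficulty: tracking which center leaf each intermediate point ($z_j$, $\rho_y(q)$) lives in, and verifying that the holonomies $h_j$ and the map $\rho_y$ are genuinely defined on full neighbourhoods. For the $h_j$ this is exactly the $C^1$ center-leaf holonomy statement recalled from \cite{PSW97} above; for $\rho_y$ it is the standard local product structure of a dynamically coherent partially hyperbolic diffeomorphism, namely that $\FF^u$ is transverse to $\FF^{cs}$ and that, inside $\FF^{cs}$, the subfoliations $\FF^s$ and $\FF^c$ are transverse.
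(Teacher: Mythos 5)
Your proof is correct and follows essentially the same approach as the paper's, which is stated quite tersely (it simply records that saturations of open sets by stable and unstable leaves are open and that saturations of open subsets of a center leaf are open in $M$, and concludes ``from these simple facts the lemma follows''). You have filled in the bookkeeping: the holonomy maps $h_j$ are just local sections of the stable/unstable saturations, your $\rho_y$ is the inverse of the stable--unstable saturation of a center plaque, and the inductive propagation of openness along an $su$-path is the content the paper leaves to the reader.
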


\begin{proof}
Notice that, by continuity of $\mathcal{F}^s$, if $U$ is an open set in
$M$ then the saturation by stable leaves, i.e., $\cup_{x\in U}\FF^s(x)$
is  also open in $M$ and the same for the saturation by unstable leaves.
Moreover, from the local product structure due the partially hyperbolic
structure, given an open set $V$ in a center leaf, its saturation by
stable and unstable leaf is also an open set in $M$.
From these simple facts the lemma follows.
\end{proof}

The same is true for the lift and also equivalent to the above:
\begin{lemma}
\label{tildeequiv}
The following are equivalent:
\begin{itemize}
\item[a)] $\widetilde{AC} (\tilde{x})$ is an open subset.
\item[b)] $\widetilde{AC} (\tilde{x})$ has non-empty interior.
\item[c)] $\widetilde{C} (\tilde{x})$ is an open subset of
$\tilde{\mathcal{F}}^c(x)$.
\item[d)] $\widetilde{C} (\tilde{x})$ has non-empty interior (in
$\tilde{\mathcal{F}}^c(x)$).
\item[e)] $C (x)$ is an open subset of $\mathcal{F}^c(x)$.
\end{itemize}
\end{lemma}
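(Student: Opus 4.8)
The plan is to establish the cycle of implications by combining two ingredients: the local product structure coming from partial hyperbolicity (already used in the proof of Lemma~\ref{equiv}), and the Global Product Structure of the lift, which lets me pass between $\tM$ and $M$ via the projections $\Pi^u_\tx, \Pi^s_\tx, \Pi^{su}_\tx$ introduced in \eqref{eq.proyu}--\eqref{eq.proysu}. First I would note that the equivalences (a)$\Leftrightarrow$(b)$\Leftrightarrow$(c)$\Leftrightarrow$(d) for $\widetilde{AC}(\tx)$ and $\wtC(\tx)$ are proved exactly as in Lemma~\ref{equiv}: the stable and unstable foliations vary continuously, so the stable (resp. unstable) saturation of an open set in $\tM$ is open, and by the local product structure the $su$-saturation of an open subset of a center leaf $\tFF^c(\tx)$ is open in $\tM$; conversely, intersecting an open set with a center leaf gives an open subset of that leaf. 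None of this uses the covering map, only the partially hyperbolic structure on $\tM$, which is inherited from $M$.

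The new content is the equivalence with (e). For (c)$\Rightarrow$(e): since $\pi:\tM\to M$ is a covering map, it is a local homeomorphism, and it maps $\tFF^c(\tx)$ onto $\FF^c(x)$; moreover, by Lemma~\ref{proj}, $\pi(\wtC(\tx))\subset C(x)$. I would argue that in fact $\pi$ restricted to a suitable neighborhood of $\tx$ in $\tFF^c(\tx)$ is a homeomorphism onto a neighborhood of $x$ in $\FF^c(x)$, so if $\wtC(\tx)$ is open in $\tFF^c(\tx)$ then it contains such a neighborhood of $\tx$, hence its image under $\pi$ — which lies in $C(x)$ — contains a neighborhood of $x$ in $\FF^c(x)$; thus $C(x)$ has non-empty interior in $\FF^c(x)$, and by Lemma~\ref{equiv} it is open. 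For (e)$\Rightarrow$(c): if $C(x)$ is open in $\FF^c(x)$, it contains an open neighborhood $V$ of $x$; lifting $V$ through a local section of $\pi$ near $\tx$ gives an open set $\tV\subset\tFF^c(\tx)$ containing $\tx$, and I claim $\tV\subset\wtC(\tx)$: any $y\in V$ is joined to $x$ by a $su$-path in $M$ lying inside $\FF^c(x)$ — or, more precisely, a $su$-path in $M$ whose existence we know — and this path lifts to a $su$-path in $\tM$ starting at $\tx$; its endpoint is some lift of $y$. The subtlety here is that the lift of the $su$-path ending at $y$ need not end at the chosen lift $\ty\in\tV$, so I must use the Global Product Structure: project the lifted path's endpoint back via $\Pi^{su}_\tx$ into $\wtC(\tx)$ using \eqref{eq.proyacc}, and check this projection lands in $\tV$ when $y$ is close enough to $x$, by continuity of $\Pi^{su}_\tx$ and the fact that $\Pi^{su}_\tx(\tx)=\tx$.

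The main obstacle I anticipate is precisely this last point: controlling which lift of a $su$-path one obtains and showing the $\Pi^{su}$-projection stays in the small neighborhood $\tV$. This requires that $\Pi^{su}_\tx$ be continuous (which holds, being a composition of the continuous holonomy projections $\Pi^s_\tx,\Pi^u_\tx$ from \eqref{eq.proyu}) and that it fix $\tx$; granting these, a routine continuity/compactness argument on the (finitely many segments of the) $su$-path closes the gap. Once (c)$\Leftrightarrow$(e) is in hand, the remaining equivalences follow from the already-established chain (a)$\Leftrightarrow\cdots\Leftrightarrow$(d) together with Lemma~\ref{equiv}, completing the proof.
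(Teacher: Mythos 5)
Your outline of the equivalences (a)--(d), as well as the implication (c)$\Rightarrow$(e), is correct and matches the paper's implicit intent (the paper gives no proof, asserting that the argument is the same as Lemma~\ref{equiv} and that (e) is equivalent, so there is nothing to compare approaches against).

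The gap is in (e)$\Rightarrow$(c). You lift an open set $V\subset C(x)$ to a set $\tV\subset\tFF^c(\tx)$ and claim $\tV\subset\wtC(\tx)$, arguing via $\Pi^{su}_\tx$. This does not work, and in fact the inclusion $\tV\subset\wtC(\tx)$ can genuinely fail. Given $\ty\in\tV$ with $y=\pi(\ty)\in C(x)$, a $su$-path from $x$ to $y$ in $M$ lifts (starting at $\tx$) to a path ending at \emph{some} lift $\ty'$ of $y$, and there is no reason for $\ty'$ to equal $\ty$: if the $su$-path together with a short arc from $y$ back to $x$ represents a nontrivial class in $\pi_1(M)$ (relative to the covering), then $\ty'\neq\ty$, no matter how close $y$ is to $x$. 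Projecting with $\Pi^{su}_\tx$ does yield a point of $\wtC(\tx)$ by \eqref{eq.proyacc}, but that point is $\Pi^{su}_\tx(\ty')$, not $\ty$, and it need not lie in $\tV$: the choice of $su$-path from $x$ to $y$ is far from canonical, $\ty'$ may be far from $\tx$ in $\tM$, and the map $y\mapsto\Pi^{su}_\tx(\ty')$ is not continuous in $y$ unless one first arranges for the $su$-paths to vary continuously, which is exactly the point in contention. So the ``routine continuity/compactness argument'' you invoke at the end does not close the gap; it presupposes control over which lift the $su$-path lands on, and that control is the missing ingredient.

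A correct route for (e)$\Rightarrow$(d) uses a different idea. Since $C(x)$ is open, $\pi^{-1}(C(x))\cap\tFF^c(\tx)$ is an open subset of $\tFF^c(\tx)$, and one checks that it equals the countable union $\bigcup_{\beta}\bigl(\wtAC(\beta(\tx))\cap\tFF^c(\tx)\bigr)$ over deck transformations $\beta$ (each $\ty$ in the set is the endpoint of a lifted $su$-path starting at some $\beta(\tx)$). By Baire category, some $\wtAC(\beta_0(\tx))\cap\tFF^c(\tx)$ has nonempty interior; the $su$-holonomy $\Pi^{su}_{\beta_0(\tx)}$ between center leaves, a homeomorphism by Global Product Structure, transports that interior to $\wtC(\beta_0(\tx))$, so $\wtAC(\beta_0(\tx))$ is open by the already-established (d)$\Rightarrow$(a) applied at $\beta_0(\tx)$. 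Finally, deck transformations preserve the lifted foliations, hence $\wtAC(\tx)=\beta_0^{-1}\bigl(\wtAC(\beta_0(\tx))\bigr)$ is open. This argument relies on the covering being regular (e.g.\ the universal cover) so that lifts of $x$ are exactly the orbit of $\tx$ under deck transformations; that is the standard choice when Global Product Structure is invoked, but it is a hypothesis worth making explicit, and it is qualitatively different from the $\Pi^{su}_\tx$-continuity argument you sketched.
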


\begin{lemma}
\label{l.4}
For any $z\in M$ and any center leaf $\FF^c(x)$ we have that $AC(z)\cap
\FF^c(x)\neq\emptyset.$
In particular, $f$ is accessible if and only if for some $x$ it holds that
$C(x) = \mathcal{F}^c(x)$.
\end{lemma}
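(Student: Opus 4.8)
The plan is to exploit the Global Product Structure, via the projections $\Pi^u_\tx$, $\Pi^s_\tx$, $\Pi^{su}_\tx$ constructed above. Fix $z\in M$ and a center leaf $\FF^c(x)$, and choose lifts $\tz,\tx\in\tM$ of $z$ and $x$. The first step is to check that the point $\tv:=\Pi^{su}_\tx(\tz)$ lies in $\wtAC(\tz)\cap\tFF^c(\tx)$. Indeed, by definition $\tw:=\Pi^u_\tx(\tz)=\tFF^u(\tz)\cap\tFF^{cs}(\tx)$ is a point on the unstable leaf through $\tz$, so (the leaves of $\tFF^u$ being connected) there is a $u$-segment from $\tz$ to $\tw$; likewise $\tv=\Pi^s_\tx(\tw)=\tFF^s(\tw)\cap\tFF^{cu}(\tx)$ lies on the stable leaf through $\tw$, giving an $s$-segment from $\tw$ to $\tv$. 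Concatenating yields a $su$-path from $\tz$ to $\tv$, so $\tv\in\wtAC(\tz)$; and $\tv\in\tFF^c(\tx)$ because $\Pi^s_\tx(\tFF^{cs}(\tx))=\tFF^c(\tx)$.

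The second step is to push this down with Lemma~\ref{proj}: the map $\pi$ sends the $su$-path from $\tz$ to $\tv$ to a $su$-path in $M$ from $z$ to $\pi(\tv)$, and $\pi(\tv)\in\pi(\tFF^c(\tx))=\FF^c(x)$. Hence $\pi(\tv)\in AC(z)\cap\FF^c(x)$, which is therefore nonempty; this proves the first assertion.

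For the ``in particular'': if $f$ is accessible then $AC(x)=M$ for every $x$, so $C(x)=AC(x)\cap\FF^c(x)=\FF^c(x)$. Conversely, assume $C(x)=\FF^c(x)$ for some $x$, and let $z\in M$ be arbitrary. By the first part pick $p\in AC(z)\cap\FF^c(x)$. Since $p\in\FF^c(x)=C(x)\subset AC(x)$, and since ``$\sim$'' is an equivalence relation, we get $AC(p)=AC(x)$; and $p\in AC(z)$ gives $AC(p)=AC(z)$. Thus $z\in AC(z)=AC(x)$, and as $z$ was arbitrary, $AC(x)=M$, i.e., $f$ is accessible.

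All steps are elementary once the Global Product Structure is available, so there is no genuine obstacle; the only points needing a little care are bookkeeping the order of the holonomies (unstable first, then stable) so that $\Pi^{su}_\tx(\tz)$ indeed lands in $\tFF^c(\tx)$, and the (standard) fact that stable and unstable leaves are connected so that the ``segments'' used to build the $su$-path genuinely exist.
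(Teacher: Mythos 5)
Your proof is correct and follows essentially the same route as the paper's: use Global Product Structure to slide from $z$ along an unstable leaf into $\FF^{cs}(x)$ and then along a stable leaf into $\FF^c(x)$ (i.e., apply $\Pi^{su}$), producing a $su$-path from $z$ to a point of $\FF^c(x)$; the paper merely states the consequence $\FF^u(z)\cap\FF^{cs}(x)\neq\emptyset$ directly in $M$ and invokes $\FF^{cs}(x)=\bigcup_{y\in\FF^c(x)}\FF^s(y)$, while you make the lift-to-$\tM$ and projection step explicit. The ``in particular'' argument via $AC(p)=AC(x)=AC(z)$ is the straightforward one the paper leaves to the reader.
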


\begin{proof}
The Global Product Structure implies in particular that for any $z$ and
$x$ we have that:
$$
\FF^u(z)\cap \FF^{cs}(x)\neq \emptyset\;\;\;\mbox{ and }\;\;\;\FF^{cs}(x)
=\bigcup_{y\in\FF^c(x)}\FF^s(y)
$$
which yields $AC(z)\cap \FF^c(x)\neq\emptyset$.
The second part follows immediately.
\end{proof}

Recall that for $y\in\FF^s(x)$ the (local) holonomy map
$\Pi^s(\FF^c(x),\FF^c(y))$ from a neighborhood $U^c_x$ of $x$ in $\FF^c(x)$
to a neighborhood $U^c_y$ of $y$ in $\FF^c(y)$ is a $C^1$ diffeomorphism.
If $\gamma:[0,1]\to \FF^s(x)$ is a path joining $x$ and $y$ then there
exists a continuous  map $\Gamma^s: U^c_x\times [0,1]\to \FF^{cs}(x)$ such
that:
\begin{itemize}
\item $\Gamma^s(z,t) \in \FF^s(z)\;\forall \;t\in[0,1]$.
In particular $\Gamma^s(z,t) \in AC(z)$.
\item $\Gamma^s(z,0)=z$ for all $z\in U^c_x$.
\item $\Gamma^s(z,1)=\Pi^s(\FF^c(x),\FF^c(y))(z)\in U^c_y.$
\item $\Gamma_0^s: U^c_x\to U_y^c$ defined by $\Gamma_0^s(z)=\Gamma(z,1)
=\Pi^s(\FF^c(x),\FF^c(y))(z)$ is a $C^1$ diffeomorphism.
\end{itemize}

The same holds when $y\in\FF^u(x)$ considering $\Pi^u(\FF^c(x),\FF^c(y))$
and also for the lift $\tilde{f}:\tM\to\tM.$
In particular, if $y\in AC(x)$ and $\gamma$ is $su$-path joining $x$ to
$y$, by finite composition of maps as above we get a map (see
Figure~\ref{mapgamma})
\begin{eqnarray}
\label{eq.gamma}
\Gamma \colon U^c_x\times [0,1]\to M
\end{eqnarray}
such that
\begin{itemize}
\item  $\Gamma(z,t) \in AC(z)$ for all $t\in[0,1].$
\item $\Gamma(z,0)=z$ for all $z\in U^c_x.$
\item $\Gamma(z,1)\in U^c_y.$
\item $\Gamma_0 \colon U^c_x\to U^c_y$ defined by $\Gamma_0(z)=\Gamma(z,1)$
is a $C^1$ diffeomorphism.
\end{itemize}

\begin{figure}
\centering
\psfrag{cx}{$\FF^c(x)$}
\psfrag{cy}{$\FF^c(y)$}
\psfrag{s}{$s$}
\psfrag{u}{$u$}
\psfrag{Ux}{$U^c_x$}
\psfrag{Uy}{$U^c_y$}
\includegraphics[height=7cm]{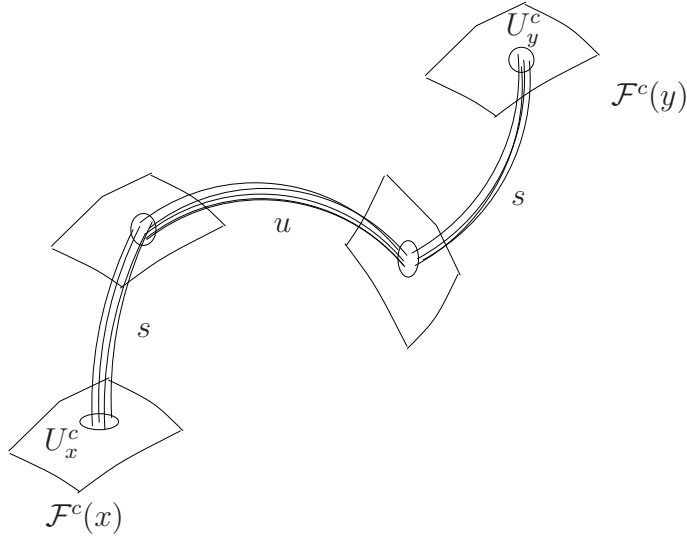}
\caption{The map $\Gamma.$}
\label{mapgamma}
\end{figure}

In the same way, if $\ty\in\widetilde{AC}(\tx)$ we have a map
\begin{equation}\label{eq.mapacc}
\tilde{\Gamma} \colon U^c_{\tx}\times[0,1]\to\tM
\end{equation}
with the properties above.
The above has important consequences.
First, we recall the definition of a homogeneous subset.
\begin{definition}
\label{homogeneous}
Let $X$ be a riemannian manifold.
A subset $Z\subset X$ is said {\em $C^r$-homogeneous} if for every pair of
points $x,y \in Z$ there are neighborhoods $U_x, U_y \subset X$ of $x$ and
$y$, respectively, and a $C^r$-diffeomorphism $\phi \colon U_x \to U_y$
such that $\phi (U_x \cap Z) = U_y \cap Z$ and $\phi(x) = y$.
\end{definition}

Thus, it follows straightforward from the definitions of $\Gamma$ and
$\tilde{\Gamma}$ that:
\begin{lemma}
\label{l.first}
$C(x)$ and $\widetilde{C}(\tx)$ are $C^1$-homogeneous.
\end{lemma}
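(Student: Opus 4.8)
The plan is to extract the homogeneity directly from the map $\Gamma$ (respectively $\tilde\Gamma$) built just above the statement. Fix a center leaf $\FF^c(x)$ and let $z_1, z_2 \in C(x)$. By definition of $C(x) = AC(x)\cap\FF^c(x)$, both $z_1$ and $z_2$ lie in $AC(x)$, hence $z_1 \in AC(z_2)$ by symmetry and transitivity of the accessibility relation, and there is a $su$-path $\gamma$ joining $z_2$ to $z_1$. First I would apply the construction \eqref{eq.gamma} to this path: it yields a neighborhood $U^c_{z_2}$ of $z_2$ in $\FF^c(z_2) = \FF^c(x)$, a neighborhood $U^c_{z_1}$ of $z_1$ in $\FF^c(z_1) = \FF^c(x)$, and a $C^1$-diffeomorphism $\Gamma_0 \colon U^c_{z_2} \to U^c_{z_1}$ with $\Gamma_0(z_2) = z_1$ and, crucially, $\Gamma(w,t) \in AC(w)$ for all $w \in U^c_{z_2}$ and all $t$, so in particular $\Gamma_0(w) = \Gamma(w,1) \in AC(w)$.

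The key step is then to check that $\Gamma_0$ maps $U^c_{z_2}\cap C(x)$ onto $U^c_{z_1}\cap C(x)$. For the forward inclusion: if $w \in U^c_{z_2}\cap C(x)$, then $w \in AC(x)$, and since $\Gamma_0(w) \in AC(w) \subset AC(x)$ (using transitivity once more) and $\Gamma_0(w) \in U^c_{z_1} \subset \FF^c(x)$, we get $\Gamma_0(w) \in AC(x)\cap\FF^c(x) = C(x)$. For the reverse inclusion I would run the same argument with the inverse diffeomorphism $\Gamma_0^{-1} \colon U^c_{z_1}\to U^c_{z_2}$, which is the $\Gamma_0$-map associated to the reversed $su$-path from $z_1$ to $z_2$ (or, equivalently, observe $\Gamma_0^{-1}$ is $C^1$ and that $w' \in AC(\Gamma_0^{-1}(w'))$ by the symmetric statement, so $\Gamma_0^{-1}(w') \in AC(w') \subset AC(x)$ whenever $w' \in C(x)$). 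Taking $\phi = \Gamma_0$, $U_x = U^c_{z_2}$, $U_y = U^c_{z_1}$ in Definition~\ref{homogeneous} (with ambient manifold $X = \FF^c(x)$) shows $C(x)$ is $C^1$-homogeneous.

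For $\widetilde C(\tx)$ the argument is verbatim the same, replacing $\Gamma$ by $\tilde\Gamma$ from \eqref{eq.mapacc}, $AC$ by $\widetilde{AC}$, $\FF^c$ by $\tFF^c$, and using that the accessibility relation on $\tM$ is again an equivalence relation (the lift of a $su$-path is a $su$-path and conversely). I do not anticipate a serious obstacle here: the only point requiring a word of care is that the neighborhoods $U^c_{z_i}$ produced by the finite composition in \eqref{eq.gamma} are genuinely open in the center leaf and that $\Gamma_0$ together with its inverse are $C^1$ — both are already recorded in the bulleted properties of $\Gamma$ preceding the statement, the smoothness coming from the $C^1$ regularity of the stable and unstable holonomies inside center-stable and center-unstable leaves via \cite{PSW97}.
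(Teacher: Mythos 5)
Your proposal is correct and matches the paper's approach: the paper simply states that the lemma ``follows straightforward from the definitions of $\Gamma$ and $\tilde{\Gamma}$'' without elaboration, and your argument is precisely the natural unpacking of that remark (applying the $C^1$ diffeomorphism $\Gamma_0$ attached to a $su$-path from $z_2$ to $z_1$, using $\Gamma_0(w)\in AC(w)$ together with transitivity of the accessibility relation to see that $\Gamma_0$ and $\Gamma_0^{-1}$ both preserve $C(x)$, and taking $X=\FF^c(x)$, $Z=C(x)$ in Definition~\ref{homogeneous}).
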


Moreover, we have the following:
\begin{lemma}
\label{l.path} Let $\tx\in\tM$  and $\ty\in \widetilde{C}(\tx).$
Then there are neighborhoods $U^c_{\tx}$ and $U^c_{\ty}$ in $\tFF^c(x)$
and a continuous map $\tilde{\gamma} \colon U^c_{\tx}\times [0,1]\to
\tFF^c(\tx)$ such that
\begin{enumerate}
\item $\tgamma(\tz,0) = \tz$ for all $\tz \in U^c_{\tx}$;
\item $\tgamma(\tz,1) \in U^c_{\ty}$ for all $\tz \in U^c_{\tx}$;
\item $\tgamma(\tx,1) = \ty$; and
\item $\tgamma(\tz,t) \in \widetilde{C}(\tz)$, for all
$\tz \in U^c_{\tx}$ and $t\in [0,1]$.
\end{enumerate}
\end{lemma}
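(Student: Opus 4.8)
The plan is to obtain the map $\tgamma$ by taking the map $\tilde\Gamma$ from (\ref{eq.mapacc}) and projecting it back into the center leaf $\tFF^c(\tx)$ using the holonomy projection $\Pi^{su}_{\tx}$ defined in (\ref{eq.proysu}). First I would recall that, since $\ty\in\wtC(\tx)\subset\widetilde{AC}(\tx)$, there is a $su$-path joining $\tx$ to $\ty$, and hence (by the construction preceding (\ref{eq.mapacc})) a neighborhood $U^c_{\tx}$ of $\tx$ in $\tFF^c(\tx)$ and a continuous map $\tilde\Gamma\colon U^c_{\tx}\times[0,1]\to\tM$ with $\tilde\Gamma(\tz,0)=\tz$, $\tilde\Gamma(\tz,t)\in\widetilde{AC}(\tz)$ for all $t$, $\tilde\Gamma(\tz,1)\in U^c_{\ty}$ (a neighborhood of $\ty$ in $\tFF^c(\ty)$), and $\tilde\Gamma_0(\tz)=\tilde\Gamma(\tz,1)$ a $C^1$ diffeomorphism onto $U^c_{\ty}$.

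Next I would define $\tgamma(\tz,t) := \Pi^{su}_{\tx}\bigl(\tilde\Gamma(\tz,t)\bigr)$. Since $\Pi^{su}_{\tx}\colon\tM\to\tFF^c(\tx)$ is continuous (being the composition of the two continuous holonomy projections $\Pi^s_{\tx}$ and $\Pi^u_{\tx}$ coming from Global Product Structure) and $\tilde\Gamma$ is continuous, $\tgamma$ is a continuous map into $\tFF^c(\tx)$. I now check the four properties. For (1): $\tilde\Gamma(\tz,0)=\tz$ and $\tz$ lies in $\tFF^c(\tx)$ for $\tz\in U^c_{\tx}$, so $\Pi^{su}_{\tx}$ fixes it, giving $\tgamma(\tz,0)=\tz$. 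For (4): by (\ref{eq.proyacc}), since $\tilde\Gamma(\tz,t)\in\widetilde{AC}(\tz)$, we have $\Pi^{su}_{\tz}(\tilde\Gamma(\tz,t))\in\wtC(\tz)$; one then observes that for points already in a common center-stable and center-unstable configuration the projections $\Pi^{su}_{\tx}$ and $\Pi^{su}_{\tz}$ agree on $\tilde\Gamma(\tz,t)$ when $\tz\in U^c_{\tx}$ (both land in $\tFF^c(\tx)=\tFF^c(\tz)$), hence $\tgamma(\tz,t)\in\wtC(\tz)$. For (3): $\tgamma(\tx,1)=\Pi^{su}_{\tx}(\tilde\Gamma(\tx,1))=\Pi^{su}_{\tx}(\ty)=\ty$, since $\ty\in\tFF^c(\tx)$. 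For (2): $\tgamma(\tz,1)=\Pi^{su}_{\tx}(\tilde\Gamma_0(\tz))$; as $\tz$ ranges over $U^c_{\tx}$, $\tilde\Gamma_0(\tz)$ ranges over $U^c_{\ty}$ and its $\Pi^{su}_{\tx}$-image ranges over a neighborhood of $\ty$ in $\tFF^c(\tx)$, so after shrinking $U^c_{\tx}$ we may assume this image lies in a prescribed neighborhood $U^c_{\ty}$ of $\ty$ in $\tFF^c(\tx)$.

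The step I expect to be the main technical obstacle is property (4): carefully justifying that the holonomy projection $\Pi^{su}_{\tx}$, applied along the family $\tilde\Gamma(\tz,\cdot)$, sends the whole segment into $\wtC(\tz)$ and not merely its endpoints — i.e., that projecting a $su$-path-endpoint-family by a fixed $\Pi^{su}_{\tx}$ is compatible with the varying basepoint $\tz$. This requires the observation that for $\tz\in\tFF^c(\tx)$ one has $\tFF^c(\tz)=\tFF^c(\tx)$, $\tFF^{cs}(\tz)=\tFF^{cs}(\tx)$, and $\tFF^{cu}(\tz)=\tFF^{cu}(\tx)$, so that $\Pi^{su}_{\tx}$ and $\Pi^{su}_{\tz}$ are literally the same map, after which (\ref{eq.proyacc}) applies directly. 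The remaining verifications and the shrinking of neighborhoods in (2) are routine continuity arguments.
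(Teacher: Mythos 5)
Your proposal is correct and is exactly the paper's argument: the paper simply sets $\tgamma = \Pi^{su}_{\tx}\circ\tilde\Gamma$ and refers to \cite{RH05}. The verification details you supply — in particular the observation that $\Pi^{su}_{\tz}=\Pi^{su}_{\tx}$ for $\tz\in\tFF^c(\tx)$ so that \eqref{eq.proyacc} applies pointwise — are the right ones, just left implicit in the paper.
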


\begin{proof}
Just take $\tgamma=\Pi^{su}_\tx\circ \tilde{\Gamma}$ where $\Pi^{su}_\tx$
and $\tilde{\Gamma}$ are the maps defined in \eqref{eq.proysu} and
\eqref{eq.mapacc}, respectively.
See also \cite{RH05}).
\end{proof}

\begin{remark}
\label{path2}
Notice that projecting by $\pi \colon \tM\to M$ we have a similar result:
given $x$ in $M$ and $\tx\in\tM$ such that $\pi(\tx)=x$ and $y\in
\pi(\wtC(\tx))$ then there are neighborhoods $U^c_{x}$ and $U^c_{y}$
in $\FF^c(x)$ and a continuous map $\gamma \colon U^c_{x}\times [0,1]\to
\FF^c(x)$ such that
\begin{enumerate}
\item $\gamma(z,0) = z$ for all $z \in U^c_{x}$;
\item $\gamma(z,1) \in U^c_{y}$ for all $z \in U^c_{x}$;
\item $\gamma(x,1) = y$; and
\item $\gamma(z,t) \in C(z)$, for all $z \in U^c_{x}$ and $t\in [0,1]$.
\end{enumerate}
\end{remark}

\begin{corollary}\label{tildeconnected}
$\widetilde{C}(x)$ is connected and arc-connected.
\end{corollary}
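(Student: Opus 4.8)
The plan is to show that $\widetilde{C}(\tx)$ is arc-connected, from which connectedness follows immediately (every arc-connected space is connected). Fix an arbitrary point $\ty \in \widetilde{C}(\tx)$; we must produce a continuous path inside $\widetilde{C}(\tx)$ joining $\tx$ to $\ty$. The key observation is that Lemma~\ref{l.path} gives us exactly such a path in a slightly disguised form: applying it with the roles as stated, we obtain a continuous map $\tgamma \colon U^c_{\tx}\times[0,1]\to\tFF^c(\tx)$ with $\tgamma(\tz,t)\in\widetilde{C}(\tz)$ for all $\tz$ and $t$, and in particular $\tgamma(\tx,0)=\tx$, $\tgamma(\tx,1)=\ty$, and $\tgamma(\tx,t)\in\widetilde{C}(\tx)$ for all $t\in[0,1]$ by item (4) with $\tz=\tx$.

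So the first step is simply to apply Lemma~\ref{l.path} to the pair $\tx, \ty$ and extract the curve $t\mapsto \tgamma(\tx,t)$. This curve starts at $\tx$, ends at $\ty$, lies entirely in $\tFF^c(\tx)$, and by property (4) of that lemma every one of its points lies in $\widetilde{C}(\tx)$. Hence $\tgamma(\tx,\cdot)$ is a continuous arc in $\widetilde{C}(\tx)$ from $\tx$ to $\ty$. Since $\ty$ was an arbitrary point of $\widetilde{C}(\tx)$, this shows $\widetilde{C}(\tx)$ is arc-connected, and therefore connected. (One could also argue directly via Lemma~\ref{l.first} and a chaining argument using the $C^1$-homogeneity, but going through Lemma~\ref{l.path} is cleaner since that lemma was precisely engineered to transport paths within the center accessibility class.)

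There is essentially no obstacle here: the statement is a direct corollary of Lemma~\ref{l.path}, and the only thing to be careful about is the bookkeeping of base points — we need the path to stay inside $\widetilde{C}(\tx)$ for the \emph{fixed} base point $\tx$, and this is exactly what item (4) of Lemma~\ref{l.path} delivers when evaluated at $\tz = \tx$. No analytic estimates or perturbation arguments are involved.

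\begin{proof}
Let $\ty\in\widetilde{C}(\tx)$ be arbitrary. By Lemma~\ref{l.path} applied to $\tx$ and $\ty$, there is a continuous map $\tgamma\colon U^c_{\tx}\times[0,1]\to\tFF^c(\tx)$ such that $\tgamma(\tz,0)=\tz$ for all $\tz\in U^c_{\tx}$, $\tgamma(\tx,1)=\ty$, and $\tgamma(\tz,t)\in\widetilde{C}(\tz)$ for all $\tz\in U^c_{\tx}$ and $t\in[0,1]$. In particular, the curve $\sigma\colon[0,1]\to\tFF^c(\tx)$ given by $\sigma(t)=\tgamma(\tx,t)$ is continuous, satisfies $\sigma(0)=\tx$ and $\sigma(1)=\ty$, and by property (4) (with $\tz=\tx$) we have $\sigma(t)\in\widetilde{C}(\tx)$ for every $t\in[0,1]$. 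Thus $\sigma$ is an arc in $\widetilde{C}(\tx)$ joining $\tx$ to $\ty$. Since $\ty$ was arbitrary, $\widetilde{C}(\tx)$ is arc-connected, hence connected.
\end{proof}
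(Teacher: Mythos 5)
Your proof is correct and follows the route the paper clearly intends: the corollary is stated immediately after Lemma~\ref{l.path} with no separate argument, precisely because one just evaluates the map $\tgamma$ from that lemma at $\tz=\tx$ to obtain the desired path from $\tx$ to an arbitrary $\ty\in\widetilde{C}(\tx)$.
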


Recall that an accessibility class $AC(x)$ is trivial if
$AC(x)\cap\FF^c(x)$ is totally disconnected.
Notice that by the map $\Gamma$ defined in \eqref{eq.gamma} this does not
depend on $x,$ just on the accessibility class.

\begin{lemma}
The accessibility class $AC(x)$ is trivial if and only if
$\widetilde{C}(\tx)=\{\tx\}$ where $\pi(\tx)=x.$
\end{lemma}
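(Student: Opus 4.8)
I would prove the two implications separately; the forward one is essentially formal, while the reverse one requires transporting the hypothesis upstairs to \emph{every} lift leaf of $\FF^c(x)$ together with a countability argument. For the forward implication, suppose $AC(x)$ is trivial, i.e.\ $C(x)=AC(x)\cap\FF^c(x)$ is totally disconnected. By Lemma~\ref{proj} we have $\pi(\widetilde C(\tx))\subseteq C(x)$, and by Corollary~\ref{tildeconnected} the set $\widetilde C(\tx)$ is connected, so $\pi(\widetilde C(\tx))$ is a connected subset of a totally disconnected space, hence a single point, which must be $x$ since $\tx\in\widetilde C(\tx)$. Thus $\widetilde C(\tx)\subseteq\pi^{-1}(x)$, which is discrete in $\tM$, and a connected subset of a discrete set containing $\tx$ is $\{\tx\}$.

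For the converse, assume $\widetilde C(\tx)=\{\tx\}$; I must show $C(x)$ is totally disconnected. Write $\pi^{-1}(\FF^c(x))=\bigsqcup_{j\in J}L_j$ as the disjoint union of its center leaves — the lift leaves of $\FF^c(x)$; since $\pi_1(M)$ is countable the fibers of $\pi$ are countable, hence $J$ is countable. Using $\pi(\widetilde{AC}(\tx))=AC(x)$ (Lemma~\ref{proj}) and that $\pi^{-1}(\FF^c(x))$ is a union of leaves one obtains
$$
C(x)=AC(x)\cap\FF^c(x)=\bigcup_{j\in J}\pi\big(\widetilde{AC}(\tx)\cap L_j\big),
$$
and for each $j$ with $\widetilde{AC}(\tx)\cap L_j\neq\emptyset$, fixing $\ty_j\in\widetilde{AC}(\tx)\cap L_j$ gives $\widetilde{AC}(\tx)\cap L_j=\widetilde{AC}(\ty_j)\cap\tFF^c(\ty_j)=\widetilde C(\ty_j)$.

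The heart of the argument is to show each $\widetilde C(\ty_j)$ is a single point. Since $\ty_j\in\widetilde{AC}(\tx)$, a $su$-path in $\tM$ from $\tx$ to $\ty_j$ produces a map $\tilde\Gamma\colon U^c_{\tx}\times[0,1]\to\tM$ as in \eqref{eq.mapacc}, with $\tilde\Gamma_0\colon U^c_{\tx}\to U^c_{\ty_j}$ a $C^1$-diffeomorphism, $\tilde\Gamma_0(\tx)=\ty_j$, and $\tilde\Gamma_0(\tz)$ in the same accessibility class as $\tz$ for every $\tz\in U^c_{\tx}$. If $\tz\in\widetilde C(\ty_j)\cap U^c_{\ty_j}$, then $\tilde\Gamma_0^{-1}(\tz)$ lies in $\widetilde{AC}(\tz)=\widetilde{AC}(\tx)$ (the last equality because $\tz\in\widetilde C(\ty_j)\subseteq\widetilde{AC}(\ty_j)=\widetilde{AC}(\tx)$) and in $U^c_{\tx}\subseteq\tFF^c(\tx)$, so $\tilde\Gamma_0^{-1}(\tz)\in\widetilde C(\tx)=\{\tx\}$ and hence $\tz=\tilde\Gamma_0(\tx)=\ty_j$. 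Thus $\ty_j$ is isolated in $\widetilde C(\ty_j)$; since $\widetilde C(\ty_j)$ is $C^1$-homogeneous (Lemma~\ref{l.first}) every point is isolated, so $\widetilde C(\ty_j)$ is discrete, and being connected (Corollary~\ref{tildeconnected}) it equals $\{\ty_j\}$. Therefore $C(x)=\{\pi(\ty_j):j\in J\}$ is countable, and a countable subset of the manifold $\FF^c(x)$ is totally disconnected (a connected metrizable space with more than one point has cardinality at least $\mathfrak c$), so $AC(x)$ is trivial.

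The step I expect to be the main obstacle is exactly this reverse implication: one cannot simply project, because a priori $\pi(\widetilde C(\tx))$ is only a part of $C(x)$ — the accessibility class may return to $\FF^c(x)$ through lift leaves other than $\tFF^c(\tx)$. The device that closes the gap is the combination of the $\tilde\Gamma$‑correspondence (which forces $\widetilde C(\ty_j)=\{\ty_j\}$ on each such lift leaf), the homogeneity and connectedness of the center accessibility classes upstairs, and the countability of the family of lift leaves, which together make $C(x)$ countable and hence totally disconnected.
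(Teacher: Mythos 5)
Your proof is correct and follows essentially the same route as the paper's: the forward implication is the paper's contrapositive (connectedness of $\widetilde{C}(\tx)$ projecting into a totally disconnected $C(x)$), and the reverse implication reduces, exactly as in the paper, to the countability of $C(x)$ via the fact that $\widetilde{AC}(\tx)\cap\tFF^c(\ty)$ is a singleton for each of the countably many lifts $\ty$ of $x$. The only difference is one of detail: the paper simply asserts that $\widetilde{C}(\tx)=\{\tx\}$ forces $\widetilde{AC}(\tx)\cap\tFF^c(\ty)$ to be a single point, whereas you spell out why, using the $\tilde\Gamma$ correspondence together with the $C^1$-homogeneity and connectedness of center accessibility classes — a worthwhile elaboration of a step the authors left implicit.
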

\begin{proof}
If $\widetilde{C}(\tx)\neq\{\tx\}$ then, since it is connected and
arc-connected we have that $\pi(\widetilde{C}(\tx))\subset C(x)$ contains
a non trivial connected set and so $C(x)$ is not totally disconnected.
On the other hand, if $\widetilde{C}(\tx)=\{\tx\}$ then
$\widetilde{AC}(\tx) \cap \tFF^c(\ty)$ consists of a single point for any
$\ty$ and in particular for those $\ty$ with $\pi(\ty)=x.$
Therefore $C(x)$ is at most countable and so totally disconnected.
\end{proof}

\begin{corollary}
\label{nontrivial}
The set $\{x\in M: AC(x) \mbox{ is nontrivial}\;\}$ is open in $M.$
\end{corollary}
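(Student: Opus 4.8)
The plan is to show that every $x$ with $AC(x)$ nontrivial has an open neighborhood in $M$ consisting entirely of points with nontrivial accessibility class, by first producing such a neighborhood inside the center leaf via the path $\gamma$ of Remark~\ref{path2} and then saturating it along stable and unstable leaves. Set $\Sigma=\{x\in M:AC(x)\text{ is nontrivial}\}$. Since triviality of an accessibility class does not depend on the chosen point (as noted via the map $\Gamma$ of \eqref{eq.gamma}), $\Sigma$ is a union of accessibility classes, hence saturated by stable and unstable holonomies.

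Fix $x\in\Sigma$ and $\tx\in\tM$ with $\pi(\tx)=x$. By the lemma characterizing trivial classes, $\wtC(\tx)\neq\{\tx\}$, and by Corollary~\ref{tildeconnected} the set $\wtC(\tx)$ is connected and arc-connected; since $\pi^{-1}(x)$ is discrete, a nonconstant arc in $\wtC(\tx)$ issuing from $\tx$ cannot stay in that fiber, so $\pi(\wtC(\tx))$ contains a point $y\neq x$, and $y\in C(x)$ by Lemma~\ref{proj}. Applying Remark~\ref{path2} with this $x$, $\tx$, $y$, I obtain neighborhoods $U^c_x$, $U^c_y$ in $\FF^c(x)$ and a continuous $\gamma\colon U^c_x\times[0,1]\to\FF^c(x)$ with $\gamma(z,0)=z$, $\gamma(z,1)\in U^c_y$, $\gamma(x,1)=y$, and $\gamma(z,t)\in C(z)$. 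Because $\gamma(x,1)=y\neq x$ and $z\mapsto\gamma(z,1)$ is continuous, choosing disjoint open sets around $x$ and $y$ in $\FF^c(x)$ and intersecting the relevant preimages yields an open $W\ni x$ in $\FF^c(x)$ with $W\subset U^c_x$ and $\gamma(z,1)\neq z$ for all $z\in W$. For $z\in W$, the path $t\mapsto\gamma(z,t)$ lies in $C(z)$ (note $\FF^c(z)=\FF^c(x)$ since $z\in\FF^c(x)$) and joins $z$ to $\gamma(z,1)\neq z$, so $C(z)$ contains a nondegenerate connected set; hence $C(z)$ is not totally disconnected and $z\in\Sigma$. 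Thus $W\subset\Sigma$.

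Finally, $W$ is open in $\FF^c(x)$, so by the local product structure recalled in the proof of Lemma~\ref{equiv} the saturation $\widehat{W}$ of $W$ by stable and then unstable leaves is an open subset of $M$ containing $x$. Every point of $\widehat{W}$ is joined by an $su$-path to some point of $W\subset\Sigma$, and since $\Sigma$ is a union of accessibility classes, $\widehat{W}\subset\Sigma$. As $x\in\Sigma$ was arbitrary, $\Sigma$ is open in $M$. The only step that needs care is the passage from openness inside a single center leaf to openness in $M$, which is precisely the $su$-saturation argument already used for Lemma~\ref{equiv}; the rest is a direct application of Remark~\ref{path2} together with the connectedness of $\wtC(\tx)$.
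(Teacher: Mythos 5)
Your proof is correct and follows essentially the same approach as the paper. The only cosmetic difference is that the paper applies Lemma~\ref{l.path} directly upstairs in $\tM$ (getting $\wtC(\tz)\neq\{\tz\}$ for $\tz$ near $\tx$ and then projecting), while you work downstairs through Remark~\ref{path2}, which forces a short extra step (arc-connectedness of $\wtC(\tx)$ plus discreteness of the fiber $\pi^{-1}(x)$) to produce a point $y\neq x$ in $\pi(\wtC(\tx))$; you also make explicit the final $su$-saturation that the paper leaves implicit with ``and the lemma follows.''
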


\begin{proof}
Let $x\in M$ be such that $AC(x)$ is nontrivial and let $\tx$ such that
$\pi(\tx)=x.$
Then $\widetilde{C}(\tx)\neq\{\tx\}.$
Now, from Lemma~\ref{l.path} we get for any $\tz \in \tFF^c(\tx)$ close
enough to $\tx$ that $\widetilde{C}(\tz)\neq\{\tz\}$ and the lemma
follows.
\end{proof}

The above says that if we have a nontrivial accessibility class then
nearby the classes are nontrivial.
Indeed, the same holds when we perturb also $f:$

\begin{corollary}
\label{nontrivunif}
Let $f\in\EE^r$ and let $x\in M$ be such that $AC(x)$ is nontrivial.
Then, there exist a neighborhood $U_x$ of $x$ and a neighborhood
$\UU(f)$ in $\EE^r$ (which can be considered $C^1$ open as well) such that
for any $g\in\UU(f)$ and $z\in U_x$ the accessibility class $AC(z,g)$ is
nontrivial.
\end{corollary}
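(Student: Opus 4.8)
The plan is to work in the universal cover, where, by the characterization of trivial classes established above (namely, $AC(z,g)$ is trivial if and only if $\widetilde{C}_g(\tz)=\{\tz\}$ for a lift $\tz$ of $z$), nontriviality amounts to $\widetilde{C}_g(\tz)\neq\{\tz\}$, and then to show that this non-degeneracy persists when one perturbs simultaneously the base point \emph{and} the diffeomorphism. The engine is a continuation argument: the $su$-path witnessing nontriviality for $f$, together with the map it produces through Lemma~\ref{l.path}, depends continuously on the pair $(g,\tz)$.

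First I would fix a lift $\tx$ of $x$; since $AC(x,f)$ is nontrivial we have $\widetilde{C}_f(\tx)\neq\{\tx\}$, so I may choose $\ty\in\widetilde{C}_f(\tx)$ with $\ty\neq\tx$ and set $2\rho:=\dist(\tx,\ty)>0$. As $\ty\in\widetilde{AC}_f(\tx)$, there is a $su$-path $\gamma=(\gamma_1,\dots,\gamma_n)$ in $\tM$ from $\tx$ to $\ty$ whose legs lie in leaves of $\tFF^s_f$ or $\tFF^u_f$. Composing the local holonomy path-maps $\Gamma^s,\Gamma^u$ attached to these legs as in \eqref{eq.gamma}--\eqref{eq.mapacc} produces $\tilde{\Gamma}_f\colon U^c_\tx\times[0,1]\to\tM$, and then, exactly as in Lemma~\ref{l.path}, the map $\tgamma_f=\Pi^{su}_\tx\circ\tilde{\Gamma}_f\colon U^c_\tx\times[0,1]\to\tFF^c_f(\tx)$ with $\tgamma_f(\tz,0)=\tz$ and $\tgamma_f(\tx,1)=\ty$.

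Next I would let $g$ range over a $C^1$ neighborhood $\UU(f)$ in $\EE^r$ (recall $\EE^r$ is $C^1$ open) and repeat this construction for $g$. Here I would invoke the facts recalled in the Setting section: structural stability of the normally hyperbolic, plaque-expansive center foliation and of the Global Product Structure, so that $\FF^c_g$ and the projections $\Pi^{su}$ make sense for $g$; continuous dependence on $g$ of the leaves of $\tFF^s_g,\tFF^u_g,\tFF^{cs}_g,\tFF^{cu}_g$ on compact sets; and the $C^1$-continuity in $g$ of the stable and unstable holonomies inside center-stable and center-unstable leaves (\cite{PSW97}). Since $\gamma$ has only finitely many legs, for $g$ close enough to $f$ the corresponding composition of $g$-holonomy path-maps is still defined on a fixed neighborhood $U^c_\tx$ and gives $\tilde{\Gamma}_g$ and $\tgamma_g=\Pi^{su}_{\tx,g}\circ\tilde{\Gamma}_g\colon U^c_\tx\times[0,1]\to\tFF^c_g(\tx)$, where $\Pi^{su}_{\tx,g}$ is the projection \eqref{eq.proysu} built from the invariant foliations of $g$; by \eqref{eq.proyacc} applied to $g$ (as in Lemma~\ref{l.path}) one has $\tgamma_g(\tz,1)\in\widetilde{C}_g(\tz)$ for all $\tz\in U^c_\tx$, and the evaluation $(g,\tz)\mapsto\tgamma_g(\tz,1)$ is jointly continuous and reduces to $\tgamma_f(\tz,1)$ at $g=f$.

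Finally, since $\dist(\tgamma_f(\tx,1),\tx)=2\rho$, joint continuity gives a neighborhood $V$ of $\tx$ in $\tM$, which I take inside an evenly covered neighborhood, and, after shrinking $\UU(f)$, the bound $\dist(\tgamma_g(\tz,1),\tz)\ge\rho>0$ for all $g\in\UU(f)$ and $\tz\in V$. Hence $\tgamma_g(\tz,1)\in\widetilde{C}_g(\tz)\setminus\{\tz\}$, so $\widetilde{C}_g(\tz)\neq\{\tz\}$ and therefore $AC(\pi(\tz),g)$ is nontrivial. Putting $U_x:=\pi(V)$, an open neighborhood of $x$, completes the argument: for $g\in\UU(f)$ and $z\in U_x$ one chooses $\tz\in V$ over $z$. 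I expect the only genuine work to be in the third paragraph, i.e.\ the continuous — and on the holonomies $C^1$-continuous — dependence of the invariant laminations on $g$; but this is precisely what normal hyperbolicity, plaque expansiveness, and the $C^1$-openness built into $\EE^r$ provide, so the whole proof amounts to continuing the object furnished by Lemma~\ref{l.path}.
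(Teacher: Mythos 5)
Your proof is correct and takes essentially the same route as the paper: both lift to $\tM$, fix a $su$-path from $\tx$ to a distinct $\ty\in\widetilde{C}_f(\tx)$, and use continuous dependence of the invariant foliations (and their holonomies) on the point and on the diffeomorphism to continue that $su$-path and conclude $\widetilde{C}_g(\tz)\neq\{\tz\}$ for $g$ near $f$ and $\tz$ near $\tx$. The paper phrases the separation in terms of disjoint open sets $\Pi^{su}_{\tilde g}(U_\tx)$ and $\Pi^{su}_{\tilde g}(U_\ty)$ while you use a metric lower bound $\dist(\tgamma_g(\tz,1),\tz)\geq\rho$; these are only cosmetically different.
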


\begin{proof}
Let $\tx$ be such that $\pi(\tx)=x.$
Since $AC(x)$ is nontrivial, then $\wtC(\tx)\neq\{\tx\}.$
Let $\ty\in\wtC(\tx)$, $\ty\neq\tx.$
We have a $su$-path (in $\tM$) joining $\tx$ to $\ty.$
Now, by continuity of stable and unstable manifolds on compact sets (with
respect to the point and to $f$) there are disjoint open sets $U_\tx$ and
$U_\ty$ of $\tx$ and $\ty$ in $\tM$ and an open set $\UU(f)$ such that
for any $g$ in $\UU(f)$ we have that any point in $U_\tx$ can be joined by
$su$-path of $\tilde{g}$ with a point in $U_\ty.$
And moreover, if we consider $\Pi^{su}_{\tilde{g}}\colon \tM\to
\FF^c(\tx, \tilde{g})$, then $\Pi^{su}_{\tilde{g}}(U_\tx)$ and
$\Pi^{su}_{\tilde{g}}(U_\ty)$ are open sets (in $\FF^c(\tx,\tilde{g})$)
and disjoint.
The result thus follows, since for any $\tz\in \Pi^{su}_{\tilde{g}}
(U_\tx)$ there is point in $\Pi^{su}_{\tilde{g}}(U_\ty)$ that belongs to
$\wtC(\tz,g).$
\end{proof}

\begin{lemma}
\label{disj}
Let $x\in M$ and let $\tx\in\tM$ with $\pi(\tx)=x.$ Let $\tFF^c_1$ and
$\tFF^c_2$ be such that $\pi(\tFF^c_i)=\FF^c(x)$ and let
$\tC_i=\wtAC(\tx)\cap\tFF^c_i,\,i=1,2.$
Then either $\pi(C_1)$ and $\pi(C_2)$ are equal or disjoint.
\end{lemma}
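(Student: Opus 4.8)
The plan is to exploit the deck group of the covering $\pi\colon\tM\to M$ furnished by Global Product Structure. Let $G$ denote the group of deck transformations of $\pi$. Since every $g\in G$ commutes with $\pi$, and the lifted foliations $\tFF^{*}$ ($*=s,u,c$) are, by construction, the foliations whose leaves are the connected components of the $\pi$-preimages of the leaves of $\FF^{*}$, each $g\in G$ permutes the leaves of $\tFF^s$ and $\tFF^u$; hence $g$ carries $su$-paths to $su$-paths, so that $g(\wtAC(\tz))=\wtAC(g\tz)$ for every $\tz\in\tM$, and likewise $g$ permutes the connected components of $\pi^{-1}(\FF^c(x))$, i.e.\ the lifts $\tFF^c_i$ of $\FF^c(x)$. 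I will use moreover that $G$ acts transitively on each fibre of $\pi$, which holds because in our setting the covering can be taken to be normal (in all the examples $\pi$ is the product of the universal covering of the Anosov factor with the identity on the surface factor).

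Suppose now that $\pi(\tC_1)\cap\pi(\tC_2)\neq\emptyset$, pick $p$ in this intersection, and choose $\tilde p_1\in\tC_1\subset\tFF^c_1$ and $\tilde p_2\in\tC_2\subset\tFF^c_2$ with $\pi(\tilde p_1)=\pi(\tilde p_2)=p$. By transitivity of $G$ on the fibre $\pi^{-1}(p)$ there is $g\in G$ with $g\tilde p_1=\tilde p_2$. First, $g\tFF^c_1$ is a lift of $\FF^c(x)$ containing $g\tilde p_1=\tilde p_2\in\tFF^c_2$, and since distinct lifts are disjoint we get $g\tFF^c_1=\tFF^c_2$. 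Second, $\wtAC(g\tx)=g(\wtAC(\tx))$ contains $g\tilde p_1=\tilde p_2$, while by hypothesis $\tilde p_2\in\tC_2\subset\wtAC(\tx)$; since accessibility classes form a partition of $\tM$, this forces $\wtAC(g\tx)=\wtAC(\tx)$, i.e.\ $g(\wtAC(\tx))=\wtAC(\tx)$. Combining the two facts,
$$
g(\tC_1)=g\bigl(\wtAC(\tx)\cap\tFF^c_1\bigr)=g(\wtAC(\tx))\cap g(\tFF^c_1)=\wtAC(\tx)\cap\tFF^c_2=\tC_2 ,
$$
and since $\pi\circ g=\pi$ this gives $\pi(\tC_2)=\pi(g\tC_1)=\pi(\tC_1)$. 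Hence $\pi(\tC_1)$ and $\pi(\tC_2)$ are either equal or disjoint.

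The one point that needs care is the transitivity of the deck group on the fibres of $\pi$, equivalently that the covering entering the definition of Global Product Structure is normal; this is immediate from the explicit description of $\tM$ in each of the classes $\EE^r,\EE^r_m,\EE^r_\omega,\EE^r_{sp},\EE^r_{sp,\omega}$, and could just as well be incorporated into the definition of $\EE^r$. Should one wish to avoid normality, an alternative is to transport a point of $\tC_1$ lying over a given $q\in\pi(\tC_1)$ to $\tFF^c_2$ along the $su$-holonomy maps $\tilde\Gamma$ of \eqref{eq.mapacc}, moving along an arc inside $\wtC(\tilde p_1)$ that joins $\tilde p_1$ to that point (using Corollary~\ref{tildeconnected} and Lemma~\ref{l.path}); this works too, but one then has to verify by hand that the transported point still projects to $q$, and the deck-group computation above is precisely the clean way to see this.
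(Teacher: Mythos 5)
Your proof is correct and follows essentially the same route as the paper: assume $\pi(\tC_1)\cap\pi(\tC_2)\neq\emptyset$, produce a deck transformation of the covering carrying a chosen preimage in $\tC_1$ to the corresponding preimage in $\tC_2$, and use that deck transformations permute lifted leaves and send $su$-paths to $su$-paths, so that $\tC_1$ is carried bijectively onto $\tC_2$ and $\pi(\tC_1)=\pi(\tC_2)$. The paper's version is terser: it simply invokes "a covering map $\beta$ with $\beta(\tz_1)=\tz_2$" without flagging that the existence of such a map requires the covering $\pi:\tM\to M$ in the Global Product Structure definition to be normal (equivalently, the deck group to act transitively on fibres). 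You single this assumption out explicitly and observe that it holds in all the classes $\EE^r,\EE^r_m,\ldots$ by inspection of $\tM$, which is a small but genuine improvement in rigour over the source; your suggestion that normality could be baked into the definition of $\EE^r$ is apt. The ancillary remark about transporting via the $su$-holonomy $\tilde\Gamma$ in case normality were dropped is reasonable but not needed, and you are right that the deck-group computation is the cleaner way to keep track of the projection.
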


\begin{proof}
Assume that $\pi(\tC_1)\cap\pi(\tC_2)\neq\emptyset.$
Let $z$ be in this intersection and let $\tz_i\in \tC_i$ be such that
$\pi(\tz_i)=z.$
It follows that $\tC_i=\wtC(\tz_i).$ Let $\beta$ be a covering map,
$\beta(\tz_1)=\tz_2.$
Since $\beta$ sends $su$-path in $\tM$ to $su$-path we conclude that
$\beta(\tC_1)\subset \tC_2$ and $\beta^{-1}(\tC_2)\subset \tC_1.$
Hence $\pi(\tC_1)=\pi(\tC_2).$
\end{proof}

\begin{corollary}
Let $x\in M$ and $\tx\in\tM$ such that $\pi(\tx)=x.$
If $C(x)$ is open then $\pi(\wtC(\tx))$ is the connected component of
$C(x)$ that contains $x.$
\end{corollary}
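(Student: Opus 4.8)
\noindent
The plan is to prove that $\pi(\wtC(\tx))$ is a nonempty, connected subset of $C(x)$ which is both open and closed in $C(x)$ and which contains $x$; since a nonempty connected clopen subset of a topological space coincides with the connected component of each of its points, this gives the corollary at once. That $x\in\pi(\wtC(\tx))$ and $\pi(\wtC(\tx))\neq\emptyset$ is immediate from $\tx\in\wtC(\tx)$, and connectedness of $\pi(\wtC(\tx))$ follows from Corollary~\ref{tildeconnected} together with continuity of $\pi$. So the content lies in two assertions: $\pi(\wtC(\tx))$ is open in $C(x)$, and it is closed in $C(x)$.

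For openness I would use that, $\pi$ being a covering map which carries the lifted center foliation onto $\FF^c$, the restriction of $\pi$ to any leaf of $\tFF^c$ is a local homeomorphism onto the leaf of $\FF^c$ it covers, hence an open map for the intrinsic leaf topologies. Since $C(x)$ is open, Lemma~\ref{tildeequiv} gives that $\wtC(\tx)$ is open in $\tFF^c(\tx)$, so $\pi(\wtC(\tx))$ is open in $\FF^c(x)$ and therefore in $C(x)$.

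For closedness I would decompose $C(x)$ along the fibres of $\pi$. Write $\pi^{-1}(\FF^c(x))=\bigsqcup_i\tFF^c_i$ as its disjoint union of leaves of $\tFF^c$, with $\tFF^c_0=\tFF^c(\tx)$, and put $\tC_i=\wtAC(\tx)\cap\tFF^c_i$, so $\tC_0=\wtC(\tx)$. Using $AC(x)=\pi(\wtAC(\tx))$ (Lemma~\ref{proj}) one checks $C(x)=\bigcup_i\pi(\tC_i)$, and Lemma~\ref{disj} says any two of the sets $\pi(\tC_i)$ are equal or disjoint. Each nonempty $\pi(\tC_i)$ is open in $\FF^c(x)$: choosing $\tz_i\in\tC_i$ and $z_i=\pi(\tz_i)$, from $\tz_i\in\wtAC(\tx)$ we get $z_i\in AC(x)$, hence $AC(z_i)=AC(x)$ and $C(z_i)=C(x)$, which is open; then $\wtC(\tz_i)=\tC_i$ is open in $\tFF^c_i$ by Lemma~\ref{tildeequiv}, and the open map above makes $\pi(\tC_i)$ open. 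Finally, by the equal-or-disjoint dichotomy, $C(x)\setminus\pi(\wtC(\tx))$ is the union of those $\pi(\tC_i)$ that are not equal to $\pi(\tC_0)$, hence open; so $\pi(\wtC(\tx))$ is closed in $C(x)$, and the proof concludes as explained above.

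I expect the closedness step to be the real obstacle. The naive approach---lifting a sequence in $\pi(\wtC(\tx))$ converging in $C(x)$ back to $\tFF^c(\tx)$---fails, because $\wtC(\tx)$ need not be closed in $\tFF^c(\tx)$ and because the lifts may sit in infinitely many distinct sheets over a small neighbourhood, so no convergence inside $\wtC(\tx)$ is available; passing through the global decomposition $C(x)=\bigcup_i\pi(\tC_i)$ and the rigid alternative of Lemma~\ref{disj} is exactly what sidesteps this. The remaining steps are routine manipulations with the covering map and with the equivalences recorded in Lemmas~\ref{proj} and \ref{tildeequiv}.
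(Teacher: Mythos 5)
Your proof is correct and follows exactly the route the paper intends: the paper's own proof is the one-line remark that the corollary is ``a direct consequence of Lemmas~\ref{proj}, \ref{tildeequiv}, and~\ref{disj},'' and your write-up is precisely the natural expansion of that remark, decomposing $C(x)=\bigcup_i\pi(\tC_i)$ via Lemma~\ref{proj}, getting openness of each piece from Lemma~\ref{tildeequiv}, clopen-ness of $\pi(\wtC(\tx))$ from the equal-or-disjoint dichotomy of Lemma~\ref{disj}, and connectedness from Corollary~\ref{tildeconnected} (which the paper's citation list omits but clearly uses).
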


\begin{proof}
It is a direct consequence of Lemmas \ref{proj}, \ref{tildeequiv}, and
\ref{disj}.
\end{proof}

We now investigate the structure of accessibility classes when the center
bundle has dimension two, that is $f\in\EE^r$ and $\dim E^c=2.$
The following important result is essentially contained in \cite{RH05}.
\begin{theorem}
Let $f\in\EE^r$ and assume that $\dim E^c=2.$ Let $\tx\in\tM.$
Then one and only one of the following holds:
\begin{enumerate}
\item $\wtC(\tx)$ is open;
\item $\wtC(\tx) = \{\tx\}$;
\item $\wtC(\tx)$ is a $C^1$ one dimensional manifold without boundary.
\end{enumerate}
\end{theorem}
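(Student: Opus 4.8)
The plan is to use that $\wtC(\tx)$ is a subset of the surface $\tFF^c(\tx)$ which, by Lemma~\ref{l.first}, is $C^1$-homogeneous, and which, by Corollary~\ref{tildeconnected}, is connected and arc-connected. I would first dispose of the cases ``$\wtC(\tx)$ has nonempty interior'' and ``$\wtC(\tx)$ is a point'', and then reduce the remaining case to a germ analysis at $\tx$ that produces a $C^1$ curve.

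First I would record the elementary fact that a $C^1$-homogeneous subset $Z$ of a manifold with $\interior(Z)\neq\emptyset$ is open. Indeed, given $x_0\in\interior(Z)$ and arbitrary $y\in Z$, take the $C^1$-diffeomorphism $\phi\colon U_{x_0}\to U_y$ furnished by homogeneity, with $\phi(x_0)=y$ and $\phi(U_{x_0}\cap Z)=U_y\cap Z$; after shrinking $U_{x_0}$ so that $U_{x_0}\subset Z$ and replacing $U_y$ by $\phi(U_{x_0})$, one gets $U_y\cap Z=\phi(U_{x_0})$, which is open, so $y\in\interior(Z)$; hence $Z=\interior(Z)$. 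Applied to $\wtC(\tx)$ this gives alternative~(1). So I may assume $\wtC(\tx)$ has empty interior; since $\tFF^c(\tx)$ is a surface, $\wtC(\tx)$ then has topological dimension at most $1$. If $\wtC(\tx)=\{\tx\}$ we are in alternative~(2), so assume also $\wtC(\tx)\neq\{\tx\}$; being connected and not a single point it then has topological dimension exactly $1$. It remains to show that, near each of its points, $\wtC(\tx)$ is a $C^1$ arc.

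The crucial input here, via \cite{PSW97}, is that the $su$-holonomy maps between center leaves are $C^1$: for each $\ty\in\wtC(\tx)$ the holonomy diffeomorphism $\Gamma_0=\tilde{\Gamma}(\cdot,1)$ of \eqref{eq.mapacc} (equivalently $\tgamma(\cdot,1)$ of Lemma~\ref{l.path}) is a $C^1$ diffeomorphism from a neighborhood of $\tx$ in $\tFF^c(\tx)$ onto a neighborhood of $\ty$ in $\tFF^c(\tx)$, it sends $\tx$ to $\ty$, and --- running the defining $su$-path backwards as well --- it carries $\wtC(\tx)$ bijectively onto $\wtC(\tx)$ near these points. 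Hence the germ of $\wtC(\tx)$ at $\tx$ is $C^1$-diffeomorphic to its germ at every other of its points, and it suffices to analyze the germ at $\tx$. There I would run the argument of Rodriguez-Hertz \cite{RH05}: using the $C^1$ holonomy diffeomorphisms $\Gamma_0$ attached to the $su$-paths joining $\tx$ to the nearby points of $\wtC(\tx)$, one obtains a dichotomy. Either the tangent directions at $\tx$ reachable inside $\wtC(\tx)$ fill an open cone in $T_\tx\tFF^c(\tx)$ --- in which case, combining this with the invariance of $\wtC(\tx)$ under the $\Gamma_0$'s and an open-mapping/implicit-function argument, $\wtC(\tx)$ would contain a $2$-dimensional disk, contradicting empty interior --- or all those holonomies preserve one and the same line $\ell_\tx\subset T_\tx\tFF^c(\tx)$, in which case every arc of $\wtC(\tx)$ through $\tx$ is tangent to $\ell_\tx$ and, bootstrapping with the $C^1$ regularity of the holonomies, $\wtC(\tx)$ is near $\tx$ the $C^1$ graph of a function on a subinterval of $\ell_\tx$, that is, a $C^1$ one-dimensional submanifold. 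Making this dichotomy precise --- in particular the quantitative ``enough independent directions $\Rightarrow$ interior'' half, and the passage from mere topological structure to genuine $C^1$ regularity of the curve --- is the step I expect to be the main obstacle.

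Finally, $\wtC(\tx)$ has no boundary. A boundary point of a $C^1$ $1$-manifold-with-boundary has, as a subset of the ambient surface, the germ of a half-open arc, whereas an interior point has the germ of an open arc $(-\vep,\vep)$; removing the point locally disconnects the set in the second case but not in the first, so these two germs are not even homeomorphic. By $C^1$-homogeneity (Lemma~\ref{l.first}) all points of $\wtC(\tx)$ have the same germ, so there are no boundary points. Putting the three cases together gives the statement.
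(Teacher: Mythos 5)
Your outline follows the paper's strategy for the first two alternatives and for the no-boundary conclusion, but you leave a real hole at exactly the point you flag, and that hole is not closable by the kind of germ bootstrapping you sketch. The parts that are fine: $C^1$-homogeneity plus nonempty interior forces openness (this is essentially Lemma~\ref{tildeequiv}); empty interior in a surface forces covering dimension $\le 1$; a connected set that is not a singleton has dimension $\ge 1$; and the no-boundary argument at the end (a half-open-arc germ and an open-arc germ are not $C^1$-diffeomorphic, so homogeneity forbids boundary points) is correct and is the paper's reasoning.

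The gap is in the passage from ``one-dimensional'' to ``$C^1$ one-dimensional submanifold''. Your proposed dichotomy --- ``tangent directions at $\tx$ inside $\wtC(\tx)$ fill an open cone'' versus ``all holonomies preserve a single line $\ell_\tx$'' --- already presupposes that $\wtC(\tx)$ has well-defined tangent directions at $\tx$, which is precisely the regularity you are trying to establish; a priori the set could be wild, with no approximate tangent at any point. And even granting the second horn of the dichotomy, ``bootstrapping with the $C^1$ regularity of the holonomies'' does not by itself produce a $C^1$ graph: knowing that each holonomy is a $C^1$ diffeomorphism carrying the set to itself is far weaker than knowing the set is a $C^1$ curve. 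This is exactly where the argument of \cite{RH05} stops: Proposition~5.2 there shows (by a topological/connectedness argument, not via tangent cones) only that the intermediate case is a \emph{topological} one-dimensional manifold.

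What you are missing is the theorem of Repov\v{s}, Skopenkov, and \v{S}\v{c}epin \cite{RSS96}: a locally compact, $C^1$-homogeneous subset of a Riemannian manifold is a $C^1$-submanifold. Applied to $\wtC(\tx)$, whose $C^1$-homogeneity is Lemma~\ref{l.first}, this immediately upgrades ``topological $1$-manifold'' to ``$C^1$ $1$-manifold'' (and, being a $C^1$-homogeneous $C^1$-submanifold, it has no boundary). The regularity gain is not an elementary consequence of the $C^1$ holonomies; it is the whole content of \cite{RSS96}, and you need to invoke it rather than try to rederive it by a local analysis at $\tx$.
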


\begin{proof}
The same proof in \cite[Proposition 5.2]{RH05} yields that $\wtC(\tx)$ is
either open, consists just of $\tx$ or it is a topological one dimensional
manifold.
Now, in case $\wtC(\tx)$ is a topological one dimensional manifold, by the
$C^1$ homogeneity of $\wtC(\tx)$ and the result in \cite{RSS96} which says
that a locally compact and $C^1$ homogeneous subset of a riemannian
manifold is a $C^1$ submanifold, one get that in fact $\wtC(\tx)$ is of
class $C^1$ (and without boundary).
\end{proof}

Let us denote by $C_0(x)$ the arc-connected component of $C(x)$ that
contains $x.$
We remark that when $C(x)$ is open, then $C_0(x)$ is just the connected
component of $C(x)$ that contains $x.$

\begin{corollary}\label{c.connect}
Let $f\in\EE^r$ and assume that $\dim E^c=2.$
Let $x\in M$ and $\tx\in\tM$ with $\pi(\tx)=x.$ Then $\pi(\wtC(\tx)) =
C_0(x).$
\end{corollary}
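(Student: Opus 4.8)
The plan is to establish the two inclusions $\pi(\wtC(\tx))\subseteq C_0(x)$ and $C_0(x)\subseteq\pi(\wtC(\tx))$ separately, the first being immediate and the second carrying all the content. For the first: by Lemma~\ref{proj} we have $\pi(\wtC(\tx))\subseteq C(x)$, and by Corollary~\ref{tildeconnected} the set $\wtC(\tx)$ is arc-connected, so its continuous image $\pi(\wtC(\tx))$ is an arc-connected subset of $C(x)$ containing $x=\pi(\tx)$; hence it is contained in the arc-connected component $C_0(x)$. For the reverse inclusion I would argue according to the trichotomy of the Theorem immediately preceding this Corollary: $\wtC(\tx)$ is either a single point, or open, or a $C^1$ one-dimensional submanifold of $\tFF^c(\tx)$.

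The first two alternatives are disposed of quickly. If $\wtC(\tx)=\{\tx\}$, then $AC(x)$ is trivial, so $C(x)$ is totally disconnected (by the lemma characterizing trivial classes, proved just before Corollary~\ref{nontrivial}), whence $C_0(x)=\{x\}=\pi(\wtC(\tx))$. If $\wtC(\tx)$ is open, then by Lemma~\ref{tildeequiv} $C(x)$ is open in the surface $\FF^c(x)$, so $C(x)$ is locally path-connected and its connected and arc-connected components coincide; the Corollary preceding this one then identifies $\pi(\wtC(\tx))$ with the connected component of $C(x)$ through $x$, which is exactly $C_0(x)$.

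The substantial case is when $\wtC(\tx)$ is a $C^1$ curve. Here the plan is to show that $\pi(\wtC(\tx))$ is simultaneously open and closed in $C_0(x)$; since $C_0(x)$ is connected (being arc-connected) and $\pi(\wtC(\tx))$ is non-empty, this forces equality. Openness in $C_0(x)$ should follow from the local holonomy-path maps built in Section~\ref{s.accessibility}: given $y\in\pi(\wtC(\tx))$ with a lift $\ty\in\wtC(\tx)$ (so $\wtC(\ty)=\wtC(\tx)$), Lemma~\ref{l.path}, equivalently Remark~\ref{path2}, produces a local $C^1$ diffeomorphism of a neighbourhood of $x$ onto a neighbourhood of $y$ carrying $x$ to $y$ and carrying $\pi(\wtC(\tx))$ onto $\pi(\wtC(\tx))$; combined with the observation that near $x$ the set $C_0(x)$ is precisely the arc $\pi(\wtC(\tx))$ (this arc is the $\pi$-image of the curve $\wtC(\tx)$ through $\tx$, and $\pi$ is a local diffeomorphism there), this transports the local picture from $x$ to an arbitrary point $y$ of $\pi(\wtC(\tx))$. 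For closedness I would use the decomposition $C(x)=\bigcup_\alpha\pi(\tC_\alpha)$, where $\tC_\alpha=\wtAC(\tx)\cap\tFF^c_\alpha$ ranges over the at most countably many lifts $\tFF^c_\alpha$ of $\FF^c(x)$: by Lemma~\ref{disj} the distinct sets $\pi(\tC_\alpha)$ are pairwise disjoint, each is arc-connected by Corollary~\ref{tildeconnected}, and the distinguished one $\pi(\tC_0)=\pi(\wtC(\tx))$ contains $x$. Given any arc in $C(x)$ from $x$ to a point $w$, its preimages of the $\pi(\tC_\alpha)$ form an at most countable partition of $[0,1]$; once one knows each $\pi(\tC_\alpha)$ is relatively closed in $C(x)$, Sierpi\'nski's theorem forces this partition to be trivial, so $w\in\pi(\tC_0)=\pi(\wtC(\tx))$, yielding $C_0(x)\subseteq\pi(\wtC(\tx))$. (Equivalently, if the pieces are relatively clopen in $C(x)$ they are its connected and arc-connected components and $C_0(x)=\pi(\tC_0)$.)

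The one point on which everything hinges — and which I expect to be the real obstacle — is showing, in the curve case, that the pieces $\pi(\tC_\alpha)$ do not accumulate on one another, equivalently that $C(x)$ is near each of its points a single $C^1$ arc. Here each $\wtC(\tz_\alpha)$ is again a $C^1$ one-dimensional submanifold (the open and point alternatives being excluded, respectively, by Lemma~\ref{tildeequiv} and by triviality of the class), and $C(x)$ is $C^1$-homogeneous by Lemma~\ref{l.first}; the task is to combine this homogeneity with the uniform local product structure encoded in the maps $\Gamma$, $\tilde\Gamma$ and $\Pi^{su}$ of Section~\ref{s.accessibility} to see that the local model of $C(x)$ at the base point $x$, which is a single $C^1$ arc, propagates to every point, so that $C(x)$ is a $C^1$ submanifold of $\FF^c(x)$ whose components are exactly the $\pi(\tC_\alpha)$. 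Once that is in place the Corollary follows as above.
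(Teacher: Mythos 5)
Your first inclusion and your handling of the trivial and open cases are correct and match the paper. Where the proposal goes wrong is in the curve case, and you largely diagnose this yourself: your whole strategy depends on showing that near each point of $C(x)$ the set is a single $C^1$ arc, equivalently that the disjoint pieces $\pi(\tC_\alpha)$ do not accumulate on one another (so that each is relatively clopen in $C(x)$, and the Sierpi\'nski/clopen argument can be run). But you never prove this, and the parenthetical justification you offer for the local statement at $x$ --- ``this arc is the $\pi$-image of the curve $\wtC(\tx)$ through $\tx$, and $\pi$ is a local diffeomorphism there'' --- only shows that $\pi(\wtC(\tx))$ is locally an arc, not that $C_0(x)$, or even $C(x)$, agrees with it near $x$. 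In fact nothing rules out other pieces $\pi(\tC_{\alpha'})$, $\alpha'\ne 0$, accumulating on $x$, and if that happens the local model of $C(x)$ at $x$ is not a single arc. Establishing that the pieces cannot accumulate in a way that destroys arc-connectivity is essentially the content of the corollary, so invoking it as an input is circular. (The paper's own later discussion --- Proposition~\ref{p.cont-var} and Corollary~\ref{c.lamination}, which come \emph{after} this corollary --- shows the non-open classes form a $C^1$ lamination, whose transversal model can be a Cantor set; so accumulation of the curves genuinely occurs, and the ``local model is a single arc'' claim is false as stated.)

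The paper's proof avoids this obstacle entirely by a direct argument that never needs closedness of the pieces. It takes an arc $\beta$ in $C_0(x)$ starting at $x$ and supposed not to lie in $\pi(\wtC(\tx))$, considers the first exit parameter $t_0=\sup\{t:\beta([0,t])\subset\pi(\wtC(\tx))\}$ and the point $y=\beta(t_0)$, chooses a subarc $\alpha\subset\pi(\wtC(\tx))$ with $y$ in its interior (using that $\wtC(\tx)$ is a boundaryless $1$-manifold), and then applies Lemma~\ref{l.path}/Remark~\ref{path2} to an endpoint $x_1$ of $\alpha$ to obtain the holonomy family $\gamma\colon U^c_{x_1}\times[0,1]\to\FF^c(x)$ with $\gamma(z,t)\in C(z)$ and $\gamma(x_1,\cdot)=\alpha$. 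Because $\beta$ must cross to one side of $\alpha$ immediately past $t_0$, the family of curves $\gamma(z,\cdot)$, $z$ ranging over an open set $U\subset U_{x_1}$, meets $\beta$; for every such $z$ the intersection point lies in $C(z)\cap C(x)$, so $C(z)=C(x)$, hence $U\subset C(x)$ and $C(x)$ has nonempty interior. This contradicts the assumption that $\wtC(\tx)$ is one-dimensional (equivalently $C(x)$ not open, Lemma~\ref{tildeequiv}). That contradiction is what you would need to supply; the Sierpi\'nski route you sketch can only be completed once the relative closedness of the $\pi(\tC_\alpha)$'s is proved, and that is precisely the missing idea.
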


\begin{proof}
When $\wtC(x)$ is open or trivial then the result follows
immediately. Thus, we just have to check it when $\wtC(x)$ is a one
dimensional submanifold (without boundary).

From Lemma \ref{proj} and Corollary \ref{tildeconnected} we have
$\pi(\wtC(\tx)) \subset C_0(x)$.
On the other hand, let $\beta$ be an arc in $C_0(x)$ starting at $x$ and
assume that it is not contained in $\pi(\wtC(\tx)).$
Let $t_0=\sup\{t \colon \beta([0,t])\subset\pi(\wtC(\tx))\}.$
Let $y=\beta(t_0)$ (that belongs to $\pi(\wtC(\tx))$).
Since $\wtC(\tx)$ has no boundary, we have an arc $\alpha$ inside
$\pi(\wtC(x))$ having $y$ in its interior, say joining $x_1$ with $x_2.$
Now, applying Lemma \ref{l.path} (or Remark \ref{path2}) we have a
continuous map $\gamma \colon U^c_{x_1}\times [0,1]\to \FF^c(x)$ such that
$\gamma(x_1,t)=\alpha(t)$ and that $\gamma(z,t)\in C(z)$ for any $t$ and
$z\in U_{x_1}.$
Then we conclude that there is an open set $U\subset U_{x_1}$ such that
$\gamma(U\times[0,1])\cap\beta\neq\emptyset$ (see Figure
\ref{f.corconnect}).
This implies that $C(x)$ is open, a contradiction.
Therefore, $C_0(x) \subset \pi(\wtC(\tx))$.
The proof is finished.
\end{proof}

\begin{figure}
\centering
\psfrag{x}{$x$}
\psfrag{z}{$z$}
\psfrag{y}{$y$}
\psfrag{U1}{$U_{x_1}$}
\psfrag{U2}{$U_{x_2}$}
\psfrag{x1}{$x_1$}
\psfrag{x2}{$x_2$}
\psfrag{al}{$\alpha$}
\psfrag{be}{$\beta$}
\psfrag{ga}{$\gamma(z,t)$} \psfrag{u}{$u$}
\includegraphics[height=6cm]{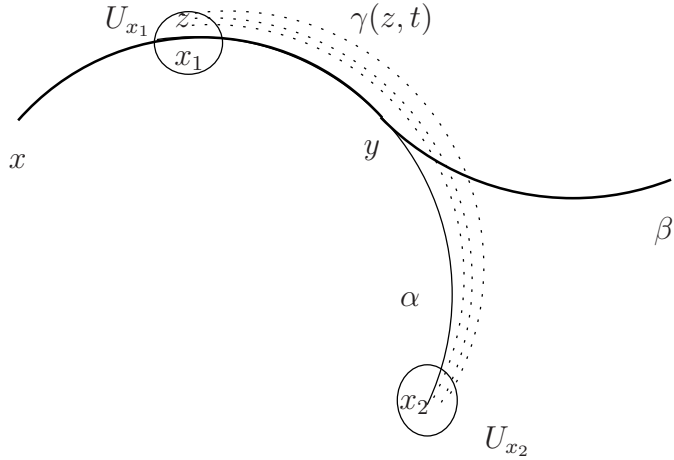}
\caption{Proof of Corollary \ref{c.connect}}
\label{f.corconnect}
\end{figure}

Next, we want to show that the set of one dimensional accessibility
classes form a lamination.
Lets recall the definition of a lamination (we stated for the case of
lamination of subsets of a surface for simplicity.

\begin{definition}
Let $S$ be a surface and let $K\subset S$ be closed.
We say that $K$ admits a $C^1$-lamination if $K$ has a partition into $C^1$
one dimensional manifolds (called leaves of the lamination) such that every
point in $K$ has a neighborhood $U$ homeomorphic to $(-1,1)\times (-1,1)$
(called charts of the lamination) such that $K\cap U$ correspond to $
F\times (-1,1)$, where $F$ is a closed set in $(-1,1)$ and every
$\{\verb"f"\}\times (-1,1)$, for $\verb"f"\in F$, is inside a leaf of $K,$
and tangent spaces of the leaves vary continuously.
\end{definition}

Fix a center leaf $\tilde{\mathcal{F}}^c$.
Let $\tilde{K}$ be the set of accessibility classes in
$\tilde{\mathcal{F}}^c$ which are $C^1$ one-dimensional submanifolds.
Then $\tilde{K}$ is partitioned by the accessibility classes
$\tilde{C}(\tx)$, $\tilde{x} \in \tilde{K}$.
We want to prove that this partition form a $C^1$-lamination.

First we prove that the accessibility classes vary continuously in the
$C^1$-topology:

\begin{proposition}
\label{p.cont-var}
For $\tilde{x} \in \tilde{K}$, the curves $\tilde{C}(\tx)$ vary
continuously in the $C^1$ topology.
\end{proposition}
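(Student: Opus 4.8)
The plan is to show that if $\tx_n \to \tx$ with all $\tx_n, \tx \in \tK$, then the curves $\wtC(\tx_n)$ converge to $\wtC(\tx)$ in the $C^1$ topology on compact subsets. The main tool is the map $\tilde\Gamma$ of \eqref{eq.mapacc} together with its projected version $\tgamma = \Pi^{su}_{\tx}\circ\tilde\Gamma$ from Lemma \ref{l.path}. First I would fix a point $\ty \in \wtC(\tx)$ and a compact arc $J \subset \wtC(\tx)$ containing $\ty$ in its interior; since $\wtC(\tx)$ is a $C^1$ one-dimensional manifold without boundary, $J$ can be parametrized as a $C^1$ embedding $\alpha:[0,1]\to\tFF^c$. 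By the $C^1$-homogeneity of $\wtC(\tx)$ (Lemma \ref{l.first}) and Lemma \ref{l.path}, for each $\tx_n$ close enough to $\tx$ there is a neighborhood $U^c_{\tx_n}$ and a continuous map $\tgamma_n : U^c_{\tx_n}\times[0,1]\to\tFF^c$ with $\tgamma_n(\tz,0)=\tz$, $\tgamma_n(\tz,t)\in\wtC(\tz)$, and $\tgamma_n(\tx_n,\cdot)$ tracing an arc of $\wtC(\tx_n)$ near the arc $J$ of $\wtC(\tx)$ — because the $su$-path realizing a segment of $\wtC(\tx)$ can be chosen to vary continuously with its starting point.

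The key step is to upgrade this $C^0$-closeness to $C^1$-closeness of the curves. The maps $\Gamma_0$ (the endpoint maps of holonomy along a fixed $su$-path) are $C^1$ diffeomorphisms between center plaques, by \cite{PSW97}, and — crucially — they and their first derivatives depend continuously on the base point within compact sets, since each $\Gamma_0$ is a finite composition of stable and unstable holonomies inside fixed center-stable and center-unstable leaves, and these holonomies are $C^1$ with continuous dependence on the leaf. Therefore, if $w_n \in \wtC(\tx_n)$ is the image of $\tx_n$ under the holonomy $\Gamma_{0,n}$ along a $su$-path $\gamma_n$ close to the $su$-path $\gamma$ realizing $\ty \in \wtC(\tx)$, then a neighborhood of $w_n$ inside $\wtC(\tx_n)$ is the $\Gamma_{0,n}$-image of a neighborhood of $\tx_n$ inside $\wtC(\tx_n)$; iterating along the arc $J$ and using that $\Gamma_{0,n}\to\Gamma_0$ in $C^1$ on compacts, the tangent directions of $\wtC(\tx_n)$ along these arcs converge uniformly to the tangent directions of $\wtC(\tx)$. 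Concretely, parametrize an arc of $\wtC(\tx_n)$ through $\tx_n$ as $\alpha_n$ via the holonomy maps that define it and verify $\alpha_n \to \alpha$ in $C^1$.

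The main obstacle I anticipate is precisely this last point: showing that the arcs of $\wtC(\tx_n)$ do not "wiggle" — i.e., that their $C^1$ jets stay close to that of $\wtC(\tx)$ — rather than merely staying $C^0$-close while oscillating. This is handled by observing that near $\tx$ the whole family $\{\wtC(\tz) : \tz \in U^c_{\tx}\}$ of nearby one-dimensional classes is, by Lemma \ref{l.path}, swept out by the single continuous map $\tgamma:U^c_{\tx}\times[0,1]\to\tFF^c$; each $\wtC(\tz)$ locally contains the image of $\{\tz\}\times[0,1]$, and since $\tgamma_0 = \tgamma(\cdot,1)$ is built from the $C^1$ maps $\Pi^{su}_{\tx}$ and $\Gamma_0$, the tangent line to $\wtC(\tz)$ at $\tgamma(\tz,t)$ is the image under a $C^1$ map (depending continuously on $\tz$) of a fixed direction, hence varies continuously with $(\tz,t)$. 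This gives a continuous field of tangent lines along the union of the nearby classes, which is exactly the statement that $\wtC(\tx_n)\to\wtC(\tx)$ in $C^1$. I would also note that the argument automatically produces the continuity of the tangent-line field, which is precisely what is needed for the subsequent lamination statement.
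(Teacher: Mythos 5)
Your plan correctly identifies the relevant objects (the map $\tilde\Gamma$/$\tgamma$ of Lemma~\ref{l.path}, the holonomy endpoint maps $\Gamma_0$, their $C^1$ regularity from \cite{PSW97}, and the $C^1$ dependence on the base point within compacts), and your first two paragraphs are a sound account of why $C^0$--convergence of the curves holds. The problem is the third paragraph, which is where the actual content of the proposition lives, and there the argument is circular.

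You write that ``the tangent line to $\wtC(\tz)$ at $\tgamma(\tz,t)$ is the image under a $C^1$ map (depending continuously on $\tz$) of a fixed direction, hence varies continuously with $(\tz,t)$.'' But the tangent to $\wtC(\tz)$ at $\tgamma(\tz,t)$ is $d(\phi_t)_\tz\bigl(T_\tz\wtC(\tz)\bigr)$, and $T_\tz\wtC(\tz)$ is \emph{not} a fixed direction: it is precisely the unknown whose continuity in $\tz$ we are trying to establish. The same circularity appears in your second paragraph when you ``iterate along the arc $J$'': you push the tangent at $\tx_n$ forward by $d\Gamma_{0,n}$ and use $\Gamma_{0,n}\to\Gamma_0$ in $C^1$, but to conclude that the pushed--forward lines converge to the tangent of $\wtC(\tx)$ you would already need $T_{\tx_n}\wtC(\tx_n)\to T_\tx\wtC(\tx)$. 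Nothing in the mere $C^1$ regularity of the holonomies rules out, a priori, that $T_{\tx_n}\wtC(\tx_n)$ stays at a definite angle from $T_\tx\wtC(\tx)$; and the alternative reading of ``fixed direction'' as $\partial_t\tgamma(\tz,t)$ does not help because that derivative need not be nonvanishing.

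The paper breaks the circularity with a contradiction argument that is absent from your plan. Suppose $\angle\bigl(T_{\tx_n}\wtC(\tx_n),T_\tx\wtC(\tx)\bigr)>\eta$. The family $\phi_t$ (holonomy along the $su$-paths $\Upsilon^{su}_t$) is $C^1$--continuous in the basepoint, uniformly in $t$, so this angular discrepancy is transported along the curve $\beta_n(t)=\phi_t(\tx_n)$, which lies inside $\wtC(\tx_n)$ and whose tangent direction at $\beta_n(t)$ is $d(\phi_t)_{\tx_n}\bigl(T_{\tx_n}\wtC(\tx_n)\bigr)$. One then obtains a uniform lower bound $\delta>0$ for the angle between $\beta_n$ and $\wtC(\tx)$ throughout a tubular neighborhood $N_{[\tx_1,\ty_1]}$ of an arc of $\wtC(\tx)$. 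But by $C^0$--closeness $\beta_n$ crosses that tubular neighborhood from one side to the other, and Lemma~\ref{l.tubular} forces the existence of a point where the angle is less than $\delta$ --- contradiction. This tubular--neighborhood/angle argument is the key step your proposal is missing; without it, the claimed continuity of the tangent line field is assumed rather than proved.
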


Before proving the proposition we need an elementary lemma from calculus.
Let us introduce some notations.
Let $\gamma \colon [0,1] \to S$ be an $C^1$-arc of a surface $S$ and
consider an $\varepsilon$-tubular neighborhood $N_\gamma$.
This tubular neighborhood is diffeomorphic to $[0,1] \times [-\varepsilon,
\varepsilon]$.
Given a point in $N_\gamma$ we identify with coordinates $(t,s)$,
$t \in [0,1]$ and $s \in [-\varepsilon, \varepsilon]$.

We call the \emph{left side} of $N_\gamma$ the boundary $\{0\} \times
[-\varepsilon, \varepsilon]$ and \emph{right side} the boundary $\{1\} \
times [-\varepsilon, \varepsilon]$.
We denote by $\pi \colon N_\gamma \to \gamma$ the orthogonal projection,
i.e., in local coordinates $\pi(t,s) = t$.

\begin{lemma}
\label{l.tubular}
With the notations above, given $\delta > 0$, there exists $\varepsilon =
\varepsilon(\delta)$ such that if $\beta$ is a $C^1$-curve in $N_\gamma$
from the left to the right side (and do not intersect $\gamma$) then there
is some $(t,s) = \beta (\tilde{t})$ such that the angle
$\angle (\dot{\beta}(\tilde{t}), \dot{\alpha}(t)) < \delta$.
\end{lemma}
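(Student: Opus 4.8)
The plan is to reduce the statement to the Mean Value Theorem applied in the tubular coordinates. First I would fix coordinates on $N_\gamma$ identifying it with $[0,1]\times[-\vep,\vep]$ as in the statement, with $\pi(t,s)=t$; after rescaling arc length we may assume $\gamma$ is parametrized so that $\dot\alpha(t)$ (the unit tangent to $\gamma$ at the point with first coordinate $t$) is what we must compare against. The key observation is that, since $\gamma$ is a fixed $C^1$ arc, its tangent direction field $t\mapsto\dot\alpha(t)$ is uniformly continuous on $[0,1]$; so there is a modulus of continuity $\rho$ with $\angle(\dot\alpha(t),\dot\alpha(t'))<\rho(|t-t'|)$. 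Moreover, by choosing $\vep$ small we can make the coordinate chart as close to an isometry as we wish, so that for a point $(t,s)\in N_\gamma$ the "horizontal" coordinate direction $\partial_t$ at $(t,s)$ makes an angle less than, say, $\delta/3$ with $\dot\alpha(t)$ — this is the first place $\vep=\vep(\delta)$ enters.

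Next I would run the Mean Value Theorem on the curve $\beta$. Write $\beta(\tilde t)=(u(\tilde t),v(\tilde t))$ in the $(t,s)$ coordinates, with $\tilde t$ ranging over the parameter interval of $\beta$; since $\beta$ goes from the left side $\{0\}\times[-\vep,\vep]$ to the right side $\{1\}\times[-\vep,\vep]$, the first coordinate $u$ runs from $0$ to $1$, while the second coordinate $v$ stays in $[-\vep,\vep]$, so its total variation is at most $2\vep$ — here one uses that $\beta$ does not meet $\gamma=\{s=0\}$ only to guarantee $\beta$ stays on one side, but in fact the bound $|v|\le\vep$ is all we need. By the Mean Value Theorem there is a parameter value $\tilde t$ where $|\dot v(\tilde t)|/|\dot u(\tilde t)|\le 2\vep/1 = 2\vep$ — more carefully, one compares the $L^1$ norms of $\dot u$ and $\dot v$ over the parameter interval: $\int|\dot u|\ge 1$ while $\int|\dot v|\le 2\vep$, so at some $\tilde t$ the ratio $|\dot v(\tilde t)|\le 2\vep\,|\dot u(\tilde t)|$. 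At that point the tangent vector $\dot\beta(\tilde t)$ makes an angle at most $\arctan(2\vep)$ with the horizontal coordinate direction $\partial_t$ at $\beta(\tilde t)=(t,v)$ with $t=u(\tilde t)$.

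Finally I would assemble the estimates: the angle between $\dot\beta(\tilde t)$ and $\dot\alpha(t)$ is at most the angle between $\dot\beta(\tilde t)$ and $\partial_t$ at that point, plus the angle between $\partial_t$ and $\dot\alpha(t)$, which is at most $\arctan(2\vep)+\delta/3$. Choosing $\vep=\vep(\delta)$ small enough that both $\arctan(2\vep)<\delta/3$ and the chart-distortion bound above holds with $\delta/3$, we get the desired $\angle(\dot\beta(\tilde t),\dot\alpha(t))<\delta$. The only mildly delicate point — the one I would treat as the "main obstacle," though it is not serious — is controlling the distortion of the tubular-neighborhood chart versus the ambient Riemannian metric uniformly along $\gamma$, so that Euclidean angle comparisons in the $(t,s)$ coordinates translate into Riemannian angle bounds; this follows from compactness of $[0,1]$ and $C^1$-ness of $\gamma$, shrinking $\vep$ if necessary. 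Everything else is the elementary calculus argument above.
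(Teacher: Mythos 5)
The paper offers no proof of this lemma --- it is introduced as ``an elementary lemma from calculus'' and used immediately in the proof of Proposition~\ref{p.cont-var} --- so there is nothing internal to compare against; the question is simply whether your argument is sound.

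There is a genuine gap at the central inequality. You assert that since the second coordinate $v(\tilde t)$ of $\beta$ stays in $[-\varepsilon,\varepsilon]$, ``its total variation is at most $2\varepsilon$,'' and from this deduce $\int|\dot v|\le 2\varepsilon$. That implication is false: a function confined to $[-\varepsilon,\varepsilon]$ can have arbitrarily large total variation (take $v(\tilde t)=\varepsilon\sin(n\tilde t)$). What $|v|\le\varepsilon$ controls is the \emph{net} change $\left|\int\dot v\,\right|\le 2\varepsilon$, and that, together with $\int|\dot u|\ge 1$, does \emph{not} produce a parameter where $|\dot v|\le 2\varepsilon|\dot u|$. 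So the ``Mean Value Theorem'' step, as written, does not go through; without more, $\beta$ could in principle oscillate with large slope for most of its length.

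The argument is saved by running it \emph{by contradiction}, which is where the $C^1$-regularity of $\beta$ actually earns its keep. Suppose $\angle(\dot\beta(\tilde t),\dot\alpha(t))\ge\delta$ for every $\tilde t$. With your chart-distortion control (shrink $\varepsilon$ so that $\partial_t$ is within $\delta/3$ of $\dot\alpha$ throughout $N_\gamma$), this forces $\angle(\dot\beta,\partial_t)\ge 2\delta/3$, i.e.\ $|\dot v|\ge\tan(2\delta/3)\,|\dot u|$ everywhere. Since $\dot\beta$ never vanishes, $\dot v$ can never vanish either (a zero of $\dot v$ would make the tangent horizontal or null), so $\dot v$ has constant sign, $v$ is monotone, and \emph{only now} is the bound $\int|\dot v|\le 2\varepsilon$ legitimate. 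Then
$$1\le\int|\dot u|\le\frac{1}{\tan(2\delta/3)}\int|\dot v|\le\frac{2\varepsilon}{\tan(2\delta/3)},$$
which is impossible once $\varepsilon<\tfrac12\tan(2\delta/3)$. With this correction the rest of your assembly --- the angle triangle inequality and the choice of $\varepsilon(\delta)$ --- goes through as you wrote it.
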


Now we are ready to give a proof of Proposition~\ref{p.cont-var}.
Since we will be working in a neighborhood of $\tFF^c$ we may assume that
we are in trivialization chart of the tangent bundle $T\tFF^c$, and so we
may compare angles and norms of vectors in different tangent spaces.

\begin{proof}[Proof of Proposition~\ref{p.cont-var}]
We need to prove that if $\tx \in \tilde{K}$ and $\tx_n \in \tilde{K}$
converges to $\tx$ then $T_{\tx_n}\tC(\tx_n)$ converges to
$T_{\tx}\tC(\tx)$.
Assume that it is not true.
Then there exists some sequence $\tx_n \in \tK$ converging to some point
$\tx \in \tK$ and some $\eta >0$ such that the angle
$\angle (T_{\tx_n}\tC(\tx_n), T_{\tx}\tC(\tx)) > \eta$, for all $n$.

Let $\ty \in \tC(\tx)$, $\ty \neq \tx$ and let $\Upsilon^{su}$ a $su$-path
joining $\ty$ to $\tx$.
We may consider an arc of $su$-paths, i.e., for each $t \in [0,1]$ a
$su$-path $\Upsilon^{su}_t$ that vary continuously joining $\tx$ with some
point $\Upsilon^{su}_t(1) \in \tC(\tx)$ such that $\Upsilon^{su}_0$ is the
trivial $su$-path and $\Upsilon^{su}_1 = \Upsilon^{su}$.
We may assume that the path $\Upsilon^{su}_t(1)$ is the arc joining $\tx$
and $\ty$ in $\tC(\tx)$ denoted by $[\tx,\ty]$.
We will consider tubular neighborhood $N$ of the arc $[\tx, \ty]$.

The path $\Upsilon^{su}_t$ allows us to consider (see the last item of
properties of the map $\Gamma$ in \eqref{eq.gamma} and equivalent for
\eqref{eq.mapacc}) a map $\phi_t \colon B(\tx,r_t) \to \tFF^c$ which is a
$C^1$ diffeomorphism onto its image that contains $\Upsilon^{su}_t(1)$.
We may choose $r_t = r$ independent of $t$.
The family $\phi_t$ varies continuously in the $C^1$ topology due to the
local holonomy is $C^1$ inside center stable and center unstable leaves,
the center foliation is $C^1$ and the path $\Upsilon^{su}_t$ varies
continuously with $t$.

Given $\theta>0$ there exists $\delta_0>0$ and $\rho_0>0$ such that for any
$t$ if $\dist(\tz,\tx) < \rho_0$ and $\angle(T_\tx\tC(\tx), w) > \theta$
then the angle
\begin{equation}
\label{eq.delta0}
\angle(d(\phi_t)_\tx(T_\tx\tC(\tx)), d(\phi_t)_\tz(w)) =
\angle(T_{\phi_t(\tx)}\tC(\tx)), d(\phi_t)_\tz(w)) > \delta_0.
\end{equation}

On the other hand, given $\delta_1>0$ there exists $\varepsilon_1>0$ such
that if
\begin{equation}
\label{eq.epsilon1}
\dist(\pi(\phi_t(\tz)), \phi_t(\tx)) < \varepsilon_1 \mbox{ then }
\angle(T_{\pi(\phi_t(\tz))}\tC(\tx), T_{\phi_t(\tx)}\tC(\tx)) < \delta_1.
\end{equation}
Notice also that there exists $\rho > 0$ such that for any $t$ if
\begin{equation}
\label{eq.rho}
\dist (\tx,\tz)< \rho \mbox{ then } \dist(\pi(\phi_t(\tz), \phi_t(\tx)) <
\varepsilon_1.
\end{equation}

Consider $\tx_1$ and $\ty_1$ in $\tC(\tx)$ between $\tx$ and $\ty$.
Denote by $\gamma = [\tx_1,\ty_1]$ the arc in $\tC(\tx)$ joining $\tx_1$
and $\ty_1$.

Let $\theta = \eta$ and take $\delta_0 = \delta_0(\theta)$ from
\eqref{eq.delta0}.
Choose $\delta_1>0$ such that $\delta_0 - \delta_1 = \delta > 0$ and let
$\varepsilon_1$ from \eqref{eq.epsilon1}.
Choose $\rho < \rho_0$ such that \eqref{eq.rho} holds.

For this $\delta$ choose an $\varepsilon$ tubular neighborhood
$N_{[\tx_1,\ty_1]}$ as in Lemma~\ref{l.tubular} and such that
$N_{[\tx_1,\ty_1]} \subset \cup_{t\in [0,1]} \phi_t(B(\tx,\rho))$.

Now, if $\tx_n$ is close enough to $\tx$ then $\beta_n(t) = \phi_t(\tx_n)$,
$t\in [0,1]$ is a curve that crosses the $N_{[\tx_1,\ty_1]}$ from the left
to the right side.
On the other hand, if $t$ is such that $\beta_n(t) \in N_{[\tx_1,\ty_1]}$
then $\spann(\dot{\beta}_n)= d(\phi_t(\tx_n))(T_{\tx_n}\tC(\tx_n))$ and so
\begin{align*}
\angle(\dot{\beta}_n(t), T_{\pi(\beta_n(t))} & \tC(\tx)) =
\angle(d\phi_t(\tx_n)T_{\tx_n}\tC(\tx_n),
T_{\pi(\phi_t(\tx_n))}\tC(\tx_n))\\
 & > \angle (d\phi_t(\tx_n) T_{\tx_n}\tC(\tx_n),
d\phi_t(\tx) T_\tx\tC(\tx) - \\
 & \quad  - \angle (T_{\pi(\phi_t(\tx_n))}\tC(\tx_n),
d\phi_t(\tx) T_\tx\tC(\tx)) \\
 & >  \delta_0 - \delta_1 = \delta,
\end{align*}
which is a contradiction with Lemma~\ref{l.tubular}.
\end{proof}

\begin{corollary}
\label{c.lamination}
Let $\tFF^c$ be a center leaf in $\tM$ and assume that there is no trivial
accessibility classes.
Then the set $\tK$ of non-open accessibility classes admits a
$C^1$-lamination whose leaves are accessibility classes.
The same holds for the set of non open accessibility classes $K$ in
$\mathcal{F}^c$ whose leaves are connected components of accessibility
classes $C_0(x)$ for $x\in K$.
\end{corollary}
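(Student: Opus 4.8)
The plan is to check directly, for the partition of $\tK$ by the curves $\wtC(\tx)$, $\tx\in\tK$, the three requirements in the definition of a $C^1$-lamination (closedness of $\tK$ with the pieces being $C^1$ one-dimensional manifolds, existence of product charts, and continuity of tangent spaces), and then to transfer everything to $\FF^c$ through the covering $\pi$. For the first item, recall that since there are no trivial accessibility classes, the trichotomy for $\dim E^c=2$ proved above forces each $\wtC(\tx)$ to be \emph{either} open \emph{or} a $C^1$ one-dimensional manifold without boundary. The union of the open ones is open in $\tFF^c$, hence its complement $\tK$ is closed; by Corollary~\ref{tildeconnected} each $\wtC(\tx)$ meets $\tFF^c$ in a single connected piece, and for $\tx\in\tK$ it is a $C^1$ embedded $1$-manifold without boundary. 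These pieces will be the leaves, and they partition $\tK$. The continuity of tangent spaces required in the definition is precisely Proposition~\ref{p.cont-var}: the map $\tz\mapsto T_{\tz}\wtC(\tz)$ is continuous on $\tK$, and since $\tw\in\wtC(\tz)\cap\tK$ implies $\wtC(\tw)=\wtC(\tz)$, continuity at one point of a leaf controls the tangent directions all along that leaf inside a small neighborhood.

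The heart of the argument is the construction of product charts. Fix $\tx_0\in\tK$, pick points $\ty_\pm\in\wtC(\tx_0)$ on the two sides of $\tx_0$ along the arc $\wtC(\tx_0)$, and apply Lemma~\ref{l.path} on each side; gluing the two maps gives a neighborhood $V$ of $\tx_0$ in $\tFF^c$ and a continuous map $\Theta\colon V\times(-1,1)\to\tFF^c$ with $\Theta(\tz,0)=\tz$, with $\Theta(\tz,t)\in\wtC(\tz)$ for every $t$, with $\Theta(\cdot,t)$ a $C^1$ diffeomorphism onto its image for each fixed $t$ (it is a finite composition of the $C^1$ holonomies, which vary continuously in the $C^1$ topology), and with $t\mapsto\Theta(\tx_0,t)$ parametrizing an arc of $\wtC(\tx_0)$ having $\tx_0$ in its interior. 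Choose a short $C^1$ arc $\tau\colon(-1,1)\to\tFF^c$ with $\tau(0)=\tx_0$ transverse to $\wtC(\tx_0)$ at $\tx_0$, and set $h(w,t)=\Theta(\tau(w),t)$. After shrinking, I claim $h$ is a homeomorphism onto an open neighborhood $U$ of $\tx_0$: its partial derivatives at $(0,0)$ span $T_{\tx_0}\tFF^c$ by transversality, $h$ is a local homeomorphism by the continuity of the family $\Theta(\cdot,t)$, and, using Proposition~\ref{p.cont-var} to ensure that all the one-dimensional classes meeting $U$ are nearly parallel to $\wtC(\tx_0)$ while $\tau$ stays nearly transverse to that direction, $\tau$ meets each such class at most once, so $h$ is injective (and then open by invariance of domain). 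Now $h(\{w\}\times(-1,1))\subset\wtC(\tau(w))$, and since membership of a point in $\tK$ depends only on its accessibility class and $\Theta(\tz,t)\in\wtC(\tz)$, we get $h^{-1}(\tK\cap U)=F\times(-1,1)$ with $F=\tau^{-1}(\tK)$ closed in $(-1,1)$; moreover, by the ``meets each class once'' property, $\wtC(\tau(w))\cap U=h(\{w\}\times(-1,1))$, so inside $U$ the accessibility classes are exactly the vertical slices through $F$. This is the desired lamination chart.

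It remains to descend to $\FF^c$. Restricted to a small enough neighborhood of a point of $\tFF^c$, the covering $\pi$ is a diffeomorphism onto a neighborhood in $\FF^c$, and $\pi(\wtC(\tx))=C_0(\pi(\tx))$ by Corollary~\ref{c.connect}. Hence $K$ is closed in $\FF^c$ (its complement is the union of the open classes, which is open by Lemma~\ref{equiv}), it is partitioned by the $C^1$ curves $C_0(x)$, $x\in K$, and pushing the charts of the previous paragraph down by $\pi$ yields lamination charts for $K$; Lemma~\ref{disj} guarantees that the pieces $C_0(x)$ obtained in overlapping charts glue consistently, and the tangent continuity passes down since $\pi$ is a local $C^1$ diffeomorphism.

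I expect the main obstacle to be the chart construction of the second paragraph, namely showing that the holonomy transport $\Theta$ really produces a \emph{product} chart rather than just a continuous map. The two delicate points are that the transversal $\tau$ should hit each nearby accessibility class exactly once (so that $h$ is injective and no leaf re-enters the chart) and that the leaf through each point of $\tK\cap U$ coincides, within $U$, with a single vertical slice; both rest on combining the transport structure of Lemma~\ref{l.path} with the $C^1$-continuity of the leaves from Proposition~\ref{p.cont-var} to control the geometry on a sufficiently small chart, and one must also rule out any collapsing of distinct slices $\Theta(\tz,\cdot)$, $\Theta(\tz',\cdot)$ into a common class, which again follows from the near-parallelism of the one-dimensional classes near $\tx_0$.
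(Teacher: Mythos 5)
Your overall strategy coincides with the paper's (very terse) proof: $\tK$ is closed because the non-one-dimensional classes are open, Proposition~\ref{p.cont-var} gives continuity of the tangent directions, and one builds charts by transversal sections; you also correctly reduce the statement on $\FF^c$ to the one on $\tFF^c$ via the covering map together with Corollary~\ref{c.connect} and Lemma~\ref{disj}. The place where your write-up is not yet airtight is precisely the chart map $h(w,t)=\Theta(\tau(w),t)$. Two points: first, Lemma~\ref{l.path} only gives that $t\mapsto\Theta(\tz,t)$ is a \emph{continuous} path inside $\wtC(\tz)$, not that it is injective or monotone along the one-dimensional manifold $\wtC(\tz)$ (the $\Pi^{su}$-projection of a $su$-path can backtrack), so the vertical slices of $h$ need not be embedded arcs without a reparametrization; second, and more seriously, your injectivity argument controls only the slices over $F=\tau^{-1}(\tK)$. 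For $w,w'\notin F$ with $\tau(w),\tau(w')$ in the \emph{same open} class, nothing in the near-parallelism of the one-dimensional leaves prevents the two arcs $\Theta(\tau(w),\cdot)$ and $\Theta(\tau(w'),\cdot)$ from meeting, so $h$ may fail to be a homeomorphism of the full square $(-1,1)^2$ onto $U$, which is what the lamination definition in the paper requires.

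A cleaner route, closer to the paper's phrase ``using transversal sections,'' avoids the map $h$ altogether: choose a $C^1$ chart $\phi\colon U\to(-1,1)^2$ with $\phi(\tx_0)=(0,0)$ and $d\phi(T_{\tx_0}\wtC(\tx_0))$ equal to the vertical axis. By Proposition~\ref{p.cont-var}, after shrinking $U$ every $\wtC(\tz)$ with $\tz\in\tK\cap U$ has tangent directions within angle $\pi/4$ of the vertical, so each of its connected pieces inside $U$ is a $C^1$ graph $u=g(v)$ with $|g'|<1$; for $U$ small enough each such graph crosses $U$ from bottom to top. Distinct classes give disjoint graphs, so the family of functions $g$ is totally ordered in $u$, and the map $(u,v)\mapsto\bigl(u-g_{\text{ref}}(v),v\bigr)$ (or any standard ``straightening along a monotone family of graphs'') gives the required homeomorphism sending $\tK\cap U$ to $F\times(-1,1)$. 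This sidesteps both the reparametrization issue and the behavior on the open complement. Your final paragraph on descending to $\FF^c$ is fine as stated.
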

\begin{proof}
Since there is no trivial accessibility class the set $\tK$ is closed.
From Proposition \ref{p.cont-var} and using transversal sections it is not
difficult to construct a chart for each $\tx\in\tK.$
\end{proof}

\section{Perturbation Lemmas}
\label{s.perturbation}

In this section we prove our main perturbation techniques that allow us to
prove our theorems.
These are Lemmas \ref{l.primeiro} and \ref{l.segundo}.
Before we state and prove these lemmas we need some elementary results,
the first one says that some perturbations of the identity can be thought
as translations in terms of local coordinates, no matter if we are in the
conservative world, symplectic world, etc.

Let $M$ be a manifold of dimension $d$ and let $S \subset M$ be an
embedded submanifold of $M$ of dimension $k$.
Let $z\in S$ and let $U$ be a neighborhood of $z$ in $M$ and let $V$ be
the connected component of $U\cap S$ containing $z$.
We say that we have \emph{local canonical coordinates} in $V$ if we have a
coordinate chart (or parametrization) $\varphi\colon U \to \mathbb{R}^{d}$
and $\varphi(V)=V_0\subset\mathbb{R}^k$ with $\varphi(z) = 0$.

It is a consequence of Darboux Theorem that if $\omega$ is a symplectic
form in $M$ such that $\omega_{/S}$ is symplectic and $k=2j, d=2l$ and we
write coordinates in $\mathbb{R}^k$ as $(x_1, \dots ,x_j,y_1, \dots ,y_j)$
and in $\mathbb{R}^d$ as $(x_1,\dots ,x_l,y_1,\dots ,y_l)$ then we may
assume that the local chart verifies $\varphi^*(\sum_{i=1}^d dx_i\wedge dy_i)
= \omega|U$ and $\varphi^* (\sum_{i=1}^j dx_i\wedge dy_i) = \omega|V$, see
e.g. \cite{Mo65} and \cite{We71}.

And in case $m$ is a volume form it is well known that we can choose local
coordinates in $\mathbb{R}^d$ as $(x_1,\dots ,x_d)$, we may assume also
that $\varphi^*(dx_1\wedge \dots\wedge dx_d)=m,$ i.e., in local coordinates
the volume form is the standard volume form in $\mathbb{R}^d$ (see e.g.
\cite{Mo65}).

The next lemma says that we can glue an arbitrary small translation near a
point with the identity outside a neighborhood in the conservative and
symplectic setting.
Sophisticated versions of this problem can be found in \cite{DM90} and
\cite{AM07} (pasting lemma).

\begin{lemma}
\label{l.pert_sympl}
Let $M$ be a $C^r$ manifold and let $S \subset M$ be a $C^r$ submanifold
of $M, r\ge 1$.
Let $z\in S$ and let $U$ be a given neighborhood of $z$ in $M$ such that
$V := S \cap U$ has local canonical coordinates.
Then, there exist $V'$, $z \in V' \subset V \subset U$ and
$\varepsilon_0 > 0$ such that for any $0 \le \varepsilon \le \varepsilon_0$
there exists $\delta > 0$ such that for any $w \in \mathbb{R}^k$, $\| w \|
< \delta$ there exists a diffeomorphism $h\colon M \to M$  satisfying\,:
\begin{enumerate}
\item $h$ is $\varepsilon$-$C^r$ close to identity;
\item $h \equiv id$ on $U^c$;
\item $h$ preserves $V$, i.e. $h(V) = V$;
\item $h|V'$ in local coordinates is given by
$$
y \mapsto h(y) = y + w.
$$
\end{enumerate}

If $\omega$ is a symplectic form in $M$ and $\omega_{/S}$ is symplectic then
$h$ can be taken to be a  symplectomorphism. If $m$ is a volume form in
$M$, we can take $h$ to preserves the volume form $m.$
\end{lemma}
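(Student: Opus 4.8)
The plan is to build $h$ directly in the local canonical coordinates, where all three cases (general, volume, symplectic) reduce to producing a diffeomorphism of $\mathbb{R}^d$ that equals a prescribed translation near the origin, equals the identity outside a fixed ball, and preserves the relevant geometric structure. First I would fix a bump function: choose $0<r_1<r_2$ with $\overline{B(0,r_2)}\subset\varphi(U)$, and a $C^\infty$ function $\rho\colon\mathbb{R}^d\to[0,1]$ with $\rho\equiv 1$ on $B(0,r_1)$ and $\supp\rho\subset B(0,r_2)$; set $V'=\varphi^{-1}(B(0,r_1))$. For the general (non-conservative) case one simply takes, in coordinates, $H_w(\zeta)=\zeta+\rho(\zeta)\,w$ (with $w$ understood as lying in the first $k$ coordinates so that the plane $\mathbb{R}^k$ is preserved). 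For $\|w\|<\delta$ small, $\|\nabla\rho\|\,\|w\|<1$, so $H_w$ is a diffeomorphism $\varepsilon$-$C^r$ close to the identity once $\delta$ is chosen small relative to $\varepsilon$ and the $C^r$-norm of $\rho$; it is the identity outside $B(0,r_2)$, equals $\zeta\mapsto\zeta+w$ on $B(0,r_1)$, and preserves the coordinate plane. Transporting back by $\varphi$ and extending by the identity gives $h$.

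The conservative and symplectic cases require the translation to be realized by a \emph{flow} of a compactly supported vector field, which is where the structure is preserved automatically. In the symplectic case, by Darboux we may assume $\varphi^*\omega_0=\omega$ with $\omega_0=\sum dx_i\wedge dy_i$ on $\mathbb{R}^d$ and $\omega_0|_{\mathbb{R}^k}=\sum_{i\le j}dx_i\wedge dy_i$. Writing $w=(a,b)\in\mathbb{R}^j\times\mathbb{R}^j$ for the components in the $\mathbb{R}^k$ plane, the constant vector field $X_0=\sum a_i\partial_{x_i}+\sum b_i\partial_{y_i}$ on the plane extends to $\mathbb{R}^d$ and is the Hamiltonian vector field of the linear function $H(\zeta)=\sum b_i x_i-\sum a_i y_i$. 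Multiplying $H$ by a cutoff $\rho$ as above, the Hamiltonian vector field $X_{\rho H}$ is symplectic, compactly supported in $B(0,r_2)$, tangent to the coordinate plane $\mathbb{R}^k$ (since on that plane $\rho H$ restricts to the generating Hamiltonian of the same translation and the symplectic orthogonal to $T\mathbb{R}^k$ is handled by $\omega_0|_{\mathbb{R}^k}$ being the restriction), and coincides with $X_0$ on $B(0,r_1)$. Take $h$ to be the time-one map of $X_{\rho H}$: it is a symplectomorphism, equals the identity outside $U$, preserves $V$, and on $B(0,r_1)$ it is exactly the translation $\zeta\mapsto\zeta+w$ because there $X_{\rho H}=X_0$ is the constant field. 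Closeness to the identity follows since $\|X_{\rho H}\|_{C^r}\to 0$ as $\|w\|\to 0$. The volume case is identical but easier: for a volume form one produces a compactly supported divergence-free vector field $X$ with $X=X_0$ on $B(0,r_1)$ — e.g. write the constant field $X_0$ as the (divergence-free) curl-type field dual to a compactly supported $(d-2)$-form, or more simply note that for a \emph{constant} field $X_0$ on the plane one may cut off in the transverse directions only, keeping $\operatorname{div}$ zero — and let $h$ be its time-one flow; it preserves $m$ and has the required properties.

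The main obstacle is purely bookkeeping: guaranteeing simultaneously that (i) the cutoff vector field is tangent to $V$ so that $h(V)=V$, (ii) it is \emph{exactly} constant on $V'$ so that $h|_{V'}$ is a genuine translation (not merely close to one), and (iii) the structure-preservation is kept through the cutoff. Point (i) and (iii) are the reason to work with flows of vector fields rather than with an explicit formula like $\zeta+\rho(\zeta)w$, which is neither symplectic nor volume-preserving in general; point (ii) is why the cutoff must be chosen equal to $1$ on a whole neighborhood $B(0,r_1)$ of the origin before being tapered off. Once the vector field $X=X_{\rho H}$ (resp. the divergence-free field) is in hand with $X|_{B(0,r_1)}=X_0$ constant and $\supp X\subset B(0,r_2)$, all the listed properties of $h=X^1$ are immediate, and the dependence $\delta=\delta(\varepsilon)$ comes from the elementary estimate that the time-one flow of a $C^r$-small vector field is $C^r$-close to the identity, with $\|X\|_{C^r}\le \mathrm{const}(\rho,r)\,\|w\|$.
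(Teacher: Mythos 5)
Your overall strategy coincides with the paper's: in both the volume and symplectic cases one takes the time-one map of a compactly supported structure-preserving vector field that equals the constant field $X_0$ near the origin, the vector field being produced by cutting off a stream function (volume case) or a generating Hamiltonian (symplectic case). Two points in your write-up, however, are not yet proofs.

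First, in the symplectic case the assertion that $X_{\rho H}$ is tangent to $\mathbb{R}^k$ on the annulus where $\rho$ is being tapered does \emph{not} follow from the fact that $\omega_0|_{\mathbb{R}^k}$ is the restriction of $\omega_0$. Tangency of $X_{\rho H}$ to $\mathbb{R}^k$ at $p\in\mathbb{R}^k$ is equivalent to $d(\rho H)_p$ vanishing on $(T_p\mathbb{R}^k)^{\perp_{\omega_0}}$. Since $H$ depends only on the $\mathbb{R}^k$-coordinates, $dH_p$ already annihilates those transverse directions, so the condition reduces to $H(p)\,d\rho_p|_{(T_p\mathbb{R}^k)^{\perp_{\omega_0}}}=0$. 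As $H$ is a nonzero linear form on $\mathbb{R}^k$, you need $d\rho$ itself to annihilate the transverse directions along $\mathbb{R}^k$ — a genuine constraint on the cutoff that an arbitrary bump $\rho$ does not satisfy (the Leibniz rule $X_{\rho H}=\rho X_H+H X_\rho$ makes the offending term $H X_\rho$ visible). The fix is to choose $\rho$ radially symmetric, or of product form $\rho_1(\zeta_\parallel)\rho_2(\zeta_\perp)$ with $\rho_2\equiv 1$ near $\zeta_\perp=0$; then $d\rho$ is $\mathbb{R}^k$-valued along $\mathbb{R}^k$ (in the Darboux coordinates the Euclidean and symplectic orthogonal complements agree) and tangency holds. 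This missing verification is exactly the content of the paper's claim $X_{H^u}(y)=X^S_{H^u}(y)$, which the paper secures by taking $H^u$ constant along the fibers of a tubular neighborhood of $S$, so that $dH^u|_{(T_yS)^{\perp_\omega}}=0$.

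Second, in the volume case the proposed shortcut ``cut off in the transverse directions only, keeping $\operatorname{div}$ zero'' does not give a compactly supported field: the support is a slab extending to infinity in the direction of $X_0$, so $h$ would not be the identity off $U$. Discard this alternative; the stream-function construction you also mention (take $\chi=\psi\,x_2$ and $X=(\partial_2\chi,-\partial_1\chi,0,\dots,0)$, which is what the paper does) is the correct one, and it is automatically tangent to $\mathbb{R}^k$ since the second component $-x_2\,\partial_1\psi$ vanishes on $\{x_2=0\}\supset$ the relevant slice of $\mathbb{R}^k$. With these two repairs your argument matches the paper's.
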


\begin{proof}
In the general case (i.e neither conservative nor symplectic) the solution
is easy, just take the time $t$ map (with $t$ small enough) of the flow
generated by a vector field $X$ (in the local coordinates) such that
$X(x)=v$ for $v\in\mathbb{R}^k, \|v\|=1$ and it is identically zero
outside a neighborhood of the origin.
The same idea works in the conservative setting taking $X$ to be
divergence free vector field and in the symplectic setting taking a
Hamiltonian vector field.

Lets consider the conservative setting. Recall that we have local
coordinates, i.e., a chart $\varphi\colon U \to \mathbb{R}^{d}$ and
$\varphi(V)=V_0\subset\mathbb{R}^k$ with $\varphi(z) = 0$ and
$\varphi^*(dx_1\wedge \dots\wedge dx_d)=m.$ Let
$\psi:\mathbb{R}^d\to\mathbb{R}$ be a bump function such that it is equal
to $1$ in a neighborhood of $0$ and it is identically zero outside a
neighborhood of $0$ as well (with closure contained in $\varphi(U)$).
Let $w\in\mathbb{R}^k, \|w\|=1.$
By a linear change of coordinates (preserving $\mathbb{R}^k$) we may
assume that $w=e_1.$
Consider the function $\chi:\mathbb{R}^d\to \mathbb{R}$ given by
$$
\chi(x)=\chi(x_1,x_2,\ldots,x_d)=\psi(x)x_2.
$$
Then, taking $X(x)=(\frac{\partial \chi}{\partial x_2},
-\frac{\partial \chi}{\partial x_1}, 0,\ldots, 0)$ we have that $X$ is
divergence free.
Now, taking the time $t$ map of the flow generated by $X$ for $t$ small we
get the lemma.

Lets consider the symplectic case.
We may assume without loss of  generality that $U$ is contained in a
tubular neighborhood of $S$ and $U = U' \times D$, $D$ fibers of the
tubular neighborhood.
Choose open balls $V_1 \subset V_1' \subset U_1 \subset U_1' \subset U$
centered in $z$ and a $C^\infty$-bump function $\psi \colon M \to
\mathbb{R}$ so that $\psi|U^c_1 \equiv 0$ and $\psi|V_1 \equiv 1$.

For simplicity we will assume that $S$ is bidimensional.
Let $u = (u_1,u_2)$ be a unit vector in $\mathbb{R}^2$ and let
$H^u_0 \colon \mathbb{R}^2 \to \mathbb{R}$ be defined by $H^u_0 (x,y) =
yu_1 - xu_2$.
Notice that $X_{H^2_0} = u$ in $\mathbb{R}^2$.

Let $\tilde{H}^u_0 \colon \mathbb{R}^2 \to \mathbb{R}$ be $\tilde{H}^u_0 =
\psi \cdot H^u_0$ and let $H^u_1 \colon U \to \mathbb{R}$ be
$H^u_1 (y) = H^u_0(\varphi(\pi(y)))$, where $\pi$ is the projection along
the fibers of the tubular neighborhood, and let $H^u \colon M \to
\mathbb{R}$ be such that
\begin{itemize}
\item[$\cdot$] $H^u \equiv 0$ in $U^c$, and
\item[$\cdot$] $H^u = \psi(y) H^u_1(y)$ if $y \in U$.
\end{itemize}
Notice that $H^u$ is $C^\infty$ and the $C^r$-norm is bounded by a
constant $K$ that does not depend on $u$.

Let $y \in S \cap V_1$.
We claim that $X_{H^u}(y) = X^S_{H^u}(y)$, where $X^S_H$ is the hamiltonian
field of $H|S$.
Indeed, $X_{H^u}(y)$ is defined as $\omega(X_{H^u}(y), \cdot) = -dH^u_y$
and $X^S_{H^u}(y)$ as $\omega_0(X^S_{H^u}(y), \cdot) = -d(H^u|S)_y$.
For $y\in V_1 \cap S$, $H^u(y) = H^u(\pi(y))$ and so
$dH^u = dH^u|S \circ d\pi$ and hence $dH^u_y|(T_yS)^{\perp_\omega} = 0$.
Thus, for any $v\in (T_yS)^{\perp_\omega} = 0$, then $X_{H^u}(y) \in T_yS$
and so, since for any $w \in T_yS$, we have
$$
\omega(X_{H^u}(y), w) = \omega_0(X^S_{H^u}(y), w).
$$
We conclude that
$$
X_{H^u}(y) = X^S_{H^u}(y).
$$
This proves the claim.
Finally, taking the time $t$ map of the corresponding hamiltonian flow,
for $t$ small enough, we conclude the lemma.
\end{proof}

For $f\in \EE^r$ we denote the {\em stable manifold of size $\varepsilon$
of a center leaf} $\displaystyle \FF^c_1$ by $W^s_\varepsilon(\FF^c_1) :=
\bigcup_{z \in \FF^c_1}(W^s_\varepsilon(z))$.

\begin{remark}
If $\FF^c_1$ is a compact periodic center leaf and $w \in W^s_\varepsilon
(\FF^c_1)$ then there exists $\varepsilon_0$ such that $w \in
W^s_{\varepsilon_0}(\FF^c_1)$ but $w\notin
\overline{f^n(W^s_{\varepsilon_0}(\FF^c_1))}$, for all $n\ge 1$.
\end{remark}

\begin{lemma}
\label{l.prod_estr} Let $f \in \EE^r$ and let $\FF_1^c$ be a periodic
center leaf of $f$. Let $x \in \FF_1^c$ and let $B$ be a small
neighborhood of $x$ in $M$.
Then there exist $p_1, p_2, w_1, w_2, z_1, z_2 \in B$, $\varepsilon_0,
\varepsilon_1, \varepsilon_2 >0$, and $U_1, U_2$ disjoint neighborhoods of
$w_1, w_2$ in $M$ such that, for $i=1,2$,
\begin{enumerate}
\item $\FF^c(p_i)$, $i=1,2$, are periodic compact center leaves;
\item $w_i \in W^s_{\varepsilon_0}(x) \cap W^u_{\varepsilon_i}(p_i)$;
\item $z_i \in W^u_{\varepsilon_0}(\FF^c_1) \cap W^s_{\varepsilon_i}(p_i)$;
\item $U_i \cap f^n(W^s_{\varepsilon_0}(\FF^c_1)) = \emptyset$, for all
$n\ge 1$;
\item $U_i \cap f^{-n}(W^u_{\varepsilon_i}(\FF^c(p_i))) = \emptyset$, for
all $n\ge 1$;
\item $U_i \cap f^{n}(W^s_{\varepsilon_i}(\FF^c(p_j))) = \emptyset$, for
all $n\ge 0$ and $i,j=1,2$; and
\item $U_i \cap f^{-n}(W^u_{\varepsilon_0}(\FF^c_1)) = \emptyset$, for all
$n\ge 0$.
\end{enumerate}
\end{lemma}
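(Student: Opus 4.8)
The plan is to construct the configuration in two geometric steps followed by a choice of constants: first pick two periodic center leaves generically near $\FF^c_1$ and read off the six points from the local product structure, then fix the radii and shrink the $U_i$ to obtain (4)--(7). Throughout, $W^s_{\mathrm{loc}},W^u_{\mathrm{loc}}$ denote local stable and unstable manifolds of points, and, applied to a center leaf, their local saturations as in the notation before the lemma (so $W^s_{\mathrm{loc}}(\FF^c)$ is a local center-stable manifold). Let $k$ be the period of $\FF^c_1$ and $\OO(\FF^c_1)=\{\FF^c_1,\dots,\FF^c_k\}$ its orbit, a compact nowhere dense set. Shrink $B$ to a ball $B'$ around $x$, small in terms of the fixed local geometry near $x$ and $\FF^c_1$ only, so that all local invariant manifolds of points of $B'$ are embedded discs with the usual transversalities and $C^1$ holonomies, and so that $B'$ is disjoint from $\FF^c_2,\dots,\FF^c_k$. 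Since periodic compact center leaves are dense in $M$ (definition of $\EE^r$) while $\OO(\FF^c_1)$ together with a couple of locally nowhere dense discs specified below cannot cover the open set $B'$, we may pick periodic compact center leaves $L_1=\FF^c(p_1)\ne L_2=\FF^c(p_2)$, neither in $\OO(\FF^c_1)$, each meeting $B'$; distinctness of the leaves gives $p_1\ne p_2$.

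\emph{The six points.} Set $\Sigma_x=\bigcup_{y\in W^s_{\mathrm{loc}}(x)}W^u_{\mathrm{loc}}(y)$, a $C^1$ disc through $x$ with $T_x\Sigma_x=E^s_x\oplus E^u_x$; it is transverse in $M$ to every center leaf passing near $x$, and $W^u_{\mathrm{loc}}(y)\cap W^s_{\mathrm{loc}}(x)=\{y\}$ for $y\in W^s_{\mathrm{loc}}(x)$. Taking $p_i$ close enough to $x$, the center plaque of $L_i$ through $p_i$ cuts $\Sigma_x$ transversally at a single point, which we rename $p_i$; then $p_i\in L_i\cap\Sigma_x$ (item (1)), there is a unique point $w_i:=W^u_{\mathrm{loc}}(p_i)\cap W^s_{\mathrm{loc}}(x)$ near $x$ (item (2)), and $w_i\ne x$, since $L_i\notin\OO(\FF^c_1)$ forces $p_i\notin W^u_{\mathrm{loc}}(x)$. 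As $W^s_{\mathrm{loc}}(x)\cap W^u_{\mathrm{loc}}(\FF^c_1)=\{x\}$ transversally, for $p_i$ close to $x$ the disc $W^s_{\mathrm{loc}}(p_i)$ meets $W^u_{\mathrm{loc}}(\FF^c_1)$ at a single point $z_i$ near $x$ (item (3)). All six points lie in $B$; requiring in addition that $L_2$ meet $B'$ off the locally nowhere dense discs $W^u_{\mathrm{loc}}(p_1)$ and $W^s_{\mathrm{loc}}(p_1)$ yields $w_1\ne w_2$ and $z_1\ne z_2$. (On the way this produces an $su$-path $x\mapsto w_i\mapsto p_i\mapsto z_i\mapsto\FF^c_1$, which is the point of the lemma for the sequel.)

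\emph{Radii and neighbourhoods.} Choose $\varepsilon_0>0$ larger than the stable distance from $x$ to the $w_i$ and than the unstable distance from $\FF^c_1$ to the $z_i$ (so (2)--(3) hold with it), smaller than the local size $\delta$ for which $w_i\notin\overline{W^u_{\delta}(\FF^c_1)}$, and smaller than the threshold furnished by the Remark before the lemma applied to $(f,\FF^c_1)$ and the point $w_i$ (valid since $w_i\in W^s_{\mathrm{loc}}(x)\subset W^s_{\mathrm{loc}}(\FF^c_1)$), which then gives $w_i\notin\overline{f^{n}(W^s_{\varepsilon_0}(\FF^c_1))}$ for all $n\ge1$, i.e.\ (4). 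For each $i$, choose $\varepsilon_i>0$ larger than the unstable and stable distances from $p_i$ to $w_i$ and to $z_i$, and smaller than the threshold of the Remark applied to $(f^{-1},\FF^c(p_i))$ and $w_i$ (valid since $w_i\in W^u_{\mathrm{loc}}(p_i)$), giving $w_i\notin\overline{f^{-n}(W^u_{\varepsilon_i}(\FF^c(p_i)))}$ for $n\ge1$, i.e.\ (5). These choices are compatible for $B'$ small. It remains to rule $w_i$ out of the closures of the two remaining families. For (7): backward iteration contracts the unstable direction, so $\bigcup_{n\ge0}f^{-n}(W^u_{\varepsilon_0}(\FF^c_1))\subseteq\bigcup_{j=1}^k W^u_{\varepsilon_0}(\FF^c_j)$, which lies in $W^u_{\varepsilon_0}(\FF^c_1)$ together with tubes around $\FF^c_2,\dots,\FF^c_k$ that miss a fixed neighbourhood of $x$; as $w_i\notin\overline{W^u_{\varepsilon_0}(\FF^c_1)}$ and $w_i\notin\FF^c_j$ for all $j$, we are done. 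For (6): if $w_i$ lay in the closure of $\bigcup_{n\ge0}f^{n}(W^s_{\varepsilon_i}(\FF^c(p_j)))$, then $f^{-n}(w_i)$ would be within stable distance $\le\varepsilon_i$ of $\FF^c(p_j)$ for some $n\ge0$; but $f^{-n}(w_i)\in W^s(f^{-n}x)$ with $f^{-n}x$ on a leaf of $\OO(\FF^c_1)$, so this would force $\FF^c(p_j)$ to meet a center-stable leaf of that orbit, hence (see below) $\FF^c(p_j)\in\OO(\FF^c_1)$, a contradiction. Finally, pick disjoint open $U_1\ni w_1$, $U_2\ni w_2$ small enough to avoid all of the above; then (4)--(7) hold.

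\emph{The main difficulty.} The two geometric steps are routine local product structure. The substance is in (4)--(7): one must exclude any return of an iterate $f^{\pm n}(U_i)$ into the thin stable or unstable tube under consideration. The Remark handles the direction in which a tube contracts; the remaining danger is a tube that, after contracting, sits near a leaf in the orbit of $\FF^c_1$ or of $\FF^c(p_i)$ which in turn passes back near $x$. Conjugating by the appropriate power of $f$, such a return forces the center-stable (or center-unstable) leaf of $\FF^c_1$ to coincide with, hence meet, that of the \emph{periodic} leaf $\FF^c(p_i)$. This cannot happen: on the space of center leaves subfoliating $\FF^{cs}(\FF^c_1)$ the first-return map of $f$ is a contraction, so $\FF^c_1$ is the unique periodic compact center leaf contained in $\FF^{cs}(\FF^c_1)$ (and likewise for $\FF^{cu}$); hence distinct periodic compact center leaves, and in particular leaves in different $f$-orbits, have disjoint center-stable and disjoint center-unstable leaves. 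Making this reduction precise, item by item, is the only delicate point; the rest is bookkeeping with the contraction and expansion rates of $f$.
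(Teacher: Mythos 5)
Your proof follows the same overall route as the paper's: use density of periodic compact center leaves to pick $L_1, L_2$ near $x$, read off $w_i, z_i$ from local product structure, then tune the radii and shrink $U_i$. You also articulate clearly the point the paper dispatches with ``Observe that trivially it holds\dots'': that two distinct $f$-periodic compact center leaves cannot lie in the same center-stable (resp.\ center-unstable) leaf, because on each such leaf the return map pushes center plaques toward the unique periodic one. That is exactly what makes items (6) and (7) work. (Your use of the disc $\Sigma_x$ tangent to $E^s\oplus E^u$ replaces the paper's quotient of the foliation chart by plaques; these are equivalent devices.)

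There is, however, one genuine gap, in the paragraph \emph{Radii and neighbourhoods}, when you assert ``These choices are compatible for $B'$ small.'' The lemma requires a \emph{single} $\varepsilon_0$ serving both $w_1$ and $w_2$ in items (2), (4), (7). Unwinding the Remark: for $w_i$ at stable distance $\rho_i$ from $\FF^c_1$, the admissible $\varepsilon_0$ must satisfy $\varepsilon_0\ge\rho_i$ (so $w_i\in W^s_{\varepsilon_0}(x)$) and, because $f^{k}(W^s_{\varepsilon_0}(\FF^c_1))\subset W^s_{\lambda^{k}\varepsilon_0}(\FF^c_1)$, also $\varepsilon_0<\lambda^{-k}\rho_i$ so that $w_i$ escapes $\overline{f^{k}(W^s_{\varepsilon_0}(\FF^c_1))}$. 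A common $\varepsilon_0$ therefore exists only if $\max_i\rho_i<\lambda^{-k}\min_i\rho_i$, i.e.\ the ratio $\rho_1/\rho_2$ is bounded by $\lambda^{-k}$. Making $B'$ small shrinks both $\rho_1$ and $\rho_2$ but imposes no control on their ratio, so the claim does not follow. The paper avoids this by the order of quantifiers: it first fixes $\varepsilon_0$ for $w_1$, and only then chooses $\hat p_2$ \emph{close to $\hat p_1$}, so that $w_2$ is close to $w_1$ and inherits the same $\varepsilon_0$. Your construction should be amended the same way: after fixing $p_1$ and $\varepsilon_0$, pick $L_2$ not merely meeting $B'$ off the two exceptional discs, but also so close to $L_1$ that $w_2$ lies in the (nonempty, open) set where the same $\varepsilon_0$ is admissible. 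With that correction the rest of your argument goes through.
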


\begin{proof}
Let $B_0$, $x\in B_0 \subset B$ be a foliated chart of the center
foliation\;:
$$
\varphi \colon B_0 \to D^{m-k} \times D^k ,
$$
where $D^{m-k}$ is a disk in $\mathbb{R}^{m-k}$ and $D^k$ is a disk in
$\mathbb{R}^k$, and the center foliation in $B_0$ through $\varphi$ is
$\{y\} \times D^k$ and $\varphi(x) = (0,0)$.

Let $P_x$ be the plaque of $x$ in $B_0$.
We may assume that $\FF^c_1 \cap B_0 = P_x$. In $B_0$, we identify point
in the same plaque, $B_0/_\thicksim \cong D^{m-k}$. Let $P\colon B_0 \to
B_0/_\thicksim$ the projection map.

For $z \in B_0$, denote by $\hat{z} := P(z)$ the plaque of $z$ and
denote by $W^{c*}_{B_0}(z)$, $*=s,u$, the connected component of
$W^*_\varepsilon (F^c(z)) \cap B_0$ that contains $z$, and by
$W^*_{B_0}(\hat{z}) := P(W^{c*}(z))$ ($B_0$ is small compared to
$\varepsilon$).

In a neighborhood $W$ of $\hat{x}$ we have local product structure.
Since periodic compact center leaves are dense, we may choose
$\hat{p}_1 \in W$ such that $\hat{p}_1$ is contained in a compact
periodic center leaf. (This compact center leaf may intersects
$B_0$ in other plaques than $\hat{p}_1$, but if it does, intersects
finitely many plaques in $B_0$).

Let $\hat{w}_1 = W^s_{B_0}(\hat{x}) \cap W^u_{B_0} (\hat{p}_1)$ and
$w_1 = W^s_\varepsilon (x) \cap P^{-1}(\hat{w}_1)$, $p_1 =
W^u_\varepsilon (w_1) \cap P^{-1}(\hat{p}_1)$, $\hat{z}_1 =
W^u_{B_0}(\hat{x}) \cap W^s_{B_0}(\hat{p}_1)$, and $z_1 =
W^s_\varepsilon (p_1) \cap P^{-1}(\hat{z}_1)$.

\begin{figure}[phtb]
\centering
\psfrag{x1}{$\hat{x}$}
\psfrag{x}{$x$}
\psfrag{p1}{$\hat{p}_1$}
\psfrag{p}{$p_1$}
\psfrag{z1}{$\hat{z}_1$}
\psfrag{z}{$z_1$}
\psfrag{w1}{$\hat{w}_1$}
\psfrag{w}{$w_1$}
\psfrag{F}{$\FF_1^c$}
\psfrag{s}{$s$} \psfrag{u}{$u$}
\includegraphics[height=5cm]{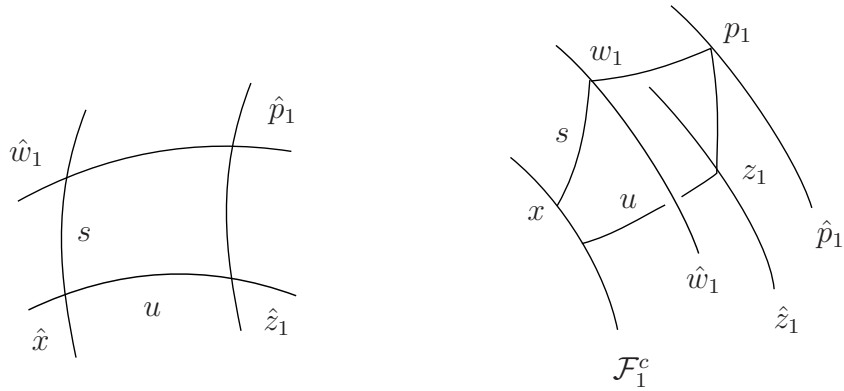}
\caption{$4$-legged path}
\label{f.4-legged}
\end{figure}

Now, we can find $\varepsilon_0, \varepsilon_1 > 0$ such that
\begin{itemize}
\item[$\cdot$] $w_1 \in W^s_{\varepsilon_0}(\FF^c_1)$ but $w_1 \notin
\overline{f^n(W^s_{\varepsilon_0}(\FF^c_1))}$, $n \ge 1$, and
\item[$\cdot$] $w_1 \in W^u_{\varepsilon_1}(\FF^c(p_1))$ but $w_1 \notin
\overline{f^{-n}(W^u_{\varepsilon_1}(\FF^c(p_1)))}$, $n \ge 1$.
\end{itemize}

Now, we may take $\hat{p}_2$ close to $\hat{p}_1$ and contained in a
compact periodic leaf and such that, if we set $\hat{w}_2 =
W^s_{B_0}(\hat{x}) \cap W^u_{B_0} (\hat{p}_2)$, $w_2 =
W^s_\varepsilon (x) \cap P^{-1}(\hat{w}_2)$, $p_2 =  W^u_\varepsilon
(w_2) \cap P^{-1}(\hat{p}_2)$, $\hat{z}_2 = W^u_{B_0}(\hat{x}) \cap
W^s_{B_0}(\hat{p}_2)$, and $z_2 = W^s_\varepsilon (p_2) \cap
P^{-1}(\hat{z}_2)$ then, $w_2 \in W^s_{\varepsilon_0}(\FF^c_1)$ but
$w_2 \notin \overline{f^n(W^s_{\varepsilon_0}(\FF^c_1))}$, for
$n \ge 1$, if $\hat{p}_2$ is close enough to $\hat{p}_1$.
Now, we may choose $\varepsilon_2 >0$ such that $w_2 \in
W^u_{\varepsilon_2}(\FF^c(p_2))$ but $w_2 \notin
\overline{f^{-n}(W^u_{\varepsilon_2}(\FF^c(p_1)))}$, $n \ge 1$.

\begin{figure}[phtb]
\centering
\psfrag{x}{$x$}
\psfrag{p1}{$p_1$}
\psfrag{p2}{$p_2$}
\psfrag{z1}{$z_1$}
\psfrag{z2}{$z_2$}
\psfrag{w1}{$w_1$}
\psfrag{w2}{$w_2$}
\psfrag{F}{$\FF_1^c$}
\psfrag{s}{$s$}
\psfrag{u}{$u$}
\includegraphics[height=5cm]{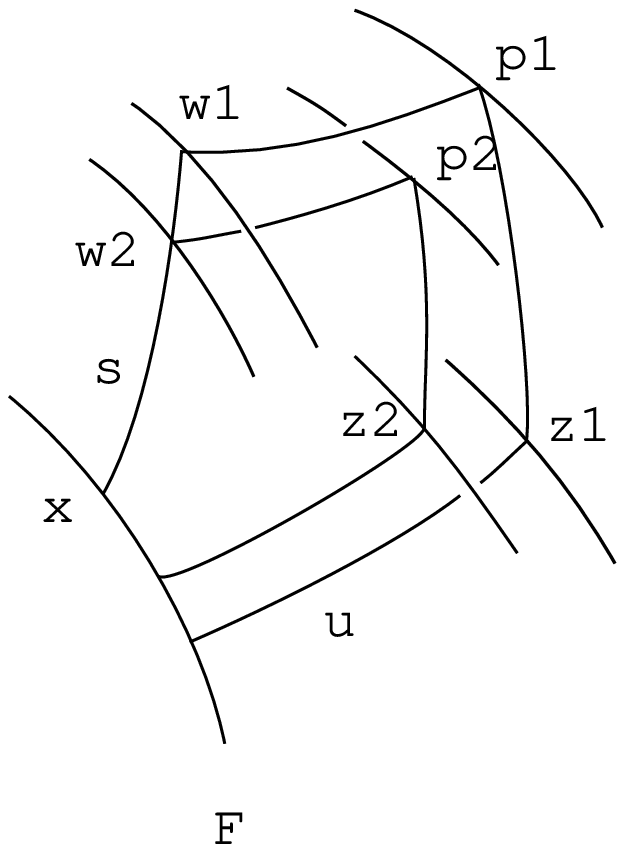}
\caption{Lemma~\ref{l.prod_estr}}
\label{f.lemma}
\end{figure}

Observe that trivially it holds that,
\begin{align*}
& w_i \notin \overline{f^n(W^s_{\varepsilon}(\FF^c(p_j)))},
\quad n \ge 0, \quad i,j = 1,2; \text{ and} \\
& w_i \notin \overline{f^{-n}(W^u_{\varepsilon}(\FF^c_1))}, \quad n
\ge 0, \quad i=1,2.
\end{align*}
From this it is easy to find the neighborhoods $U_1,U_2$. The proof
is complete.
\end{proof}

Given $f\in \EE^r$ and $w \in W^s_\varepsilon(x)$ recall that  we denote
$\Pi^s_f(\FF^c(x), \FF^c(w))$ the (local) holonomy map along the stable
foliation in $\FF^{cs}(x)$ from a neighborhood of $x$ in $\FF^c(x)$ onto
a neighborhood of $w$ in $\FF^c(w)$.

\begin{remark}
\label{equal}
In the setting of Lemma~\ref{l.prod_estr} notice that
$\Pi_f^u(\FF^c(z_i), \FF^c(x))(z_i)$ belongs to $C_0(x)$.
Moreover, if $h \colon M \to M$ is a diffeomorphism close to the identity
such that $h \equiv id$ in $(U_1 \cup U_2)^c$ and preserves $V_i$ the
connected component of $U_i \cup \FF^c(w_i)$ that contains $w_i$ then if
we define $g = f\circ h^{-1}$ we have:
\begin{align}
&\nonumber
\left.
\begin{aligned}
&\FF^c(*,f)=\FF^c(*,g) \quad\mbox{where}\quad *=x,p_1,p_2;\\
&V_i\subset \FF^c(w_i,g);\\
&\FF^c_{loc}(z_i,f)=\FF^c_{loc}(z_i,g);
\end{aligned}
\right. \\
&\left.
\begin{aligned}
& \Pi_g^s(\FF^c(w_i), \FF^c(x)) = \Pi_f^s(\FF^c(w_i), \FF^c(x)) \circ h ;\\
& \Pi_g^u(\FF^c(w_i), \FF^c(p_i)) = \Pi_f^u(\FF^c(w_i), \FF^c(p_i));\\
& \Pi_g^s(\FF^c(z_i), \FF^c(p_i)) = \Pi_f^s(\FF^c(z_i), \FF^c(p_i));
\text{ and}\\
& \Pi_g^u(\FF^c(x), \FF^c(z_i)) = \Pi_f^u(\FF^c(x), \FF^c(z_i)).
\end{aligned}
\right\}
\end{align}
\end{remark}

Denote $\supp (f \neq g) = \{y \colon f(y) \neq g(y)\}$.
The next lemma says that we can destroy trivial accessibility class.

\begin{lemma}
\label{l.primeiro}
Let $\EE$ be as in Theorem~\ref{trivialaccesibility} and let $f\in \EE$
and let $\FF^c(x)$ be a periodic compact center leaf of $f$.
Assume that $C_0(x) = \{ x \}$.
Then, there exists neighborhood $V_x$ of $x$ in $\FF^c(x)$ and
$\varepsilon_0>0$ such that for any $0 < \varepsilon \le
\varepsilon_0$ there exists $g \in \EE$, with $\dist_{C^r} (f,g) <
\varepsilon$ such that:
\begin{enumerate}
\item $\supp (f\neq g)$ is disjoint from the $f$-orbit of $\FF^c(x)$,
\item for any $y \in V_x$ we have $C_0(y,g)) \neq \{ y \}$.
\end{enumerate}
\end{lemma}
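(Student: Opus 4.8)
The idea is to use the $4$-legged configuration from Lemma~\ref{l.prod_estr} together with the localized perturbation provided by Lemma~\ref{l.pert_sympl} to create, inside the center leaf $\FF^c(x)$, a new nontrivial piece of the (center) accessibility class of $x$. First I would apply Lemma~\ref{l.prod_estr} with the given periodic compact leaf $\FF^c_1 = \FF^c(x)$ and a small neighborhood $B$ of $x$, obtaining points $p_1,p_2,w_1,w_2,z_1,z_2$, scales $\varepsilon_0,\varepsilon_1,\varepsilon_2$, and disjoint neighborhoods $U_1,U_2$ of $w_1,w_2$ with the seven stated disjointness properties. The role of the $U_i$ is exactly that a perturbation supported in $U_1\cup U_2$ leaves untouched all the stable/unstable manifolds and holonomies listed in Remark~\ref{equal}; in particular, by item~(1) of Lemma~\ref{l.prod_estr} together with items~(4)--(7), one checks that $\supp(f\neq g)$ can be made disjoint from the whole $f$-orbit of $\FF^c(x)$, which is conclusion~(1). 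One must also note that (being center leaves of nearby diffeomorphisms) the leaves $\FF^c(x,g),\FF^c(p_i,g)$ coincide with those of $f$, so that the computation of center accessibility classes of $g$ may be carried out using the $f$-foliations perturbed only at the $w_i$.

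Next I would realize the perturbation. Set $V_i$ to be the connected component of $U_i\cap\FF^c(w_i)$ containing $w_i$; since $\FF^c(w_i)$ is a $C^r$ (normally hyperbolic) leaf we have local canonical coordinates in $V_i$ in the sense preceding Lemma~\ref{l.pert_sympl} (in the symplectic or volume-preserving case using the Darboux/Moser normal forms recalled there, with $S$ a center leaf on which $\omega$ restricts to a symplectic form, which holds in the $\EE^r_\omega$ and skew-product symplectic settings). Apply Lemma~\ref{l.pert_sympl} in each $U_i$ to obtain, for a given small translation vector $w\in\mathbb{R}^k$, a diffeomorphism $h$ (symplectic, resp. volume-preserving, when required) which is $\varepsilon$-$C^r$-close to the identity, equals the identity off $U_1\cup U_2$, preserves each $V_i$, and acts as a translation $y\mapsto y+w$ on a smaller $V_i'$. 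Put $g = f\circ h^{-1}\in\EE$ (here $\EE$ is $C^1$-open, so $g\in\EE$ for $\varepsilon$ small). By Remark~\ref{equal}, the only holonomy that changes is $\Pi^s_g(\FF^c(w_i),\FF^c(x)) = \Pi^s_f(\FF^c(w_i),\FF^c(x))\circ h$.

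Now I would track the composed holonomy loop $x\to w_i\to p_i\to z_i\to x$. For $f$, following $z_i$ back to $x$ along $u$ gives a point $\Pi^u_f(\FF^c(z_i),\FF^c(x))(z_i)\in C_0(x)=\{x\}$, and likewise the forward $su$-path $x\to w_i\to p_i\to z_i\to x$ is a loop that returns to $x$; this is precisely why $C_0(x,f)=\{x\}$ forces these two legs to agree at $x$. For $g$, the three legs $\FF^c(w_i)\to\FF^c(p_i)$, $\FF^c(p_i)\to\FF^c(z_i)$, $\FF^c(z_i)\to\FF^c(x)$ are unchanged, but the first leg $x\to w_i$ is post-composed with $h$, which is a translation by $w$ near $w_i$. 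Therefore the endpoint of the $g$-loop through $U_i$ is displaced from $x$ by an amount that, to first order, is the nonzero image of $w$ under the (invertible) derivative of the composition of the three unchanged $C^1$ holonomies. Choosing $w\neq 0$ appropriately, and doing this for $i=1$ and $i=2$ with two \emph{linearly independent} directions (this is where having \emph{two} channels $U_1,U_2$, hence a $2$-dimensional worth of directions, matters when $\dim E^c=2$; for the general statement one channel already produces a nonconstant curve), we obtain an $su$-loop of $g$ based at $x$ whose terminal point $q\in\FF^c(x)$ is different from $x$, i.e. $q\in C_0(x,g)\setminus\{x\}$. By the $C^1$-homogeneity of $C(x)$ (Lemma~\ref{l.first}) and Lemma~\ref{l.path}/Remark~\ref{path2}, the same construction works with $x$ replaced by any $y$ in a small neighborhood $V_x$ of $x$ in $\FF^c(x)$ (the points $w_i,p_i,z_i$ and neighborhoods $U_i$ vary continuously with $y$), giving $C_0(y,g)\neq\{y\}$ for all $y\in V_x$, which is conclusion~(2).

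The main obstacle is the bookkeeping in the third paragraph: one must verify that the perturbation genuinely moves the endpoint of the $su$-loop and is not accidentally cancelled. Concretely, the derivative at $w_i$ of the composition $\Pi^u_f(\FF^c(x),\FF^c(z_i))^{-1}$-style chain of the three surviving holonomies is an isomorphism of tangent spaces of center leaves (each holonomy is a $C^1$ diffeomorphism by \cite{PSW97}), so the displacement $w\mapsto (\text{endpoint})$ has nonzero derivative at $w=0$; hence for a suitable arbitrarily small $w$ the endpoint leaves $x$. One then also has to be a little careful that the two perturbations in $U_1$ and $U_2$ do not interfere — but they don't, because $U_1\cap U_2=\emptyset$ and $h=\mathrm{id}$ off $U_1\cup U_2$, so the two displacements can be prescribed independently. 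Finally, the hypothesis $C_0(x,f)=\{x\}$ is used only to guarantee that the unperturbed loop closes up \emph{exactly} at $x$ (so that the perturbed one closes up near but not at $x$); if one merely wanted "$C_0(x,g)$ strictly larger than $C_0(x,f)$" the same computation applies verbatim.
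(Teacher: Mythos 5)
Your setup mirrors the paper's: Lemma~\ref{l.prod_estr} to build the $4$-legged configuration with the two disjoint chambers $U_1,U_2$, Lemma~\ref{l.pert_sympl} to implement small translations there, Remark~\ref{equal} to track that only one holonomy leg changes, and the observation that $C_0(x,f)=\{x\}$ forces the unperturbed loop to close up exactly at $x$, so that a translation by $w\neq 0$ near $w_1$ makes $C_0(x,g)\neq\{x\}$. (Your derivative estimate there is unnecessary: the translation and the stable holonomy are both injective, so the displaced endpoint cannot return to $x$.) The gap is in passing from the single point $x$ to conclusion~(2). The lemma demands a neighborhood $V_x$, fixed \emph{before} $g$, with $C_0(y,g)\neq\{y\}$ for every $y\in V_x$. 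You only prove the conclusion at $y=x$ and then assert that ``the same construction works with $x$ replaced by any $y$'', with ``$w_i,p_i,z_i$ and $U_i$ varying continuously with $y$.'' But the perturbation regions $U_i$ are frozen once $g$ is built; they cannot vary with $y$. And the hypothesis $C_0(x,f)=\{x\}$ controls nothing about the loop map $l_i$ at nearby points $y\neq x$: after composing with the translation $h_i$, the map $h_i\circ l_i$ may perfectly well acquire new fixed points near $w_i$, so there may be $y\in V_x$ fixed simultaneously by both loops, and your argument produces nothing nontrivial in $C_0(y,g)$ for such $y$.

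The paper closes precisely this gap with a regular-value argument. It chooses the translation vector so that $-v$ is a regular value of $l_1-\mathrm{id}$ (Sard applied to the $C^1$ map $l_1$), which forces $h_1\circ l_1$ to have only finitely many fixed points in $\overline{V'_1}$; it then picks $h_2$ so that none of the stable-holonomy images of those finitely many points in $V_2$ is fixed by $h_2\circ l_2$, which is possible since only a finite set must be avoided. Then the two (conjugated) loop maps share no common fixed point, so every $y$ in the a-priori neighborhood $V_x\subset\Pi^s_f(\FF^c(w_i),\FF^c(x))(V'_i)$ is moved by at least one of the two $su$-loops, giving conclusion~(2) with $V_x$ independent of $g$. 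This is also why both chambers $U_1,U_2$ are needed; your parenthetical about ``two linearly independent directions'' when $\dim E^c=2$, and ``one channel already producing a nonconstant curve'' for the general statement, misreads the structure --- Lemma~\ref{l.primeiro} carries no restriction on $\dim E^c$, and the paper's finite-avoidance argument requires both chambers in every dimension.
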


\begin{proof}
Let $x,w_1, w_2, p_1, p_2, z_1, z_2$ and $U_1, U_2$ be as in
Lemma~\ref{l.prod_estr}. For simplicity we write,
$\hat{w}_i = \FF^c(w_i,f)$, $\hat{z}_i = \FF^c(z_i,f)$, and
$\hat{p}_i = \FF^c(p_i,f)$.

\begin{figure}[phtb]
\centering
\psfrag{x}{$\hat{x}$}
\psfrag{p1}{$\hat{p}_1$}
\psfrag{p2}{$\hat{p}_2$}
\psfrag{z1}{$\hat{z}_1$}
\psfrag{z2}{$\hat{z}_2$}
\psfrag{w1}{$\hat{w}_1$}
\psfrag{w2}{$\hat{w}_2$}
\includegraphics[height=5cm]{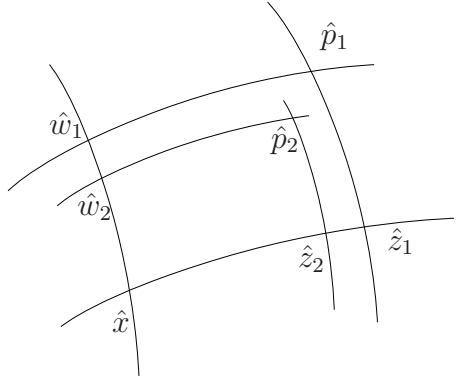}
\caption{Perturbing trivial accessibility classes}
\label{f.perturbation_trivial}
\end{figure}

Since $C_0(x,f) = \{x\}$, we have $\Pi_f^u(\hat{z}_i,\hat{x})(z_i) = x$
and
$$
\Pi^s_f(\hat{w}_i,\hat{x}) \circ \Pi^u_f(\hat{p}_i,\hat{w}_i) \circ
\Pi^s_f(\hat{z}_i,\hat{p}_i) \circ \Pi^u_f(\hat{x},\hat{z}_i) (x) = x,
\quad i=1,2.
$$

Let $W_0$ be a small neighborhood of $x$ in $\FF^c(x)$ and let $W_1
\subset W_0$ such that $W_1 \subset \Pi^s_f(\hat{w}_i,\hat{x}) \circ
\Pi^u_f(\hat{p}_i,\hat{w}_i) \circ \Pi^s_f(\hat{z}_i,\hat{p}_i)
\circ \Pi^u_f(\hat{x},\hat{z}_i) (W_0)$.

We may assume that $U_i$ are small so that if $V_i$ is the connected
component of $U_i\cap\hat{w}_i$ that contains $w_i$ then
$\Pi_f^s(\hat{w}_i,\hat{x})(V_i) \subset W_1$.

Let $\varepsilon >0$ be given (small) and let $V_i'$ as in
Lemma~\ref{l.pert_sympl} for the submanifolds   $\hat{w}_i$
corresponding to $V_i$ and let $V_x \subset
\Pi_f^s(\hat{w}_i,\hat{x})(V_i')$.

Let $l_i \colon V_i \to \hat{w}_i$ be defined by
$$
l_i = \Pi^u_f(\hat{p}_i,\hat{w}_i) \circ
\Pi^s_f(\hat{z}_i,\hat{p}_i) \circ \Pi^u_f(\hat{x},\hat{z}_i) \circ
\Pi^s_f(\hat{w}_i,\hat{x}).
$$
Note that $l_i$ is a $C^1$ map and
$$
\Pi^s_f(\hat{w}_i,\hat{x}) \circ \Pi^u_f(\hat{p}_i,\hat{w}_i) \circ
\Pi^s_f(\hat{z}_i,\hat{p}_i) \circ \Pi^u_f(\hat{x},\hat{z}_i) =
\Pi^s_f(\hat{w}_i,\hat{x}) \circ l_i \circ
(\Pi^s_f(\hat{w}_i,\hat{x}))^{-1}.
$$
Lets look $l_1$ in the local canonical coordinates and let $v$ in
$\mathbb{R}^k$, $\|v\| < \delta$ be such that $-v$ is a regular value of
$l_1-id$.
For this $v$, choose $h_1 \colon M\to M $ as in Lemma~\ref{l.pert_sympl}
(in the appropriate setting for $\EE$) and so $h_1 \circ l_1$ has finitely
many fixed points in $\overline{V'_1}$.
Indeed, if $q$ is a fixed point in $\overline{V'_1}$ of $h_1\circ l_1$
then $l_1(q)-q=-v$ and since $-v$ is a regular value of $l_1-id$ it is an
isolated fixed point.

Let $q_1, \dots , q_\ell$ be the projection of these fixed points in
$V_2$, i.e, $\{ q_1, \dots , q_\ell \}$ $= \Pi^s_f(\hat{w}_1, \hat{w}_2)
(\Fix(h_1 \circ l_1|\overline{V}_1))\cap V_2$.
Choose $h_2 \colon M  \to M $ as in Lemma~\ref{l.pert_sympl} (corresponding
to $U_2, V_2)$ such that no $q_i$ is fixed by $h_2 \circ l_2$.

Let $g \colon M \to M $ be $g=f\circ h^{-1}$ where $h \colon M \to M $ is
defined by
$$
h(z) = \left\{ \begin{array}{cl}
h_1(z) & \text{if } z \in U_1, \\
h_2(z) & \text{if } z \in U_2, \\
z & \text{otherwise}
\end{array}
\right.
$$
It is not difficult to see that (see Remark \ref{equal}):
$$
\Pi^s_g(\hat{w}_i,\hat{x}) \circ \Pi^u_g(\hat{p}_i,\hat{w}_i) \circ
\Pi^s_g(\hat{z}_i,\hat{p}_i) \circ \Pi^u_g(\hat{x},\hat{z}_i) =
\Pi^s_f(\hat{w}_i,\hat{x}) \circ h_i \circ l_i \circ
(\Pi^s_f(\hat{w}_i,\hat{x}))^{-1}.
$$

Now, the maps
$$
\Pi^s_g(\hat{w}_1,\hat{x}) \circ \Pi^u_g(\hat{p}_1,\hat{w}_1) \circ
\Pi^s_g(\hat{z}_1,\hat{p}_1) \circ \Pi^u_g(\hat{x},\hat{z}_1)
$$
and
$$
\Pi^s_g(\hat{w}_2,\hat{x}) \circ \Pi^u_g(\hat{p}_2,\hat{w}_2) \circ
\Pi^s_g(\hat{z}_2,\hat{p}_2) \circ \Pi^u_g(\hat{x},\hat{z}_2)
$$
have no common fixed point.
Thus, for $y \in V_ x$ we have that either for $i= 1$ or $2$ that
$$
\Pi^s_g(\hat{w}_i,\hat{x}) \circ \Pi^u_g(\hat{p}_i,\hat{w}_i) \circ
\Pi^s_g(\hat{z}_i,\hat{p}_i) \circ \Pi^u_g(\hat{x},\hat{z}_i) (y) \neq y.
$$
This completes the proof.
\end{proof}

We need the following elementary result.
For completeness, we give a proof in the appendix.
It says roughly that two nondecreasing maps of the interval with
arbitrarily small translations have no fixed points in common (this is
very simple when the maps are $C^1$ by transversality).

\begin{proposition}
\label{p.bb}
Let $\ell_1 \colon [-a,a] \to \mathbb{R}$ and $\ell_2 \colon [-b,b] \to
\mathbb{R}$ be two non-decreasing maps and let $\phi \colon [-b,b] \to
[-a,a]$ be also a non-decreasing map.
Then for any $\varepsilon > 0$ there exist $s,t$, $|s|,|t|\le \varepsilon$,
such that\,:
$$
\phi(\{ x\in [-b,b] \colon \ell_2(x)+t = x \} ) \cap \{ x\in [-a,a]
\colon \ell_1(x)+s = x \} = \emptyset.
$$
\end{proposition}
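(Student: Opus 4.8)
The plan is to rephrase the statement as a measure estimate for the image of a single map of bounded variation and then close by an elementary covering argument. First I would set $g_1\colon[-a,a]\to\real$, $g_1(y)=y-\ell_1(y)$, and $g_2\colon[-b,b]\to\real$, $g_2(x)=x-\ell_2(x)$, so that $\Fix(\ell_1+s)=g_1^{-1}(s)$ and $\Fix(\ell_2+t)=g_2^{-1}(t)$. Then a pair $(s,t)$ is \emph{bad} (meaning $\phi(g_2^{-1}(t))\cap g_1^{-1}(s)\neq\emptyset$) exactly when there is some $x\in[-b,b]$ with $g_2(x)=t$ and $g_1(\phi(x))=s$; equivalently, the bad set is contained in the image of
$$
\Psi\colon[-b,b]\to\real^2,\qquad \Psi(x)=\bigl(g_1(\phi(x)),\,g_2(x)\bigr).
$$
Hence it suffices to show that $\Psi([-b,b])$ has zero planar Lebesgue measure: then the bad subset of $[-\varepsilon,\varepsilon]^2$ is null while $[-\varepsilon,\varepsilon]^2$ is not, so some $(s,t)$ with $|s|,|t|\le\varepsilon$ is not bad.

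Next I would note that both coordinate functions of $\Psi$ are of bounded variation, using monotonicity only. Indeed $g_2(x)=x-\ell_2(x)$ is the difference of the non-decreasing functions $x\mapsto x$ and $\ell_2$; and since a composition of non-decreasing maps is non-decreasing, $x\mapsto\ell_1(\phi(x))$ is non-decreasing, so $g_1(\phi(x))=\phi(x)-\ell_1(\phi(x))$ is the difference of the non-decreasing maps $\phi$ and $\ell_1\circ\phi$. Setting $w=\phi+\ell_1\circ\phi+\mathrm{id}+\ell_2$ on $[-b,b]$, which is non-decreasing, each of these four non-decreasing functions has increments dominated by those of $w$; hence for $x\le x'$ one gets $|g_1(\phi(x'))-g_1(\phi(x))|\le w(x')-w(x)$ and $|g_2(x')-g_2(x)|\le w(x')-w(x)$.

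Then I would run the covering estimate. Put $L=w(b)-w(-b)$. Given $N\in\mathbb{N}$, partition $[w(-b),w(b)]$ into $N$ intervals of length $L/N$; since $w$ is non-decreasing, the $w$-preimage of each of these is a (possibly degenerate) subinterval $I_k\subset[-b,b]$, and these $I_k$ cover $[-b,b]$. On each $I_k$ both coordinates of $\Psi$ oscillate by at most $L/N$, so $\Psi(I_k)$ lies in a square of area $(L/N)^2$, giving $\Leb(\Psi([-b,b]))\le N\,(L/N)^2=L^2/N$. Letting $N\to\infty$ yields $\Leb(\Psi([-b,b]))=0$, which finishes the proof.

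The only genuinely delicate point is the vanishing of $\Leb(\Psi([-b,b]))$: the image of a one-variable map can in principle be two-dimensional (space-filling curves), so it is essential that $g_2$ and $g_1\circ\phi$ have bounded variation, a property forced here by the monotonicity of $\ell_1,\ell_2,\phi$ alone, with no continuity hypothesis. Everything else is routine; in particular, when $\ell_1,\ell_2,\phi$ are $C^1$ this step is immediate from Sard's theorem applied to the $C^1$ map $\Psi$ of an interval into the plane, which is the transversality argument alluded to before the statement.
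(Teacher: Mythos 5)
Your proof is correct, and it takes a genuinely different route from the paper's. The paper also works with $g_1=\ell_1-\mathrm{id}$ and $g_2=\ell_2-\mathrm{id}$, but then argues by contradiction: assuming that every $(s,t)\in[-\varepsilon,\varepsilon]^2$ is bad, it picks $k>V(g_2;[-b,b])/(2\varepsilon)$ values $s_1<\dots<s_k$, sets $S_i=\overline{\phi^{-1}(g_1^{-1}(s_i))}$, and after a preliminary lemma (the paper's Lemma~\ref{l.ap1}) establishing that the $S_i$ are pairwise almost disjoint (finite intersections, no triple intersections), encloses them in finite unions of intervals with disjoint interiors on each of which $g_2$ must have variation at least $2\varepsilon$; summing contradicts the bound on $V(g_2)$. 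Your argument replaces all of this by a single measure estimate: the bad set is exactly the image of the one-variable map $\Psi(x)=(g_1(\phi(x)),g_2(x))$, and the covering argument with the dominating non-decreasing function $w=\phi+\ell_1\circ\phi+\mathrm{id}+\ell_2$ shows this image is planar Lebesgue-null, so it cannot exhaust $[-\varepsilon,\varepsilon]^2$. This is cleaner and slightly stronger (null rather than merely proper), dispenses with the jump/closure lemmas entirely, and makes the role of monotonicity as a quantitative one-dimensional control completely transparent; the paper's version is closer in spirit to a direct counting of how many disjoint intervals can carry the full range $[-\varepsilon,\varepsilon]$ of $g_2$, which is perhaps more intuitive but technically heavier. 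Both work in the purely measurable setting with no continuity hypotheses, which is what the application requires.
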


\begin{proof}
See Appendix~\ref{a.apA}.
\end{proof}

We now present the last lemma of this section and it will play a key role
in the proof of our main result (Theorem \ref{main}).

\begin{lemma}
\label{l.segundo} Let $\EE$ be either $\EE^r_\omega$ or $\EE^r_{sp,\omega}$
(i.e. as in Theorem \ref{main}).
Consider $f \in \EE$ and let $\FF^c_1$ be a periodic compact center leaf.
Let $x \in \FF_1^c$ and assume that $C_0(x)$ is a $C^1$-simple closed
curve $\CC$.
Let $U$ be a neighborhood of $\CC$ homeomorphic to an annulus and assume
that a family $\Gamma$ of disjoint essential simple closed curves
contained in $U$ is given, with $\CC \in \Gamma$.
Then, there exist a neighborhood $V$ of $\CC$ homeomorphic to an annulus
and $\varepsilon_0 > 0$ such that for any $0 < \varepsilon \le
\varepsilon_0$ there exist $g \in \EE$ such that
\begin{enumerate}
\item $\dist_{C^r}(f,g) < \varepsilon$,
\item $\supp (f \neq g)$ is disjoint from the $f$-orbit of $\FF_1^c$, and
\item no curve of $\Gamma$ contained in $V$ is the accessibility class
of a point in $V$, i.e, for any $y\in V,\; C_0(y,g)\neq \gamma$ for any
$\gamma\in\Gamma.$
\end{enumerate}
\end{lemma}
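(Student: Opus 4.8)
The plan is to mimic the proof of Lemma~\ref{l.primeiro}, replacing ``a trivial class becomes nontrivial'' by ``a simple closed curve of $\Gamma$ stops being an accessibility class'', and replacing the transversality argument used there by the one-dimensional statement of Proposition~\ref{p.bb}. First I would fix a point $x\in\CC\subset\FF^c_1$ and apply Lemma~\ref{l.prod_estr} in a small neighborhood $B$ of $x$ to produce two $4$-legged $su$-loops based at $x$: the data $p_i,w_i,z_i$ and disjoint neighborhoods $U_i$ of $w_i$ ($i=1,2$), with $U_i$ disjoint from the whole $f$-orbit of $\FF^c_1$, so that the support condition (2) is automatic. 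Exactly as in Lemma~\ref{l.primeiro}, for diffeomorphisms $h_i$ close to the identity, supported in $U_i$ and preserving the plaque $V_i=\FF^c(w_i)\cap U_i$, we set $g=f\circ h^{-1}$ with $h=h_1$ on $U_1$, $h=h_2$ on $U_2$, $h=\mathrm{id}$ elsewhere; by Remark~\ref{equal} the loop holonomy of $g$ along the $i$-th $su$-loop, read in $\FF^c_1$ near $x$, is
$$
L_i^g=\Pi^s_f(\hat w_i,\hat x)\circ h_i\circ l_i\circ\big(\Pi^s_f(\hat w_i,\hat x)\big)^{-1},
$$
where $l_i$ is the corresponding loop holonomy of $f$. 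The key point, from \eqref{eq.gamma} (and Remark~\ref{path2}), is that for $z$ in its domain $z$ and $L_i^g(z)$ lie in the same arc-connected component $C_0(z,g)$ of the center accessibility class; hence $L_i^g$ preserves the partition of a neighborhood of $x$ in $\FF^c_1$ into the sets $C_0(\cdot,g)$. In particular, if some $\gamma\in\Gamma$ is a $g$-accessibility class and meets the (fixed) domain $W$ of the maps $L_i^g$, then $\gamma\cap W$ is invariant under both $L_1^g$ and $L_2^g$.

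Next comes the reduction to dimension one. Since the curves of $\Gamma$ are disjoint and essential in the annulus $U$, they are linearly ordered near $\CC$; choose a short $C^1$ arc $J\subset W$ through $x$ transverse to $\CC$ and put $F=J\cap\bigcup\Gamma$. Projecting $L_i^g$ along the curves of $\Gamma$ (in the situation needed for Theorem~\ref{main}, $\Gamma$ is a piece of the $C^1$-lamination of Corollary~\ref{c.lamination}, so this projection is honestly $C^1$; in general one uses the topological ordering) yields a \emph{non-decreasing} map $\bar\ell_i^g\colon J\to J$ whose fixed points in $F$ are precisely the curves of $\Gamma$ invariant under $L_i^g$. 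Moreover, since $L_i^g$ is conjugate to $h_i\circ l_i$ and, by Lemma~\ref{l.pert_sympl} in the symplectic (resp.\ fibered-symplectic) setting appropriate to $\EE$, $h_i$ may be chosen to realize an arbitrarily small prescribed translation in the local canonical coordinates of the surface $\FF^c(w_i)$, the effect of $h_i$ on the interval map is exactly to add a small constant $t_i$: $\bar\ell_i^g=\bar\ell_i+t_i$. Let $\phi$ be the non-decreasing change of coordinate identifying the arc $J$ of the second loop with that of the first.

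Now apply Proposition~\ref{p.bb} with $\ell_1=\bar\ell_1$, $\ell_2=\bar\ell_2$ and this $\phi$. Given $\varepsilon>0$, let $\delta>0$ be the corresponding threshold from Lemma~\ref{l.pert_sympl} for both loops; the proposition yields $s,t$ with $|s|,|t|<\delta$ and $\phi(\{\bar\ell_2+t=\mathrm{id}\})\cap\{\bar\ell_1+s=\mathrm{id}\}=\emptyset$. Realize $t_1=s$, $t_2=t$ by $h_1,h_2$ as above and set $g=f\circ h^{-1}$. Then $\dist_{C^r}(f,g)<\varepsilon$; $g\in\EE$, because the $h_i$ are symplectic (resp.\ fibered over $N$ and area-preserving on each fiber), so $g$ is again a symplectomorphism (resp.\ a symplectic skew product) and lies in the $C^1$-open set $\EE^r$; and $\supp(f\neq g)\subset U_1\cup U_2$, disjoint from the $f$-orbit of $\FF^c_1$ by Lemma~\ref{l.prod_estr}. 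No point of $F$ is fixed by both $\bar\ell_1^g$ and $\bar\ell_2^g$, so no curve of $\Gamma$ that meets $W$ can be invariant under both $L_1^g$ and $L_2^g$, hence none of them is a $g$-accessibility class. Finally choose the annular neighborhood $V$ of $\CC$ thin enough that every essential $\gamma\in\Gamma$ with $\gamma\subset V$ comes within a small distance of $x$ and therefore meets $W$; for such $\gamma$ the previous sentence gives that $\gamma$ is not a $g$-accessibility class, proving (3).

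The main obstacle is the reduction in the second paragraph: one must verify that $L_i^g$ genuinely preserves the cyclic order of the curves of $\Gamma$ crossing $J$, that ``$\gamma$ invariant under $L_i^g$'' corresponds exactly to ``a fixed point of $\bar\ell_i^g$ in $F$'', and that a perturbation of $f$ localized in $U_i$ translates $\bar\ell_i$ by a genuinely arbitrary small amount independent of the other loop. All of this is transparent when $\Gamma$ is a $C^1$-lamination (the case used in the proof of Theorem~\ref{main}), and requires some care in the purely topological formulation; granting it, Proposition~\ref{p.bb} plays exactly the role that transversality of regular values plays in Lemma~\ref{l.primeiro}.
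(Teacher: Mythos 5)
Your proposal follows the paper's proof closely: both fix two $4$-legged $su$-loops produced by Lemma~\ref{l.prod_estr}, realize independent small translations on the two ``free'' plaques via Lemma~\ref{l.pert_sympl}, reduce the question to a pair of non-decreasing interval maps through the cyclic order that $\Gamma$ induces on transversals, and invoke Proposition~\ref{p.bb} to eliminate common fixed points, exactly as you outline. The content you flag as ``the main obstacle'' is precisely what the paper supplies: the explicit closest-point maps $P_i\colon I_i\to J_i$, $\varphi\colon I_1\to I_2$, $\psi\colon J_1\to J_2$ (built from the disjointness and separation properties of the curves of $\Gamma$, then extended monotonically), the verification that the composed map $l_i$ is orientation-preserving because $\CC$ is essential, and the final case analysis showing that if $y_\gamma$ is moved off $\gamma$ in either direction along $I_i$ or $J_i$ then $\gamma$ cannot be an accessibility class.
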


\begin{proof}
Let $x$ be as in statement of the lemma and let $x,w_1, w_2, p_1, p_2, z_1,
z_2$ and $U_1, U_2$ be as in Lemma~\ref{l.prod_estr}.
Again, for simplicity we denote, $\hat{w}_i = \FF^c(w_i,f)$, $\hat{z}_i =
\FF^c(z_i,f)$, and $\hat{p}_i = \FF^c(p_i,f)$.
Let $V_i$ be the connected component of $U_i\cap \hat{w_i}$ that contains
$w_i$ and let $V_i'$ be as in Lemma~\ref{l.pert_sympl}.
Let $W \subset \Pi^s_f (\hat{w}_i,\hat{x})(V_i')$ be open and containing
$x$, $i=1,2$, and let $\CC_w = W \cap \CC$ (we may assume that $\CC_w$ is
an arc). Let $\CC_i = \Pi^s_f(\hat{x},\hat{w}_i)(\CC_w)$, $i=1,2$ (see
Figure~\ref{f.perturbation_curves}).

\begin{figure}[phtb]
\centering
\psfrag{x}{$\hat{x}$}
\psfrag{p1}{$\hat{p}_1$}
\psfrag{p2}{$\hat{p}_2$}
\psfrag{z1}{$\hat{z}_1$}
\psfrag{z2}{$\hat{z}_2$}
\psfrag{w1}{$\hat{w}_1$}
\psfrag{w2}{$\hat{w}_2$}
\psfrag{U}{$U$}
\psfrag{W}{$W$}
\psfrag{C}{$\CC$}
\psfrag{S1}{$S_1$}
\psfrag{S2}{$S_2$}
\psfrag{I1}{$I_1$}
\psfrag{I2}{$I_2$}
\psfrag{C1}{$\CC_1$}
\psfrag{C2}{$\CC_2$}
\psfrag{J1}{$J_2$}
\psfrag{J2}{$J_1$}
\includegraphics[height=7.5cm]{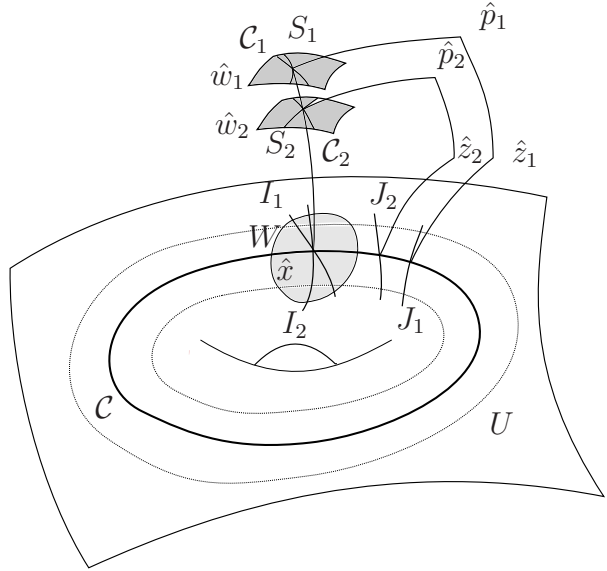}
\caption{Perturbing closed $1$-dimensional accessibility classes.}
\label{f.perturbation_curves}
\end{figure}

In the local canonical coordinates in $V_i$, let $S_i$ be straight segments
transversal to $\CC_i$ at $w_i$, and let $I_i = \Pi^s_f(\hat{w}_i,
\hat{x}) (S_i\cap V_i')$.
These arcs are transversal to $\CC$ at $x$. We take $V$ a compact
neighborhood of $\CC$ homeomorphic to a closed annulus, such that both
$I_i$ crosses $V$ and intersects $\CC$ in just one point.
We may suppose that if $\gamma \in \Gamma$, $\gamma \cap V \neq \emptyset$
then $\gamma \subset V$.
Moreover, we redefine $I_i$ to be the connected component of $I_i\cap V$
that contains $x$ and let $S_i'=\Pi^s_f(\hat{x},\hat{w_i})(I_i) \subset
S_i\cap V_i'.$

Let $\hat{J}_i = \Pi^u_f(\hat{z}_i,\hat{x}) \circ \Pi^s_f(\hat{p}_i,
\hat{z}_i) \circ \Pi^u_f(\hat{w}_i,\hat{p}_i)(S_i)$ and we may also assume
that $\hat{J}_i$ crosses $V$.
Notice that $\hat{J}_i$ are transversal to $\CC$. Let $J_i$ be a connected
component of $\hat{J}_i\cap V$ that crosses $V$ (and we may assume that
$J_i$ intersects $\CC$ in just one point).
We will define maps $P_i \colon I_i \to J_i$, $\varphi \colon I_1 \to I_2$,
and $\psi\colon J_1 \to J_2$ as follows. We will just define $P_1 \colon
I_1 \to J_1$, the others are completely similar.

We order the arcs $I_1, I_2, J_1, J_2$ so that all of them crosses
$\CC$ in ``positive" direction.

Let $\gamma\subset V$ be a curve in the family $\Gamma$.
Let $x_\gamma$ be the closest point of $\gamma \cap J_1$ (in the order of
$J_1$) to $J_1 \cap \CC$, and let $y_\gamma$ be the closest point of $I_1
\cap \gamma$ (in the order of $I_1$ to $x = I_1 \cap \gamma$.
See Figure~\ref{f.P_1}.

\begin{figure}[phtb]
\centering
\psfrag{C}{$\CC$}
\psfrag{g}{$\gamma$}
\psfrag{xg}{$x_\gamma$}
\psfrag{yg}{$y_\gamma$}
\psfrag{I1}{$I_1$}
\psfrag{J1}{$J_1$}
\includegraphics[height=5cm]{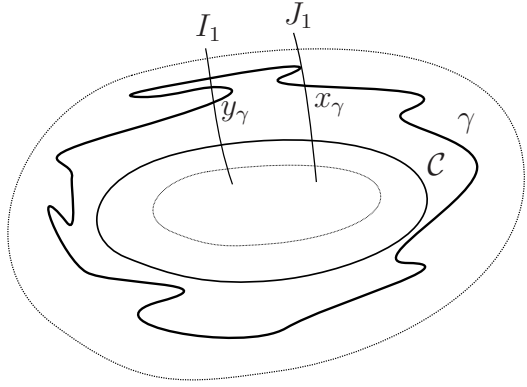}
\caption{The map $P_1$\;.} \label{f.P_1}
\end{figure}

Define $P_1(y_\gamma) := x_\gamma$.
This is a map from $\{ y_\gamma \colon \gamma \in \Gamma \}\subset I_1$ to
$\{ x_\gamma \colon \gamma \in \Gamma \}\subset J_1$.
This map is non-decreasing since for $\gamma, \eta \in \Gamma$, $\gamma
\neq \eta$, if $y_\gamma <_{I_1} y_\eta$ then $x_\gamma <_{J_1} x_\eta$
(since each curve of $\Gamma$ separates $V$ in exactly two components and
the curves in $\Gamma$ are disjoint).

Since the map is non-decreasing it may be extended to a
non-decreasing map $P_1 \colon I_1 \to J_1$. In the same say we
define $P_2 \colon I_2 \to J_2$, $\varphi \colon I_1 \to I_2$, and
$\psi \colon J_1 \to J_2$.

Notice that for $\gamma \in \Gamma$ and $x_\gamma$ as before we have
$$
\psi \circ P_1 (y_\gamma) = P_2 \circ \varphi(y_\gamma).
$$
Let $l_i \colon S_i' \to S_i$ defined by
$$
l_i = \Pi^u_f(\hat{p}_i,\hat{w}_i) \circ
\Pi^s_f(\hat{z}_i,\hat{p}_i) \circ\Pi^u_f(\hat{x},\hat{z}_i) \circ
P_i \circ \Pi^s_f(\hat{w}_i,\hat{x})|S_i'.
$$
We claim that $l_i$ is non-decreasing.
Indeed, it is equivalent to prove that
$$
\tilde{l}_i = \Pi^s_f(\hat{w}_i,\hat{x}) \circ
\Pi^u_f(\hat{p}_i,\hat{w}_i) \circ \Pi^s_f(\hat{z}_i,\hat{p}_i)
\circ \Pi^u_f(\hat{x},\hat{z}_i) \circ P_i
$$
is non-decreasing, which is equivalent to show that
$$
\hat{l}_i = \Pi^s_f(\hat{w}_i,\hat{x}) \circ
\Pi^u_f(\hat{p}_i,\hat{w}_i) \circ \Pi^s_f(\hat{z}_i,\hat{p}_i)
\circ \Pi^u_f(\hat{x},\hat{z}_i)
$$
from $J_i$ to $I_i$ preserves orientation (since it is a diffeomorphism).
Set $x_i=J_i\cap\ \mathcal{C},$ we know that $\hat{l}_i(x_i)=x.$
If $\hat{l}_i$ reverse orientation, then for $y>_{J_i} x_i$ we have that
$\hat{l}_i(y)<_{I_i} x$.
Since $\mathcal{C}$ is essential in $U$ we get that the accessibility
class of $C_0(y)$ must intersect $\mathcal{C}=C_0(x)$ and hence
$\mathcal{C}$ is not an accessibility class, a contradiction.

Now, let $\varepsilon >0$ be given and let $\delta$ be as in
Lemma~\ref{l.pert_sympl}.
For $|s|< \delta$ and $|t|<\delta$ we choose $h_1$ and $h_2$ as in
Lemma~\ref{l.pert_sympl} so that in $V_1'$ we have $h_1(y) = y+v_1$, $v_1$
in the direction of $S_1$, $\| v_1 \| = |s|$ and in $V_2'$ we have
$h_2(y)= y+v_2$, $v_2$ in the direction of $S_2$, $\| v_2 \| = |t|$.
So, $S_1$ is invariant by $h_1$ and $S_2$ is invariant by $h_2$, and
parametrizing $S_1$ and $S_2$, these maps have the form $h_{1/S_1}(y) = y+s$
and $h_{2/S_2}(y) = y + t$.

Now define $g = h \circ f$ where
$$
h = \left\{ \begin{array}{cl}
h_1(x) & \text{ if } x\in U_1, \\
h_2(x) & \text{ if } x\in U_2, \\
x & \text{ otherwise}.
\end{array}
\right.
$$
Notice that $\Pi^u_g(\hat{p}_i, \hat{w}_i) = h_i \circ
\Pi^u_f(\hat{p}_i, \hat{w}_i)$,
$\Pi^s_g(\hat{z}_i, \hat{p}_i) = \Pi^s_f(\hat{z}_i, \hat{p}_i)$,
$\Pi^u_g(\hat{x}, \hat{z}_i) = \Pi^u_f(\hat{x}, \hat{z}_i)$, and
$\Pi^s_g(\hat{w}_i, \hat{x}) = \Pi^s_f(\hat{w}_i, \hat{x})$.

Now, by Proposition~\ref{p.bb}, we may choose $s,t$ so that if
$q$ is a fixed point of
$$
h_1 \circ l_1 = \Pi^u_g(\hat{p}_1,\hat{w}_1) \circ
\Pi^s_g(\hat{z}_1,\hat{p}_1) \circ\Pi^u_g(\hat{x},\hat{z}_1) \circ
P_1 \circ \Pi^s_g(\hat{w}_1,\hat{x})|S_1'
$$
then $\left[ \Pi^s_f(\hat{x},\hat{w}_1) \circ \varphi \circ
\Pi^s_f(\hat{w}_2,\hat{x}) \right]^{-1}(q)$ does not contain any
fixed point of $h_2 \circ l_2$.

Thus, by conjugacy with $\Pi^s_g(\hat{x},\hat{w}_1)$ we have that if
$q$ is a fixed point of
$$
\hat{l}_1 := \Pi^s_g(\hat{w}_1,\hat{x}) \circ
\Pi^u_g(\hat{p}_1,\hat{w}_1) \circ \Pi^s_g(\hat{z}_1,\hat{p}_1)
\circ\Pi^u_g(\hat{x},\hat{z}_1) \circ P_1
$$
then $\varphi^{-1}(q)$ does not contain any fixed point of
$$
\hat{l}_2 := \Pi^s_g(\hat{w}_2,\hat{x}) \circ
\Pi^u_g(\hat{p}_2,\hat{w}_2) \circ \Pi^s_g(\hat{z}_2,\hat{p}_2)
\circ\Pi^u_g(\hat{x},\hat{z}_2) \circ P_2.
$$

Now, let $\gamma \in \Gamma, \gamma\subset V$ and let $y_\gamma, x_\gamma$
as before.
Then $\hat{l}_1(y_\gamma) \in C_0(x_\gamma,g) = C_0(P_1(y_\gamma),g)$.
So, if $y_\gamma$ is not fixed by $\hat{l}_1$, we have two
possibilities: either
\begin{itemize}
\item[(1)] $\hat{l}_1(y_\gamma) <_{I_1} y_\gamma$ or
\item[(2)] $\hat{l}_1(y_\gamma) >_{I_1} y_\gamma$.
\end{itemize}
In case (1), $\hat{l}_1(y_\gamma)$ cannot belong to $\gamma$ by the
definition of $y_\gamma$ and so $C_0(x_\gamma,g)$ is not contained in
$\gamma$.

In case (2), we conclude that the point
$$
z:= \Pi^u_g(\hat{z}_1, \hat{x}) \circ \Pi^s_g(\hat{p}_1, \hat{z}_1)
\circ \Pi^u_g(\hat{w}_1, \hat{p}_1) \circ \Pi^s_g(\hat{x},
\hat{w}_1) (y_\gamma)
$$
satisfies $z <_{J_1} x_\gamma$ and so does not belong to $\gamma$
which implies that $C_0(y_\gamma,g)$ is not contained in $\gamma$.

Finally, assume that $y_\gamma$ is fixed by $\hat{l}_1$ and let
$\bar{x}_\gamma$ be the closest point of $\gamma \cap I_2$ (in the order
of $I_2$) to $x$.
Then we know that $\bar{y}_\gamma$ is not fixed by $\hat{l}_2$ and we
apply the previous argument.
Thus, no curve $\gamma \in \Gamma$ is an accessibility class.
The proof is finished.
\end{proof}

\section{Proof of Theorem \ref{trivialaccesibility}}
\label{s.theorem-trivialaccesibility}
Let $r\ge 2$ and let $\EE$ be as in Theorem \ref{trivialaccesibility},
i.e., $\EE$ is $\EE^r, \EE^r_m, \EE^r_\omega, \EE^r_{sp}$ or
$\EE^r_{sp,\omega}$.
We have to prove the set $\RR_0$ of diffeomorphisms in $\EE$ having no
trivial accessibility classes is $C^1$ open and $C^r$ dense.
This result is a consequence of Lemma \ref{l.primeiro}, as follows.

Lets consider $\Gamma_0 \colon \EE \to \CC(M)=\{$compact subsets of $M \}$
(endowed with the Hausdorff topology)
\begin{equation}\label{eq-gamma0}
\Gamma_0 (f) = \{ x \in M \colon AC(x) \text{ is trivial} \}.
\end{equation}
We observe that $\Gamma_0(f)$ is indeed a compact set, it follows from
Corollary~\ref{nontrivial}.

\begin{lemma}
The map $\Gamma_0$ is upper semicontinuous, i.e., given $f \in \EE$ and a
compact set $K$ such that $\Gamma_0(f) \cap K = \emptyset$ then there
exists a neighborhood $\UU(f)$ of $f$ in $\EE$ (which is also $C^1$ open)
such that $\Gamma_0(g) \cap K = \emptyset$ for all $g \in \UU(f)$.
\end{lemma}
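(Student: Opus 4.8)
The plan is to deduce this upper semicontinuity statement directly from Corollary~\ref{nontrivunif} together with a routine compactness argument; no genuine obstacle is expected, since all the analytic content has already been isolated in that corollary.

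First I would unwind the hypothesis: $\Gamma_0(f) \cap K = \emptyset$ means precisely that for every $x \in K$ the accessibility class $AC(x)$ is \emph{nontrivial}. For each such $x$, Corollary~\ref{nontrivunif} provides an open neighborhood $U_x$ of $x$ in $M$ and a neighborhood $\UU_x(f)$ of $f$ in $\EE$ (which can be taken $C^1$ open) such that $AC(z,g)$ is nontrivial for every $g \in \UU_x(f)$ and every $z \in U_x$. The family $\{U_x\}_{x \in K}$ is then an open cover of the compact set $K$, so I would extract a finite subcover $U_{x_1}, \dots, U_{x_n}$ and set $\UU(f) := \bigcap_{i=1}^n \UU_{x_i}(f)$, which is again a neighborhood of $f$ in $\EE$, and $C^1$ open as a finite intersection of $C^1$ open sets.

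It then remains only to check the conclusion: given $g \in \UU(f)$ and $z \in K$, there is an index $i$ with $z \in U_{x_i}$; since $g \in \UU(f) \subset \UU_{x_i}(f)$, the class $AC(z,g)$ is nontrivial, i.e.\ $z \notin \Gamma_0(g)$. Hence $\Gamma_0(g) \cap K = \emptyset$, which is exactly upper semicontinuity of $\Gamma_0$ (the fact that $\Gamma_0(f)$ is itself compact, so that the statement is meaningful as phrased, was already recorded via Corollary~\ref{nontrivial}). The substantive input — uniform persistence of a nontrivial accessibility class under perturbation of both the base point and the diffeomorphism, which relies on continuity of the stable and unstable manifolds and of the holonomy projections $\Pi^{su}$ with respect to $f$ — is entirely carried by Corollary~\ref{nontrivunif}, so the argument above is the whole proof.
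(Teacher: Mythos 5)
Your proof is correct and is essentially identical to the paper's: both invoke Corollary~\ref{nontrivunif} at each point of $K$, extract a finite subcover by compactness, and intersect the corresponding neighborhoods of $f$.
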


\begin{proof}
Let $y \notin \Gamma_0(f)$. From Corollary~\ref{nontrivunif} there exist
$U(y)$ and $\UU_y(f)$ (which is also $C^1$  open)  such that for any
$g \in \UU_y(f)$ and $z \in U(y)$ we have that $AC(z,g)$ is non-trivial.

Now, consider the family of $U_y$ with $y\in K$. We may cover $K$ with
finitely many of them, say $K\subset \bigcup_{i=1}^n U_{y_i}$.

Let $\UU(f) \subset \bigcap_{i=1}^n \UU_{y_i}(f)$. If $g \in \UU(f)$ and
$z\in K$ then $z\in U_{y_i}$ and $g\in \UU_{y_i}(f)$ for some $i$ and so
$AC(z,g)$ is non-trivial. The proof of the lemma is complete.
\end{proof}

By taking $K = M$ in the previous lemma, we get:

\begin{corollary}
\label{c.1} If for some $f\in\EE$ we have that $\Gamma_0(f) = \emptyset$
then there is a neighborhood $\UU(f)\subset \EE$ (which is $C^1$ open)
such that for any $g \in \UU(f)$ we have $\Gamma_0(g) = \emptyset$.
\end{corollary}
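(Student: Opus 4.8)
The plan is to deduce this corollary directly from the upper semicontinuity of $\Gamma_0$ proved in the preceding lemma, by specializing the compact set there to all of $M$. Concretely: assume $f\in\EE$ satisfies $\Gamma_0(f)=\emptyset$. Since $M$ is a compact manifold, $K:=M$ is an admissible compact set for that lemma, and $\Gamma_0(f)\cap M=\Gamma_0(f)=\emptyset$. Hence the lemma furnishes a neighborhood $\UU(f)$ of $f$ in $\EE$, which moreover may be taken to be $C^1$ open, such that $\Gamma_0(g)\cap M=\emptyset$ for every $g\in\UU(f)$. Since $\Gamma_0(g)\subset M$, this says exactly that $\Gamma_0(g)=\emptyset$ for all $g\in\UU(f)$, which is the assertion. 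There is essentially no obstacle: the single thing to check is that $M$ is a legitimate choice of compact set $K$, which is immediate from compactness of the ambient manifold.

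It is worth recording what this buys us and what it does not. Writing $\RR_0=\{f\in\EE:\Gamma_0(f)=\emptyset\}$ for the set of diffeomorphisms without trivial accessibility classes in the statement of Theorem~\ref{trivialaccesibility}, Corollary~\ref{c.1} is precisely the assertion that $\RR_0$ is $C^1$ open in $\EE$ (and hence $C^r$ open as well), which settles half of Theorem~\ref{trivialaccesibility}. The remaining and genuinely harder half — $C^r$ density of $\RR_0$ — will not come from this soft argument: the plan there will be to invoke the perturbation device of Lemma~\ref{l.primeiro} to destroy trivial accessibility classes near periodic compact center leaves, combined with the structural observation that $\Gamma_0(f)$ is a union of accessibility classes and that (by Lemma~\ref{l.4}) every accessibility class meets every center leaf; this reduces the density statement to emptying $\Gamma_0$ on a single periodic compact center leaf, which one then does on a finite subcover of that (compact) leaf using Lemma~\ref{l.primeiro}, keeping the supports of the finitely many perturbations mutually disjoint and appealing to the upper semicontinuity lemma above to prevent $\Gamma_0$ from reappearing outside the chosen cover.
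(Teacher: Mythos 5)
Your argument is exactly the paper's: the paper proves Corollary~\ref{c.1} by the single remark ``by taking $K=M$ in the previous lemma,'' which is precisely what you do. The additional commentary on how the density half of Theorem~\ref{trivialaccesibility} will be obtained is accurate and consistent with the paper's subsequent proof, but it is not needed for this corollary.
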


Now, we are ready to conclude:
\begin{proof}[Proof of Theorem~\ref{trivialaccesibility}.]
Let $\GG_0$ be the set of continuity points of $\Gamma_0$.
This is a residual set in $\EE$ (since $\EE$ with the $C^r$ topology is a
Baire space).
We claim that if $f\in \GG_0$ then $\Gamma_0(f) = \emptyset$.
Otherwise, let $x \in \Gamma_0(f)$ and we may assume that $x$ belongs to a
periodic compact center leaf (see Lemma \ref{l.4}).

Indeed, by the continuity of $\Gamma_0$ at $f$ we have that for any
neighborhood $V$ of $x$ there exists $\UU(f)$ such that for any $g \in
\UU(f)$ there is $x_g \in V$ such that $AC(x_g,g)$ is trivial.
A direct application of Lemma~\ref{l.primeiro} yields a contradiction and
the claim is proved.

From this and Corollary~\ref{c.1} we get that the set
\begin{equation}
\label{eq.R0}
\RR_0=\{f\in\EE: \Gamma_0(f) = \emptyset\}
\end{equation}
is $C^1$ open and $C^r$ dense in $\EE.$
This set $\RR_0$ is just the set of diffeomorphisms where any accessibility
class is nontrivial.
\end{proof}

\section{The accessibility class of periodic points}
\label{s.pp}

Through this section, we consider $\EE$ to be either $\EE^r, \EE^r_m,
\EE^r_\omega, \EE^r_{sp}$ or $\EE^r_{sp,\omega}$ and with $\dim E^c=2.$

Let $f\in\EE$ and let $\FF^c_1$ be a periodic center leaf of period $k$
and let $p\in \FF^c_1$ be a periodic point of $f$.
We will classify the periodic points with respect to its behaviour on
the central leaf. In particular we say
\begin{itemize}
\item $p$ is \emph{center-hyperbolic of saddle type} if $p$ is hyperbolic
of saddle type with respect to $f^k_{/\mathcal{F}^c_1}$,
\item $p$ is \emph{center-attractor} or \emph{center-repeller} if $p$ is
attractor or repeller w.r.t. $f^k_{/\mathcal{F}^c_1}$,
\item $p$ is \emph{center-elliptic} if $p$ is elliptic w.r.t.
$f^k_{/\mathcal{F}^c_1}$.
\end{itemize}
Assume that $p$ is a center-hyperbolic periodic point of saddle type
$f^k_{/\mathcal{F}^c_1}$.
We denote by $CW^s(p)$ the {\em stable manifold of $p$ with respect to}
$f^k_{/\FF^c_1}$.
We write $CE^s_p \subset T_p\FF^c_1$ the tangent space to $CW^s(p)$.
Analogously, we denote $CW^u(p)$ the {\em unstable  manifold of $p$ with
respect to} $f^k_{/\FF^c_1}$ and
$CE^s_p \subset T_p\FF^c_1$ the tangent space to $CW^s(p)$.

If $p$ is a periodic point of $f$, $p \in \FF^c_1$, and $U$ is a
neighborhood of $p$ in $M$, we denote by $C_0(p,U,f)$ the {\em local
accessibility class of} $p$, that is, the set $y \in \FF^c_1$ that can be
joined to $p$ by $su$-path contained in the neighborhood $U$ of $p$.

We say that a periodic point $p$ of period $\tau(p)$ of $f$ is
{\em generic}  if:
\begin{itemize}
\item $p$ is hyperbolic in the case $\EE=\EE^r$ or $\EE^r_{sp}.$
\item $-1$ and $1$ are not eigenvalues of $Df^{\tau(p)}_p$ in the case
$\EE=\EE^r_m, \EE^r_\omega$ or $\EE^r_{sp,\omega}.$
\end{itemize}

\begin{lemma}
\label{l.cc}
There exists a residual set $\GG_1$ in $\EE$ such that if $f \in \GG_1$
and $p$ is a center-hyperbolic periodic point of saddle type of $f$ for
$f^k_{/\FF^c_1}$ then there exist neighborhoods $U_c$ and $U$ of $p$, $U_c$
in $\FF^c(p)$ and $U$ in $M$, such that $U_c \setminus (CW^s_{loc}(p) \cup
CW^u_{loc}(p))$ has four connected component, $U \cap \FF^c(p) \subset
U_c$, and $C_0(p,U,f)$ is not contained in $CW^s_{loc}(p) \cup
CW^u_{loc}(p)$.
\end{lemma}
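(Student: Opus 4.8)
The plan is to take $\GG_1=\bigcap_{n\ge1}\GG_1^n$, where $\GG_1^n$ controls periodic points of period $\le n$. First observe that if $p$ is center-hyperbolic of saddle type for $f^k_{/\FF^c_1}$ then $Df^{\tau(p)}_p$ is hyperbolic (its eigenvalues along $E^s,E^u$ have modulus $\ne1$, and along $E^c$ one has modulus $<1$ and the other $>1$); in particular $p$ is a \emph{generic} periodic point, and $CW^s_{loc}(p),CW^u_{loc}(p)$ are $C^1$ arcs meeting transversally at $p$. Hence for every sufficiently small disc $U_c\subset\FF^c(p)$ around $p$ the set $U_c\setminus(CW^s_{loc}(p)\cup CW^u_{loc}(p))$ has exactly four components, and for $U$ small we get $U\cap\FF^c(p)\subset U_c$; so the only substantive requirement is that $C_0(p,U,f)$ contain a point off the cross $CW^s_{loc}(p)\cup CW^u_{loc}(p)$ for some small $U$. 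Let $\mathrm{KS}_n\subset\EE$ be the (open and dense, by a Kupka--Smale argument in the corresponding category) set of $f$ all of whose periodic points of period $\le n$ are generic, and set $\GG_1^n=\mathrm{KS}_n\cap\{f\in\EE:\ \text{the conclusion holds for all center-saddle periodic points of period}\le n\}$.

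\textbf{Openness of $\GG_1^n$.} For $f\in\mathrm{KS}_n$ there are only finitely many periodic points of period $\le n$, all hyperbolic and persistent, and being center-saddle is an open condition, so the center-saddle ones persist with continuously varying $CW^s_{loc},CW^u_{loc}$. If $f\in\GG_1^n$, for each such $p$ fix $U$ and $q\in C_0(p,U,f)$ with $\dist(q,CW^s_{loc}(p)\cup CW^u_{loc}(p))=d>0$; the $su$-path joining $p$ to $q$ inside $U$ has finitely many legs, each a compact arc in a stable or unstable leaf, and these vary continuously with the diffeomorphism. Hence a $C^1$-small perturbation $g$ still lies in $\mathrm{KS}_n$ (so acquires no new low-period orbits) and admits, for each persisting $p_g$, a $g$-$su$-path inside $U$ from $p_g$ to a point $q_g$ within $d/2$ of $q$, so $q_g\notin CW^s_{loc}(p_g)\cup CW^u_{loc}(p_g)$; thus $g\in\GG_1^n$.

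\textbf{Density of $\GG_1^n$.} Given $f$, first perturb it into $\mathrm{KS}_n$; now there are finitely many center-saddle periodic points of period $\le n$. We treat each one $p$ for which $C_0(p,U,f)\subset CW^s_{loc}(p)\cup CW^u_{loc}(p)$ for all small $U$ by a perturbation supported in a small neighbourhood $B$ of $p$ disjoint from all these (finitely many) orbits; the supports are disjoint, so the perturbations may be composed. Apply Lemma~\ref{l.prod_estr} with $x=p$, getting $w_1,p_1,z_1$ and $U_1\subset B$; by conclusions (4)--(7) of that lemma (and $\FF^c_1\subset W^s_{\varepsilon_0}(\FF^c_1)\cap W^u_{\varepsilon_0}(\FF^c_1)$), $U_1$ is disjoint from the $f$-orbit of $\FF^c_1$, hence of $p$. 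Consider the holonomy loop
$$
L^f=\Pi^s_f(\hat w_1,\hat x)\circ\Pi^u_f(\hat p_1,\hat w_1)\circ\Pi^s_f(\hat z_1,\hat p_1)\circ\Pi^u_f(\hat x,\hat z_1),
$$
a $C^1$ germ at $p$ in $\FF^c(p)$. Exactly as in the proof of Lemma~\ref{l.primeiro} (via the family of $su$-paths furnished by $\Gamma$ in \eqref{eq.gamma}, Remark~\ref{equal}, and $\pi(\wtC(\tx))=C_0(x)$ from Corollary~\ref{c.connect}), for every $q$ near $p$ there is a short $su$-path joining $q$ to $L^f(q)$, so $L^f(q)\in C_0(q,f)$; shrinking $B$ we may also assume $L^f(p)$ lies as close to $p$ as we wish (and that $(\Pi^s_f(\hat w_1,\hat x))^{-1}(L^f(p))$ lies in the domain $V_1'$ of the forthcoming translation). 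Now, for a small vector $v\in\real^2=\real^{\dim E^c}$, take $h\colon M\to M$ as in Lemma~\ref{l.pert_sympl} (in the class appropriate to $\EE$, and fibered over $N$ in the skew-product cases), equal to the identity off $U_1$, preserving $\hat w_1=\FF^c(w_1)$, and acting on $V_1'\ni w_1$ in local canonical coordinates by $y\mapsto y+v$. Put $g=f\circ h^{-1}\in\EE$. Then $\supp(f\ne g)\subset U_1$ is disjoint from the orbit of $\FF^c_1$, so $p$ remains a period-$\tau(p)$ center-saddle periodic point of $g$ with the same local cross. By Remark~\ref{equal}, only the first holonomy changes, $\Pi^s_g(\hat w_1,\hat x)=\Pi^s_f(\hat w_1,\hat x)\circ h$, whence
$$
L^g(p)=\Pi^s_f(\hat w_1,\hat x)\Bigl(h\bigl((\Pi^s_f(\hat w_1,\hat x))^{-1}(L^f(p))\bigr)\Bigr),
$$
and, as $v$ ranges over a small ball of $\real^2$, this point ranges over a full neighbourhood of $L^f(p)$ in $\FF^c(p)$ (the holonomies are $C^1$ diffeomorphisms). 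Since the cross is nowhere dense, choose $v$ so that $L^g(p)\notin CW^s_{loc}(p)\cup CW^u_{loc}(p)$, with $L^g(p)$ still close to $p$; taking $U$ a thin tube around the short $su$-path from $p$ to $L^g(p)$ (so $U\cap\FF^c(p)$ is small) and $U_c$ a small disc containing it, we get $L^g(p)\in C_0(p,U,g)\setminus(CW^s_{loc}(p)\cup CW^u_{loc}(p))$. Doing this for all bad $p$ gives $g\in\GG_1^n$ arbitrarily $C^r$-close to $f$. Then $\GG_1=\bigcap_n\GG_1^n$ is residual and has the stated property.

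\textbf{Main obstacle.} The delicate point is the identity $L^f(q)\in C_0(q,f)$, i.e.\ that the holonomy loop of a $4$-legged $su$-path lands in the arc-component $C_0$; this is precisely the mechanism behind Lemma~\ref{l.primeiro} and rests on the continuously varying family of $su$-paths of \eqref{eq.gamma} together with Corollary~\ref{c.connect}. The rest is bookkeeping: keeping the perturbations $C^r$-small, with disjoint supports avoiding all low-period orbits, inside the ($C^1$-open) class $\EE$, and, in $\EE^r_{sp}$ and $\EE^r_{sp,\omega}$, choosing $h$ fibered so that $g$ stays a (symplectic) skew product.
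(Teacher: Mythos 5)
Your proof is correct and follows essentially the same strategy as the paper's: write $\GG_1$ as a countable intersection of open dense sets $H_n$ indexed by period, with openness coming from persistence of a nontrivial $su$-path off the local cross, and density coming from a one-point version of the translation-perturbation of Lemma~\ref{l.primeiro} (via Lemmas~\ref{l.prod_estr} and~\ref{l.pert_sympl}). The paper compresses the density step into ``by similar arguments as Lemma~\ref{l.primeiro}''; you have supplied those details (using a single perturbation site $w_1$, which is enough here since only the single constraint $L^g(p)\notin CW^s_{loc}(p)\cup CW^u_{loc}(p)$ must be arranged), and your bookkeeping of disjoint supports, preservation of $\FF^c(p)$, and the fibered case is consistent with what the authors intend.
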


We say that a periodic point $p$ as in the previous lemma satisfies the
{\em Property $(L)$}.

\begin{proof}
Let $H_n = \{ f \in \EE \colon$ all points in $\Fix(f^n)$ are generic and
every center-hyperbolic periodic point $p \in \Fix(f^n)$ of saddle type
satisfies Property~$(L)\}$.

\noindent{\em Claim:} $H_n$ is open and dense in $\EE$.
In fact, notice that $H_n^0 = \{ f\in \EE \colon \Fix(f^n)
\text{ generic}\}$ is open and dense in $\EE$.
Thus, to prove the claim it is enough to show that $H_n$ is open and dense
in $H_n^0$.
It is immediate that $H_n$ is open in $H_n^0$.
Let us show that $H_n$ is dense.
Let $f\in H^0_n$.
We know that there are finitely many center-hyperbolic periodic points in
$\Fix(f^n)$.
Choose a neighborhood $U_c$ for each one as in Property $(L)$.
By similar arguments as Lemma~\ref{l.primeiro} it is not difficult to get
$g \in H_n$ arbitrarily close to $f$ satisfying Property $(L)$.
Finally, set $\GG_1 = \cap_{n\ge 0} H_n$ and the lemma is proved.
\end{proof}

\begin{theorem}
\label{t.peridic_point}
There exists a residual subset $\RR_*$ in $\EE$ such that if $f\in \RR_*$
and $p$ is a periodic point which is neither a center-attractor nor a
center-repeller for $f^k_{/\FF^c_1}$ on a compact periodic center leaf
$\FF^c_1$ then $C_0(p,f)$ is open.
Moreover, if $p$ is center-hyperbolic of saddle type then there exist an
open set $V$ in $M$ (contained in a ball around $p$) and a neighborhood
$\UU(f)$ (which is also $C^1$ open) such that for any $g \in \UU(f)$, we
have $V \subset AC(p_g,g)$ where $p_g$ is the continuation of $p$ for
$g\in \UU(f)$.
\end{theorem}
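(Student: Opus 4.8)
The plan is to set $\RR_* = \RR_0\cap\GG_1$, where $\RR_0\subset\EE$ is the set of Theorem~\ref{trivialaccesibility} (with no trivial accessibility classes; it is $C^1$-open, hence $C^r$-open, and $C^r$-dense, so residual) and $\GG_1\subset\EE$ is the residual set of Lemma~\ref{l.cc}; then $\RR_*$ is residual. Fix $f\in\RR_*$, a compact periodic center leaf $\FF^c_1$ of period $k$, and a periodic point $p\in\FF^c_1$ which is neither a center-attractor nor a center-repeller for $f^k_{/\FF^c_1}$. Since $f\in\GG_1$ the point $p$ is generic, so the two eigenvalues of $Df^k_{/E^c_p}$ are either both real with one of modulus $>1$ and one of modulus $<1$ (so $p$ is center-hyperbolic of saddle type), or a non-real conjugate pair (so $p$ is center-elliptic); the remaining possibilities are excluded by genericity and by the assumption on $p$. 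Note that $C_0(p)=C_0(p,f)$ is $f^k$-invariant, since $C(p)=AC(p)\cap\FF^c_1$ is $f^k$-invariant and $p$ is fixed by $f^k$. By the structure theorem for $\dim E^c=2$ together with Corollary~\ref{c.connect} (see also Corollary~\ref{c.lamination}), $C_0(p)$ is either open, equal to $\{p\}$, or a $C^1$ one-dimensional manifold without boundary; the second alternative is excluded because $f\in\RR_0$. It remains to discard the third.

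If $C_0(p)$ is a $C^1$ curve, then $f^k$ restricts to a $C^1$ diffeomorphism of $C_0(p)$ fixing $p$, so the tangent line $T_pC_0(p)\subset E^c_p$ is invariant under $Df^k_{/E^c_p}$; in particular $Df^k_{/E^c_p}$ has a real eigendirection. If $p$ is center-elliptic this is impossible, and we conclude that $C_0(p)$ is open. This disposes of the elliptic case.

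The main case is $p$ center-hyperbolic of saddle type with $C_0(p)$ a $C^1$ curve. Here $T_pC_0(p)$ equals $CE^s_p$ or $CE^u_p$; assume $T_pC_0(p)=CE^u_p$ (the case $CE^s_p$, and the orientation-reversing ``flip saddle'' case handled through $f^{2k}$, are entirely analogous). I would then argue that an $f^k$-invariant $C^1$ curve through $p$ tangent to $CE^u_p$ must agree with $CW^u_{loc}(p)$ near $p$: a point of $C_0(p)$ close to $p$ lies in the unstable cone, so its backward $f^k$-orbit converges to $p$ and it belongs to $CW^u_{loc}(p)$ (a cone estimate / $\lambda$-lemma inside $\FF^c_1$), while conversely $CW^u_{loc}(p)\subset C_0(p)$ follows from invariance together with the $C^1$-continuity of the leaves of the accessibility lamination along $C_0(p)$ (Proposition~\ref{p.cont-var}). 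Consequently $C_0(p)$ meets no open quadrant of $U_c\setminus(CW^s_{loc}(p)\cup CW^u_{loc}(p))$ near $p$. But Property~$(L)$ for $f\in\GG_1$ provides a point of $C_0(p,U,f)\subset C_0(p)$ lying in an open quadrant; choosing the neighborhood $U$ in Property~$(L)$ small enough (the perturbative construction behind Lemma~\ref{l.cc}, modeled on Lemma~\ref{l.primeiro}, produces Property~$(L)$ at every sufficiently small scale) this contradicts the previous sentence. Hence $C_0(p)$ is open. The \emph{main obstacle} is precisely this reconciliation: one must guarantee that Property~$(L)$ is available on a neighborhood small enough for the cone/invariant-manifold argument to bite, equivalently that an invariant $C^1$ curve through the center-saddle cannot enter $U_c$ off the local invariant manifolds.

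For the final assertion, assume $p$ is center-hyperbolic of saddle type, so $C_0(p,f)$ is open by the above, and pick a compact disk $\bar D\subset C_0(p,f)$ with $p$ in its interior. Each $y\in\bar D$ is joined to $p$ by a $su$-path, and by the maps $\Gamma$ of \eqref{eq.gamma} (and Remark~\ref{path2}) such a path comes in a $C^1$ family parametrized by a neighborhood of $p$ in $\FF^c_1$; by compactness of $\bar D$ finitely many such families cover $\bar D$, so the points of $\bar D$ are reached from $p$ by $su$-paths contained in a fixed compact set and of bounded combinatorics. Since $\EE^r$ is $C^1$-open and the foliations $\FF^s,\FF^u,\FF^c$ and the associated holonomies vary continuously (in the $C^1$ topology) with $g$ near $f$, these $su$-paths and the associated $\Gamma$-families deform to those of $g$; hence there are a $C^1$-open neighborhood $\UU(f)$ and a fixed slightly smaller disk $D'\ni p$ such that $D'\subset C_0(p_g,g)$ for all $g\in\UU(f)$. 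Saturating $D'$ by the stable and unstable leaves of $g$ yields an open subset of $M$ contained in $AC(p_g,g)$, and by continuity of the foliations this subset contains a fixed open set $V\ni p$, independent of $g$, after possibly shrinking $\UU(f)$. Thus $V\subset AC(p_g,g)$ for every $g\in\UU(f)$, as desired.
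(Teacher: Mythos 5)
Your choice of $\RR_*=\RR_0\cap\GG_1$, the elliptic case, and the trichotomy are exactly as in the paper. In the saddle case your cone/$\lambda$-lemma estimate and the paper's ``unique locally invariant graph'' are two phrasings of the same idea: an $f^k$-invariant $C^1$ curve tangent to $CE^s_p$ or $CE^u_p$ must coincide near $p$ with the corresponding local invariant manifold, contradicting Property~$(L)$. You explicitly flag the scale-compatibility issue (the curve must coincide with $CW^{s/u}_{loc}(p)$ on the whole of the Property~$(L)$ neighborhood $U_c$, not merely on a tiny graph patch), and your proposed fix---that the construction in Lemma~\ref{l.cc} yields Property~$(L)$ at arbitrarily small scale---is a reasonable way to close it; the paper handles the same point more tersely via uniqueness of the locally invariant graph on a neighborhood chosen when Property~$(L)$ is established.

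The genuine gap is in your argument for the ``moreover'' part, and it is not merely stylistic. You pick a compact disk $\bar D\subset C_0(p,f)$, cover it by finitely many $\Gamma$-families $\Gamma^i_0\colon U^c_p\to U^c_{y_i}$, and assert that these deform to $g$ and yield a disk $D'\subset C_0(p_g,g)$. But the $\Gamma$-family property is $\Gamma^{i,g}_0(z)\in C(z,g)$ for each individual $z$; only the single point $\Gamma^{i,g}_0(p_g)$ is guaranteed to lie in $C(p_g,g)$. For $z\neq p_g$ you would need $z\in C_0(p_g,g)$ before you could conclude $\Gamma^{i,g}_0(z)\in C_0(p_g,g)$, which is precisely what you are trying to prove. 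So the deformed families produce only finitely many points in $C_0(p_g,g)$ (near $y_1,\dots,y_n$), not an open subset, and the claim $D'\subset C_0(p_g,g)$ does not follow. This is why the paper's proof does something structurally different: it first upgrades ``$C_0(p)$ open'' to $CW^{s}(p)\cup CW^{u}(p)\subset C_0(p)$ by invariance, then uses the separation of the quadrants around the saddle. For $z$ in a fixed open set $V_c$ in quadrant $(III)$, the $\Gamma$-path $\gamma(z,\cdot)$ (ending near a quadrant-$(I)$ point provided by Property~$(L)$) must cross $CW^s_{loc}\cup CW^u_{loc}\subset C_0(p)$, forcing $z\in C_0(p)$; this robust topological separation argument, together with the $C^1$-openness of both $\RR_0$ and Property~$(L)$ (so that the openness argument can be re-run for nearby $g$ to get $CW^{s/u}(p_g)\subset C_0(p_g,g)$), yields a fixed open set $V$ with $V\subset AC(p_g,g)$ uniformly over a $C^1$-neighborhood. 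Your compactness-plus-deformation approach bypasses the quadrant structure and, as written, does not produce the required robustly open seed.
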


\begin{proof}
Let $\RR_* = \RR_0 \cap \GG_1$ where $\RR_0$ is as in
Theorem~\ref{trivialaccesibility} (see also \eqref{eq.R0}).
Let $f \in \RR_*$ and let $p$ be a periodic point of $f$.
Since $f \in \RR_*$, $C_0(p):=C_0(p,f)$ is either open or a one dimensional
$C^1$-submanifold.

Assume first that $p$ is a center-elliptic periodic point of
$f^k_{/\FF^c_1}$ of period $\tau(p)$.
Now, since $C_0(p)$ is invariant under $f^{\tau(p)}$ and there is no
invariant direction of $Df^{\tau(p)}_{/\FF^c(p)}$ we easily conclude that
$C_0(p)$ is open.

Assume now that $p$ is a center-hyperbolic periodic point of
$f^k_{/\FF^c_1}$ of saddle type of period $\tau(p)$ and assume, by
contradiction, that $C_0(p)$ is not open, that is, $C_0(p)$ is a one
dimensional $C^1$-submanifold.
Then $p$ satisfies Property $(L)$.
This implies that there exist non trivial connected set $C \subset
C_0(p,U,f)\subset C_0(p)$ and not contained in $CW^s_{loc}(p) \cup
CW^u_{loc}(p)$.

On the other hand, $T_pC_0(p)$ must be an invariant direction (by the
invariance of $C_0(p)$) by $Df^{\tau(p)}_p$ and so $T_pC_0(p) = CE^s_p$ or
$CE^u_p$.
Assume that $T_pC_0(p) = CE^s_p$.
Thus, $C_0(p)$ is locally a graph around $p$ (via the exponential map) of
a map from $CE^s_p \to CE^u_p$.

Now, this graph is not contained in $CW^s_{loc}(p)$ (since $p$ satisfies
Property~$(L)$ and the connected set $C  \subset C_0(p,U,f))$.
But notice that this graph is locally invariant, by the invariance of
$C_0(p)$.
This is a contradiction since there is a unique locally invariant graph,
namely $CW^s_{loc}(p)$.
Analogously, if $T_pC_0(p) = CE^u_p$ we use the same argument for $f^{-1}$.
Thus, we have proved that $C_0(p)$ is open.

As the accessibility class $C_0(p)$ of a center-hyperbolic periodic point
of saddle type $p$ is open and that $CW^s_{loc}(p)$ and $CW^u_{loc}(p)$
intersect $C_0(p)$ then, by invariance we get that $CW^s(p)$ and $CW^u(p)$
are contained in $C_0(p)$.

From Property $(L)$, we know that there exists $y_f \in U_c \setminus
CW^s_{loc}(p) \cup CW^u_{loc}(p)$ so that $y_f \in C_0(p,U,f)$.
Lets order the four connected component clockwise beginning with the one
that contains $y_f$.
See Figure~\ref{f.pert_open}.

\begin{figure}[phtb]
\centering
\psfrag{p}{$p_f$}
\psfrag{I}{$(I)$}
\psfrag{II}{$(II)$}
\psfrag{III}{$(III)$}
\psfrag{IV}{$(IV)$}
\psfrag{Vc}{$V^c$}
\psfrag{Bc}{$B^c(p_f)$}
\psfrag{Bcy}{$B^c(y_f)$}
\psfrag{y}{$y_f$}
\psfrag{U}{$U_c$}
\includegraphics[height=9cm]{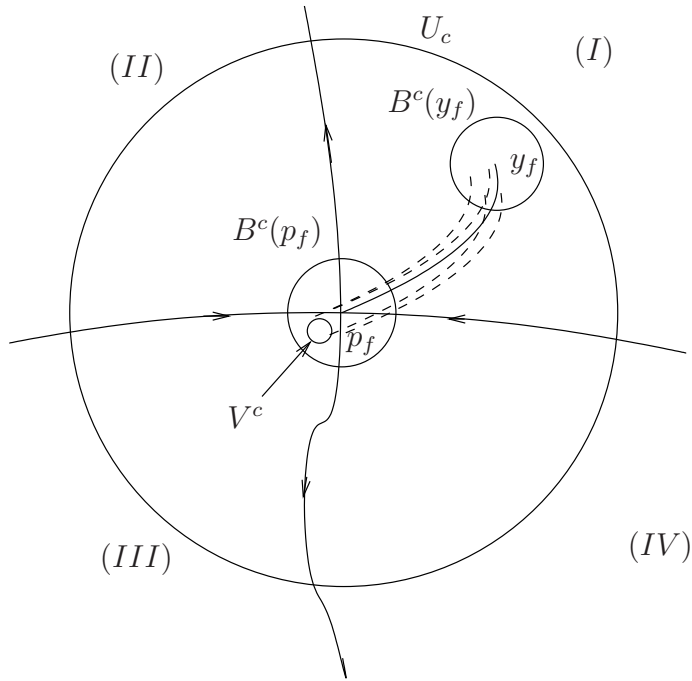}
\caption{Accessibility class of hyperbolic periodic points.}
\label{f.pert_open}
\end{figure}

Let $B_c(y_f)$ be a ball centered in $y_f$ contained in $U_c \setminus
(CW^s_{loc}(p) \cup CW^u_{loc}(p))$, that is, $B_c(y_f) \subset (I)$.

We know (see Lemma~\ref{l.path}) that there exists a continuous map
$\gamma \colon B_c(p) \times [0,1] \to U_c$ such that
\begin{itemize}
\item[$\cdot$] $\gamma(z,0) = z$,
\item[$\cdot$] $\gamma(z,1) \subset B_c(y_f)$, and
\item[$\cdot$] $\gamma(z,t) \subset C_0(z,U,f)$.
\end{itemize}

Let $V_c$ be an open set such that $\overline{V_c} \subset B_c(p) \cap
(III)$.
Thus, for any $z\in V_c$ we have for some $t_0$ that $\gamma(z,t_0) \in
(CW^s_{loc}(p) \cup CW^u_{loc}(p)) \subset C_0(p)$ and so
$V_c \subset C_0(p)$.

Finally, we saturate $V_c$ by local (strong) stable and unstable manifolds
to obtain an open set $V \subset M$.
This set $V$ satisfies the requirement of the theorem for $\UU(f)$ small
enough by the continuation of center leaves, strong stable and unstable
leaves, the continuation of $p$, and the continuation of $\gamma$ and that
Property $(L)$ is open.
The theorem is proved.
\end{proof}

Let $f \in \RR_*$ and let $p$ be a center-hyperbolic periodic point of
saddle type (belonging to a periodic compact center leaf).
Let $\UU(f)$ corresponding to $p$ and $f$ in Theorem~\ref{t.peridic_point}
(we denote by $p_g$ the continuation of $p$ for $g \in \UU(f)$).

Let $\Gamma_1 \colon \UU(f) \to \CC(M)$,
\begin{equation}\label{eq.gamma1}
\Gamma_1(g) = M \setminus AC(p_g,g).
\end{equation}

Notice that $\Gamma_1$ is well defined since $AC(p_g,g)$ is open for
$g\in \UU(f)$.

\begin{proposition}
\label{p.gamma1upper}
The map $\Gamma_1$ is upper semicontinuous, i.e., given $g\in \UU(f)$ and
a compact set $K$ such that $\Gamma_1(g) \cap K = \emptyset$ then there
exists a neighborhood $\VV(g) \subset \UU(f)$ (which is also $C^1$ open)
such that for any $h \in \VV(g)$ we have that $\Gamma_1(h) \cap K =
\emptyset$.
\end{proposition}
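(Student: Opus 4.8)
The plan is to reduce the statement to a perturbation-robust, local version of accessibility from the continuation of $p$, and then conclude by a compactness argument, exactly along the lines of the upper semicontinuity of $\Gamma_0$ established above and of Corollary~\ref{nontrivunif}. First I would unwind the definitions: the hypothesis $\Gamma_1(g)\cap K=\emptyset$ means $K\subset AC(p_g,g)$, and the conclusion to be reached is that $K\subset AC(p_h,h)$ for all $h$ in a $C^1$-open neighborhood of $g$ inside $\UU(f)$. Note that, by Theorem~\ref{t.peridic_point}, $AC(p_g,g)$ contains a nonempty open set for every $g\in\UU(f)$, hence is open by Lemma~\ref{equiv}; in particular $C(p_g,g)=AC(p_g,g)\cap\FF^c(p_g)$ is an open neighborhood of $p_g$ in $\FF^c(p_g)$.

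The key step will be the local claim: for $g\in\UU(f)$ and $z\in AC(p_g,g)$ there exist an open neighborhood $W_z\ni z$ in $M$ and a $C^1$-open neighborhood $\VV_z(g)\subset\UU(f)$ such that $W_z\subset AC(p_h,h)$ for every $h\in\VV_z(g)$. To prove it I would fix a $su$-path $\alpha$ of $g$ from $p_g$ to $z$ and consider the thickened map $\Gamma^g\colon U^c_{p_g}\times[0,1]\to M$ associated to $\alpha$ as in \eqref{eq.gamma}, choosing $U^c_{p_g}\subset C(p_g,g)$. Then $\Gamma^g_0=\Gamma^g(\cdot,1)$ is a $C^1$ diffeomorphism onto a neighborhood of $z$ in $\FF^c(z)$; since $\Gamma^g_0(w')\in AC(w',g)=AC(p_g,g)$ for $w'\in U^c_{p_g}$, this neighborhood lies in $AC(p_g,g)$, and saturating it by local strong stable and strong unstable manifolds yields an open set $W_z\ni z$ inside $AC(p_g,g)$. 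Now, as $h$ varies $C^1$-close to $g$ in $\EE$, the continuation $p_h$, the periodic compact center leaf through it, the center foliation near $\FF^c(p_g)$ (structurally stable because $\EE^r$ is $C^1$-open with the stated properties), the finitely many strong stable and strong unstable leaf segments composing $\alpha$, and the strong stable/unstable holonomies all vary continuously in the $C^1$ topology; hence so does the thickened map, and for $h$ in a suitable $\VV_z(g)$ the saturation of $\Gamma^h_0(U^c_{p_h})$ by local strong leaves of $h$ still contains a fixed smaller open neighborhood of $z$, which we relabel $W_z$. This gives $W_z\subset AC(p_h,h)$ and proves the claim.

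To finish, given $g\in\UU(f)$ and a compact $K$ with $\Gamma_1(g)\cap K=\emptyset$, I would apply the claim to each $z\in K$, extract a finite subcover $K\subset\bigcup_{i=1}^n W_{z_i}$, and set $\VV(g)=\bigcap_{i=1}^n\VV_{z_i}(g)$, which is $C^1$-open; then for every $h\in\VV(g)$ we have $K\subset\bigcup_{i=1}^n AC(p_h,h)=AC(p_h,h)$, i.e.\ $\Gamma_1(h)\cap K=\emptyset$. I expect the only genuinely delicate point to be the uniformity in $h$ of the persistence of the thickened $su$-path — namely that one smaller neighborhood of $z$ works simultaneously for all $h$ in a single $C^1$-neighborhood of $g$ — but this is precisely the type of argument already carried out in Corollary~\ref{nontrivunif}, resting on structural stability of the center foliation and the $C^1$-continuity of the strong holonomies, so no essentially new obstacle arises.
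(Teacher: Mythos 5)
Your overall strategy — thicken a $su$-path, use $C^1$-continuity of the strong holonomies and structural stability to make it persist under perturbation, then cover $K$ by finitely many of the resulting open sets — is the same as the paper's, and the compactness step at the end is fine. However, there is a gap in the "local claim," and it is not at the point you anticipate.

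The issue is the domain side, not the target side. For $g$ you choose $U^c_{p_g}\subset C(p_g,g)$, which is legitimate. For the perturbed map $h$ you then work with a domain $U^c_{p_h}$ and assert that the saturation of $\Gamma^h_0(U^c_{p_h})$ lies in $AC(p_h,h)$. But $\Gamma^h_0(w')\in AC(w',h)$, and to upgrade this to $AC(p_h,h)$ you need $w'\in C(p_h,h)$, i.e.\ $U^c_{p_h}\subset C(p_h,h)$, \emph{with a size of $U^c_{p_h}$ that is uniform over $h\in\VV_z(g)$} (otherwise the image $W_z$ is not a fixed neighborhood of $z$). Neither structural stability of the center foliation nor $C^1$-continuity of the holonomies (the ingredients you invoke, and the ones behind Corollary~\ref{nontrivunif}) gives you this: a priori $C(p_h,h)$ could shrink as $h$ varies, and establishing that it does not is essentially the content of the proposition you are trying to prove, applied to a small $K$ around $p_g$ — so as written the argument is circular.

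The way out, which the paper uses, is the uniform open set $V$ produced by the "moreover" clause of Theorem~\ref{t.peridic_point}: there is a single open set $V\subset M$ with $V\subset AC(p_h,h)$ for \emph{every} $h\in\UU(f)$. You quote Theorem~\ref{t.peridic_point} only to get openness of $AC(p_g,g)$, but its real role here is exactly to break the circularity: $V\cap\FF^c(p_h,h)$ is a uniformly large neighborhood of $p_h$ inside $C(p_h,h)$ (equivalently, one can run the $su$-path from $y\in K$ to a point of $V$, as the paper does, and only needs $AC(z,h)\cap V\neq\emptyset$ — no statement about the domain lying in $C(p_h,h)$ is required). Once you anchor the construction at this fixed $V$ rather than at a freely chosen $U^c_{p_g}$, the rest of your argument goes through unchanged.
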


\begin{proof}
Let $V$ be the fixed open set in $M$ from Theorem~\ref{t.peridic_point},
that is, $V \subset AC(p_g,g)$ for every $g \in \UU(f)$.
Let $g \in \UU(f)$ and $K$ compact with $\Gamma_1(g) \cap K = \emptyset$
be given.
Let $y \in K$, then there exists $U_y$ and $\UU_y(g)$ such that for any
$h \in \UU_y(g)$ and any $z \in U_y$ we have that $AC(z,h) \cap V \neq
\emptyset$ (see Corollary~\ref{nontrivunif}), in other words $U_y \subset
AC(p_h,h)$ for any $h \in \UU_y(g)$.

Now, cover $K$ with finitely many of these open sets $U_y$, that is,
$K \subset \bigcup_{i=1}^n U_{y_i}$. Let $\VV(g) = \bigcap_{i=1}^n
\UU_{y_i}(g)$.
Then, for every $h \in \VV(g)$ we have that $K \subset AC(p_h,h)$.
The proof of proposition in finished.
\end{proof}

\begin{corollary}\label{c.accopen}
Assume that for $g \in \UU(f)$ we have that $\Gamma_1(g) = \emptyset$,
then $g$ is accessible.
Moreover, there exists $\VV(g)$ (which is also $C^1$ open) such that any
$h \in \VV(g)$ is accessible.
\end{corollary}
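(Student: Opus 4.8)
The plan is short, because essentially all the substantive work has already been carried out: the first assertion is a tautology once one unwinds the definition of $\Gamma_1$, and the second is an immediate application of the upper semicontinuity proved in Proposition~\ref{p.gamma1upper}.

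First I would recall that, by \eqref{eq.gamma1}, we have $\Gamma_1(g) = M \setminus AC(p_g,g)$. Hence the hypothesis $\Gamma_1(g) = \emptyset$ says exactly that $AC(p_g,g) = M$, and by Definition~\ref{defacc} this is precisely the statement that $g$ is accessible (recall that in Definition~\ref{defacc} the condition $AC(x)=M$ for one point $x$ is equivalent to $AC(x)=M$ for all $x$). So the first assertion needs no argument beyond this unwinding.

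Next, for the ``moreover'' part, I would apply Proposition~\ref{p.gamma1upper} with the compact set $K = M$. Since $\Gamma_1(g) \cap M = \Gamma_1(g) = \emptyset$, that proposition yields a neighborhood $\VV(g) \subset \UU(f)$ — which, as stated there, can be taken $C^1$ open — such that $\Gamma_1(h) \cap M = \emptyset$, i.e. $\Gamma_1(h) = \emptyset$, for every $h \in \VV(g)$. Applying the first part of the corollary with $h$ in place of $g$, every such $h$ is accessible. This gives both the existence of $\VV(g)$ and its $C^1$ openness.

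There is no real obstacle here; the only point requiring a little care is to make sure the neighborhood $\VV(g)$ inherits $C^1$ openness, but that is exactly the parenthetical claim built into Proposition~\ref{p.gamma1upper} (which in turn rests on the fixed open set $V \subset AC(p_g,g)$ valid for all $g \in \UU(f)$ produced in Theorem~\ref{t.peridic_point}, together with Corollary~\ref{nontrivunif}). So the proof is a two-line deduction from the preceding results.
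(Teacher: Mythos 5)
Your proof is correct and is exactly the intended deduction: unwind $\Gamma_1(g)=M\setminus AC(p_g,g)$ for the first claim, then apply Proposition~\ref{p.gamma1upper} with $K=M$ and reuse the first claim for the second. The paper states the corollary without proof precisely because it is this immediate consequence, so there is nothing further to compare.
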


\section{Proof of Theorem \ref{axa}}
\label{s.axa}

Let $\EE$ be either $\EE^r_A$ or $\EE^r_{A,m}$ and with $\dim E^c=2$, that
is, the set of $\EE^r$ or $\EE^r_m$ where $\dim E^c=2$ and supporting a
Center Axiom-A.
Let $\RR_0$ be as in Theorem~\ref{trivialaccesibility} and $\RR_*$ as in
Theorem~\ref{t.peridic_point} (both restricted to $\EE^r_A$ or
$\EE^r_{A,m}).$
Thus, $\RR_0\cap\RR_*$ is residual in $\EE.$
Let $f\in \RR_0\cap\RR_*$ (although in the proof of
Theorem~\ref{t.peridic_point} we construct $\RR_* \subset \RR_0$).
We will prove that $f$ is accessible. Lets see the properties we know for
$f:$
\begin{itemize}
\item Any accessibility class is nontrivial.
\item The accessibility classes of center-hyperbolic periodic points of
saddle type of $f$ are open.
\item If $p$ is a center-hyperbolic periodic point of saddle type, then it
satisfies Property $(L).$
\item There exists a compact periodic center leaf $\FF^c_1$ such that
$f^k_{/\FF^c_1}$ is an Axiom-A diffeomorphism without having both periodic
attractor and periodic repellers, where $k$ is the period of $\FF^c_1.$
We will assume for simplicity that it has no attractors.
\item If $p$ is center-hyperbolic periodic point of saddle type, then the
stable and unstable manifolds in the center leaf $CW^s(p)$ and $CW^u(p)$
are contained in the accessibility class of $p.$
\end{itemize}

Lets denote by $\Lambda$ a basic piece of $f^k_{/\FF^c_1}$ which is not a
periodic center-repeller.
Recall that the stable and unstable manifolds $CW^s(\OO(p))$ and
$CW^u(\OO(p))$ are dense in $\Lambda$ for any periodic point $p\in\Lambda.$
Let $x\in\Lambda$ be any point.
We know that $C_0(x)$ is open or a one dimensional $C^1$ manifold without
boundary containing $x.$
In any case, we have that it intersects $CW^s(\OO(p))$ or $CW^u(\OO(p))$
and therefore $C_0(x) \cap C_0(f^i(p)) \neq \emptyset$ for some $i.$
Therefore, $C_0(x) = C_0f^i(p)$.
This means that $\Lambda$ is contained in $\cup_{q\in\OO(p)}C_0(q)$ for
$p\in\Lambda$ periodic.
By the invariance of $\cup_{q\in\OO(p)}C_0(q)$ we also have that
$CW^s(\Lambda)$ and $CW^u(\Lambda)$ are contained in
$\cup_{q\in\OO(p)}C_0(q).$
Let $F_0$ be the set of periodic center-repellers which is a finite set.
Let $F_1$ be the set of center-hyperbolic periodic points of saddle type.
Since there are no periodic center-attractor we have that
$\Per(f_{/\FF^c_1}) = F_0 \cup F_1$.

Since every point in $\FF^c_1$ is contained in the stable manifold (inside
the center leaf) of the basic pieces, we conclude that
$\FF^c_1 \setminus F_0 \subset \cup_{p\in F_1}C_0(p)$ and that $C_0(p)$ is
open for any periodic point in $F_1$.
By connectedness we conclude that $\FF^c_1 \setminus F_0 \subset C_0(p)$
(for any periodic point $p \in F_1$).
Since the accessibility classes are non trivial for every $q \in F_0$, we
have that $C_0(q) \cap C_0(p) \neq \emptyset$ and so $C_0(q) = C_0(p)$.
Thus $\FF^c_1 = C_0(p)$ for any $p \in F_1$.
That is, the center leaf $\FF^c_1$ is just one center accessibility class
and by Lemma~\ref{l.4} we have that $f$ is accessible, as we claimed.

Finally, since $f\in\RR_*$ and fixing a center-hyperbolic periodic point
of saddle type $p$ of $f$, in the setting of Section~\ref{s.pp}, we have a
neighborhood $\UU(f)$ and a map $\Gamma_1$ defined in $\UU(f).$
Due to what we just proved  $\Gamma_1(f) = \emptyset$ holds and so by
Corollary~\ref{c.accopen} there exists $\UU_0(f)$ such that any
$g\in\UU_0(f)$ is accessible.
Thus,
$$
\RR_1=\bigcup_{f\in\RR_0\cap\RR_*}\UU_0(f)
$$
is $C^1$ open and $C^r$ dense in $\EE$ and formed by accessible
diffeomorphisms.
This completes the proof of Theorem~\ref{axa}.
\hfill $\Box$

\subsection{Examples}

We present here some examples where Theorem \ref{axa} applies.

\begin{example}
This example can be thought as a conservative version of the well known
Shub's example on $\mathbb{T}^4$ (\cite{Sh71}).

Consider the Lewowicz family (see \cite{Le80}) of conservative
diffeomorphisms on $\T^2:$
$$
f_c(x,y)=\left(2x -\frac{c}{2\pi}\sin2\pi x+y, x- \frac{c}{2\pi}\sin2\pi
+y\right),\,\,\,c\in\mathbb{R}.
$$
Notice that when $c=0$ $f_c$ is Anosov and when $1<c<5$ the fixed point at
$(0,0)$ is an elliptic fixed point.
We just consider for instance $c\in[0,2].$
From this family it is not difficult to construct a continuous map
$$
g:\T^2\to \diff^\infty(\T^2)
$$
such that for two points $p,q \in \mathbb{T}^2$ given, we have $g(p)=f_0$
and $g(q)=f_{2}.$
Now, given $r\ge 2,$ consider a conservative Anosov diffeomorphism
$A:\T^2\to\T^2$ having $p,q \in \mathbb{T}^2$ as fixed points and with
enough strong expansion and contraction so the map
$$
F:\T^2\times\T^2\to\T^2\times\T^2 \mbox{ defined as }F(x,y)=(A(x),g_x(y))
$$
belongs to $\EE^r_{A,m}(\T^2\times\T^2).$
The center foliation is thus $\{x\}\times\T^2$ and at $\FF^c(p)=\{p\}
\times \T^2$ the map $F$ supports an Anosov (and hence an Axiom-A without
having both periodic attractors and repellers).
Our theorem implies that a generic arbitrarily small $C^r$ perturbation
(preserving the Lebesgue measure on $\T^2\times\T^2$) is stably ergodic.
The same example can be considered also just in the skew-product setting.
\end{example}

\begin{remark}
Notice that due to the presence of an elliptic point on $\{q\}\times\T^2$
the center bundle $E^c$ does not admit any dominated splitting.
By the result in \cite{BFP06} we may find a perturbation of $F$ (and stably
ergodic) such that the center Lyapunov exponent is nonzero.
This implies that the center foliation (which is two dimensional) is not
absolutely continuous.
\end{remark}

Theorem~\ref{axa} admits some generalizations or different versions.
We just give some examples and an idea of how to prove stable ergodicity.

\begin{example}
Consider $f:\T^2\to\T^2$ a conservative Anosov diffeomorphism and let
$F_0:\T^2\times \mathbb{S}^1\to \T^2\times \mathbb{S}^1$ as
$F_0=f\times id.$
This is a conservative partially hyperbolic diffeomorphism on $\T^3$ with
one dimensional center and the center foliation is by circles.
Let $p$ be a fixed point of $f.$
It is not difficult to construct a (conservative) perturbation $F$ of
$F_0$ such that on the corresponding $\FF^c(p,F)$ the dynamics is a north-south
Morse-Smale dynamics and $F$ satisfies the same generic conditions as in
Theorem~\ref{axa}.
Then, the map $F\times F:\T^6\to\T^6$ belongs to $\EE^r_{m}(\T^6)$, the
center foliation is by $\T^2$ and $F_{/\FF^c(p,p)}$ is an
Axiom-A diffeomorphism (the product of the two Morse-Smale on the circle).
Theorem~\ref{axa} does not apply in this case because $F_{/\FF^c(p,p)}$
have a center-attracting and a center-repelling periodic point.
Nevertheless, by the similar arguments as in the proof of Theorem~\ref{axa}
one gets that the union of the accessibility classes of the two
center-hyperbolic saddles in $\FF^c(p,p)$ is open and if it is not the
whole center leaf $\FF^c(p,p)$ then its complement consist of a closed
$C^1$ curve which is a connection between the attractor and the repeller.
Since this curve does not separate the leaf $\FF^c(p,p)$ we have that the
union of the accessibility classes of the two center-hyperbolic saddles is
just one accessibility class $C_0(q)$, for $q$ any periodic saddle, which
is open and $AC(q)$ has full measure in $\T^6$.
This means that $F$ is essentially accessible and hence ergodic.
Since the above situation is $C^r$ open we conclude that $F$ is $C^r$
stably ergodic.
\end{example}

The same argument also applies to next example.

\begin{example}
Consider $f_0:M\to M$ to be the time one map of the suspension of a
conservative Anosov diffeomorphisms on $\T^2.$
This is a conservative partially hyperbolic diffeomorphism whose leaves of
the center foliation are the orbits of the suspension.
Let $p$ be a fixed point of the Anosov and the center leaf $\FF^c(p,f_0)$
is a circle where $f_0$ is the identity.
We then find a conservative and generic perturbation $f$ of $f_0$ such
that $f$ restricted to $\FF^c(p,f)$ is a Morse-Smale system.
The diffeomorphism $f\times f:M\times M\to M\times M$ belongs to
$\EE^r_{m}(M\times M)$ with two dimensional center leaves and in the
leaf $\FF^c((p,p),f\times f)$ is an Axiom-A on a two torus and in the same
situation as the previous example.
The same argument yields that $f \times f$ is stably ergodic.
\end{example}

\section{Proof of Theorem \ref{main}}\label{s.main}

In this section we denote by $\EE$ either $\EE^r_\omega$ or
$\EE^r_{sp,\omega}$ and with $\dim E^c=2.$
The key fact about preserving a symplectic form $\omega$ in $\EE^r_\omega$
is the following folklore result (see \cite[Lemma 2.5]{Xia06}):

\begin{lemma}
Let $f:M\to M$ be a partially hyperbolic diffeomorphism preserving a
symplectic form $\omega.$
Then $\omega_{/E^c}$ is non degenerated (and so symplectic).
In particular if $f$ is dynamically coherent and $\dim E^c=2$ then $\omega$
is an area form in $\FF^c(x)$ for any $x.$
Furthermore, if $\FF^c(x)$ is $k$-periodic then $f^k_{/\FF^c(x)}$ is a
conservative diffeomorphism.
\end{lemma}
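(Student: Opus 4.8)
The plan is to reduce the statement to linear algebra on the fibers $T_xM$, via the bundle isomorphism $\omega^\flat\colon TM\to T^*M$, $v\mapsto\omega(v,\cdot)$, determined by the non-degenerate $\omega$, together with the uniqueness of the partially hyperbolic splitting. As a warm-up, note that $E^s$ and $E^u$ are $\omega$-isotropic: for $v,w\in E^s_x$, $f$-invariance of $\omega$ gives $\omega_x(v,w)=\omega_{f^n(x)}(Df^n v,Df^n w)$, and since $\|Df^n v\|$ and $\|Df^n w\|$ decay exponentially while $\omega$ is bounded on the compact manifold $M$, the right-hand side tends to $0$; hence $\omega_x(v,w)=0$, and symmetrically for $E^u$ using $f^{-1}$. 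What really has to be controlled are the mixed pairings $\omega(E^s,E^c)$ and $\omega(E^c,E^u)$; a naive iteration does not suffice here, since partial hyperbolicity alone does not bound the products $\|Df^n_{/E^s}\|\,\|Df^n_{/E^c}\|$.

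Next I would transport the derivative cocycle to the cotangent bundle. The identity $f^{*}\omega=\omega$ is exactly $\omega^\flat_{f(x)}\circ Df_x=\bigl((Df_x)^{-1}\bigr)^{*}\circ\omega^\flat_x$, so $\omega^\flat$ conjugates $(TM,Df,f)$ to the cotangent cocycle $(T^*M,Tf^{*},f)$, where $Tf^{*}:=((Df)^{-1})^{*}$. Dualizing $TM=E^s\oplus E^c\oplus E^u$, the annihilator bundles give a $Tf^{*}$-invariant splitting $T^*M=(E^c\oplus E^u)^{\circ}\oplus(E^s\oplus E^u)^{\circ}\oplus(E^s\oplus E^c)^{\circ}$; since $A\mapsto(A^{-1})^{*}$ interchanges $\|\cdot\|$ with $m\{\cdot\}$ and reverses time, $(E^c\oplus E^u)^{\circ}$ (the ``dual of $E^s$'') is exponentially expanded by $Tf^{*}$, $(E^s\oplus E^c)^{\circ}$ (the ``dual of $E^u$'') is exponentially contracted, and all the required domination inequalities carry over, so $(T^*M,Tf^{*},f)$ is partially hyperbolic with strong stable bundle $(E^s\oplus E^c)^{\circ}$ and strong unstable bundle $(E^c\oplus E^u)^{\circ}$. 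Because $M$ is compact, $\omega^\flat$ is uniformly bounded above and below, hence preserves exponential contraction/expansion rates of vectors; by uniqueness of the strong stable and strong unstable bundles of a partially hyperbolic cocycle this forces $\omega^\flat(E^s)\subseteq(E^s\oplus E^c)^{\circ}$ and $\omega^\flat(E^u)\subseteq(E^c\oplus E^u)^{\circ}$, and counting dimensions (which, incidentally, also shows $\dim E^s=\dim E^u$) upgrades these inclusions to equalities.

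From $\omega^\flat(E^s_x)=(E^s_x\oplus E^c_x)^{\circ}$ one reads off $\omega_x(v,w)=0$ for all $v\in E^s_x$ and $w\in E^s_x\oplus E^c_x$; in particular $\omega(E^s,E^c)=0$, and likewise $\omega(E^c,E^u)=0$. Hence $E^s\oplus E^u\subseteq(E^c)^{\perp_\omega}$, and since $\dim(E^c)^{\perp_\omega}=\dim M-\dim E^c=\dim(E^s\oplus E^u)$, in fact $(E^c)^{\perp_\omega}=E^s\oplus E^u$. Therefore the radical of $\omega|_{E^c}$ equals $E^c\cap(E^c)^{\perp_\omega}=E^c\cap(E^s\oplus E^u)=\{0\}$, i.e. $\omega|_{E^c}$ is non-degenerate. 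If moreover $\dim E^c=2$ and $f$ is dynamically coherent, each leaf $\FF^c(x)$ is a surface with $T\FF^c(x)=E^c$, so $\omega|_{\FF^c(x)}$ is a non-degenerate $2$-form on $\FF^c(x)$, hence an area form; and if $f^k(\FF^c(x))=\FF^c(x)$, then $(f^k)^{*}\omega=\omega$ restricts to $\bigl(f^k|_{\FF^c(x)}\bigr)^{*}\bigl(\omega|_{\FF^c(x)}\bigr)=\omega|_{\FF^c(x)}$, so $f^k|_{\FF^c(x)}$ preserves this area form and is therefore conservative.

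The main obstacle is the identification $\omega^\flat(E^s)=(E^s\oplus E^c)^{\circ}$ and its $E^u$-analogue. This rests on two things: that a fiberwise-bounded bundle isomorphism over a compact base distorts the norms of cocycle iterates by at most a bounded factor, hence preserves the dynamically defined strong stable and strong unstable bundles; and the uniqueness of those bundles for a partially hyperbolic cocycle. One also has to check that $Tf^{*}$ genuinely inherits from $Df$ every contraction, expansion and domination inequality required of a partially hyperbolic cocycle, which is precisely where the exchange of norm and co-norm under $A\mapsto(A^{-1})^{*}$ enters. The rest is routine linear algebra and unwinding definitions.
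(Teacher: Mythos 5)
Your general strategy—dualize via $\omega^\flat$ and match the partially hyperbolic splittings of $Df$ and $((Df)^{-1})^*$—is a clean idea, and the computations of the annihilator splitting and of the domination inequalities it inherits are correct, as is the final reduction once $\omega(E^s,E^s\oplus E^c)=0=\omega(E^u,E^c\oplus E^u)$ is known. (The paper itself does not prove this lemma; it cites it as folklore to Xia--Zhang, so there is no internal proof to compare against.)

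However, there is a genuine gap at the step where you deduce $\omega^\flat(E^s)\subseteq(E^s\oplus E^c)^{\circ}$ and $\omega^\flat(E^u)\subseteq(E^c\oplus E^u)^{\circ}$ from ``uniqueness of the strong stable and strong unstable bundles.'' Uniqueness of a dominated splitting is a \emph{per-dimension} statement: for a fixed cocycle, two dominated splittings $F_1\oplus G_1$ and $F_2\oplus G_2$ with $\dim F_1\le\dim F_2$ satisfy $F_1\subseteq F_2$, but this gives no containment when the dimensions go the wrong way. Here the annihilator strong stable bundle $(E^s\oplus E^c)^{\circ}$ has dimension $\dim E^u$, while $\omega^\flat(E^s)$ has dimension $\dim E^s$, and the annihilator strong unstable bundle $(E^c\oplus E^u)^{\circ}$ has dimension $\dim E^s$, while $\omega^\flat(E^u)$ has dimension $\dim E^u$. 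The two inclusions you assert are therefore \emph{equivalent} to $\dim E^s=\dim E^u$; deducing that equality afterwards by ``counting dimensions'' is circular. Nor does the weaker ``$\omega^\flat$ preserves exponential rates'' argument rescue it: the set of covectors contracted at rate $\lambda^n$ need not coincide with $(E^s\oplus E^c)^{\circ}$, because covectors in $(E^s\oplus E^u)^{\circ}$ are contracted at a rate as fast as $1/\|Df^n_{/E^c}\|$, which can match $\lambda^n$ when $\|Df_{/E^c}\|$ is close to $m\{Df_{/E^u}\}$.

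The gap is not cosmetic. Take a linear symplectic automorphism of $\mathbb{T}^6$ with simple eigenvalues $\lambda_1<\lambda_2<\lambda_3<1<\lambda_3^{-1}<\lambda_2^{-1}<\lambda_1^{-1}$, and declare $E^s$ to be the $\lambda_1$-eigenline, $E^c$ the sum of the $\lambda_2$- and $\lambda_3$-eigenlines, and $E^u$ the sum of the remaining three. This is a legitimate partially hyperbolic splitting in the sense used in the paper (and can even be arranged to be center bunched). Since $\omega$ pairs the $\mu$- and $\mu^{-1}$-eigenspaces and no two eigenvalues inside $E^c$ multiply to $1$, one has $\omega_{/E^c}\equiv 0$, so the conclusion fails; and indeed $\omega^\flat(E^u)=(E^u)^{\circ}$, which has dimension $\dim E^s+\dim E^c=3$ and is \emph{not} contained in $(E^c\oplus E^u)^{\circ}$, which has dimension $1$. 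So the two inclusions you invoke, and the lemma itself, genuinely require $\dim E^s=\dim E^u$ (equivalently, that $\omega$ restrict to a perfect pairing $E^s\times E^u\to\mathbb{R}$). That equality is automatic in the paper's actual applications (e.g.\ when $\dim M=4$ and $\dim E^c=2$, or when $E^s,E^u$ come from a symplectic Anosov factor), but your argument needs to either hypothesize it or establish it independently before the dualization step can be closed; as written, the hardest part of the statement is assumed at the point where it is needed.
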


Throughout this section in order to simplify notation we omit the word
center when we refer to the classification of periodic point in a center
leaf in Section~\ref{s.pp}.

The next lemma says that generically we have compact leaves with periodic
points.

\begin{lemma}
There exists a $C^1$ open and $C^r$ dense set $\GG_2$ in $\EE$ such that if
$f\in\GG_2$ then there exists a periodic compact leaf having a hyperbolic
periodic point.
\end{lemma}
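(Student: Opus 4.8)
The plan is to establish the two statements about $\GG_2$ — writing $\GG_2$ for the set of diffeomorphisms in $\EE$ having a periodic compact center leaf that carries a hyperbolic periodic point — separately, the $C^r$-density being where the real work lies.

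\emph{$C^1$-openness.} Suppose $f\in\GG_2$, witnessed by a periodic compact center leaf $\FF^c_1$ of period $k$ and a periodic point $p\in\FF^c_1$ hyperbolic for $f^k_{/\FF^c_1}$. By the previous lemma $\omega$ restricts to an area form on $\FF^c_1$ and $f^k_{/\FF^c_1}$ is conservative, so $p$ is necessarily of saddle type (an area-preserving $2\times2$ matrix with an eigenvalue off the unit circle has a real reciprocal pair of eigenvalues), and hence $p$ is a hyperbolic periodic point of $f$ \emph{in $M$}: its central eigenvalues are off the unit circle while $E^s,E^u$ are hyperbolic by partial hyperbolicity. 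For $g$ $C^1$-close to $f$ in $\EE$ the point $p$ has a hyperbolic continuation $p_g$, and by plaque expansiveness and structural stability of the center foliation of $\EE$-diffeomorphisms the leaf $\FF^c_g(p_g)$ is the continuation of $\FF^c_1$, hence again compact and $g$-periodic of period $k$; since $g$ preserves $\omega$, $p_g$ is a hyperbolic (saddle) periodic point of $g^k_{/\FF^c_g(p_g)}$. Thus $g\in\GG_2$, so $\GG_2$ is $C^1$-open, in particular $C^r$-open.

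\emph{$C^r$-density.} Fix $f\in\EE$ and $\varepsilon>0$. By definition of $\EE^r$ the periodic compact center leaves are dense; choose one, $\Sigma:=\FF^c_1$, of period $k$. Then $\Sigma$ is a closed orientable surface (it carries the area form $\omega_{/\Sigma}$ by the previous lemma) and $\phi:=f^k_{/\Sigma}$ is a conservative $C^r$ diffeomorphism. If $\phi$ already has a hyperbolic periodic point we are done. Otherwise the plan is: (i) produce a conservative, $C^r$-small perturbation $\phi'$ of $\phi$ supported in a small disk $D\subset\Sigma$ and having a hyperbolic periodic point; and (ii) lift $\phi'$ to a perturbation of $f$. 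For (ii) I would use the mechanism behind Lemma~\ref{l.pert_sympl} in its symplectic form: take a Hamiltonian on $M$ supported in a thin tubular neighborhood $W$ of the $f$-orbit of $\Sigma$, chosen so that its time-one map respects the center foliation inside $W$ and restricts on $\Sigma$ to the Hamiltonian diffeomorphism realizing $\phi'$; this yields $g\in\EE$ (using that $\EE$ is $C^1$-open) with $\dist_{C^r}(f,g)<\varepsilon$ and $g=f$ off $W$, for which $\Sigma$ is still a periodic compact center leaf and $g^k_{/\Sigma}=\phi'$. In the skew-product class $\EE^r_{sp,\omega}$ this is more transparent: $\Sigma=\{z\}\times S$ for a periodic base point $z$, and one simply perturbs the fiber diffeomorphisms $g_x$ for $x$ near $z$. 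Then $g\in\GG_2$.

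\emph{Main obstacle: step (i).} The crux is that a conservative $C^r$ surface diffeomorphism admits a conservative, $C^r$-small, disk-supported perturbation with a hyperbolic periodic point. A small perturbation cannot make an elliptic periodic point hyperbolic (the eigenvalues of a nearby area-preserving matrix stay near the unit circle), so one must \emph{create} new periodic orbits. If $\phi$ has a non-hyperbolic periodic point $q$, then after a $C^r$-small perturbation of the jet at $q$ one may assume $q$ is elliptic with rational rotation number $p/q'$; the Birkhoff normal form / local Poincar\'e--Birkhoff mechanism at $q$ then produces — after one further generic $C^r$-small perturbation making the relevant resonant coefficient nonzero — a periodic orbit chain of period $q'$ near $q$ with alternating elliptic and hyperbolic points, in particular a hyperbolic one. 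If $\phi$ has no periodic point at all, then by classical surface topology — $\Sigma$ being closed orientable, the Lefschetz fixed-point theorem rules out the sphere and the Nielsen--Thurston classification rules out genus $\ge2$ — $\Sigma$ is the $2$-torus and $\phi$ is isotopic to the identity with degenerate rotation set; a disk-supported conservative $C^r$-small perturbation enlarges the rotation set to one with nonempty interior, and Franks' theorem then produces periodic points of infinitely many periods, reducing to the previous case. I expect the bookkeeping of this last reduction, and the verification in step (ii) that the lifted perturbation keeps us inside $\EE$ with $\Sigma$ still a periodic center leaf, to be the genuinely technical points; the rest is soft.
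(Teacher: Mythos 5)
Your proof follows the same high-level route as the paper's: reduce to the area-preserving return map $\phi=f^k_{/\FF^c_1}$ on a periodic compact leaf; use classical surface dynamics to produce a periodic point (Lefschetz for genus $\neq 1$; for the torus, rationalize the mean rotation vector by composing with a translation and invoke Franks); and then turn an elliptic periodic point into hyperbolic ones by a conservative perturbation. The main difference is mechanism and emphasis. Where you invoke the Birkhoff normal form / Poincar\'e--Birkhoff mechanism at a rational elliptic point and then build an explicit disk-supported perturbation on the leaf that you must lift to $M$ by a Hamiltonian along the lines of Lemma~\ref{l.pert_sympl}, the paper takes a Kupka--Smale diffeomorphism $C^r$-close to $f_0$ (using residuality of this property in $\EE$) and cites Zehnder's theorem \cite{Z73}, which says that $C^r$-generically an elliptic periodic point of an area-preserving surface map has transverse homoclinic points, hence hyperbolic periodic orbits, nearby. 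Both routes work, but the paper's is shorter precisely because it offloads the construction onto known $C^r$-genericity results and never needs to produce a specific perturbation of $f$ preserving the leaf $\Sigma$. Your step~(ii) --- realizing an arbitrary disk-supported Hamiltonian perturbation of $\phi$ as the restriction of a symplectic perturbation of $f$ that keeps $\Sigma$ a periodic compact center leaf --- is indeed the genuinely delicate point you flag: Lemma~\ref{l.pert_sympl} produces a translation near a single point (with the Hamiltonian constant along tubular-neighborhood fibers so that the leaf is preserved), and extending that construction to realize a general leaf-perturbation and checking normal hyperbolicity of the $g$-invariant $\Sigma$ would require essentially redoing that lemma with more generality. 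The paper's choice to use residuality of Kupka--Smale (plus Zehnder's genericity applied to center-leaf return maps) sidesteps this entirely; it is worth noting, though, that the paper itself is a bit terse about why the Zehnder genericity on leaf return maps is inherited from a residual set in $\EE$, a point neither argument makes fully explicit.
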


\begin{proof}
Notice that the set in $\EE$ having a hyperbolic periodic point on some
compact periodic leaf is $C^1$ open.

Let $f_0\in\EE$ and let $\FF^c_1$ be a compact periodic leaf.
For simplicity we assume it is fixed.
We may assume also that $\FF^c_1$ is orientable (otherwise we go to the
double covering).
If $\FF^c_1$ is not the two torus then $f_{0/\FF^c_1}$ has periodic points.
Let $f\in\EE$ be a Kupka-Smale diffeomorphism and arbitrarily $C^r$ close
to $f_0.$
It follows that there is a hyperbolic periodic point in $\FF^c_1(f)$ since
once we have elliptic periodic points we have hyperbolic periodic points
(see \cite{Z73}).

If $\FF^c_1$ is the two torus and $f_0$ has no periodic points in
$\FF^c_1$, then by composing with a translation in the torus (and extending this  perturbation on $\FF^c_1$ to $M$) we may change
the mean rotation number (the mean rotation number of the composition of
two maps is the sum of the mean rotation number of each one) to get a
rational mean rotation number.
Using  a result by Franks \cite{Fra88} we get a periodic point, which by
perturbation we may assume that it is hyperbolic or elliptic.
And then we argue as before.
\end{proof}

\begin{remark}
In this situation we are working with ($\EE^r_\omega$ or
$\EE^r_{sp,\omega}$), if $p$ is a hyperbolic periodic point of $f$ then
for the restriction to the center manifold that contains $p$ we have that
$p$ is a hyperbolic periodic point of saddle type (since the restriction
to the center manifold preserves area).
\end{remark}

Let $\RR_0$ from Theorem~\ref{trivialaccesibility}, and let $\KS$ be the
set ok Kupka-Smale diffeomorphisms in $\EE$ which is a residual set, and
let $\RR_*$ as in Theorem~\ref{t.peridic_point}.
We consider
\begin{equation}
\label{eq.generic}
f\in\RR_0\cap\RR_*\cap\GG_2\cap\KS
\end{equation}
and let $\FF^c_1$ be a compact
$k$-periodic leaf containing a hyperbolic periodic point
$p,\,\FF^c_1=\FF^c(p).$
Let $\Gamma_1$ as in \eqref{eq.gamma1}.

Let $\UU(f)$ be as in Theorem \ref{t.peridic_point}.
Let $\tilde{\RR} \subset
\UU(f)$ be the residual subset of continuity points of $\Gamma_1$ (as
defined in \eqref{eq.gamma1}).

We will assume for simplicity that the compact leaf $S=\FF^c(p,f)$ is
fixed by $f.$
When $g$ varies on $\UU(f)$ the compact leaf $\FF^c(p_g,g)$ varies
continuously (by a homeomorphism on $M$ close to the identity), and thus
there is a natural identification between $\FF^c(p_g,g)$ with $\FF^c(p,f)$
as the surface $S.$
In order to avoid unimportant technicalities we will assume that
$\FF^c(p_g,g)=S$ for any $g\in\UU(f).$
Now consider the following maps $\Gamma_2, \Gamma_3:\UU(f)\to \CC(S)$,
where $\CC(S)$ is the set of compact subset of $S$ with the Hausdorff
topology:
\begin{equation}\label{eq.gamma2}
\Gamma_2(g)=S \setminus C_0(p_g,g) \qquad \text{ and } \qquad
\Gamma_3(g)=\overline{C_0(p_g,g)},
\end{equation}
where $C_0(p_g,g)$ is the connected component of $C(p_g,g)$ that contains
$p_g.$

\begin{lemma} \label{l.gamma2y3}
The map $\Gamma_2$ is upper semicontinuous and the map $\Gamma_3$ is lower
semicontinuous.
That is, given $g\in\UU(f)$, a compact set $K\subset S$ and an open set
$U\subset S$ such that $K\cap \Gamma_2(g)=\emptyset$ and $U\cap\Gamma_3(g)
\neq\emptyset$ then there exists $\UU(g)$ such that $K\cap \Gamma_2(h)=
\emptyset$ and $U\cap\Gamma_3(h)\neq\emptyset$ for any $h\in\UU(g).$
\end{lemma}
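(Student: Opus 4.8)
The plan is to establish the two semicontinuity statements separately, both of them essentially reducing to the robustness facts already proved in Section~\ref{s.accessibility} (especially Corollary~\ref{nontrivunif} and the behaviour of the maps $\Pi^{su}_{\tilde g}$ under perturbation) together with Theorem~\ref{t.peridic_point}, which guarantees that $C_0(p_g,g)$ stays open throughout $\UU(f)$.

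For the upper semicontinuity of $\Gamma_2$, suppose $K\subset S$ is compact with $K\cap\Gamma_2(g)=\emptyset$, i.e.\ $K\subset C_0(p_g,g)$. Since $C_0(p_g,g)$ is open (Theorem~\ref{t.peridic_point}), for each $z\in K$ there is an $su$-path of $g$ joining $p_g$ to a point of $S$ near $z$, staying inside a bounded region; moreover, as in the proof of Corollary~\ref{nontrivunif}, by continuity of the stable/unstable manifolds and of the holonomies $\Pi^{su}$ with respect to both the point and the diffeomorphism, there are a neighbourhood $U_z$ of $z$ in $S$ and a neighbourhood $\UU_z(g)$ such that for every $h\in\UU_z(g)$ one has $U_z\subset C_0(p_h,h)$ (here one uses that the perturbed $su$-paths remain in $S=\FF^c(p_h,h)$ after projecting by $\Pi^{su}_{\tilde h}$, and that this projection is an open map, exactly as in Corollary~\ref{nontrivunif}). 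Covering $K$ by finitely many $U_{z_i}$ and intersecting the $\UU_{z_i}(g)$ gives $\UU(g)$ with $K\subset C_0(p_h,h)$, i.e.\ $K\cap\Gamma_2(h)=\emptyset$, for all $h\in\UU(g)$. This is the same finite-cover argument used in the proof of the lemma preceding Corollary~\ref{c.1} and in Proposition~\ref{p.gamma1upper}.

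For the lower semicontinuity of $\Gamma_3$, suppose $U\subset S$ is open with $U\cap\overline{C_0(p_g,g)}\neq\emptyset$; then in fact $U\cap C_0(p_g,g)\neq\emptyset$ since $C_0(p_g,g)$ is open, so pick $y\in U\cap C_0(p_g,g)$. There is an $su$-path of $g$ from $p_g$ to (near) $y$; again by continuity of invariant manifolds and holonomies with respect to $h$, for $h$ in a suitable neighbourhood $\UU(g)$ the corresponding $su$-path of $h$ still lands inside $U$, so that $U\cap C_0(p_h,h)\neq\emptyset$ and a fortiori $U\cap\Gamma_3(h)\neq\emptyset$. Combining with the previous paragraph (and shrinking $\UU(g)$ to lie inside both neighbourhoods, as well as inside $\UU(f)$ so that $\Gamma_1,\Gamma_2,\Gamma_3$ remain defined), we obtain the desired $\UU(g)$.

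The only genuinely delicate point is making rigorous the claim, used in both parts, that the local $su$-path joining $p_g$ to a point of $S$ ``persists'' under perturbation while still returning to the \emph{center} leaf $S=\FF^c(p_h,h)$; this is handled exactly as in the proof of Corollary~\ref{nontrivunif}, using the continuous dependence of $\Pi^{su}_{\tilde h}$ on $h$ and the fact that it maps open sets of $\tM$ onto open sets of $\tFF^c$, together with the identification $\FF^c(p_h,h)\cong S$ fixed above. No new perturbation of $f$ is needed here, only continuity, so there is no genuine obstacle beyond bookkeeping; the lemma is a soft consequence of the structural results of Section~\ref{s.accessibility}.
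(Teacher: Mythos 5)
Your proposal is correct and follows essentially the same route as the paper: for $\Gamma_2$ you run the finite-cover argument of Proposition~\ref{p.gamma1upper}, with the extra care (which both you and the paper flag explicitly) that the $su$-path from $z$ to the distinguished open set must, after lifting to $\tM$, start and end on the same lifted center leaf so that one actually lands in $C_0(p_h,h)$ and not merely in $AC(p_h,h)$; and for $\Gamma_3$ you pick a point in $U\cap C_0(p_g,g)$ and reuse the same persistent neighbourhood $U_y\subset U$. (One tiny slip: the implication $U\cap\overline{C_0(p_g,g)}\neq\emptyset\Rightarrow U\cap C_0(p_g,g)\neq\emptyset$ follows because $U$ is open, not because $C_0(p_g,g)$ is open, but this does not affect the argument.)
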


\begin{proof}
The proof that $\Gamma_2$ is upper semicontinuous is similar as the proof of
Proposition~\ref{p.gamma1upper}.
Let $V'=V\cap S$ where $V$ is as in Theorem~\ref{t.peridic_point} and let
$K\subset S$ a compact set as in statement, that is $K\subset C_0(p_g,g).$
Let $y\in K.$
There exists $U_y\subset S$ and $\UU_y(g)$ such that for any $z\in U_y$
and $h\in\UU_y(g)$ we have that $AC(z,h)\cap V'\neq\emptyset.$
On the other hand we can assume that $U_y\subset C_0(p_g,g)$ and this
means that the $su$-path (of $g$) joining $z\in U_y$ with $V'$ when lifted
to the covering $\tM$ is a path that starts and ends on a same center leaf
(which projects to $S$) and so the same happens for $h$ near $g.$
This implies that $U_y\subset C_0(p_h,h)$ for any $h\in\UU_y(g).$
Then, covering $K$ with finitely many sets $U_y$ and taking the
corresponding intersection of the $\UU_y(g)$ we conclude the statement on
$\Gamma_2$.

Lets prove the semicontinuity of $\Gamma_3$.
Let $U\subset S$ be an open set such that $U\cap\Gamma_3(g)\neq\emptyset.$
In particular $U\cap C_0(p_g,g)\neq\emptyset.$
Let $y$ be in this intersection and let $U_y$ and $\UU_y(g)$ as before.
We may assume that $U_y\subset U.$
Then, for any $h\in\UU_y(g)$ we have that $U_y\subset C_0(p_h,h)$ and so
$U\cap \Gamma_3(h)\neq\emptyset.$
\end{proof}

Let $\RR_2(f)$ and $\RR_3(f)$ be the sets of continuity points of
$\Gamma_2$ and $\Gamma_3$ respectively.
These are residual subsets of $\UU(f).$
We set
\begin{equation}
\label{eq.generic-neigh}
\RR_{\UU(f)}=\RR_0 \cap\GG_2\cap \RR_* \cap \tilde{\RR} \cap \RR_2 (f)
\cap \RR_3(f) \cap \KS\cap\UU(f),
\end{equation}
which is a residual subset of $\UU(f).$ The next result implies our
Theorem~\ref{main}.

\begin{theorem}\label{t.accesibleRR}
Let $g\in\RR_{\UU(f)}.$ Then  $C_0(p_g,g) = \FF^c(p_g,g)$, i.e., $g$
is accessible.
\end{theorem}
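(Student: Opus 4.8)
The plan is to argue by contradiction: suppose $g\in\RR_{\UU(f)}$ but $C_0(p_g,g)\neq\FF^c(p_g,g)=S$. Since $g\in\RR_*$ and $p_g$ is a center-hyperbolic periodic point of saddle type (by the Remark following $\GG_2$, hyperbolic periodic points of symplectic partially hyperbolic diffeomorphisms restrict to saddles on the area-preserving center dynamics), Theorem~\ref{t.peridic_point} gives that $C_0(p_g,g)$ is open; as $g\in\RR_0$ there are no trivial classes, so the only remaining alternative is that $C_0(p_g,g)$ is a proper open subset of $S$. I would then examine $\Gamma_2(g)=S\setminus C_0(p_g,g)$: it is a nonempty compact set, and its frontier is contained in the boundary of the open set $C_0(p_g,g)$. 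By Corollary~\ref{c.lamination} the non-open accessibility classes in $S$ (which are exactly the classes meeting $\partial C_0(p_g,g)$, since they cannot be open) form a $C^1$-lamination; so $\partial C_0(p_g,g)$ is a union of $C^1$ one-dimensional manifolds which are accessibility classes, and these are leaves of a lamination. The strategy is to show that at least one such boundary leaf is a $C^1$ simple closed curve $\CC$ bounding (together with $C_0(p_g,g)$) an annular region, and then to apply Lemma~\ref{l.segundo} to destroy it.

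Next I would set up the topological picture. Since $C_0(p_g,g)$ is open, connected, $f^{\tau}$-invariant, and contains the saddle $p_g$ together with $CW^s(p_g)$ and $CW^u(p_g)$ (by the last bullet recalled in Section~\ref{s.axa}), I would analyze a connected component $D$ of $C_0(p_g,g)$ with compact closure whose boundary lamination leaf $\CC$ is nontrivial; on a surface, a minimal set of a $C^1$ lamination that bounds an open region, together with the homogeneity from Lemma~\ref{l.first} and the fact that it is a single accessibility class (so invariant under the return map), forces $\CC$ to be a $C^1$ embedded circle, essential in a suitable annular neighborhood $U$ of it. One then takes the family $\Gamma$ to be, say, the connected components of $C(x,g)\cap U$ for $x$ ranging over the lamination leaves inside $U$ — a family of disjoint essential $C^1$ simple closed curves with $\CC\in\Gamma$, as required by the hypotheses of Lemma~\ref{l.segundo}. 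Here I would need to be careful to pass to the covering $\tM$ via Corollary~\ref{c.connect} (so that $\pi(\wtC)=C_0$), to choose the periodic leaf $\FF^c_1$ inside the isotopy class of $S$ using density of periodic compact leaves and the fact that $\supp(f\neq g)$ can be kept off its orbit, and to keep track that the perturbation in Lemma~\ref{l.segundo} does not affect the open class $C_0(p_g,g)$ itself because its support is disjoint from the orbit of $\FF^c_1$.

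With this setup, Lemma~\ref{l.segundo} produces, for every $\varepsilon>0$, a diffeomorphism $g'\in\EE$ with $\dist_{C^r}(g,g')<\varepsilon$, with $\supp(g\neq g')$ disjoint from the orbit of $\FF^c_1$, and such that no curve of $\Gamma$ contained in the annular neighborhood $V$ is an accessibility class of any point of $V$. In particular $\CC$ is no longer an accessibility class for $g'$; since $\CC$ was a boundary leaf of $C_0(p_g,g)=C_0(p_{g'},g')$ (the class is unchanged off the orbit of $\FF^c_1$, and $p$ keeps being a saddle), its former accessibility class must now have merged into a larger class, so $C_0(p_{g'},g')\supsetneq C_0(p_g,g)$ strictly, in particular $\Gamma_2(g')\subsetneq\Gamma_2(g)$ and this shrinkage persists in a whole $C^1$ neighborhood by the upper semicontinuity of $\Gamma_2$ (Lemma~\ref{l.gamma2y3}). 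This contradicts the choice of $g$ as a continuity point of $\Gamma_2$ in $\tilde\RR\cap\RR_2(f)$: at a continuity point no nearby $h$ can have strictly smaller $\Gamma_2(h)$ unless $\Gamma_2(g)$ was already empty. Hence $\Gamma_2(g)=\emptyset$, i.e. $C_0(p_g,g)=S$, and by Lemma~\ref{l.4} $g$ is accessible. The $C^1$-openness of accessibility (Corollary~\ref{c.accopen}) then gives Theorem~\ref{main}.

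The main obstacle I anticipate is the topological step: showing that a proper open invariant accessibility class on the closed surface $S$ must have, in its boundary lamination, an \emph{essential simple closed curve} bounding an annulus — i.e., verifying the precise hypotheses of Lemma~\ref{l.segundo}. One has to rule out boundary configurations that are not simple closed curves (e.g. lamination leaves accumulating on themselves, or a boundary consisting of arcs) and, when $S$ has higher genus, locate a curve that is genuinely essential in some annular neighborhood so that the "essential'' hypothesis in Lemma~\ref{l.segundo} is met; this is where the $C^1$-lamination structure (Corollary~\ref{c.lamination}), the homogeneity (Lemma~\ref{l.first}), invariance of the class under the first-return map, and standard surface topology must be combined carefully. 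The rest is bookkeeping with the semicontinuity lemmas and the covering-space reductions already established.
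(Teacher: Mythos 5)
Your high-level plan matches the paper's: argue by contradiction, observe that $C_0(p_g,g)$ is a proper open set with laminated boundary, show the boundary components are essential simple closed $C^1$-curves, then destroy them via Lemma~\ref{l.segundo} and get a contradiction with the choice of $g$ as a continuity point of the set-valued maps. However, there is a genuine gap: the "main obstacle'' you flag at the end --- proving that each connected component of $\partial C_0(p_g,g)$ is a simple closed essential $C^1$-curve --- is precisely the technical heart of the paper's argument, and you do not supply it. The paper resolves it through three ingredients you do not use. First, Lemma~\ref{l.no_pp} shows the boundary has no periodic point; the proof uses Mather's theorem on invariant sets of area-preserving surface homeomorphisms together with the Kupka--Smale property built into $\RR_{\UU(f)}$. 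Second, the paper invokes the Xia--Koropecki theorem (Theorem~\ref{t.X,Ko}) on aperiodic invariant continua of nonwandering surface homeomorphisms to conclude each boundary component is an annular domain. Third, a topological result on $C^1$ laminations of the plane (Proposition~\ref{p.plane}, relying on Fokkink--Oversteegen) upgrades "annular domain carrying a lamination'' to "simple closed curve.'' Without these, "the homogeneity plus invariance plus surface topology'' does not rule out boundary components that are (for instance) indecomposable continua or non-circle minimal laminations.

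A second gap is in your final contradiction. You claim that after the perturbation $C_0(p_{g'},g')\supsetneq C_0(p_g,g)$ strictly and hence $\Gamma_2(g')\subsetneq\Gamma_2(g)$; but the accessibility class of $p$ genuinely changes across the perturbation (only the dynamics restricted to the center leaf is held fixed, not the $su$-holonomies), so the inclusion $C_0(p_{g'},g')\supset C_0(p_g,g)$ is unjustified: $\partial C_0(p_{g'},g')$ could simply be a different invariant circle near $\CC$. The paper avoids this by taking $\Gamma$ to be the family of all $g^m$-invariant essential simple closed curves in the annular neighborhood, showing Lemma~\ref{l.segundo} excludes every such curve from being a boundary component for the perturbed map, and concluding that the new boundary component in $V_i$ must be null-homotopic. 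That contradiction requires a separate, preliminary lemma --- proved first, by another application of Lemma~\ref{l.segundo} together with continuity of both $\Gamma_2$ and $\Gamma_3$ --- that $\overline{C_0(p_g,g)}=\FF^c(p_g,g)$ already. Your proposal collapses these two steps (density of the class, then full coincidence) into one and so loses the null-homotopy contradiction at the end. In short: the strategy is right, but the two key pieces of surface dynamics/topology that make the boundary classification work, and the two-stage density-then-equality structure of the final argument, are missing.
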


Indeed, for any $f$ as in \eqref{eq.generic} we consider $\RR_{\UU(f)}$ as
in \eqref{eq.generic-neigh} and we set
$$
\RR=\bigcup_{f\in\RR_0\cap\RR_*\cap\GG_2\cap\KS}\RR_{\UU(f)}.
$$
It follows that $\RR$ is residual (and hence $C^r$ dense) in $\EE$ and
every $g\in\RR$ is accessible from Theorem~\ref{t.accesibleRR}.
On the other hand, Corollary \ref{c.accopen} implies that the accessible
ones are $C^1$ open.

The rest of the section is thus devoted to prove Theorem~\ref{t.accesibleRR}.

\begin{lemma}
Let $g \in \UU(f)$ and let $K$ be a connected component of the boundary
$\partial C_0(p_g,g)$.
Then, for every $x \in K$ we have that $C_0(x) \subset K$.
\end{lemma}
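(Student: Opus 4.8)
The plan is to show first that the closure $\overline{C_0(p_g,g)}$ is saturated by accessibility classes, and then to finish with a purely topological observation. I would begin by recording that $C_0(p_g,g)$ is open in the compact center surface $S:=\FF^c(p_g,g)$: by the choice of $\UU(f)$ in Theorem~\ref{t.peridic_point} there is a nonempty open set $V\subset M$ with $V\subset AC(p_g,g)$, so by Lemma~\ref{equiv} the class $AC(p_g,g)$ is open and hence $C(p_g,g)=AC(p_g,g)\cap S$ is open in $S$; since $S$ is a surface and therefore locally connected, the connected component $C_0(p_g,g)$ of $C(p_g,g)$ containing $p_g$ is open in $S$. Also, as $C_0(p_g,g)\subset S$ and $S$ is compact, $\overline{C_0(p_g,g)}\subset S$ and $\partial C_0(p_g,g)=\overline{C_0(p_g,g)}\setminus C_0(p_g,g)$; in particular $\FF^c(x)=S$ for every $x\in\partial C_0(p_g,g)$.

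The heart of the argument is the claim: \emph{if $z\in\overline{C_0(p_g,g)}$, then $C_0(z)\subset\overline{C_0(p_g,g)}$.} To prove it, fix $w\in C_0(z)$. By Corollary~\ref{c.connect} we have $C_0(z)=\pi(\wtC(\tz))$ for any lift $\tz$ of $z$, so Remark~\ref{path2} provides a neighborhood $U^c_z$ of $z$ in $S$ and a continuous map $\gamma\colon U^c_z\times[0,1]\to S$ with $\gamma(v,0)=v$, $\gamma(z,1)=w$, and $\gamma(v,t)\in C(v)$ for all $v\in U^c_z$ and $t\in[0,1]$. Choose a sequence $q_n\in C_0(p_g,g)$ with $q_n\to z$; for $n$ large $q_n\in U^c_z$, and since $q_n\in C_0(p_g,g)\subset AC(p_g,g)$ we have $C(q_n)=C(p_g,g)$. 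Hence the continuous path $t\mapsto\gamma(q_n,t)$ has connected image contained in $C(p_g,g)$ and joins $q_n\in C_0(p_g,g)$ to $\gamma(q_n,1)$, so $\gamma(q_n,1)$ lies in the same connected component of $C(p_g,g)$, i.e.\ $\gamma(q_n,1)\in C_0(p_g,g)$. Passing to the limit and using continuity of $\gamma$, $w=\gamma(z,1)=\lim_n\gamma(q_n,1)\in\overline{C_0(p_g,g)}$, which proves the claim.

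To conclude, let $x\in K$. Since $K\subset\partial C_0(p_g,g)\subset\overline{C_0(p_g,g)}$, the claim yields $C_0(x)\subset\overline{C_0(p_g,g)}$. Moreover $C_0(x)\cap C_0(p_g,g)=\emptyset$: if some $v$ belonged to both sets, then $AC(x)=AC(v)=AC(p_g,g)$, hence $C(x)=C(p_g,g)$, and then $C_0(x)$ and $C_0(p_g,g)$ would be connected components of one and the same subset of $S$ sharing the point $v$, so they would coincide, contradicting $x\in\partial C_0(p_g,g)$. Therefore $C_0(x)\subset\overline{C_0(p_g,g)}\setminus C_0(p_g,g)=\partial C_0(p_g,g)$. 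Finally, $C_0(x)$ is connected (Corollaries~\ref{tildeconnected} and \ref{c.connect}) and contains the point $x$, which belongs to the connected component $K$ of $\partial C_0(p_g,g)$; hence $C_0(x)\subset K$.

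The step I expect to be the main obstacle is the claim above: one must make sure that the continuous family of $su$-paths furnished by Remark~\ref{path2} transports points of $C_0(p_g,g)$ back into $C_0(p_g,g)$ itself, and not merely into the a priori larger equivalence class $C(p_g,g)$. This is exactly why the argument is routed through the path $t\mapsto\gamma(q_n,t)$ and the connectedness of its image. Once the claim is established, the remainder is elementary point-set topology combined with the openness of $C_0(p_g,g)$ noted at the outset.
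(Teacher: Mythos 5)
Your argument is correct and rests on the same key mechanism as the paper's: the continuous family of $su$-holonomies $\gamma$ from Lemma~\ref{l.path}/Remark~\ref{path2}, which transports points of $C_0(p_g,g)$ near $x$ back into $C_0(p_g,g)$ and therefore forces $C_0(x)$ into the closure. The paper argues directly by contradiction from a single point $y\in C_0(x)\setminus K$, while you first establish the slightly more general saturation claim that $\overline{C_0(p_g,g)}$ is a union of accessibility-class components and then finish by elementary topology; this is a reorganization rather than a different proof, though it does make explicit (openness of $C_0(p_g,g)$, disjointness of $C_0(x)$ and $C_0(p_g,g)$) two small points the paper leaves implicit.
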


\begin{proof}
Let $x\in K$ and let $y\in C_0(x)$ and assume that $y \notin K$.
Since $C_0(x)$ is connected, we may assume, without loss of
generality, that $y \notin \partial C_0(p_g,g)$. Therefore, since $y$
cannot belong to $C_0(p_g)$, we have that $y \notin
\overline{C_0(p_g)}$.

On the other hand we know (by Lemma \ref{l.path} and Remark~\ref{path2})
there is a continuous map $\gamma \colon B_x \to B_y$, such that for any
$z \in B_x$, $\gamma(z) \in C_0(z)$, where $B_x$ and $B_y$ are small
neighborhoods $x$ and $y$, respectively, and we may take $B_y \subset
\FF^c(p_g,g) \setminus \overline{C_0(p_g)}$.
Since $x\in \partial C_0(p_g)$ then we may take $z \in B_x \cap C_0(p_g)$
and hence $\gamma(z) \in C_0(p_g) \cap B_y$, a contradiction.
\end{proof}

\begin{remark}
Let $K$ be a connected component of $\partial C_0(p_g,g)$.
Then $K$ has no periodic point.
This is because $K$ has empty interior and we know that for any periodic
point $q$ of $g$, $C_0(q,g)$ is open.
\end{remark}

\begin{lemma}
\label{l.no_pp} Let $g \in \RR_{\UU(f)}$ and let $h \in \UU(f)$ such that
$h=g$ in $\FF^c(p_g,g)$.
Then there is no periodic point of $h$ in $\partial C_0(p_h,h)$.
\end{lemma}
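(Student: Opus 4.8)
The plan is to argue by contradiction, in the spirit of the Remark just preceding the lemma, but working harder because $h$ need not be generic. Suppose $q$ is a periodic point of $h$, of period $\tau$ for $h_{/S}$ where $S:=\FF^c(p_h,h)=\FF^c(p_g,g)$, lying in $\partial C_0(p_h,h)$. Since $\partial C_0(p_h,h)\subset S$ we have $q\in S$; and since $h_{/S}=g_{/S}$ is area preserving (by the folklore lemma at the start of this section) and $g$ is Kupka--Smale, $q$ is either center-elliptic or center-hyperbolic of saddle type for $h_{/S}$. After shrinking $\UU(f)$ — which is harmless since $\RR_0$ is $C^1$-open and $f\in\RR_0$ — we may assume $h\in\RR_0$, so by the trichotomy for $\wtC(\tx)$ when $\dim E^c=2$ together with Corollary~\ref{c.connect}, $C_0(q,h)$ is either open or a $C^1$ one dimensional submanifold through $q$.

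First I would exclude the possibility that $C_0(q,h)$ is open. If it were, then since $AC(p_h,h)$ is open (Theorem~\ref{t.peridic_point} furnishes an open $V\subset AC(p_h,h)$ for every map of $\UU(f)$) and $q\in\overline{C_0(p_h,h)}$, the open set $C_0(q,h)\ni q$ would meet $C_0(p_h,h)$; as accessibility classes partition $M$, $C_0(q,h)$ and $C_0(p_h,h)$ would be two intersecting connected components of the open set $C(p_h,h)$, hence equal, so $q\in C_0(p_h,h)$ — contradicting $q\in\partial C_0(p_h,h)$. Thus $C_0(q,h)$ is a $C^1$ curve. It is $h^{\tau}$-invariant, so $T_qC_0(q,h)$ is an $Dh^{\tau}_q$-invariant line in $T_qS$; a center-elliptic $q$ admits no such line, so $q$ is a center-hyperbolic saddle and $T_qC_0(q,h)$ equals $CE^s_q$ or $CE^u_q$. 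By uniqueness of the locally invariant $C^1$ curve tangent to each of these directions, $C_0(q,h)$ agrees near $q$ with $CW^s_{loc}(q,h)$ or with $CW^u_{loc}(q,h)$, say with $CW^s_{loc}(q,h)$; iterating by $h^{-\tau}$ spreads this to $CW^s(q,h)\subset C_0(q,h)$, and since by the preceding lemma $C_0(q,h)$ lies in the connected component $K$ of $\partial C_0(p_h,h)$ containing $q$, we obtain $CW^s(q,h)\subset\partial C_0(p_h,h)$, a set with empty interior.

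It remains to contradict the genericity of $g$, and this last step is the one I expect to be the main obstacle. The available inputs are that $h_{/S}=g_{/S}$, whence $CW^{s}(q,h)=CW^{s}(q,g)$ and $CW^{u}(q,h)=CW^{u}(q,g)$, and the conclusions of Theorem~\ref{t.peridic_point} for $g$: $C_0(q,g)$ is open, $CW^s(q,g)\cup CW^u(q,g)\subset C_0(q,g)$, and $q$ satisfies Property~$(L)$ for $g$. The strategy I would try is: show that a short arc of $CW^u_{loc}(q,h)\setminus\{q\}$ issuing from $q$ on the side of $CW^s_{loc}(q,h)$ from which $C_0(p_h,h)$ accumulates must enter $C_0(p_h,h)$ (it is transverse there to the boundary leaf $C_0(q,h)=CW^s_{loc}(q,h)$), and then, iterating by $h^{-n\tau}$ and using $AC(h^{n\tau}(y),h)=h^{n\tau}(AC(y,h))$ with $n\tau$ a multiple of the period of $p_h$, conclude $CW^u(q,h)\subset AC(p_h,h)$; a point of $CW^u(q,g)\cap CW^s(q,g)$ other than $q$ — a homoclinic point of the area preserving map $g_{/S}$, forced to exist on the compact surface $S$ by recurrence — would then lie simultaneously in $AC(p_h,h)$ and, through $CW^s(q,h)$, in $\partial C_0(p_h,h)$, which are disjoint. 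The delicate points to pin down are precisely the location of short arcs of $CW^u_{loc}(q,h)$ relative to $C_0(p_h,h)$ (one must rule out their entering the open complement of $\overline{C_0(p_h,h)}$, which is where Property~$(L)$ for $g$, transported via $h_{/S}=g_{/S}$, has to do the work) and securing the homoclinic intersection — or, failing a clean appeal to recurrence, replacing that final step by working directly inside $\overline{CW^u(q,h)}\cap\overline{C_0(p_h,h)}$.
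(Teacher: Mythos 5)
Your opening reductions are fine and essentially match the paper: excluding the case $C_0(q,h)$ open, classifying $q$ as a center--hyperbolic saddle, concluding $T_qC_0(q,h)\in\{CE^s_q,CE^u_q\}$, and deducing (say) $CW^s(q,h)\subset C_0(q,h)\subset K$ via uniqueness of the locally invariant curve. (The shrinking of $\UU(f)$ into $\RR_0$ that you propose is implicit in the paper and is a point worth making explicit.)

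The continuation, however, has a genuine gap, and the proposed strategy as stated cannot work. You want to show that a short arc of $CW^u_{\mathrm{loc}}(q,h)$ issuing from $q$ \emph{enters} $C_0(p_h,h)$, then iterate and look for a homoclinic intersection inside $AC(p_h,h)\cap\partial C_0(p_h,h)$. But this first step is false. Because $h_{/S}=g_{/S}$ is an area--preserving $C^r$ surface diffeomorphism whose periodic points are all nondegenerate and which, being Kupka--Smale, has no saddle connections, Mather's theorem \cite[Theorem 5.2]{Ma81} gives $CW^u(q,h)\subset\overline{CW^s(q,h)}\subset\overline{C_0(q,h)}\subset K$. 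Thus $CW^u_{\mathrm{loc}}(q,h)$ never enters $C_0(p_h,h)$ (nor the open complement of $\overline{C_0(p_h,h)}$): it lies in the boundary component $K$. Your named ``delicate point'' --- ruling out that the arc lands in the open complement --- does not capture the obstruction; the arc simply stays in $K$. Likewise, the existence of a homoclinic point cannot be extracted from bare recurrence; this is precisely what Mather's theorem \emph{does not} quite give (it gives accumulation $CW^u(q)\subset\overline{CW^s(q)}$, not an intersection).

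The paper's proof turns the conclusion of Mather's theorem into the contradiction directly, without any homoclinic point: once one knows $CW^u_{\mathrm{loc}}(q,h)\subset K$, the map $\gamma$ from Lemma~\ref{l.path}, which pushes a whole neighborhood $B(q)$ along $su$--paths towards a point $y\neq q$ on $CW^s_{\mathrm{loc}}(q,h)$, forces the arcs $\gamma(z,\cdot)$, for $z$ in a suitable quadrant of $B(q)\setminus(CW^s_{\mathrm{loc}}(q)\cup CW^u_{\mathrm{loc}}(q))$, to cross $CW^u_{\mathrm{loc}}(q)$. At the crossing time $\gamma(z,t_z)\in CW^u_{\mathrm{loc}}(q)\subset K$, whence $z\in K$ by the preceding lemma. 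An open quadrant thus lies in $K\subset\partial C_0(p_h,h)$, contradicting that the boundary has empty interior. I would encourage you to replace the homoclinic plan by this combination of Mather's theorem with the topological argument via Lemma~\ref{l.path}.
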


\begin{proof}
Assume, by contradiction, that there exists a periodic point $q \in
\partial C_0(p_h,h)$, $q \in K$ a connected component of $\partial
C_0(p_h,h)$.
Since $h=g$ on $\FF^c(p_g,g) = \FF^c(p_h,h)$ we have that $q$ is a
periodic point of $g$ and hence $q$ is either elliptic or hyperbolic (for
$g_{/\FF^c(p_g,g)}$ and thus for $h_{/\FF^c(p_g,g)}$).
If $q$ is elliptic then we know that $C_0(q,h)$ is open and we get a
contradiction.

Assume that $q$ is hyperbolic. If $C_0(q,h)$ is open we are done.
If not, we know that $CW^s(q) \subset C_0(q,h)$ or $CW^u(q) \subset
C_0(q,h)$. For instance, assume that $CW^s(q) \subset C_0(q,h)$.
Since $h =g$ on $\FF^c(p_g,g)$ then every periodic point of $h$ in
$\FF^c(p_g,g)$ is elliptic nondegenerated or hyperbolic and there is
no saddle connections (since $g$ is $\KS$).
A theorem of J. Mather in \cite[Theorem~5.2]{Ma81} implies that
$CW^u(q) \subset \overline{CW^s(q)} \subset \overline{C_0(q,h)} \subset K$.
We know that there exists a continuous map $\gamma \colon B(q) \times [0,1]
\to \FF^c(p_g,g)$ such that $\gamma(q,\cdot) \subset
CW^s_{loc}(q,h)$, $\gamma(q,\cdot)$ is not constant and for every $z
\in B(q)$, $\gamma(z,t) \in C_0(z,h)$.
Therefore, for $z$ belonging to an appropriate component of $B(q) \setminus
(CW^s_{loc}(q) \cup CW^u_{loc}(q))$ we have that $\gamma(z, t_z) \in
CW^u_{loc}(q)$ for some $t_z$ and so $z \in K$.

\begin{figure}[phtb]
\centering
\psfrag{q}{$q$}
\psfrag{cs}{$CW^s_{loc}(q)$}
\psfrag{cu}{$CW^u_{loc}(q)$}
\psfrag{z}{$z$}
\psfrag{c0}{$C_0(q,h)$}
\psfrag{g}{$\gamma(z,t)$}
\includegraphics[height=5cm]{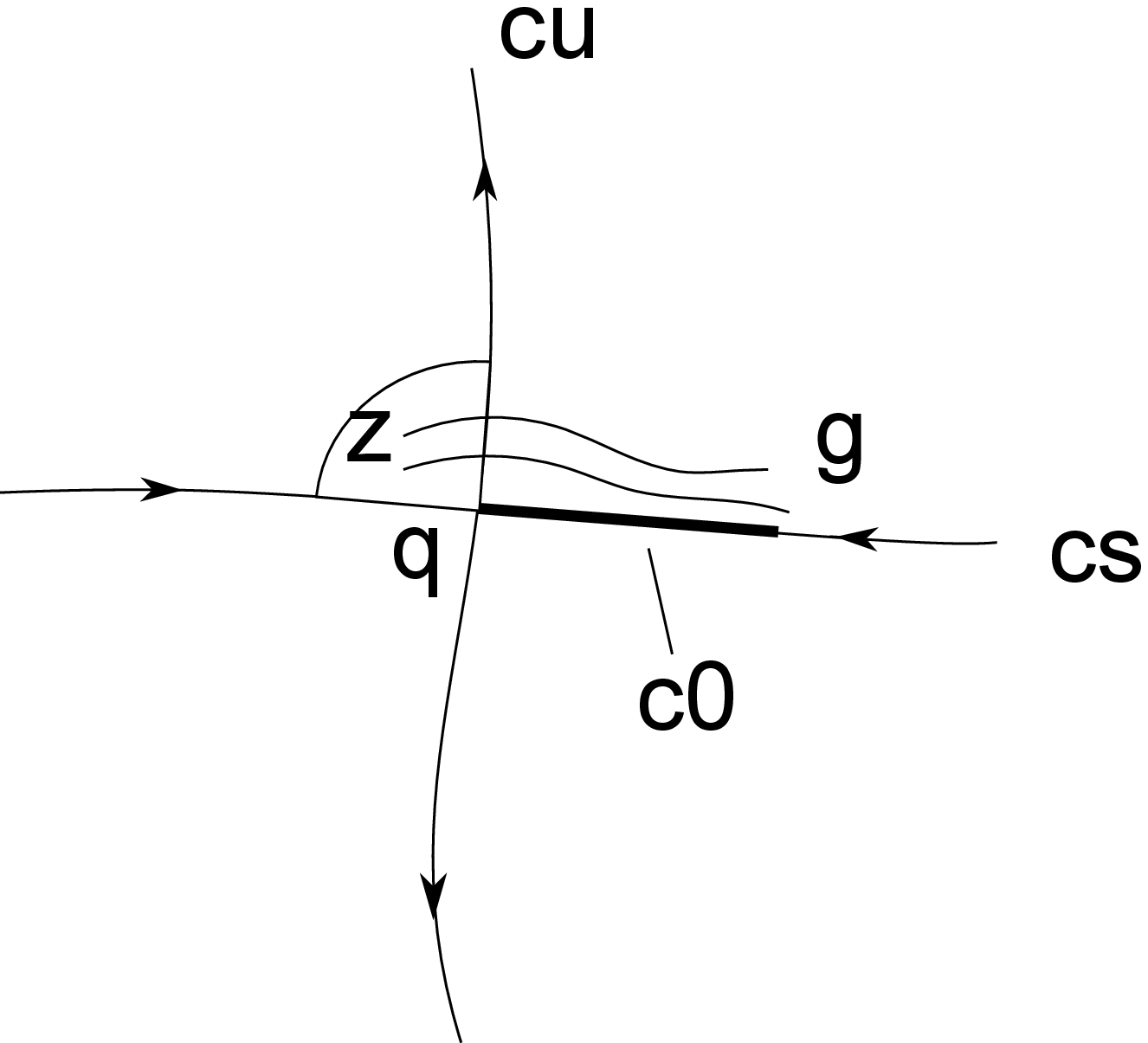}
%\caption{Accessibility class of hyperbolic periodic points.}
\label{f.per}
\end{figure}

This implies that $K$ has nonempty interior, a contradiction.
\end{proof}

\begin{proposition}
\label{p.lamination}
Let $h \in \mathcal{U}(f)$ and let $K$ be a connected component of
$\partial C_0(p_h,h)$.
Then, the partition of $K$ by connected component of accessibility classes
form a $C^1$ lamination.
\end{proposition}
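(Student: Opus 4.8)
The plan is to show that $K$, being a connected component of $\partial C_0(p_h,h)$, is a closed subset of the surface $S=\FF^c(p_h,h)$ with empty interior, and that each point of $K$ has its (local) accessibility class entirely contained in $K$; then to apply the lamination machinery already developed in Section~\ref{s.accessibility}. First I would recall from the preceding lemmas that for every $x\in K$ we have $C_0(x)\subset K$ and that $K$ contains no periodic point of $h$ (by Lemma~\ref{l.no_pp} together with the preceding remark). Since $h\in\EE^r$ with $\dim E^c=2$, for each $x\in K$ the class $\wtC(\tx)$ is either open, a single point, or a $C^1$ one-dimensional manifold without boundary; the first two options are excluded on $K$ (open would contradict $K\subset\partial C_0(p_h,h)$ having empty interior, since an open class would force $C_0(x)$ to meet the interior of $C_0(p_h,h)$ via the map $\gamma$ of Remark~\ref{path2}; a single point would mean the class is trivial, contradicting $h\in\RR_0\subset\RR_{\UU(f)}$ — here I use that $\RR_{\UU(f)}$ lies in $\RR_0$, so there are no trivial accessibility classes). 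Hence every accessibility class meeting $K$ is a $C^1$ one-dimensional submanifold without boundary, and $K$ is partitioned by such classes $C_0(x)$, $x\in K$.

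Next I would invoke the continuous variation of these classes. Proposition~\ref{p.cont-var} shows that the non-open accessibility classes inside a center leaf vary continuously in the $C^1$ topology, and Corollary~\ref{c.lamination} shows that the set $K$ of all non-open classes in $\FF^c$ admits a $C^1$-lamination provided there are no trivial accessibility classes — which holds here. The connected component $K\subset\partial C_0(p_h,h)$ is a \emph{closed} subset of $S$ (boundaries are closed, and connected components of closed sets in a manifold are closed), it has empty interior, and it is saturated by accessibility classes $C_0(x)$, each of which is a leaf of the ambient lamination from Corollary~\ref{c.lamination}. Therefore $K$ inherits the lamination structure: around each $x\in K$ pick a lamination chart $U\cong(-1,1)\times(-1,1)$ as in the definition, in which the classes through $K$ correspond to $F\times(-1,1)$ for a closed $F\subset(-1,1)$; intersecting with $K$ just replaces $F$ by the closed subset $F\cap(\text{the transversal trace of }K)$, and the continuity of the tangent spaces is inherited verbatim. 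This gives the desired $C^1$ lamination of $K$ whose leaves are the connected components of accessibility classes.

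The one point that needs care — and is the main obstacle — is verifying that the partition of $K$ really is \emph{exactly} by connected components of accessibility classes, i.e.\ that distinct leaves of the lamination cannot be two different arcs of the same accessibility class $C_0(x)$ glued along $K$, and conversely that an accessibility class meeting $K$ does not partly leave $K$. The first direction follows because each $C_0(x)$, $x\in K$, is already a $C^1$ one-dimensional manifold without boundary contained in $K$, hence is itself a leaf; the second is precisely the content of the earlier lemma asserting $x\in K\Rightarrow C_0(x)\subset K$, so no class "escapes." I would also note that $K$ cannot contain an open accessibility class, because an open class has nonempty interior while $K\subset\partial C_0(p_h,h)$ does not — this was already used above and is what makes the restricted partition consist solely of one-dimensional leaves. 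With these checks the proposition reduces to Corollary~\ref{c.lamination} applied to the closed saturated set $K$, and the proof is complete.
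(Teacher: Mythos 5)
Your proof takes the same route as the paper: the paper's entire proof of this proposition is the one line ``direct consequence of Corollary~\ref{c.lamination},'' and your write-up is essentially that corollary unpacked, together with the relevant preparatory facts (the preceding lemma that $x\in K\Rightarrow C_0(x)\subset K$, Lemma~\ref{l.no_pp}, the trichotomy for $\wtC(\tx)$, and Proposition~\ref{p.cont-var}). The logic is sound and matches what the paper intends.

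One small but real slip in your justification that there are no trivial accessibility classes on $\FF^c(p_h,h)$: you write ``contradicting $h\in\RR_0\subset\RR_{\UU(f)}$ --- here I use that $\RR_{\UU(f)}$ lies in $\RR_0$,'' which is internally inconsistent (you assert both inclusions), and neither gets you where you need to go, since the proposition only assumes $h\in\UU(f)$ and $\RR_{\UU(f)}$ is merely a residual \emph{subset} of $\UU(f)$, so $h\in\UU(f)$ does not put $h$ in $\RR_{\UU(f)}$. The correct justification is that $f$ was chosen in $\RR_0\cap\RR_*\cap\GG_2\cap\KS$ and $\RR_0$ is $C^1$ open (Theorem~\ref{trivialaccesibility}), so $\UU(f)$ may be shrunk to lie inside $\RR_0$; hence every $h\in\UU(f)$ has no trivial accessibility classes, and Corollary~\ref{c.lamination} applies to the leaf $\FF^c(p_h,h)$. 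With that fix your argument is complete and coincides with the paper's.
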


\begin{proof}
It is a direct consequence of Corollary~\ref{c.lamination}.
\end{proof}

We need a general result about $C^1$ lamination of subsets of the plane.

\begin{proposition}
\label{p.plane}
Let $K \subset \mathbb{R}^2$ be a compact and connected set with empty
interior and supporting a $C^1$ lamination.
Then
\begin{enumerate}
\item $\mathbb{R}^2 \setminus K$ has at least two connected components, and
\item if $\mathbb{R}^2 \setminus K$ has exactly two connected component  then $K \cong \mathbb{S}^1$.
\end{enumerate}
\end{proposition}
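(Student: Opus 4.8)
The plan is to organize both parts around the separation of the plane by $K$, which by Alexander duality is measured by \v{C}ech cohomology: for a compact $K\subset S^2=\mathbb{R}^2\cup\{\infty\}$ with $c$ complementary components one has $\check H^1(K;\mathbb{Z})\cong\mathbb{Z}^{\,c-1}$, and $\mathbb{R}^2\setminus K$ has the same number of components as $S^2\setminus K$ (the point $\infty$ lies in the unbounded one). The geometric input will be the rigidity of the local model $K\cap U\cong F\times(-1,1)$: every point of $K$ lies on a leaf, $K$ has no free endpoints and no branch points, and the accumulation of one leaf on another is ``stacked'' (spiral-type) rather than transverse; also, since $K$ has empty interior, in every chart $F$ is closed and nowhere dense, hence totally disconnected.

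For part (1), fix a leaf $\ell$; being a connected $C^1$ $1$-manifold without boundary inside the compact set $K$, it is diffeomorphic to $\mathbb{S}^1$ or to $\mathbb{R}$. If $\ell\cong\mathbb{S}^1$ it is a Jordan curve, $\mathbb{R}^2\setminus\ell=D\sqcup\mathrm{Ext}$, and because $K$ has empty interior both $D\setminus K$ and $\mathrm{Ext}\setminus K$ are non-empty open sets; their union is $\mathbb{R}^2\setminus K$ (as $\ell\subset K$), so $\mathbb{R}^2\setminus K$ is disconnected. If $\ell\cong\mathbb{R}$ I pass to the continuum $\overline\ell\subset K$ and prove that $\overline\ell$ already separates the plane; then the previous argument applied to $\overline\ell$ in place of $\ell$ (again using empty interior) finishes. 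To see that $\overline\ell$ separates, I would extend the continuous tangent line field of the lamination to a $C^0$ line field on a neighbourhood of a piece of $\overline\ell$, and use compactness of $\overline\ell$ together with the stacked nature of the accumulation to conclude that $\overline\ell$ contains either a closed leaf (a Jordan curve, done) or a leaf winding monotonically onto its limit set; in the latter case a short transversal $\tau$ to the lamination meets $\overline\ell$ in a set this leaf winds around, producing a simple closed curve that meets $K$ transversally in an odd number of points, which by a $\mathbb{Z}/2$ intersection-number argument is incompatible with $\mathbb{R}^2\setminus K$ being connected. Equivalently, this last impossibility is precisely $\check H^1(K;\mathbb{Z})\neq0$, so it is really the content of (1).

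For part (2), by the duality statement ``exactly two complementary components'' is equivalent to $\check H^1(K;\mathbb{Z})\cong\mathbb{Z}$. First I would deduce that $K$ contains a closed leaf $\ell\cong\mathbb{S}^1$: covering $K$ by finitely many lamination charts and running Mayer--Vietoris (each $K\cap U_i$ being cohomologically a totally disconnected set), the rank-one class is carried by a single essential loop inside $K$, and by the absence of branch points and endpoints this loop is contained in a leaf, which is therefore a circle. Writing $\mathbb{R}^2\setminus\ell=D\sqcup\mathrm{Ext}$, saturation by leaves gives a partition $K=\ell\sqcup(K\cap D)\sqcup(K\cap\mathrm{Ext})$. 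If $K\cap D\neq\emptyset$ (the case $K\cap\mathrm{Ext}\neq\emptyset$ is symmetric), pick a leaf $\ell'\subset K\cap D$: if $\ell'\cong\mathbb{S}^1$ it bounds a disc inside $D$, yielding a third complementary component of $K$, a contradiction; if $\ell'\cong\mathbb{R}$, then by compactness both of its ends accumulate, on $\ell$ or on a sub-continuum of $K\cap D$, and since the accumulation is stacked one exhibits an additional complementary region inside $D$, again contradicting that there are only two. Hence $K\cap D=K\cap\mathrm{Ext}=\emptyset$ and $K=\ell\cong\mathbb{S}^1$.

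The main obstacle, shared by both parts, is the purely topological one: turning ``no free ends, no branch points, stacked accumulation'' into a rigorous proof that the local product structure is incompatible with the acyclic (tree-like) planar continua that empty interior alone would permit --- i.e. that a laminated continuum always separates $\mathbb{R}^2$, and separates it into exactly two pieces only when it is a circle. I expect most of the work to go into the transversal-crossing / limit-set analysis (or, equivalently, the \v{C}ech--Alexander duality bookkeeping) that makes those three informal principles precise.
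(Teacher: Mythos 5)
Your proposal is a reasonable sketch but you explicitly leave the key steps unfilled, and those steps are precisely what the paper's proof outsources to the result of Fokkink and Oversteegen \cite{FO96}. The paper's argument is very short for a reason: \cite{FO96} already supplies the crucial dichotomy that a laminated planar continuum either contains a circle leaf or has complement with \emph{at least four} components. Part (1) then follows immediately, and the stronger ``at least four'' bound is what drives part (2): the two-component hypothesis forces the existence of a (necessarily unique) circle leaf $W_0$, every other leaf must have both limit sets containing $W_0$, and the index argument for the singular extension of the lamination to the sphere (also following \cite{FO96}) gives a third complementary component, a contradiction. Your transversal/$\mathbb{Z}/2$-intersection-number heuristic for the no-circle-leaf case is essentially a re-derivation of the \cite{FO96} theorem, and you acknowledge yourself that ``most of the work'' remains there --- so this step is a genuine gap as written.

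Separately, your Mayer--Vietoris argument in part (2) is shakier than it looks. Alexander duality gives $\check{H}^1(K;\mathbb{Z})\cong\mathbb{Z}$ when the complement has two components, but that alone does not produce a loop \emph{inside} $K$ carrying the generator: Warsaw-circle-type continua satisfy $\check{H}^1=\mathbb{Z}$ yet contain no embedded circle. To rule this out you must use the lamination structure, and the phrase ``by the absence of branch points and endpoints this loop is contained in a leaf'' is not a proof --- it is again the substance of the \cite{FO96} theorem. (The local fact that any connected subset of a chart $F\times(-1,1)$ lies in a single plaque does let you conclude that an actual embedded arc or loop in $K$ lies in a single leaf; the missing step is getting such a loop in the first place.) The paper avoids this by deducing the closed leaf directly from the ``four components'' bound, not from a cohomological argument. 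Your treatment of $K\cap D$ once a circle leaf is found is fine (a second circle leaf inside $D$ gives a third component, using that $K$ has empty interior), but the $\ell'\cong\mathbb{R}$ subcase once more rests on the unproven ``stacked accumulation produces a new region'' principle. In short: the architecture is sound and genuinely different in flavor (duality/intersection theory versus a direct citation), but every place the two proofs diverge is exactly a place where you have a stated obstacle rather than an argument.
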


\begin{proof}
If $K$ contains a leaf which is diffeomorphic to a circle then clearly $K$
separates $\mathbb{R}^2$.
On the other hand, if $K$ does not contains such a leaf then by \cite{FO96},
$\mathbb{R}^2 \setminus K$ has at least four connected components, this
proves item 1.

Now, if $\mathbb{R}^2 \setminus K$ has exactly two connected components, by
the above it follows that $K$ contains a  leaf $W_0$ that is diffeomorphic
to a circle, which is unique otherwise the complement has at least three
connected components.
Arguing by contradiction, assume that there are other leaves of the
lamination than $W_0$.

Let $W(x)$ be the leaf of lamination through $x \in K$.
Orientate the leaf in an arbitrary way.
It follows that the $\alpha$ and $\omega$ limit set of the leaf must
contain $W_0$.
Otherwise, the result of \cite{FO96} applies and the complement of $K$ has
at least four connected components.

Consider a transversal section $J$ through $W_0$.
By the above, every point in $K \setminus J$ is in an arc of the lamination
having both ends in $J$.
The same arguments in the paper of \cite{FO96} yields that the lamination
could be extended to a foliation with singularities in the sphere having at
most one singularity of index $1$ and the others have index less than $1/2$.
It follows that the complement of $K$ has at least $3$ connected components,
a contradiction.
\end{proof}

We now state a theorem that will be useful in our context.

\begin{theorem}[Xia \cite{Xia06a}, Koropecki \cite{Ko10}]
\label{t.X,Ko}
Let $S$ be a compact surface without boundary and let $f \colon S \to S$
be a homeomorphism such that $\Omega(f) = S$.
Let $K$ be a compact connected invariant set.
Then, one of the following holds:
\begin{enumerate}
\item $K$ has a periodic point;
\item $K=S=\mathbb{T}^2$;
\item $K$ is an annular domain, i.e., there exists an open neighborhood
$U$ of $K$ homeomorphic to an annulus and $U \setminus K$ has
exactly two components (each one homeomorphic to an annulus).
\end{enumerate}
\end{theorem}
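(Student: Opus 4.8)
This is the theorem of Xia \cite{Xia06a} (for $S=\T^2$) and Koropecki \cite{Ko10} (for general closed surfaces; we take $S$ orientable, which covers all applications here, e.g.\ those in which $S$ carries an $f$-invariant area form). I only sketch the strategy. If $K$ contains a periodic point of $f$ we are in case (1), so assume $K$ is \emph{aperiodic}; the aim is to reach case (2) or (3). The starting point is that $\Omega(f)=S$ forbids wandering open sets: the connected components of the open invariant set $S\setminus K$ are permuted by $f$ and pairwise disjoint, so each component $V$ is $f$-periodic, an infinite orbit of components being a wandering open set. Hence, for the local analysis around a given component, we may pass to an iterate of $f$ and treat $V$ as invariant, remembering that the original $f$ cyclically permutes finitely many translates of $V$.

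The heart of the proof is the dichotomy between $K$ being \emph{inessential} (contained in an embedded open disk of $S$) and \emph{essential}, together with the claim that neither $K$ nor any invariant component $V$ of $S\setminus K$ can be inessential. Given such a $V$, one \emph{fills} it --- adding the components of its complement that lie in the ambient disk and miss $K$ --- to obtain an invariant open topological disk $V^{*}$ with $\partial V^{*}\subset K$; the prime-end compactification of $V^{*}$ produces a circle on which $f$ acts as a homeomorphism with a well-defined rotation number $\rho$. If $\rho\in\mathbb{Q}$, prime-end theory (Cartwright--Littlewood, Mather) yields an accessible \emph{periodic} point of $f$ on $\partial V^{*}\subset K$, contradicting aperiodicity; if $\rho\notin\mathbb{Q}$ then $f|_{V^{*}}$ is fixed-point free, so by Brouwer's plane translation theorem every point of $V^{*}$ is wandering, contradicting $\Omega(f)=S$ (after bookkeeping with the finitely many translates of $V^{*}$). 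Running the same scheme on a suitable filling of $K$ itself, and invoking the Cartwright--Littlewood--Bell fixed point theorem (with its period-two version via $f^{2}$), rules out $K$ being inessential. Therefore $K$ is essential, and so is every component of $S\setminus K$.

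To conclude, lift $K$ to the universal cover of $S$ and consider a connected component $\widehat K$ of its preimage, invariant under a subgroup $G$ of deck transformations. Essentiality of $K$ forces $G$ to be nontrivial, and the analysis above (using the Lefschetz fixed point theorem to exclude $K=S$ on surfaces other than $\T^2$, and the prime-end/Brouwer argument on the complementary components of $\widehat K$) shows that $G$ is either the full deck group --- so $K=S=\T^2$, case (2) --- or infinite cyclic, in which case the image of $\widehat K$ in the corresponding $\mathbb{Z}$-cover exhibits $K$ as contained in a periodic open topological annulus $A$ in which it is essential. In this last case, the essentiality of \emph{all} components of $S\setminus K$ forces exactly two of them to abut $K$, namely the two annular regions into which an essential continuum splits a neighbouring annulus, and taking $U$ to be their union with $K$ gives the annular neighbourhood of case (3). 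The main obstacle is the middle step: controlling every complementary component of $K$, in particular those that are not simply connected, through the precise interplay of prime-end rotation numbers, Brouwer theory and the nonwandering hypothesis; the complete argument is carried out in \cite{Xia06a} and \cite{Ko10}.
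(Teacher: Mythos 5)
The paper does not prove this statement---it is quoted from Xia \cite{Xia06a} and Koropecki \cite{Ko10}---so the only question is whether your sketch is a faithful outline of a correct argument, and it is not: its central claim is false. You assert that neither $K$ nor any invariant component $V$ of $S\setminus K$ can be inessential, but consider an irrational rotation of $\mathbb{S}^2$ about an axis with $K$ an invariant latitude circle: here $\Omega(f)=S$, $K$ is an aperiodic invariant continuum, and $K$ together with both complementary disks is inessential, while the theorem holds through case (3) (a null-homotopic circle is an annular domain in the stated sense). So inessentiality cannot lead to a contradiction, and the place where your argument breaks is the step ``$\rho\notin\mathbb{Q}$ implies $f|_{V^{*}}$ is fixed-point free'': an irrational prime-end rotation number is perfectly compatible with interior fixed points (the irrational rotation of a disk about its center), so Brouwer's translation theorem does not apply and no contradiction with $\Omega(f)=S$ arises. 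Likewise, Cartwright--Littlewood--Bell applied to a filling of $K$ only produces a periodic point in the \emph{filled} continuum, which may sit in a complementary disk rather than in $K$, so it does not rule out inessential $K$ either; and in the rational case the passage from a rational prime-end rotation number to a periodic point on $\partial V^{*}$ is itself a nontrivial statement that requires the nonwandering hypothesis, not just classical prime-end theory.

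Because of this, the overall architecture---force $K$ and all components of $S\setminus K$ to be essential, pass to a $\mathbb{Z}$-cover, and read off exactly two complementary components abutting $K$---does not match what has to be proved: the actual proofs must (and do) produce the annular neighbourhood also when $K$ is inessential, e.g.\ on $\mathbb{S}^2$ where no continuum is essential, and the claim that every component of $S\setminus K$ is essential is unjustified in general. What is correct in your sketch are the preliminary reductions (each component of $S\setminus K$ is periodic since otherwise it would be wandering; $K=S$ with $K$ aperiodic forces $S=\mathbb{T}^2$) and the choice of tools (prime-end compactifications of invariant complementary domains, rotation numbers, Brouwer theory, the nonwandering hypothesis), but the way they are assembled here would prove a false statement, so as written the proposal has a genuine gap rather than being a compressed version of the arguments of \cite{Xia06a} and \cite{Ko10}.
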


\begin{proposition}
Let $g \in \RR_{\UU(f)}$ and let $h \in \UU(f)$ such that $h=g$ on
$\FF^c(p_g,g)$.
Then, any connected component $K$ of $\partial C_0(p_h,h)$ is a simple
closed $C^1$-curve invariant for some power of $h$ (and $g$).
\end{proposition}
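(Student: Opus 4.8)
The plan is to establish, in order: $K$ is invariant under some power of $h$; $K$ is an annular domain; $K\cong\mathbb{S}^1$; and finally $K$ is a $C^1$ simple closed curve — the invariance under a power of $g$ coming for free at the end.

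\emph{Invariance.} Let $\tau$ be the period of the saddle periodic point $p_h$; since the compact leaf $S=\FF^c(p_h,h)$ is $h$-invariant, $h^\tau$ fixes both $p_h$ and $S$. By Theorem~\ref{t.peridic_point} the accessibility class $AC(p_h,h)$ is open, and it is $h^\tau$-invariant (iterates of $h$ send $su$-paths to $su$-paths); hence $C(p_h,h)=AC(p_h,h)\cap S$ is open in $S$ and $h^\tau$-invariant, so $C_0(p_h,h)$, being its connected component through the fixed point $p_h$, is $h^\tau$-invariant. Therefore $h^\tau$ permutes the connected components of $\partial C_0(p_h,h)$. To promote this to $h^N(K)=K$ for some $N$, I use that $h_{/S}$ preserves an area form (the folklore lemma opening this section): each component of $S\setminus\overline{C_0(p_h,h)}$ is open, hence of positive area, distinct $h^\tau$-orbits of such regions are disjoint, and all lie in the finite-area surface $S$, which forces these orbits to be finite; transferring finiteness from these complementary regions to the components of $\partial C_0(p_h,h)$ then gives that the $h^\tau$-orbit of $K$ is finite, say $h^N(K)=K$.

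\emph{Classification.} The set $K$ is compact, connected, $h^N$-invariant, has empty interior (it lies in the topological boundary of the open set $C_0(p_h,h)$), and contains no periodic point of $h$, hence none of $h^N$, by Lemma~\ref{l.no_pp}. Since $h^N_{/S}$ preserves an area form of full support, $\Omega(h^N_{/S})=S$. Applying Theorem~\ref{t.X,Ko} to $h^N_{/S}$ and $K$: alternative (1) is excluded by Lemma~\ref{l.no_pp}, alternative (2) by $K\ne S$ (empty interior versus a surface); hence $K$ is an annular domain, with annular neighborhood $U$ and $U\setminus K$ having exactly two components $V_1,V_2$. Choose a compact annulus $A$ with $K\subset\interior A\subset A\subset U$ for which $A\cap V_1$ and $A\cap V_2$ are connected, and embed $A$ in $\real^2$ as a round closed annulus; then $\real^2\setminus K$ has exactly two components, the bounded one being the inner disk together with $A\cap V_1$, the unbounded one being $A\cap V_2$ together with the exterior. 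Since $K$ supports a $C^1$ lamination (Proposition~\ref{p.lamination}), Proposition~\ref{p.plane}(2) yields $K\cong\mathbb{S}^1$. In a lamination chart the local model $F\times(-1,1)$ must then be a $1$-manifold, so $F$ is a single point and the lamination of $K$ has the single $C^1$ leaf $K$ itself; thus $K$ is a simple closed $C^1$-curve. Finally $h=g$ on $S$ and the $h$-orbit of $K$ stays in $S$, so $g^N(K)=h^N(K)=K$.

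\emph{Main obstacle.} The delicate point is the invariance step: $h^\tau$ a priori only \emph{permutes} the components of $\partial C_0(p_h,h)$, which may be infinite in number, so it is precisely the area-preserving hypothesis that lets one extract a genuine invariant circle — and the passage from finiteness of the complementary regions to finiteness of the orbit of a boundary component must be handled with care. Once invariance is in hand, the classification is a careful but essentially mechanical assembly of Theorem~\ref{t.X,Ko}, Proposition~\ref{p.plane}, and the lamination structure.
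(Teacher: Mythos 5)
Your overall architecture matches the paper's: establish that some power of $h$ fixes $K$, invoke Lemma~\ref{l.no_pp} to exclude periodic points on $K$, apply Theorem~\ref{t.X,Ko} to rule out cases (1) and (2) and conclude $K$ is an annular domain, then use Proposition~\ref{p.lamination} together with Proposition~\ref{p.plane}(2) to conclude $K\cong\mathbb{S}^1$, and read off $C^1$-smoothness from the lamination chart. That part is sound and close to the paper.

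The genuine gap is exactly the one you flag at the end: ``transferring finiteness from these complementary regions to the components of $\partial C_0(p_h,h)$.'' Area preservation does bound the $h^\tau$-orbit of any single connected component $U_0$ of $S\setminus\overline{C_0(p_h,h)}$, but there is no a priori correspondence between such complementary components and boundary components $K$ of $\partial C_0(p_h,h)$: a given $K$ need not be the frontier of any one complementary region, and there may be infinitely many complementary regions accumulating on $K$ without any of them having $\partial U_0\subset K$. So finiteness of orbits of open complementary regions does not, by itself, give finiteness of the $h^\tau$-orbit of $K$. This is precisely where the paper's argument does real work that you have elided.

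The paper sidesteps this by a trichotomy on $K$: either (1) $K$ lies in a disk and does not separate it (excluded outright by Proposition~\ref{p.plane}(1)); or (2) $K$ lies in a disk and separates it, in which case one can \emph{explicitly} produce a complementary component $U_0$ with $\partial U_0\subset K$ — and only then does the area-preservation argument transfer, via $h^m(U_0)=U_0\Rightarrow h^m(\partial U_0)=\partial U_0\subset K\Rightarrow h^m(K)\cap K\neq\emptyset\Rightarrow h^m(K)=K$; or (3) $K$ is ``essential'' (every neighborhood contains non-null-homotopic loops), in which case only finitely many components of $\partial C_0(p_h,h)$ can be of this type, and $h$ permutes this finite set, giving periodicity without any area argument. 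In short: the area-preservation idea only bites once you know $K$ bounds a complementary region, and the trichotomy is what reduces to that situation (or to the finite essential case). To repair your proof, you would need to supply this case analysis or some substitute for it; as written, the transfer step does not follow.
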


\begin{proof}
Let $K$ be a connected component of $\partial C_0(p_h,h)$.
By Proposition~\ref{p.lamination} we know that $K$ admits a $C^1$
lamination.
We have three possibilities:
\begin{itemize}
\item[$(1)$] $K \subset U$ where $U$ is homeomorphic to a disk and $K$
does not separate $U$;
\item[$(2)$] $K \subset U$ where $U$ is homeomorphic to a disk and $K$
does separate $U$;
\item[$(3)$] none of the above, i.e., in any neighborhood $U$ of $K$
we have non null-homotopic closed curves (in $\FF^c(p_h,h)$).
\end{itemize}

Proposition~\ref{p.plane} implies that $(1)$ cannot happen.
Lets consider situation $(2)$.
We consider an open set $U_0 \subset U$, where $U_0$ is a
connected component of the complement of $\overline{C_0(p_h,h)}$ and
$\partial U_0 \subset K$. Since $h|\FF^c(p_h,h)$ preserves the form
$\omega|\FF^c(p_h,h)$ we have for some integer $m$ that $h^m(U_0) =
U_0$.
This  implies that $h^m(K) \cap K \neq \emptyset$.
From the fact that $C_0(p_h,h)$ is invariant we get $h^m(K) = K$.
Since $K$ has no periodic point (from Lemma~\ref{l.no_pp}) and is not the
whole surface, we have from Theorem~\ref{t.X,Ko} that $K$ is an annular
domain and by Proposition~\ref{p.plane}, we have that $K$ is a simple
closed curve.

Finally, lets consider situation $(3)$. Notice that there are finitely
many components satisfying $(3)$.
On the other hand, $h$ maps a connected component $K$ satisfying $(3)$, to
another one also satisfying $(3)$.
Therefore, for some $m$ we have that $h^m(K)=K$, for any $K$ in $(3)$.
Applying Theorem~\ref{t.X,Ko} and Proposition~\ref{p.plane}, we get the
result as before.
\end{proof}

\begin{lemma}
Let $g \in \RR_{\UU(f)}$. Then $\overline{C_0(p_g,g)} = \FF^c(p_g,g)$.
\end{lemma}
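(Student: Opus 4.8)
The plan is to argue by contradiction. Suppose $\overline{C_0(p_g,g)}\subsetneq S$, where $S:=\FF^c(p_g,g)$, and write $p:=p_g$. Then $S\setminus\overline{C_0(p_g,g)}$ is a nonempty open set; fix one of its connected components $U_0$ together with a round disk $D\subset U_0$ of radius $\rho_0>0$. A short topological check gives $\partial U_0\subset\partial C_0(p_g,g)$, and by the preceding proposition (applied with $h=g$) the latter is a disjoint union of $C^1$ simple closed curves, each invariant under a power of $g$ and, by Lemma~\ref{l.no_pp}, carrying no periodic point. Each such component $K$ is moreover a single accessibility class: it is $C^1$-laminated by the connected accessibility classes it contains (Proposition~\ref{p.lamination}), none of which is trivial ($g\in\RR_0$) nor open (since $K$ has empty interior), so every leaf is a $C^1$ one-manifold without boundary embedded in the circle $K$, hence equals $K$. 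Fix one component $\CC\subset\partial U_0$; then $\CC=C_0(x_0,g)$ for every $x_0\in\CC$, the curve $\CC$ is two-sided, $C_0(p_g,g)$ and $U_0$ sit on opposite sides of it locally, and $\CC$ is essential in a thin tubular annular neighbourhood.

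Next I would apply Lemma~\ref{l.segundo} to the periodic compact leaf $\FF^c_1=S$, the curve $\CC=C_0(x_0,g)$, and an annular neighbourhood $U$ of $\CC$ chosen so that it bulges through $U_0$ and engulfs $D$ (this can be done keeping $U$ homeomorphic to an annulus with $U\cap(\partial U_0\setminus\CC)=\emptyset$ and $\CC$ essential in $U$), together with a family $\Gamma$ of disjoint essential simple closed curves, $\CC\in\Gamma$, foliating the core of $U$. For every small $\varepsilon>0$ this produces $h=h_\varepsilon\in\EE$ with $\dist_{C^r}(g,h)<\varepsilon$, with $\supp(g\neq h)$ disjoint from the $g$-orbit of $S$ — so $h|_S=g|_S$, $p_h=p$, and $h\in\UU(f)$ — and with no curve of $\Gamma$ contained in a sub-annulus $V\subset U$ (still engulfing $D$) being an accessibility class of a point of $V$.

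The heart of the matter is to show that $C_0(p,h)$ is forced to swallow $D$. Since $h\in\UU(f)$ and $h=g$ on $S$, the preceding proposition applies to $h$ as well, so every boundary component of $C_0(p,h)$ is a simple closed curve which is an accessibility class of $h$. Now a connected open subset of an annulus meets it in a region with at most two boundary circles essential in the annulus; hence at most one boundary component of $C_0(p,h)$ separates $C_0(p,h)$ from $D$ inside $V$, and if such a component exists it is an essential curve in $V$ which is an accessibility class, so a further application of Lemma~\ref{l.segundo} removes it (choosing the successive perturbation sizes in geometric progression so that their sum stays below $\varepsilon$). Iterating this finitely many times yields a map $h$, still $\varepsilon$-$C^r$-close to $g$ and equal to $g$ on $S$, with $V\subset C_0(p,h)$; in particular $\overline{C_0(p,h)}$ contains a point of $D$ lying at distance $\ge\rho_0/2$ from $\partial U_0$, hence at distance $\ge\rho_0/2$ from $\overline{C_0(p_g,g)}$. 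Since $\rho_0$ depends only on $g$, letting $\varepsilon\to0$ contradicts the continuity of $\Gamma_3$ at $g$ (recall $g\in\RR_3(f)$); one may alternatively contradict the continuity of $\Gamma_2$ at $g$.

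The step I expect to be the main obstacle is precisely this last paragraph: guaranteeing that destroying the boundary curve $\CC$ (and the finitely many essential curves lying between $C_0(p,h)$ and $D$) makes $C_0(p,h)$ genuinely expand into $U_0$ by a definite amount, instead of merely receding slightly from $\CC$. The leverage is that boundary components of $C_0(p,h)$ are again simple-closed-curve accessibility classes and that a connected open set has at most two essential boundary circles inside an annulus, so only finitely many essential curves stand in the way of $D$; carrying out this bookkeeping — and verifying that the perturbations furnished by Lemma~\ref{l.segundo} indeed delete exactly those curves while keeping $h$ within the prescribed $C^r$-distance of $g$ — is the delicate point.
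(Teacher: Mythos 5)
Your overall strategy (contradiction via a boundary curve $\CC$, then Lemma~\ref{l.segundo}) is the right one, and the preliminary observations (boundary components are $C^1$ simple closed curves without periodic points, each a single accessibility class) are sound. But the core of your argument has a genuine gap, and it is precisely where you yourself flag the difficulty.

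The issue is the choice of the family $\Gamma$. You take $\Gamma$ to be an essentially arbitrary family of disjoint essential curves ``foliating the core of $U$.'' Lemma~\ref{l.segundo} then guarantees only that no curve \emph{of that particular family} $\Gamma$ is an accessibility class of the perturbed $h$; it says nothing about curves not in $\Gamma$. After perturbing, the new boundary component of $C_0(p_h,h)$ in $V$ is some $C^1$ essential simple closed curve, but there is no reason it should lie in your chosen foliation $\Gamma$. So your iterative scheme (``destroy $\CC$, then destroy whatever essential curve now blocks $D$, \dots'') never gets started: each application of Lemma~\ref{l.segundo} only rules out curves from a fixed family, and after one perturbation the obstruction need not belong to that family. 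Iterating also runs into the problem that $\supp(g\neq h_1)$ and $\supp(h_1\neq h_2)$ must each stay off the orbit of $S$, while the cumulative perturbation still has to land in $\UU(f)$; you gesture at a geometric series of $\varepsilon$'s, but the real problem is that the sets $\Gamma$ have to change at each step and the lemma does not track them.

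The paper's proof sidesteps all of this by choosing $\Gamma$ to be the family of \emph{all} $g^m$-invariant essential simple closed $C^1$-curves in $U$, where $g^m(\CC)=\CC$. That this is a family of pairwise disjoint curves is an honest claim that requires proof, and the paper supplies it: since $g\in\KS$, none of these invariant circles has rational rotation number, and area preservation on the leaf rules out Denjoy behaviour, so two intersecting invariant circles would have to coincide. With this choice, the argument is a single perturbation, no iteration: semicontinuity of $\Gamma_2$ and $\Gamma_3$ at $g$ forces $\partial C_0(p_h,h)$ to have a connected component inside $V$ for every $h$ close to $g$ with $h=g$ on the leaf; by the preceding proposition that component is a $C^1$ essential simple closed curve invariant under a power of $h$; and because $h=g$ on the leaf, it is $g^m$-invariant and hence lies in $\Gamma$ — directly contradicting Lemma~\ref{l.segundo}. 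You do not need $C_0(p,h)$ to engulf the disk $D$; you only need one boundary component to sit in $V$ and to be recognized as a member of $\Gamma$, which your $\Gamma$ cannot guarantee but the paper's can.

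A smaller point: your reduction of $K$ to a single accessibility class via the lamination structure is an unnecessary detour once you know (from the preceding proposition) that $K$ is already a single $C^1$ simple closed curve. What matters for Lemma~\ref{l.segundo} is that $\CC=C_0(x_0,g)$ is essential and a simple closed curve, which is given directly.
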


\begin{proof}
Assume that this is not the case, and so, there is a connected component
$\CC$ of $\partial C_0(p_g,g)$ (which is a simple closed curve) and an
open annulus $U$ which is a neighborhood of $\CC$, such that  one
component of $U \setminus \CC \subset C_0(p_g,g)$ and the other one is
contained in the complement of $\overline{C_0(p_g,g)}$.
We know that $g^m(\CC) = \CC$ for some $m$.

Consider the family $\Gamma$ of $g^m$ invariant simple closed and
essential $C^1$-curve in $U$. Notice that curves in this family are
disjoint or coincide.
This is because, since $g$ is $\KS$, these curves cannot have rational
rotation number.
Now, if $\CC_1 \cap \CC_2 \neq \emptyset$, by invariance we have that they
intersects along the nonwandering set $\Omega(g^m|\CC_1) =
\Omega(g^m|\CC_2)$.
But if one (and hence both) are Denjoy maps there must exists a wandering
open set $U \subset \FF^c(p_g,g)$, a contradiction since $g$ preserves
area on the compact leaf $\FF^c(p_g,g).$

Let $V \subset \overline{V} \subset U$ be an annulus neighborhood of $\CC$
as in Lemma~\ref{l.segundo}.
Since $g \in \RR_{\UU(f)}$ and in particular $g$ is a continuity point of
the maps $\Gamma_2$ and $\Gamma_3$ (see \eqref{eq.gamma2}) it is not
difficult to see that there exists $\VV(g)$ such that if $h \in \VV(g)$
and $h=g$ on $\FF^c(p_g,g)$ then $\partial C_0(p_h,h)$ must have a
connected component in $V$ which must be an $h^m$-invariant (and so
$g^m$-invariant) essential simple closed $C^1$-curve.
By Lemma~\ref{l.segundo} we get a contradiction.
\end{proof}

Now we are ready to finish the proof of Theorem~\ref{t.accesibleRR} and
hence Theorem~\ref{main}.

\medskip

\noindent \textit{End of proof of Theorem~\ref{t.accesibleRR}:}
Let $g\in \RR_{\UU(f)}$ and we already know that $\overline{C_0(p_g,g)} =
\FF^c(p_g,g)$. We want to prove that $C_0(p_g,g)=\FF^c(p_g,g).$
We argue by contradiction and we assume that $C_0(p_g,g)\neq\FF^c(p_g,g)$
and let $\CC_i=\CC_i(g), i=1, \dots ,\ell$ be the connected components of
$\partial C_0(p_g,g).$
We know that every  $\CC_i$ is a simple closed $C^1$-curve non
null-homotopic invariant for $g^{m_i}$, for some $m_i$ and let $U_i$ be annulus
neighborhoods of $\CC_i.$

Since $g$ is a continuity point of $\Gamma_2$ and $\Gamma_3$ we get that
there exists a neighborhood $\VV(g)$ such that if $h\in\VV(g)$ then
\begin{itemize}
\item $\overline{C_0(p_h,h)}=\FF^c(p_h,h).$
\item $\FF^c(p_h,h) \setminus C_0(p_h,h)\cap U_i\neq\emptyset, i=1,\dots ,
\ell.$
\end{itemize}

Consider  the family of essential simple closed $C^1$-curves $g^m$-invariant
and contained in $U_i$ and let $V_i$ be as in Lemma~\ref{l.segundo}.
Since $g \in \RR_{\UU(f)}$ and for $\VV(g)$ as above we have  for any
$h \in \VV(g)$ and such that $h=g$ on $\FF^c(p_g,g)$  that
$\partial C_0(p_h,h)$ must have a connected components $\CC_i(h)$ (which are
simple closed curves) contained in every $V_i$.
By Lemma~\ref{l.segundo} this curves cannot be essential in $V_i$\,.
This implies that $\CC_i(h)$ must be null-homotopic.
And therefore $\overline{C_0(p_h,h)}\neq\FF^c(p_h,h),$ a contradiction.
\hfill $\Box$

\appendix
\section{Proof of Proposition~\ref{p.bb}}\label{a.apA}
\subsection{Bounded Variation}

Recall that $f \colon [a,b] \to \mathbb{R}$ is of {\em bounded
variation} if\;:
$$
\sup \left\{ \sum_{i=0}^{n-1} |f(x_{i+1}) - f(x_i)| \colon a=x_0 < x_1 <
\dots < x_n=b \right\} < \infty,
$$
and this supremum is denoted by $V(f;[a,b])$.

\begin{lemma}
\label{l.bv} Let $f \colon [a,b] \to \mathbb{R}$ of bounded
variation. We have the following:
\begin{enumerate}
\item if $[a_1,b_1] \subset [a,b]$ then  $V(f;[a_1,b_1]) \le V(f;[a,b])$,
\item if $(a_1,b_1)$ and $(a_2,b_2)$ are disjoint intervals contained in $[a,b]$
then $V(f;[a_1,b_1]) + V(f;[a_2,b_2]) \le V(f;[a,b])$. The same
holds for any finite disjoint collection of intervals $(a_i,b_i)$'s,
\item if $(a_1,b_1)$ and $(a_2,b_2)$ are disjoint intervals in $[a,b]$ and
$f([a_1,b_1]) \cup f([a_2,b_2]) \supset [c,d]$ then $V(f;[a_1,b_1])
+ V(f;[a_2,b_2]) \ge d - c$. A similar statement holds for any
finite disjoint collection of $(a_i,b_i)$'s,
\item if $f$ is the difference of two non-decreasing maps then $f$ is
of bounded variation.
\end{enumerate}
\end{lemma}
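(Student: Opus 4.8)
The plan is to treat the four items in sequence, each resting on elementary manipulations of partitions together with the triangle inequality; nothing beyond the definition of $V(f;\cdot)$ is needed.

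For item (1), I would start from an arbitrary partition of $[a_1,b_1]$ and enlarge it to a partition of $[a,b]$ by adjoining the endpoints $a$ and $b$. Inserting points into a partition can only increase the sum $\sum |f(x_{i+1})-f(x_i)|$ (apply $|f(z)-f(x)|\le|f(z)-f(y)|+|f(y)-f(x)|$ at each new point), so the sum over the original partition is bounded by $V(f;[a,b])$; passing to the supremum gives the inequality. Item (2) is the same idea run once more: given partitions of two disjoint subintervals, say $[a_1,b_1]$ lying to the left of $[a_2,b_2]$, I amalgamate them together with the points $a,b_1,a_2,b$ into a single partition of $[a,b]$. Since no points are forced between $b_1$ and $a_2$ or between $a$ and $a_1$, every consecutive pair coming from one of the two original partitions stays consecutive, so the amalgamated sum contains all terms of both original sums among its nonnegative terms and therefore dominates their total. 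Taking suprema yields $V(f;[a_1,b_1])+V(f;[a_2,b_2])\le V(f;[a,b])$, and the statement for a finite disjoint family follows by the same amalgamation (or by induction using item (1)).

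Item (3) is where a genuinely new ingredient enters. The point is the inequality $V(f;[a_i,b_i])\ge \diam f([a_i,b_i])$, which is immediate because the two- or four-point partition $a_i\le x< y\le b_i$ already contributes a term $\ge |f(x)-f(y)|$. Thus the claim reduces to the elementary fact that if subsets $A,B\subset\real$ satisfy $A\cup B\supseteq [c,d]$ then $\diam A+\diam B\ge d-c$. I would prove this by contradiction: assuming $c\in A$ without loss of generality, every point of $(\,\sup(A\cap[c,d]),\,d\,]$ must lie in $B$, so $B$ accumulates at $\sup(A\cap[c,d])$ and contains $d$, forcing $\diam B\ge d-\sup(A\cap[c,d])\ge d-c-\diam A$. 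For an arbitrary finite disjoint family the cleanest route is either the same argument by induction, or simply subadditivity of outer Lebesgue measure together with the observation that a set of diameter $\delta$ fits in an interval of length $\delta$, which gives $d-c\le \sum_i \diam f([a_i,b_i])\le \sum_i V(f;[a_i,b_i])$.

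Finally, for item (4), writing $f=g-h$ with $g,h$ non-decreasing, for any partition $a=x_0<\dots<x_n=b$ I would estimate $\sum_i |f(x_{i+1})-f(x_i)|\le \sum_i (g(x_{i+1})-g(x_i))+\sum_i (h(x_{i+1})-h(x_i))$ using $|\alpha-\beta|\le\alpha+\beta$ for $\alpha,\beta\ge 0$; both sums on the right telescope, producing the uniform bound $(g(b)-g(a))+(h(b)-h(a))$, whence $V(f;[a,b])<\infty$. The only step calling for more than bookkeeping is the set-diameter inequality used in item (3); everything else is routine.
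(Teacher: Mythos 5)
Your proposal is correct and, for the only item the paper actually proves (item 3), it rests on the same key inequality $V(f;[a_i,b_i]) \ge \diam f([a_i,b_i])$ followed by the elementary covering fact that two sets whose union contains $[c,d]$ have diameters summing to at least $d-c$. The paper packages this covering fact by naming $c^*=\sup f([a_1,b_1])$ and (evidently a typo) $d^*=\inf f([a_2,b_2])$, showing $c^*\ge d^*$, and then writing $V(f;[a_1,b_1])+V(f;[a_2,b_2])\ge(c^*-c)+(d-d^*)\ge d-c$; you abstract the same reasoning into a standalone statement about diameters, which is if anything slightly cleaner and makes the induction for a finite family (or the alternative via subadditivity of outer Lebesgue measure) transparent. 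Your treatments of items 1, 2, and 4 — refining partitions by adjoining endpoints, amalgamating partitions of disjoint subintervals, and telescoping the monotone pieces — are the standard verifications the paper waves off as "immediate from the definition," and they are all correct as written.
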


\begin{proof}
We just prove item 3, the others follows immediately from the
definition of bounded variation. Lets assume that $c \in
f([a_1,b_1])$. If $d \in f([a_1,b_1])$ then we are done. So, assume
that $d \notin f([a_1,b_1])$ and so $d \in f([a_2,b_2])$. Let $c^* =
\sup (f([a_1,b_1]))$ and $d^* = \inf (f([a_1,b_1]))$ then $c^* \ge
d^*$, $c^* \ge c$, and $d^* \le d$. Then $V(f;[a_1,b_1]) \ge c^* -
c$ and $V(f;[a_2,b_2]) \ge d- d^*$. Then,
$$
V(f;[a_1,b_1]) + V(f;[a_2,b_2]) \ge (d-d^*) + (c^* -c) \ge  d - c.
$$
By induction, we prove the statement for finite collections of
intervals.
\end{proof}

\subsection{Proof of Proposition~\ref{p.bb}}

Lets state it again for simplicity:

\textit{Let $\ell_1 \colon [-a,a] \to \mathbb{R}$ and $\ell_2 \colon [-b,b]
\to \mathbb{R}$ be two non-decreasing maps and let $\phi \colon [-b,b] \to
[-a,a]$ be also a non-decreasing map.
Then for any $\varepsilon > 0$ there exist $s,t$, $|s|,|t| \le \varepsilon$,
such that\,:
$$
\phi(\{ x\in [-b,b] \colon \ell_2(x)+t = x \} ) \cap \{ x\in [-a,a]
\colon \ell_1(x)+s = x \} = \emptyset.
$$
}

For a non-decreasing map $f \colon [a,b] \to \mathbb{R}$ and $y\in [a,b]$
we denote by $f_-(y)=\lim_{x\nearrow y} f(x)$ and $f_+(y) =
\lim_{x\searrow y} f(x)$.
We say that $f \colon [a,b]\to \mathbb{R}$ has a {\em jump} in $z\in (a,b)$
if $f_-(z) \neq f_+(z)$ (i.e., if $z$ is a discontinuity point of $f$).
Moreover, we say that $f$ has a {\em jump of size} $\varepsilon$ if $f$
has a jump in some $z$ such that $|f_-(z) - f_+(z)| \ge \varepsilon$.

\begin{lemma}
\label{l.aux}
Let $f \colon [a,b] \to \mathbb{R}$ be non-decreasing.
Then, for any $\varepsilon > 0$ there is $\delta > 0$ such that if
$0<y - x < \delta$ then either $f(y) - f(x) < \varepsilon$ or there exists
a jump of size $\varepsilon/2$ between $x$ and $y$.
\end{lemma}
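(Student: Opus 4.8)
The plan is to argue by contradiction, using the compactness of $[a,b]$ together with the elementary fact that a non-decreasing $f$ has one-sided limits $f_-(c)=\lim_{x\nearrow c}f(x)$ and $f_+(c)=\lim_{x\searrow c}f(x)$ at every $c$, with $f_-(c)\le f(c)\le f_+(c)$, and with $f_+(c)-f_-(c)$ equal to the size of the jump of $f$ at $c$. Reading ``between $x$ and $y$'' in the statement as ``at some point $z\in[x,y]$'', suppose the conclusion fails for some $\varepsilon>0$. Then for every $n\ge 1$ there are $x_n<y_n$ in $[a,b]$ with $y_n-x_n<1/n$, with $f(y_n)-f(x_n)\ge\varepsilon$, and such that $f$ has no jump of size $\varepsilon/2$ at any point of $[x_n,y_n]$. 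Passing to a subsequence I may assume $x_n\to c$, and then $y_n\to c$ as well.

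Next I would split according to the position of $c$; assume first that $c\in(a,b)$. If $c\notin[x_n,y_n]$ for infinitely many $n$, then along such a subsequence $x_n$ and $y_n$ lie on the same side of $c$: either $x_n<y_n<c$, in which case $f(x_n)$ and $f(y_n)$ both converge to $f_-(c)$, or $c<x_n<y_n$, in which case both converge to $f_+(c)$; either way $f(y_n)-f(x_n)\to 0$, contradicting $f(y_n)-f(x_n)\ge\varepsilon$. In the complementary situation $c\in[x_n,y_n]$ for all large $n$; fixing a small $\rho>0$ with $[c-\rho,c+\rho]\subseteq[a,b]$, for $n$ large we get $c-\rho\le x_n\le y_n\le c+\rho$, hence $f(c-\rho)\le f(x_n)\le f(y_n)\le f(c+\rho)$ and so $f(c+\rho)-f(c-\rho)\ge\varepsilon$; letting $\rho\to 0$ gives $f_+(c)-f_-(c)\ge\varepsilon>\varepsilon/2$. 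Thus $f$ has a jump of size $\varepsilon/2$ at $c\in[x_n,y_n]$, contradicting the choice of $x_n,y_n$, which is the required contradiction.

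The argument is short and I do not anticipate a serious obstacle; the only point needing care is when the limit point $c$ is an endpoint of $[a,b]$, where a two-sided jump is not literally defined. This is harmless: in the first alternative the contradiction never uses a jump, so nothing changes, and in the second alternative $c$ being an endpoint forces $x_n=c$ (if $c=a$) or $y_n=c$ (if $c=b$) for all large $n$, so the same one-sided estimate produces a one-sided discontinuity of size $\ge\varepsilon$ at that endpoint, which is exactly what the convention $f_-(a)=f(a)$, $f_+(b)=f(b)$ records. With this lemma established, together with the bounded-variation bounds of Lemma~\ref{l.bv}, one then derives Proposition~\ref{p.bb}.
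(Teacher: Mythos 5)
Your argument is correct and follows essentially the same route as the paper: extract a convergent (sub)sequence of pairs $(x_n,y_n)$, use the one-sided limits of a monotone function to rule out the possibility that $x_n,y_n$ accumulate on the same side of the limit point, and conclude that the limit point lies in $[x_n,y_n]$ and carries a jump of size at least $\varepsilon>\varepsilon/2$, contradicting the choice of $x_n,y_n$. Your organization of the case split (by whether $c\in[x_n,y_n]$) and the explicit treatment of endpoints are slightly tidier than the paper's, but the content is the same.
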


\begin{proof}
By contradiction, suppose that there is $\varepsilon_0 > 0$ such that for
any $\delta>0$, there exist $x_\delta, y_\delta$ such that $0< y_\delta -
x_\delta < \delta$ and $f(y_\delta) - f(x_\delta) \ge \varepsilon_0$, and
there is no jump of size $\varepsilon_0/2$.

Let $(x_n)$ and $(y_n)$ be two sequences such that $0<y_n - x_n < 1/n$ and
$f(y_n) - f(x_n) \ge \varepsilon_0$, and there is no jump of size
$\varepsilon_0/2$ between $x_n$ and $y_n$, for every $n$.
Let $x$ be an accumulation point of $\{x_n\}.$
Since $f$ is non-decreasing, we may assume that $x_n$ approaches $x$ from
the left (otherwise $\lim f(y_n) - f(x_n)=f_+(x)-f_+(x)=0$).
We may assume then that $x_n \nearrow x.$
By the same argument, we have that $\{y_n\}$ has to approach $x$ from the
right, and we may assume that and so $y_n \searrow x$.
Thus, $f_+(x) - f_-(x) \ge \varepsilon_0$, which is a contradiction, since
$x_n \le x \le y_n$.
\end{proof}

Let $g_1 \colon [-a,a] \to \mathbb{R}$ defined by $g_1 = \ell_1(x) -x$
and $g_2 \colon [-b,b] \to \mathbb{R}$ defined by $g_2 = \ell_2(x) -x$.
Notice that, $g_1, g_2$ are of bounded variation and that
$$
\{ x\colon \ell_1(x) + s=x \} = g_1^{-1}(s) \quad \text{ and } \quad \{
x\colon \ell_2(x) + t=x \}= g_2^{-1}(s).
$$

\begin{lemma}
\label{l.ap1}
For any $s_1 < s_2$ the following hold:
\begin{enumerate}
\item $\overline{\phi^{-1}(g_1^{-1}(s_1))} \cap
\overline{\phi^{-1}(g_1^{-1}(s_2))}$ contains at most finitely many points.
\item For any $y$ in the above intersection there exists $\delta > 0$ such
that either
\begin{itemize}
\item[a)]$(y - \delta, y) \cap \phi^{-1}(g_1^{-1}(s_1)) = \emptyset
\text{ and } (y,y+\delta) \cap \phi^{-1}(g_1^{-1}(s_2)) = \emptyset$ or
\item[b)] $(y - \delta, y) \cap \phi^{-1}(g_1^{-1}(s_2)) = \emptyset
\text{ and } (y,y+\delta) \cap \phi^{-1}(g_1^{-1}(s_1)) = \emptyset.$
\end{itemize}
\end{enumerate}
\end{lemma}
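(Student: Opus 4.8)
The plan is to reduce both items to a single monotonicity estimate on the level sets of $g_1$, transported through $\phi$. Write $\eta = s_2 - s_1 > 0$, $A_i = g_1^{-1}(s_i)$ and $B_i = \phi^{-1}(A_i)$ for $i=1,2$. The first observation is that, since $\ell_1$ is non-decreasing, $A_1$ and $A_2$ are anti-ordered with a gap: if $x_1 \in A_1$, $x_2 \in A_2$ and $x_1 > x_2$, then $\ell_1(x_1) \ge \ell_1(x_2)$ forces $x_1 + s_1 \ge x_2 + s_2$, that is $x_1 - x_2 \ge \eta$; in particular $A_1 \cap A_2 = \emptyset$. Since $\phi$ is also non-decreasing, this passes to $B_1, B_2$: if $y_1 \in B_1$, $y_2 \in B_2$ and $y_1 > y_2$ then $\phi(y_1) \ge \phi(y_2)$, and equality is impossible because it would put $\phi(y_1)$ in $A_1 \cap A_2$; hence $\phi(y_1) > \phi(y_2)$ and the previous line gives $\phi(y_1) - \phi(y_2) \ge \eta$. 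This single inequality is the engine for both items.

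For item 1 I would invoke bounded variation. By Lemma~\ref{l.bv}(4) the map $g_1 = \ell_1 - \mathrm{id}$ has finite variation $M := V(g_1;[-a,a])$, and because $\phi$ is monotone the composition $g_1 \circ \phi$ has variation at most $M$ (a monotone $\phi$ sends an increasing finite tuple in $[-b,b]$ to an ordered finite set in $[-a,a]$, so the telescoping sums defining the variation of $g_1 \circ \phi$ are dominated by those for $g_1$). If $\overline{B_1} \cap \overline{B_2}$ had $k$ distinct points $c_1 < \cdots < c_k$, then in a window of radius $\tfrac12 \min_j (c_{j+1} - c_j)$ about each $c_j$ one can choose a point of $B_1$ and a point of $B_2$; ordering the $2k$ chosen points $\zeta_1 < \cdots < \zeta_{2k}$, the function $g_1 \circ \phi$ equals $s_1$ on $B_1$-points and $s_2$ on $B_2$-points, hence changes value at least once inside each of the $k$ windows, so the variation of $g_1 \circ \phi$ along $\zeta_1, \dots, \zeta_{2k}$---and therefore $M$---is at least $k\eta$. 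Thus $k \le M/\eta$ and the intersection is finite.

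For item 2, fix $y$ in this finite intersection. The statement is equivalent to the claim that $B_1$ and $B_2$ cannot both cluster at $y$ from the same side. Suppose both cluster from the right: choose $y_n \in B_1$ and $z_n \in B_2$ with $y_n \downarrow y$ and $z_n \downarrow y$. For each $n$ pick $m$ with $z_m < y_n$ (possible since $z_m \to y < y_n$); then $y_n > z_m$, so $\phi(y_n) - \phi(z_m) \ge \eta$ by the engine inequality, while $\phi(z_m) \ge \phi_+(y)$ since $z_m > y$. Letting $n \to \infty$, so $\phi(y_n) \to \phi_+(y)$, we get $\phi_+(y) \ge \phi_+(y) + \eta$, a contradiction. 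The left-hand case is handled the same way, now fixing a point $z_m \in B_2$ close to $y$ and choosing $y_n \in B_1$ with $z_m < y_n < y$, and yielding $\phi_-(y) \ge \phi_-(y) + \eta$. Hence near $y$ at most one of $B_1, B_2$ clusters from the left and at most one from the right; running through the possibilities and taking $\delta$ small enough that the absence of clustering is visible on $(y-\delta, y)$ or on $(y, y+\delta)$ produces alternative (a) or (b).

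The step I expect to be the genuine obstacle is the last one in item 2 when $\phi$ (equivalently $\ell_1$) has a jump at $y$: one must still exclude the configuration in which, say, $y \in B_2$ is an isolated point of $B_2$ while $B_1$ clusters at $y$ from both sides. When $\phi$ is continuous this is immediate, for $B_1$ clustering at $y$ from the right produces points of $A_1$ lying above and arbitrarily close to $\phi(y) = \phi_+(y)$, whereas the gap estimate gives $A_1 \cap [\,\phi(y), \phi(y)+\eta\,) = \emptyset$ because $\phi(y) \in A_2$. In the discontinuous case one has to run this argument across the jump of $\phi$ at $y$ while accounting for jump sizes, and here the finiteness of $V(g_1 \circ \phi)$ is what limits how often such large jumps occur. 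All the remaining parts are routine manipulations with monotone functions.
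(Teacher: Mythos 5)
Your proof of item~1 is correct and takes a genuinely different, and arguably cleaner, route than the paper's. You pass directly to the variation of $g_1\circ\phi$: since $\phi$ is monotone, $V(g_1\circ\phi;[-b,b])\le V(g_1;[-a,a])$, while each of the $k$ disjoint windows around the $c_j$'s contributes an oscillation of at least $\eta$, giving $k\le V(g_1;[-a,a])/\eta$. The paper instead deduces item~1 from the dichotomy of item~2 together with a jump-counting argument via Lemma~\ref{l.aux}; this is longer and makes item~1 depend logically on item~2. Your version is self-contained and avoids that dependence.

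For item~2 the ``engine inequality'' $\phi(y_1)-\phi(y_2)\ge\eta$ for $y_1\in B_1$, $y_2\in B_2$, $y_1>y_2$, and the resulting conclusion that $B_1$ and $B_2$ cannot both cluster at $y$ from the same side, are correct and are essentially the paper's own argument. But the configuration you flag as the ``genuine obstacle'' (say $y$ an isolated point of $B_2$ while $B_1$ clusters at $y$ from both sides) is not just an obstacle to your write-up: it is a counterexample to the dichotomy (a)/(b) as literally stated, so the remedy you sketch (controlling the large $\phi$-jumps via $V(g_1\circ\phi)$) cannot close the gap. Concretely, take $s_1=-\tfrac14$, $s_2=\tfrac14$, $\phi(x)=x/2-\tfrac14$ for $x\le0$ and $\phi(x)=x/2+\tfrac14$ for $x>0$ (a jump of size $\eta=\tfrac12$ at $0$), and $\ell_1(x)=x-\tfrac14$ for $|x|>\tfrac14$, $\ell_1\equiv0$ on $[-\tfrac14,\tfrac14]$. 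Then $g_1^{-1}(s_1)=[-1,-\tfrac14)\cup[\tfrac14,1]$, $g_1^{-1}(s_2)=\{-\tfrac14\}$, hence $B_1=[-1,0)\cup(0,1]$, $B_2=\{0\}$, and at $y=0$ both $(-\delta,0)$ and $(0,\delta)$ meet $B_1$, so neither (a) nor (b) holds for any $\delta>0$. The paper's own proof only establishes the one-sided non-clustering claim and then asserts ``this proves item~2,'' so the gap originates in the source, not in your reasoning. What the proof of Proposition~\ref{p.bb} actually invokes is only that each one-sided interval $(y_i-\delta_i,y_i)$ and $(y_i,y_i+\delta_i)$ meets at most one of the $S_j$, which is precisely the one-sided statement that both you and the paper do prove; the downstream argument is therefore unaffected, but you should state and use that weaker version rather than (a)/(b).
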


\begin{proof}
Let $y \in \overline{\phi^{-1}(g_1^{-1}(s_1))} \cap
\overline{\phi^{-1}(g_1^{-1}(s_2))}$.
Observe that $\phi^{-1}(g_1^{-1}(s_1)) \cap \phi^{-1}(g_1^{-1}(s_2)) =
\emptyset$.
We claim that $y$ cannot be accumulated at one side (either right or left)
by both sets $\phi^{-1}(g_1^{-1}(s_1))$ and $\phi^{-1}(g_1^{-1}(s_2))$.
Otherwise, assume this for the left, let $x_n \nearrow y$, $x_n \in
\phi^{-1}(g_1^{-1}(s_1))$ and $z_n \nearrow y$, $z_n \in
\phi^{-1}(g_1^{-1}(s_2))$.
Then, $\phi(x_n) \nearrow \phi_-(y)$ and $\phi(z_n) \nearrow \phi_-(y)$.
Hence, $s_1 = g_1(\phi(x_n)) = \ell_1(\phi(x_n)) - \phi(x_n)$ and so $s_1=
(\ell_1)_-(\phi_-(y)) - \phi_-(y)$.
Analogously, $s_2 = g_1(\phi(z_n)) = \ell_1(\phi(z_n)) - \phi(z_n)$ and so
$s_2 = (\ell_1)_-(\phi_-(y)) - \phi_-(y)$, a contradiction since $s_1 \neq
s_2$.
This proves item 2.

To prove item 1, lets assume that for $y$ in the intersection situation
(a) holds.
Then $s_2 = (\ell_1)_-(\phi_-(y)) - \phi_-(y)$ and $s_1 =
(\ell_1)_+(\phi_+(y)) - \phi_+(y)$.
So, $(\ell_1)_+(\phi_+(y)) = s_1 + \phi_+(y)$ and $(\ell_1)_+(\phi_-(y)) =
s_2 + \phi_-(y)$.
Since $\phi_(y) \le \phi_+(y)$ and $\ell_1$ is non-decreasing then
$(\ell_1)_+(\phi_-(y)) \le (\ell_1)_+(\phi_+(y))$. Then, $s_1 + \phi_+(y)
\ge s_2 + \phi_-(y)$. Therefore,
$$
\phi_+(y) - \phi_-(y) \ge s_2 - s_1.
$$
This means that the jump of $\phi$ at $y$ is at least of size $s_2 - s_1$
and there are at most finitely many of them.
The proof is complete in this case.

Now, assume that (b) holds and so $s_1 = (\ell_1)_-(\phi_-(y))- \phi_-(y)$
and $s_2 = (\ell_1)_+(\phi_+(y)) - \phi_+(y)$.
Let $\varepsilon = s_2 - s_1$ and let $\delta$ $(< \varepsilon)$ from
Lemma~\ref{l.aux} applied to $\ell_1$.
Notices that
\begin{align*}
(\ell_1)_+(\phi_+(y)) - & (\ell_1)_-(\phi_-(y)) = s_2 + \phi_+(y) - s_1 -
\phi_-(y) \\
& = (s_2 - s_1) + \phi_+(y)) -\phi_-(y) \ge s_2 -s_1.
\end{align*}
If $y$ is a continuity point of $\phi$ then we have a $\ell_1$-jump of
size $s_2 - s_1$ at $\phi(y) =\phi_+(y) = \phi_-(y)$ and there can be just
finitely many of them.
On the other hand, there can be finitely many $y$'s such that the
$\phi$-jump at $y$ is at least $\delta$.
So, we just consider the set of $y$'s such that $\phi_+(y) - \phi_-(y) <
\delta$.
By Lemma~\ref{l.aux} there exists a $\ell_1$-jump in $[\phi_-(y),\phi_+(y)]$
of size at least $\varepsilon/2$.
For different $y$'s the intervals $[\phi_-(y), \phi_+(y)]$ are disjoint.
Since there are finitely many $\ell_1$-jumps of size at least
$\varepsilon/2$, we conclude that there are finitely many $y$'s in
$$
\overline{\phi^{-1}(g_1^{-1}(s_1))} \cap \overline{\phi^{-1}(g_1^{-1}(s_2))}
$$
and the lemma is proved.
\end{proof}

\medskip

Now we can prove Proposition~\ref{p.bb}.
\begin{proof}[Proof of Proposition~\ref{p.bb}]
Recall that $g_1(x) = \ell_1(x) - x$ and $g_2(x) = \ell_2(x) - x$.
Assume, by contradiction, that for some $\varepsilon > 0$ we have that for
any $s,t$, $|s|,|t| \le \varepsilon$ we have
$$
\phi(\{ x\in [-b,b] \colon \ell_2(x)+t = x \} ) \cap \{ x\in [-a,a] \colon
\ell_1(x)+s = x \} \neq \emptyset.
$$
We know that $g_2$ is of bounded variation, set $M = V(g_2;[-b,b])$ and
let $k$ be an integer, $k > M/(2\varepsilon)$.
Consider a partition of $[-\varepsilon,\varepsilon]$
$-\varepsilon \le s_1 < s_2 < \dots < s_k \le \varepsilon$, and let
$S_i = \overline{\phi^{-1}(g_1^{-1}(s_i))}$.
Notice that from our contradicting assumption that $g_2(S_i)\supset
[-\varepsilon,\varepsilon].$
From Lemma~\ref{l.ap1} we have that $S_i \cap S_j$ contains at most
finitely many points for $i\neq j$.
And if $i\neq j \neq l \neq i$ then $S_i\cap S_j\cap S_l = \emptyset$.

Let $y_1, \dots, y_m$ be the set of points that belongs to more than one
$S_i$.
For each $y_i$ let $\delta_i$ from Lemma~\ref{l.ap1} such that $(y_i -
\delta_i, y_i)$ intersects just one of the sets $S_j$, $j=1,\dots ,k$, and
the same for $(y_i, y_i + \delta_i)$.

Let $\displaystyle \widehat{S}_j = S_j \cap \left[ \bigcup_{i=1}^M
(y_i-\delta_i, y_i+ \delta_i)\right]^c$.
The sets $\widehat{S}_j$, $j=1,\dots, k$ are compact and disjoints.
For each $j$, choose $\displaystyle \widehat{U}_j = \bigcup_{l =1}^{m_j}
[a_l, b_l]$ such that $\widehat{S}_j \subset \widehat{U}_j$ and
$\widehat{U}_j \cap \widehat{U}_i = \emptyset$ if $j \neq i$.

Let
$$
U_j = \widehat{U}_j \cup \left( \bigcup_{i\colon S_j \cap [y_i -
\delta ,y_i] \neq \emptyset} [y_i - \delta_i, y_i] \right) \cup
\left( \bigcup_{i\colon S_j \cap [y_i,y_i + \delta] \neq \emptyset}
[y_i, y_i + \delta_i] \right).
$$
We can write $U_j$ as a union of finitely many compact and disjoint
intervals $I_j(1), \dots, I_j(m_j)$.

Now, we have\;:
\begin{itemize}
\item[$\cdot$] $g_2(U_j) \supset [-\varepsilon, \varepsilon]$ for any $j=1,
\dots, k$; and
\item[$\cdot$] $\interior (U_j) \cap \interior (U_l) = \emptyset$ if
$j \neq l$.
\end{itemize}

Therefore, we have
$$
\sum_{i=1}^m V(g_2;I_j(i)) \ge 2 \varepsilon,
$$
and so, from Lemma~\ref{l.bv}, we get
$$
V(g_2;[-b,b]) \ge \sum_{j=1}^k \sum_{i=1}^m V(g_2;I_j(i)) \ge k2
\varepsilon > M \ge V(g_2;[-b,b]),
$$
a contradiction.
This completes the proof.
\end{proof}

%\bibliographystyle{alpha}
%\bibliography{bib}

\newcommand{\etalchar}[1]{$^{#1}$}

\end{document}